\newtheorem{theorem}{Theorem}
\newtheorem{lemma}[theorem]{Lemma}
\newtheorem*{lemma*}{Lemma}
\newtheorem{proposition}[theorem]{Proposition}
\newtheorem{corollary}[theorem]{Corollary}
\newtheorem{conjecture}[theorem]{Conjecture}
\newtheorem{definition}[theorem]{Definition}
\newtheorem{remark}[theorem]{Remark}
\newcommand{\wass}{{\mathcal W}}
\newcommand{\pii}{{\textstyle\frac{1}{2\pi}}}
\newcommand{\rand}{{\rm rand}}
\DeclareMathOperator{\sgn}{sgn}
\newcommand{\N}{\mathbb{N}}
\newcommand{\Z}{\mathbb{Z}}
\newcommand{\R}{\mathbb{R}}
\newcommand{\C}{\mathbb{C}}
\newcommand{\E}{\mathbb{E\,}}
\newcommand{\Prob}{\mathbb{P}}
\renewcommand{\Re}{\mathrm{Re} \,}
\renewcommand{\Im}{\mathrm{Im} \,}
\newcommand{\z}{Z}
\newcommand{\reg}[1]{{#1}^{{\mathbf r}}}
\renewcommand{\blacksquare}{\diamond}
\def\@tocline#1#2#3#4#5#6#7{\relax
  \ifnum #1>\c@tocdepth 
  \else
    \par \addpenalty\@secpenalty\addvspace{#2}%
    \begingroup \hyphenpenalty\@M
    \@ifempty{#4}{%
      \@tempdima\csname r@tocindent\number#1\endcsname\relax
    }{%
      \@tempdima#4\relax
    }%
    \parindent\z@ \leftskip#3\relax \advance\leftskip\@tempdima\relax
    \rightskip\@pnumwidth plus4em \parfillskip-\@pnumwidth
    #5\leavevmode\hskip-\@tempdima
      \ifcase #1
       \or\or \hskip 1em \or \hskip 2em \else \hskip 3em \fi%
      #6\nobreak\relax
    \dotfill\hbox to\@pnumwidth{\@tocpagenum{#7}}\par
    \nobreak
    \endgroup
  \fi}
\author[E. Saksman]{Eero Saksman}
\address{University of Helsinki, Department of Mathematics and Statistics,
         P.O. Box 68, FIN-00014 University of Helsinki, Finland}
\email{eero.saksman@helsinki.fi}
\author[C. Webb]{Christian Webb}
\address{Department of mathematics and systems analysis, Aalto University, P.O.
Box 11000, 00076 Aalto, Finland}
\email{christian.webb@aalto.fi}
\keywords{Riemann zeta function, Multiplicative chaos, Critical line, Statistical behaviour, Gaussian approximation}
\subjclass[2010]{Primary 60G57; Secondary 11M06, 60G15, 11M50}
\thanks{The first author was supported by the Academy of Finland CoE in Analysis and Dynamics Research. The second author was supported by the Eemil Aaltonen Foundation grant Stochastic dynamics on large random graphs and Academy of Finland grants 288318 and 308123. The authors  wish to thank the Heilbronn Institute for Mathematical Research for support during the workshop Extrema of logarithmically correlated processes, characteristic polynomials, and the Riemann zeta function, where some of this work was carried out.}
\newcommand{\add}{\; \; \underset{{\rm unif}}{\sim}\;\;}
\numberwithin{equation}{section}
\numberwithin{theorem}{section}
\title[The zeta function on the critical line and multiplicative chaos]{The Riemann zeta function\\ and Gaussian multiplicative chaos:\\ statistics on the critical line}
\begin{document}

\begin{abstract}
We prove that if $\omega$ is uniformly distributed on $[0,1]$, then as $T\to\infty$, $t\mapsto \zeta(i\omega T+it+1/2)$ converges to a non-trivial random generalized function, which in turn is identified  as a product of a very well behaved random smooth function and a random generalized function known as a complex Gaussian multiplicative chaos distribution. This demonstrates a novel rigorous connection between probabilistic number theory and the theory of multiplicative chaos -- the latter is  known to be connected to various branches of modern probability theory and mathematical physics.  

We also investigate the statistical behavior of the zeta function on the mesoscopic scale. We prove that if we let $\delta_T$ approach zero slowly enough as $T\to\infty$, then $t\mapsto \zeta(1/2+i\delta_T t+i\omega T)$ is asymptotically a product of a divergent scalar quantity suggested by Selberg's central limit theorem and a strictly Gaussian multiplicative chaos. We also prove a similar result for the characteristic polynomial of a Haar distributed random unitary matrix, where the scalar quantity is slightly different but the multiplicative chaos part is identical. This says that up to scalar multiples, the zeta function and the characteristic polynomial of a Haar distributed random unitary matrix have an identical distribution on the mesoscopic scale.
\end{abstract}

\maketitle

{\hypersetup{linkcolor=black}
\tableofcontents}

\section{Introduction}\label{se:intro} 

In this article, we study a rather classical problem in probabilistic number theory, namely the statistical behavior of the Riemann zeta function on the critical line (for definitions, see Section \ref{subs:pre}), or what can be said about the behavior of the zeta function in the neighborhood of a random point on the critical line and far away from the origin. Before going into precise definitions and statements, we review briefly and informally our main contributions, their role compared to previous studies, and connections to other branches of modern probability theory and mathematical physics. 

One of our main results, Theorem \ref{th:main}, states that if this neighborhood of a random point is of a fixed size compared to the distance to the origin, then the zeta function behaves like a random generalized function which can be viewed as a product of a random smooth function and a very special random generalized function known as a (complex) Gaussian multiplicative chaos distribution. The main contributions of this article continue with Theorem \ref{th:meso1} and Theorem \ref{th:mesormt}. Theorem \ref{th:meso1} states that if the neighborhood of the random point shrinks slowly enough compared to the distance to the origin, one again has a limit theorem representing the zeta function in terms of Gaussian multiplicative chaos. Theorem \ref{th:mesormt} states that up to a constant random factor, we have identical behavior in a model which is believed to be very closely related to the zeta function, namely the characteristic polynomial of a large, Haar distributed random unitary matrix. Informally speaking, these results establish that on such a \emph{mesoscopic} scale, the statistical behavior of the zeta function is indistinguishable from that of a characteristic polynomial of a large random unitary matrix. We now turn to the historical background and context of our problem and results.

The study of the statistical behavior of the zeta function began with the work of Bohr and Jessen in the 1930's \cite{BJ,BJ2}. They studied pointwise statistics away from the critical line. Informally speaking, they found that if one considers a random point uniformly bounded away from the critical line, then the probability law of the zeta function at this random point converges to a non-trivial probability law as the distance of this random point from the origin tends to infinity. This left open the questions of what happens at a random point on the critical line and what are the functional statistics, or what happens in a neighborhood of this random point. The question of functional statistics  was later studied by Bagchi (\cite{Bagchi}, see also \cite[Chapter 5]{L}, and \cite{L} in general for an extensive discussion about the statistical behavior of the zeta function), who found that in a fixed neighborhood of a random point far away from the origin and at a positive distance  from the critical line, the zeta function behaves like a random analytic function. 

 The behavior on the critical line is more complicated. A classical result of Selberg's, \cite{Selberg}, states that to have a meaningful limit theorem for a random point on the critical line, one must consider the logarithm of the absolute value of the zeta function and divide it by a suitable divergent quantity related to the distance of the random point from the origin -- the limiting probability law being the standard normal one. For functional statistics, Laurin\v{c}ikas proved in \cite{L2} that on the critical line, (without a divergent normalization as in Selberg's limit theorem) one can not have a functional limit theorem where the limit would be a continuous function. This means that there are various approaches one could take to obtain a functional limit theorem: one might try to normalize the zeta function as in Selberg's limit theorem, one might consider shrinking neighborhoods, or one might try to relax the regularity of the limit.  In the spirit of Selberg's limit theorem, a partial result in the direction of a functional limit theorem for the logarithm of the zeta function on a mesoscopic scale was proven by Bourgade \cite{Bourgade}, the limiting object being a suitable Gaussian process (see also \cite{HNY} for a related result). From the point of view of relaxing the regularity of the limit, our Theorem \ref{th:main} can be seen as giving the first rigorous answer to the question of what can be said about the functional statistics on the critical line, and similarly Theorem \ref{th:meso1} answers the same question in case we shrink the neighborhood at a mesoscopic rate. The open, and very hard, problem remaining is to study the statistical behavior of the zeta function when one shrinks the neighborhood at a  "\emph{microscopic} rate". Here (up to a divergent constant random factor), one expects to have convergence to a certain random analytic function related to the sine process arising in random matrix theory -- see \cite{CNN}.

Since the 1970's, and increasingly during the last two decades,  a slightly different direction in the study of the statistical behavior of the zeta function has been provided by a conjectural connection between random matrix theory and the zeta function -- for a brief overview of this connection, see the book review of Conrey \cite{Conrey}. An influential conjecture in the development of random matrix theory, as well as certain branches of analytic number theory, has been the Montgomery-Dyson pair correlation conjecture, which states that after a suitable normalization, the non-trivial zeroes of the zeta function behave statistically speaking like the eigenvalues of a large random matrix. This conjectural connection was a motivation in the work of Keating and Snaith \cite{KS}, who suggested that the statistical behavior of the zeta function should be modelled by the characteristic polynomial of a random unitary matrix. Morevoer, they conjectured a connection between moments of the zeta function in the neighborhood of a random point on the critical line as well as moments of the characteristic polynomial of a random unitary matrix. Again the most interesting and most difficult problem would be establishing such a connection on the microscopic scale, where one can see individual zeroes of the zeta function. Nevertheless, our Theorem \ref{th:meso1} and Theorem \ref{th:mesormt} can be seen as a rigorous justification of this Montgomery-Dyson-Keating-Snaith picture, when considering phenomena related to the mesoscopic scale.

As a final remark concerning the background and general context of our work, we point out rather recent work of Fyodorov, Hiary, and Keating \cite{FHK} and Fyodorov and Keating \cite{FK}. Perhaps the main message of these articles is that on the global and mesoscopic scale, one should think of the logarithm of the zeta function on the critical line behaving statistically like log-correlated field -- a random generalized function which has a logarithmic singularity on the diagonal of its covariance kernel.\footnote{In particular, they argue that known properties of maxima of log-correlated fields, properties of characteristic polynomials of random matrices, and conjectures due to Montgomery and Farmer-Gonek-Hughes \cite{FGH} fit well together.} Log-correlated fields have emerged recently as an important class of objects arising naturally in many models of modern probability and mathematical physics (see e.g. \cite{ABH,AJKS,CLD,FK,DKRV,DS,HKO}). In addition to being connected to many important models, their interest also stems from the fact that despite being rough objects, one can make some sense of their fractal geometric properties, which typically end up being universal -- common to all log-correlated fields. It turns out that a good way to describe and study these universal geometric properties is through objects which are formally exponentials of these log-correlated fields. The rigorous construction of these objects and the study of their properties goes under the name of the theory of multiplicative chaos, and its foundations were laid in the work of Kahane \cite{Kahane} (see also \cite{RV} for a recent review and \cite{Be} for a recent study). From this point of view, many of the conjectures of \cite{FHK,FK} can be rephrased as stating that statistical properties of the zeta function can be expressed through the theory of multiplicative chaos. Some properties one would expect the zeta function to possess based on universality of properties of log-correlated fields have indeed been proven recently (without multiplicative chaos theory) -- see e.g. \cite{ABBRS,N} (see also \cite{ABB,Z,M} for related results in the setting of random matrices). From this point of view, our main results connecting the Riemann zeta function to multiplicative chaos can be seen as establishing a novel connection between the zeta function and a class of stochastic objects arising in various models of great interest as well as further evidence for universal behavior in the class of log-correlated fields.

The methods we use to prove our main theorems involve mainly tools of probability theory and analysis -- indeed the tools of analytic number theory we make use of are rather classical. From the point of view of number theory, our main results lie in the probabilistic part of analytic number theory. Our main contributions to probabilistic number theory should be viewed as introducing a new class of limit theorems and limiting objects that are shown to appear naturally in the theory. In the setting of random matrices, we provide a rigorous proof of a connection between the statistical behavior of the zeta function and random characteristic polynomials, while in the setting of the theory of log-correlated fields, our results can be seen as further evidence for log-correlated fields being an ubiquitous class of objects with many universal properties. In addition, from the point of view of analytic number theory, our contributions can be seen as providing rigorous support for a conceptual picture, describing what kind of behavior one should expect from the zeta function when studying questions on the global or mesoscopic scale. In the future, this connection to multiplicative chaos could also prove fruitful in studying fractal properties of the zeta function.

The remainder of this introduction is devoted to the precise definitions of the objects we study, the precise statements of our main results (together with some further results), along with some conjectures. In this whole article, we have tried to be generous with details in our presentation to make the text accessible for readers with various backgrounds.

\subsection{Definitions, basic concepts, and main result on the global scale}\label{subs:pre} 
In this section, we describe the main objects we study and state our main results concerning the study of the zeta function in a fixed size neighborhood of a random point on the critical line. As the study of multiplicative chaos is a less classical field of mathematics, we also discuss some of its background for the convenience of the reader.

We start by fixing our notation and recalling some basic facts about the \emph{Riemann zeta function} defined for $\sigma>1$ by the convergent series\footnote{We follow the custom in analytic number theory to denote  by $s=\sigma+it$  the real an imaginary parts of the complex variable $s$.}
$$
\zeta (s)=\sum_{n=1}^\infty n^{-s}.
$$
One may continue $\zeta $ as a meromorphic function to the whole complex plane, with only one pole. The pole is simple and located at $s=1$ with residue 1. The \emph{critical line} $\lbrace \sigma=\frac{1}{2}\rbrace$ is of particular importance in the study of the zeta function as the behavior of the zeta function on it is closely related to the distribution of the prime numbers, and is of course a subject of many fundamental conjectures in number theory. As stated at the beginning of our introduction, we are interested in the behavior of the zeta function in the neighborhood of a random point on the critical line. The precise object of our study is the following:

\begin{equation}\label{eq:randomshift}
\mu_T(x):=\zeta(1/2+ix+i\omega T)\quad\textrm{for}\quad x\in \R,
\end{equation}
where $\omega$ is a random variable which is uniform on $[0,1]$, $T\in(0,\infty)$, and we are interested in the behavior of this random function in the limit where $T\to\infty$. It is natural to expect that the precise distribution of the random variable $\omega$ is unimportant, but we will not discuss this further.

As discussed at the beginning of this introduction, the other main player in our study is a random generalized function which is called a Gaussian multiplicative chaos distribution. The foundations of the mathematical theory of  Gaussian multiplicative chaos were established in the 1980's by Kahane \cite{Kahane}.  At that time, the main motivation was the desire to build mathematical models for Kolmogorov's statistical theory of turbulence by providing a continuous counterpart for multiplicative cascades that were originally introduced  by Mandelbrot for the same purpose in the early 1970's. During the last 15 years there has been a new wave of interest in multiplicative chaos, partly due to its important connections to Stochastic Loewner Evolution \cite{Sheffield,AJKS}, to quantum gravity and scaling limits of random planar maps \cite{DS,MS2013,berestycki2014liouville,MS2015,MS2016,MS20162,DKRV}, as well as to models in finance and turbulence \cite[Section 5]{RV}.

In order to give a brief and informal description of multiplicative chaos, consider a sequence of a.s.\ continuous and centered real-valued Gaussian fields $X_n$, say on an interval $I\subset \R$. The elements of this sequence should be considered as suitable approximations of a (possibly distribution valued) Gaussian field $X$. For simplicity, assume that the increments $X_{n+1}-X_n$ are independent. One may then define the random measures $\lambda_n$  on $I$ by setting
$$
\lambda_n(dx):=\exp(X_n(x)-\frac{1}{2}\E X_n(x)^2)dx.
$$
In this situation basic martingale theory implies that almost surely there exists a (random) limit measure $\lambda=\lim_{n\to\infty}\lambda_n$, where the convergence is understood in the ${\rm weak^*}$-sense. The measure $\lambda$ is called the \emph{multiplicative chaos measure} defined by $X$, and Kahane proved that under suitable conditions the limit does not depend on the choice of the approximating sequence ($X_n$). However, a significant obstacle in defining a meaningful limiting object $\lambda$ is that it may very well be the zero measure almost surely.
The most important, and in some sense a borderline situation for defining meaningful limiting objects, is when the limit field $X$ is log-correlated, i.e.\ it has a covariance of the form
$$
C_X(x,y)=-2\beta^2\log |x-y|+ G(x,y),\qquad x,y\in I,
$$
where $G$ is a continuous and bounded function. Then Kahane's theory implies that  the limit measure is almost surely non-zero for $0<\beta <1$.  The limiting random Borel measure $\lambda=\lambda_\beta$ on the interval $I$  is almost surely singular and its basic properties like multifractal spectrum, tail of the total mass or scaling properties have been investigated. 

At  the threshold $\beta=\beta_{c}:=1$ one needs to add a  deterministic non-trivial renormalization factor that depends on $n$ in order to obtain the existence of a non-trivial object known as a \emph{critical chaos measure}. This limit can also be achieved through a random normalization known as the derivative martingale.  

Overall,  the dependence of the chaos measure on the generating Gaussian field has many delicate features.  E.g. the universality property (how the law of the limiting object is independent of the precise details of the approximation scheme) is far from trivial for multiplicative chaos  \cite{RV,Shamov,JS}. We refer to the nice survey \cite{RV} for the basic properties of these measures, to \cite{Be} for an elegant proof of the existence of subcritical chaos measures, and to \cite{DRSV1,DRSV2,BKNSW} for the existence and basic properties of critical Gaussian chaos. 

There is a further variant of multiplicative chaos that is important for the connection to the Riemann zeta function (and for random matrix theory as well), which is the concept of \emph{complex multiplicative chaos}, where in the above one allows for complex Gaussian fields. 
Two basic cases have been studied in the literature. In the first variant one allows the parameter $\beta$ take complex values, and it turns out that one obtains analyticity in the parameter $\beta$ for  $\beta\in U$, where $U\subset\C$ is an open subset  
whose intersection with the real axis equals $(-1,1)$ in the above normalization (see \cite{BJM1,BJM2} in the slightly simpler case of multiplicative cascades and e.g. \cite{AJKS} in the case of multiplicative chaos).  In the second case one assumes that $X=\beta_1X_1+i\beta_2X_2$ with 
$X_1,X_2$ independent copies of a log-correlated field and $\beta_1,\beta_2\in \R$. This case turns out to be more amenable to analysis, due to the independence of the real and imaginary parts, and many aspects of it have been studied thoroughly in \cite{RV3}. However, the complex chaos we need to study here does not quite fit into either of these models. In our situation, we study a complex Gaussian field $\mathcal{G}$, for which there is a very special mutual dependence between the real and imaginary parts $X_1$ and $X_2$, of the form
$$
\E X_1(x)X_2(y)=-\frac{\pi}{4}\sgn(x-y)\; +\; {\rm smooth},
$$
where $\sgn(x)$ denotes the sign of $x$ and the covariance is zero when $x=y$. Formally this suggests that, the 2-point function $\E e^{\mathcal{G}(x)+\overline{\mathcal{G}(y)}}$ is not absolutely integrable, which in general indicates that one can not use $L^2$-theory to construct a limiting object. Remarkably enough,  it is exactly the above peculiar dependence of the real and imaginary part that produces the dominant part $(i(x-y))^{-1}$ to the two-point function, and hence the basic theory of one-dimensional singular integrals applies to resurrect the $L^2$-theory. It turns out that the complex Gaussian chaos we study, which can be formally written as $\nu=e^{\mathcal{G}}$, has some unique features that arise from the fact that it can be considered as a boundary distribution of a random analytic function. For example, the finiteness of a moment $\E |\nu (\phi)|^p$ with $p>4$ can be shown to depend on the smoothness properties of the function $\phi$, and thus their properties differ in some respects from the complex chaos considered in \cite{RV3}. 

Before stating our result concerning the behavior of $\mu_T$ from \eqref{eq:randomshift}, we need to recall some notions related to generalized functions to properly state our convergence results. We shall make extensive use of the classical $L^2$-based \emph{Sobolev spaces} $W^{\alpha, 2}(\R)$, where $\alpha\in\R$ is an arbitrary smoothness index. The space $W^{\alpha, 2}(\R)$ consists of all Schwartz generalized functions $f\in \mathcal{S}'(\R)$ such that the Fourier  transform $\widehat f$ is locally $L^2$-integrable and satisfies
$$
\| f\|_{W^{\alpha,2}(\R)}^2\; :=\; \int_\R(1+\xi^2)^\alpha |\widehat f(\xi)|^2d\xi <\infty ,
$$
where our  convention for the normalization of the Fourier transform is given by
$$
\widehat f(\xi):=\int_\R e^{-2\pi i \xi x}f(x)dx
$$
 for $f\in \mathcal{S}(\R).$  One has the duality $(W^{\alpha,2}(\R))'=W^{-\alpha,2}(\R)$ with respect to the standard distributional pairing. If $\alpha >1/2,$ there exists a continuous embedding $W^{\alpha,2}(\R)\subset C(\R)$, where the latter space is equipped with the sup-norm. In turn, if $\alpha <-1/2,$ one has $\delta_a\in W^{\alpha,2}(\R)$ for all $a\in\R.$ We refer e.g. to \cite{Stein} or \cite{G}   for basic facts on harmonic analysis and the relevant function spaces -- we  apply mostly only the $L^2$-theory that can be often conveniently dealt with by basic Fourier analysis.

We now turn to our first main result, which shows unconditionally\footnote{That is, we do not assume the Riemann hypothesis.}, that the random shifts $\mu_T$ from \eqref{eq:randomshift} converge to a statistical limit that is a random generalized function  whose non-trivial behaviour is determined by a (complex) Gaussian multiplicative chaos distribution:

\begin{theorem}\label{th:main}\quad {\bf (i)}
There exists a non-trivial random variable $x\mapsto \zeta_\rand(1/2+ix)$ taking values in $\mathcal{S}'(\R)$ -- the space of tempered distributions -- such that as $T\to\infty$
\begin{equation*}
(1+x^2)^{-1}\mu_T(x)\stackrel{d}{\longrightarrow}(1+x^2)^{-1}\zeta_\rand(1/2+ix),
\end{equation*}
where the convergence in law is with respect to the strong topology of the Sobolev space $W^{-\alpha,2}(\R)$ for any $\alpha>1/2$.

\smallskip
\noindent  {\bf (ii)}\quad
Moreover, the law of the limit $\zeta_\rand$ can be characterized in the following way$:$ as random generalized functions
\begin{equation*}
\zeta_{\rand}(1/2+ix)=g(x)\nu(x),
\end{equation*}
where $\nu$ is a random generalized function known as a Gaussian multiplicative chaos distribution, 
which can be formally written as
$$
\nu(x)= "\; e^{\mathcal{G}(x)}\;",
$$
where $\mathcal{G}$ is a centred Gaussian field with the  correlation structure
$$
\E \mathcal{G}(x)\mathcal{G}(y)=0 \quad \textrm{and} \quad \E \mathcal{G}(x)\overline{\mathcal{G}(y)}=\log (\zeta (1+i(x-y)))\quad\textrm{ for}\quad x,y\in \R.
$$
The  factor $g$ is a random smooth function on $\R$, almost surely has no zeroes, and for which $\E\big(\|g(x)\|_{C^\ell(I)}^p+ \|1/g(x)\|_{C^\ell(I)}^p\big)$ is finite for all $p\in \R$, any $\ell\geq 0$, and any finite interval $I\subset \R$.
\end{theorem}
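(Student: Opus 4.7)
The plan is to approximate $\mu_T$ by a truncated Euler product and then pass to the limit in two stages, first $T\to\infty$ and then the truncation parameter $X\to\infty$. For $X\ge 2$ set
$$P_T^X(x):=\prod_{p\le X}\left(1-p^{-1/2-ix-i\omega T}\right)^{-1},\qquad R_T^X(x):=\mu_T(x)/P_T^X(x).$$
The first step is to show that, for each fixed $X$, the tuple $(p^{-i\omega T})_{p\le X}$ converges in distribution as $T\to\infty$, via the Kronecker--Weyl equidistribution theorem (using $\mathbb{Q}$-linear independence of $\{\log p\}_p$), to a family $(U_p)_{p\le X}$ of independent Haar-uniform random variables on the unit circle. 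This gives the functional limit
$$P_T^X(\cdot)\;\stackrel{d}{\longrightarrow}\; P_\infty^X(\cdot):=\prod_{p\le X}\left(1-U_p\,p^{-1/2-i\,\cdot}\right)^{-1}$$
in the topology of uniform convergence on compact subsets of $\R$.

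\textbf{Chaos limit.} The next step is to show that, interpreted as random generalized functions, $P_\infty^X$ converges as $X\to\infty$ to the complex Gaussian multiplicative chaos $\nu$. Taking logarithms, $\log P_\infty^X(x)=\mathcal{G}_X(x)+\mathcal{H}_X(x)$, where $\mathcal{G}_X(x):=\sum_{p\le X}U_p\,p^{-1/2-ix}$ is the $k=1$ contribution and $\mathcal{H}_X$ gathers the higher-order terms from the Taylor expansion of $-\log(1-z)$. The series defining $\mathcal{H}_X$ converges absolutely and uniformly on compact sets as $X\to\infty$ (since $\sum_p p^{-k}<\infty$ for $k\ge 2$), producing a smooth contribution that is ultimately absorbed into $g$. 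The leading part $\mathcal{G}_X$ is log-correlated, with covariance $\E \mathcal{G}_X(x)\overline{\mathcal{G}_X(y)}=\sum_{p\le X}p^{-1-i(x-y)}$, which differs from $\log\zeta(1+i(x-y))$ only by a function smooth in $x-y$ with a bounded limit as $X\to\infty$ (coming from the higher-prime-power terms in $\log\zeta$). A central-limit-type invariance principle for Dirichlet polynomials then permits replacement of the i.i.d.\ uniforms $U_p$ by a Gaussian field, after which Kahane-style martingale techniques, adapted to the complex setting, give convergence of the exponentiated approximants to the subcritical chaos $\nu$ in a local Sobolev sense.

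\textbf{Smooth factor, Sobolev convergence, main obstacle.} For the remainder $R_T^X$ one establishes, using classical moment estimates for $\zeta$ on the critical line (Ingham-type second moments combined with Cauchy integration to bound derivatives in a strip about the real axis), the uniform bound $\E\big(\|R_T^X\|_{C^\ell(I)}^p+\|1/R_T^X\|_{C^\ell(I)}^p\big)\le C_{I,\ell,p,X}$ for any compact $I\subset\R$, any $\ell\ge 0$, and any $p\in\R$; joint convergence of $(P_T^X,R_T^X)$ (first in $T$, then in $X$) then yields the factorization $\zeta_\rand(1/2+ix)=g(x)\nu(x)$ with the claimed regularity of $g$. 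Upgrading convergence on compacta to convergence in the strong topology of $W^{-\alpha,2}(\R)$ for the weighted distribution $(1+x^2)^{-1}\mu_T(x)$ requires a tightness argument based on uniform control of $\E\|(1+x^2)^{-1}\mu_T\|_{W^{-\alpha,2}(\R)}^2$, which reduces to a polynomial-in-$x$ bound for the second moment of $\zeta(1/2+ix+i\omega T)$, a standard mean-value estimate. The principal technical obstacle, as flagged in the introduction, is the construction of $\nu$ itself: the cross-covariance $\E X_1(x)X_2(y)\sim-\frac{\pi}{4}\sgn(x-y)$ makes the formal two-point function $\E e^{\mathcal{G}(x)+\overline{\mathcal{G}(y)}}$ non-integrable near the diagonal, so the standard $L^2$ construction of subcritical chaos does not apply directly; this is overcome by exploiting analyticity of the approximants $P_\infty^X$ in the upper half-plane and invoking $L^2$-boundedness of one-dimensional singular integrals (essentially the Hilbert transform) to restore enough integrability for the martingale and $L^2$ arguments to go through.
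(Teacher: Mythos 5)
Your overall architecture is close to the paper's — truncated (randomised) Euler products, Kronecker–Weyl in the $T\to\infty$ limit, a Gaussian approximation of the leading sum $\sum_{p}U_pp^{-1/2-ix}$, and the observation that the troublesome cross‑covariance term contributes the Hilbert‑transform kernel $(i(x-y))^{-1}$ so that $L^2$/singular‑integral theory rescues the moment bounds. But the plan has a fatal gap in the way you propose to control the approximation of $\mu_T$ by $P_T^X$, and this gap affects both parts of the theorem.

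The problem is the factor $R_T^X(x)=\mu_T(x)/P_T^X(x)$ and the claimed bounds $\E\big(\|R_T^X\|_{C^\ell(I)}^p+\|1/R_T^X\|_{C^\ell(I)}^p\big)\le C$ for \emph{all} $p\in\R$. The finite Euler product $P_T^X$ is bounded and bounded away from zero on the critical line, but $\mu_T(x)=\zeta(1/2+ix+i\omega T)$ vanishes whenever $1/2+i(x+\omega T)$ is a nontrivial zero of $\zeta$ on the critical line. By Hardy's theorem there are infinitely many such zeros, with density $\sim\frac{1}{2\pi}\log T$ per unit interval, so for large $T$ the random window $[i\omega T,\,i\omega T+i]$ almost surely contains zeros of $\zeta$. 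Consequently $1/R_T^X$ is almost surely unbounded on $I$ and $\|1/R_T^X\|_{C^\ell(I)}=\infty$, so the moment bound for any $p>0$ is false. Cauchy integration and second‑moment estimates control the size of $\zeta$ and its derivatives, not the distance to zero, so they cannot repair this. Without $R_T^X$ you also have no route from the (known) convergence of $P_T^X$ to the claimed convergence of $\mu_T$: tightness from the mean‑value estimate gives subsequential limits in $W^{-\alpha,2}$, but does not identify the limit.

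The paper avoids this by never factoring $\mu_T$ at finite $T$. Instead, the difference $\mu_T-\mu_{T,N}$ (not the ratio) is controlled in $L^2$ via Ingham–Bettin type asymptotics of the shifted two‑point functions $\E\,\mu_T(x)\overline{\mu_T(y)}$, $\E\,\mu_{T,N}(x)\overline{\mu_T(y)}$, $\E\,\mu_{T,N}(x)\overline{\mu_{T,N}(y)}$; the main terms cancel up to an error governed by $\|\cdot\|_\z^2-\|\cdot\|_{\z,N}^2$, yielding a $\limsup_{T\to\infty}$ bound that vanishes as $N\to\infty$ uniformly over test functions. This is what upgrades tightness to identification of the limit. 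The smooth factor $g$ then arises only at the level of the \emph{limit} object $\zeta_\rand=\lim_N\zeta_{N,\rand}$: since each $\zeta_{N,\rand}(1/2+ix)$ is a finite product of factors $(1-p_k^{-1/2-ix}e^{2\pi i\theta_k})^{-1}$ that are uniformly bounded away from $0$ and $\infty$, $\log\zeta_{N,\rand}$ is well defined and smooth, and the Gaussian approximation $\log\zeta_{N,\rand}=\mathcal G_N+\mathcal E_N$ (with $\mathcal E_N$ converging to a smooth limit with all exponential moments) gives $\zeta_\rand=e^{\mathcal E}\nu$. Your $\mathcal H_X$ (the $k\ge2$ terms of $-\log(1-z)$) is exactly part of this error, and your Gaussian invariance principle is the paper's block‑wise coupling argument; so the ``chaos limit'' step is essentially right. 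What you must drop is the $R_T^X$ step, and replace it with a quantitative $L^2$ comparison $\mu_T\approx\mu_{T,N}$ based on two‑point function asymptotics.
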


Above, the identification of $\zeta_{\rand}$ and $g \nu$ can be understood to mean that as elements of $W^{-\alpha,2}(\R)$, $(1+x^2)^{-1}\zeta_{\rand}(1/2+ix)=(1+x^2)^{-1}g(x)\nu(x)$  or then equality can be understood in the sense of tempered distributions. Note that as we have convergence in the Sobolev space, we can define the action of $\zeta_{\rand}(1/2+ix)$ on a larger class of test functions than just Schwartz functions. For a definition of the norm we use on $C^\ell$, see Lemma \ref{le:perus}. We now turn to the mesoscopic behavior of the zeta function.

\subsection{The mesoscopic scale}\label{sec:mesointro}

Recall that by mesoscopic, we mean that we study the zeta function in a suitably slowly shrinking neighborhood of the random point $1/2+i\omega T$, or in the notation of \eqref{eq:randomshift}, we are interested in the random function $\mu_T(\delta_T x)$ for some $\delta_T\to 0$ with $T\to \infty$ slowly enough. Before stating the description  of the mesoscopic behavior, which comprises our second main result, we first make a couple of heuristic comments in an attempt to motivate and clarify the emerging picture. As one may expect from Theorem \ref{th:main} that $\mu_T(\delta_T x)$ is close to $e^{\mathcal{G}(\delta_T x)}$, consider what happens to a log-correlated field under scaling -- for simplicity, let us look at a real translation invariant log-correlated field $X$ with covariance $C_X(x,y)=\log(1/ |x-y|)+g(x-y)$, where $g$ is smooth. If we scale the spatial variable by $\varepsilon$ and let $\varepsilon\to 0$, the covariance will roughly be $\log \varepsilon^{-1}+g(0)-\log |x-y|$. So it's natural to expect that the precise details of the covariance are irrelevant: the field $X(\varepsilon x)$ should roughly consist of "zero mode" or constant Gaussian random variable whose variance explodes as $\varepsilon\to 0^+$, as well as of a log-correlated field with covariance $-\log |x-y|+\mathrm{constant}$ (though this latter field may not strictly speaking exist on the whole axis -- see a more precise formulation below). 

For precise statements for our results in this setting,  we need to recall the definition of  the Sobolev space $W^{\alpha,2}(0,1)$.
For any $\alpha\in\R$ this space can be defined as the space of restrictions of elements in $W^{\alpha,2}(\R)$ to the set $(0,1)$. More formally,
$$
W^{\alpha,2}(0,1):= \{ g_{|(0,1)} \; :\; g\in W^{\alpha,2}(\R)\}
$$
and
$$
\|f\|_{W^{\alpha,2}(0,1)}:=\inf \{ \|g\|_{W^{\alpha,2}(\R)}\;: \; f=g_{|(0,1)}\}.
$$
The space $ W^{\alpha,2}(0,1)$ is again a separable Hilbert space.
Moreover, it is easily verified that $\|(1+x^2)^{-1}f\|_{W^{\alpha,2}(0,1)}\asymp\|f\|_{W^{\alpha,2}(0,1)}$ (where $a\asymp b$ means that $a/b$ and $b/a$ are bounded) for any $\alpha\in\R$, which lets us transfer  our earlier convergence results on the whole line to restrictions to the interval. We also find it convenient to strengthen our notion of convergence from convergence in law to convergence in the sense of the quadratic Wasserstein (or Kantorovich) metric $\wass_2$. Assume that $(X,d_X)$ is a complete separable metric space and  $\mu$ and $\nu$ are Borel probability distributions on $X$. Then
$$
\wass_2(\mu,\nu)_X\; :=\; \left(\inf_{(U,V)} \E \Big( d_X(U,V)^2\Big)\right)^{1/2},
$$
where $U,V$ are random variables on a common probability space taking values in $X$ so that $U\sim \mu$ and $V\sim \nu$. This defines a distance between probability distributions on $X$. In particular, convergence in the Wasserstein metric implies  convergence in distribution and uniform boundedness of second moments. The monograph \cite{Villani} is a good place to learn the basics about Wasserstein distances, although we hardly need more than the definition in this paper.

We are now ready to state our first mesoscopic result

\begin{theorem}\label{th:meso1}
There exists a deterministic $\delta_T>0$ so that $\delta_T\to 0^+$ as $T\to \infty$ and 

\begin{equation*}
\lim_{T\to \infty}\wass_2\big(\zeta(1/2+i\delta_T x+i\omega T), h_T(x)e^{Y_T}\eta(x)\big)_{W^{-\alpha,2}(0,1)}=0,
\end{equation*}

\noindent where $\alpha>1/2$, $h_T$ is a random smooth function satisfying $h_T\stackrel{d}{\to} 1$ in $C^1[0,1]$ as $T\to\infty$, 

\begin{equation*}
Y_T\stackrel{d}{=}\sqrt{\log (1/\delta_T)}Z+R,
\end{equation*}

\noindent where $Z$ is a standard complex Gaussian, $R$ is a complex random variable independent of $x$ and it satisfies $\E e^{\lambda |R|}<\infty$ for all $\lambda>0$. Finally $\eta(x)$ is a complex Gaussian multiplicative chaos distribution which can be formally written as 

\begin{equation*}
\eta(x)=\exp\left[\int_0^1 \frac{e^{-2\pi i xu}-1}{\sqrt{u}}dB_u^{\C}+\int_1^\infty \frac{e^{-2\pi i x u}}{\sqrt{u}}dB_u^{\C}\right],
\end{equation*}

\noindent where $B_u^{\C}$ is a standard complex Brownian motion.
\end{theorem}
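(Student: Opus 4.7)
The plan is to parallel the Dirichlet/Euler-product machinery that underlies Theorem~\ref{th:main}, but now carry out the Gaussian approximation and the multiplicative chaos limit at the mesoscopic scale. The first step will be to approximate $\zeta(1/2+i\delta_T x+i\omega T)$ by a short Euler product $\prod_{p\leq Y}(1-p^{-(1/2+i\delta_T x+i\omega T)})^{-1}$ for a truncation parameter $Y=Y_T$ growing as a small power of $T$. Standard mean-value estimates along the critical line (of the type underpinning Theorem~\ref{th:main}) allow one to replace the zeta function by this Euler product uniformly for $x\in[0,1]$, with a multiplicative error that will be absorbed into the smooth factor $h_T$ and that tends in distribution to $1$ in $C^1[0,1]$.

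Because $\omega$ is uniform on $[0,1]$, the vector $(p^{-i\omega T})_{p\leq Y}$ is asymptotically equidistributed on the torus, and a quantitative CLT for Dirichlet sums shows that $\log\prod_{p\leq Y}(1-p^{-s})^{-1}$ at $s=1/2+i\delta_T x+i\omega T$ is approximated by a centered complex Gaussian field $\mathcal{G}_T(x)$ whose covariance satisfies $\E\,\mathcal{G}_T(x)\overline{\mathcal{G}_T(y)}=\log\zeta(1+i\delta_T(x-y))+o(1)$, uniformly in $x,y\in[0,1]$. Using $\log\zeta(1+iu)=-\log(iu)+\phi(u)$ with $\phi$ smooth near $0$ yields
\begin{equation*}
\log\zeta(1+i\delta_T(x-y))=\log(1/\delta_T)-\log|x-y|-\tfrac{i\pi}{2}\sgn(x-y)+\phi(\delta_T(x-y)),
\end{equation*}
which decomposes $\mathcal{G}_T$ into (i) a constant-in-$x$ part of variance $\log(1/\delta_T)$ giving the zero-mode $\sqrt{\log(1/\delta_T)}Z$, plus a bounded-variance correction $R$ with all exponential moments, and (ii) a log-correlated complex Gaussian field with covariance exactly $-\log|x-y|-\tfrac{i\pi}{2}\sgn(x-y)$, whose exponential is to be identified with $\eta$.

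A direct It\^o computation will show that the Gaussian field in the exponent of $\eta$ has covariance
\begin{equation*}
\int_0^1\frac{(e^{-2\pi iux}-1)(e^{2\pi iuy}-1)}{u}\,du+\int_1^\infty\frac{e^{-2\pi iu(x-y)}}{u}\,du,
\end{equation*}
which is precisely a regularization of the principal-value integral $\int_0^\infty u^{-1}e^{-2\pi iu(x-y)}\,du=-\log|x-y|-\tfrac{i\pi}{2}\sgn(x-y)+\mathrm{const}$, matching the covariance found in the previous step up to smooth terms that can be absorbed into $h_T$. Existence of $\eta$ as a random element of $W^{-\alpha,2}(0,1)$ should follow from the same complex-chaos $L^2$-construction outlined in Section~\ref{subs:pre}: although $\E e^{\mathcal{G}(x)+\overline{\mathcal{G}(y)}}$ fails to be integrable, the Hilbert-transform structure induced by the $\sgn$-covariance produces a $(i(x-y))^{-1}$ singularity to which one-dimensional singular integral theory applies.

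The main technical obstacle I expect is upgrading the distributional limit to $\wass_2$-convergence. This demands uniform-in-$T$ second moment bounds on $\|\zeta(1/2+i\delta_T\cdot+i\omega T)\|_{W^{-\alpha,2}(0,1)}$, which by Plancherel reduce to classical fourth-moment estimates for $|\zeta|$ on the critical line, together with matching bounds for $\|h_T e^{Y_T}\eta\|_{W^{-\alpha,2}(0,1)}$ that follow from the $L^2$-theory of the complex chaos. Once both sides are $L^2$-tight, a joint coupling through a common Euler-product approximation at scale $Y_T$ delivers Wasserstein convergence, provided $\delta_T\to 0$ slowly enough that all the error terms accumulated above are summable -- this condition is precisely what dictates the choice of $\delta_T$ in the statement.
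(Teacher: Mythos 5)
Your proposal attempts a direct mesoscopic argument -- approximate $\zeta(1/2+i\delta_T x+i\omega T)$ by a short Euler product, Gaussianize, and read off the chaos -- whereas the paper's route is structurally different: it first establishes the global-scale limit $\wass_2(\mu_T,\zeta_\rand)_{W^{-\alpha,2}(\R)}\to 0$ (Theorem \ref{th:main}(i)), then performs the mesoscopic analysis entirely on the \emph{limiting} object $\zeta_\rand$. Concretely, Proposition \ref{pr:meso7} decomposes $\zeta_\rand(1/2+i\delta\cdot)$ on $(0,1)$ as $h_\delta e^{Y_\delta}\eta^{(\delta)}$ using the stochastic-integral Gaussian approximation of $\log\zeta_{N,\rand}$ and a change of variables $dB^{\C,\delta}_u=\delta^{-1/2}dB^\C_{\delta u}$; then Lemma \ref{le:vanutus} ($\|f(\delta\cdot)\|_{W^{-\alpha,2}}\leq\delta^{-1-2\alpha}\|f\|_{W^{-\alpha,2}}$) plus a diagonal argument in $k$ and $T_k$ produces $\delta_T$ going to $0$ slowly enough. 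The advantage of the paper's scheme is that all the analytic number theory (Ingham--Bettin shifted moment estimates) is needed only at the fixed scale $\delta=1$; the mesoscopic statement then becomes a purely probabilistic dilation computation on $\zeta_\rand$ combined with a quantitative Sobolev scaling bound. Your direct scheme would in principle force you to re-run the moment machinery uniformly at all scales $\delta_T$, and to control the interplay between $Y_T$ and $\delta_T$ simultaneously, which is avoidable.

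There is, however, a genuine gap in one step of your outline: you propose to replace $\zeta$ by the short Euler product $\zeta_Y$ \emph{with a multiplicative error absorbed into the smooth factor $h_T$ converging to $1$ in $C^1[0,1]$}. This cannot work as stated. The ratio $\zeta(1/2+i\cdot)/\zeta_Y(1/2+i\cdot)$ is not smooth and does not converge to $1$ in any $C^\ell$: $\zeta_Y$ has no zeros while $\zeta$ has plenty on the critical line, so the ratio has genuine singularities. The second-moment machinery of Theorem \ref{th:main} controls the \emph{additive} error $\mu_T-\mu_{T,N}$ in the $W^{-\alpha,2}$ norm (Propositions \ref{pr:A}, \ref{pr:B}); it does not and cannot yield smooth multiplicative control. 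In the actual argument, the factor $h_T$ arises from an entirely different source: after the Gaussian approximation, $h_\delta(x)=\exp\big(\widetilde{\mathcal{E}}(\delta x)-\widetilde{\mathcal{E}}(0)-\int_0^\delta(e^{-2\pi ixu}-1)u^{-1/2}dB^{\C,\delta}_u\big)$, i.e.\ the smoothness comes from the a.s.\ $C^\infty$ error $\widetilde{\mathcal{E}}$ in the Gaussian approximation of $\log\zeta_{N,\rand}$ (Theorem \ref{th:gaussian_appro} / Proposition \ref{pr:gaussian_appro2}) together with the small-$u$ tail of the stochastic integral after rescaling, and it is only because this error function is smooth (indeed with all exponential moments in $C^\ell$) that $h_T\to 1$ in $C^1$. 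You would need to reorganize your argument so the $\zeta$-to-Euler-product step is handled additively in the Sobolev norm (with a separate $\wass_2$-vanishing error), and let $h_T$ come from the Gaussian approximation error on the log of the Euler product instead. (A minor further slip: the needed bound is $\E\|\cdot\|_{W^{-\alpha,2}}^2$, which reduces to shifted \emph{second} moments of $\zeta$, not fourth -- this one actually helps you since the second-moment estimates are what is available unconditionally.) Your decomposition of the covariance $\log\zeta(1+i\delta_T(x-y))$ into $\log(1/\delta_T)$, a bounded-variance correction, and the $-\log|x-y|-\tfrac{i\pi}{2}\sgn(x-y)$ log-correlated part, and your identification of the covariance of $\eta$'s exponent, are both correct.
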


\noindent Note that we don't claim any independence between $\eta$, $R$ or $Z$. 

Our next result (which we prove in Appendix \ref{app:mesormt}) is the claim that a nearly identical result holds for the characteristic polynomial of a random unitary matrix, or more precisely, for random unitary matrices whose probability law is the Haar measure on the unitary group. 

\begin{theorem}\label{th:mesormt}
Let $\eta$ be the multiplicative chaos distribution from Theorem \ref{th:meso1} and let $U_N$ be a Haar distributed $N\times N$ random unitary matrix. There exists a deterministic $\delta_N>0$ so that $\delta_N\to 0^+$ as $N\to \infty$ and 

\begin{equation*}
\lim_{N\to \infty}\wass_2\big(\det(I-e^{i\delta_N x}U_N), e^{Y_N}(\eta(x)+o(1))\big)_{W^{-\alpha,2}(0,1)}=0,
\end{equation*}

\noindent where $\alpha>1/2$, and $o(1)$ is a $W^{-\alpha,2}(0,1)$-valued random variable that tends to zero in probability, and

\begin{equation*}
Y_N\stackrel{d}{=}\sqrt{\log (1/\delta_N)}Z+R,
\end{equation*}

\noindent where $Z$ is a standard complex Gaussian, $R$ is a complex Gaussian random variable independent of $x$ with order one variance. 
\end{theorem}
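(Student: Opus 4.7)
The plan is to proceed in close parallel with the proof of Theorem \ref{th:meso1}, but replacing the probabilistic number-theoretic input used in the zeta case with the Diaconis--Shahshahani theorem, which for the Haar ensemble gives us exact (not just approximate) Gaussianity of traces. The starting point is the Fourier-type expansion
\begin{equation*}
\log \det(I - e^{i\delta_N x} U_N) = -\sum_{k=1}^{\infty} \frac{e^{i k \delta_N x}}{k}\,\overline{\mathrm{Tr}(U_N^k)},
\end{equation*}
together with the fact that for $k\le N$ the tuple $(\mathrm{Tr}(U_N^k))_{k=1}^{K}$ has moments matching exactly those of $(\sqrt{k}\,Z_k)_{k=1}^{K}$, where $Z_k$ are i.i.d.\ standard complex Gaussians. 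This exact identification is the main structural advantage over the zeta-side proof of Theorem \ref{th:meso1}, where Selberg-type CLT input and Euler-product truncation are required.

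I would split the sum at a cutoff $K_N$ satisfying $K_N\to\infty$ slowly, with $K_N\ll 1/\delta_N\ll N$. The low-frequency piece
\begin{equation*}
S^{\mathrm{low}}_N(x)=-\sum_{k=1}^{K_N}\frac{e^{ik\delta_N x}}{k}\overline{\mathrm{Tr}(U_N^k)}
\end{equation*}
is, on the event $k\delta_N x \ll 1$, approximately constant in $x$; a Taylor expansion $e^{ik\delta_N x}=1+O(k\delta_N)$ isolates an $x$-independent random variable $Y_N$ with variance $\sum_{k=1}^{K_N}1/k \sim \log(1/\delta_N)$, producing the $\sqrt{\log(1/\delta_N)}\,Z$ scaling in the statement, while the residual $x$-dependence is a random smooth remainder of bounded $C^1[0,1]$-norm. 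The leftover $R$ in $Y_N$ is the contribution from $k$ close to $K_N$ which does not cleanly partition between the zero mode and the chaos.

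The high-frequency piece $S^{\mathrm{high}}_N(x)=-\sum_{K_N<k\le N}\frac{e^{ik\delta_N x}}{k}\overline{\mathrm{Tr}(U_N^k)}$ is where the chaos emerges. By Diaconis--Shahshahani we may replace this (in joint distribution, hence on a common probability space after coupling) by $-\sum_{K_N<k\le N}\frac{e^{ik\delta_N x}}{\sqrt{k}}\,\overline{Z_k}$. Setting $u=k\delta_N/(2\pi)$ converts the sum into a Riemann sum for the Gaussian stochastic integral $\int_0^\infty \frac{e^{-2\pi i x u}}{\sqrt{u}}\,dB_u^{\C}$; the appearance of $e^{-2\pi i x u}-1$ rather than $e^{-2\pi i x u}$ in the $[0,1]$ piece of $\log\eta$ is exactly the footprint of the zero-mode subtraction carried out in the previous step. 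Convergence of the Riemann sum to the integral takes place in $W^{-\alpha,2}(0,1)$ because the oscillatory factor yields $L^2$-Fourier decay once tested against $(1+\xi^2)^{-\alpha}$. Exponentiating and invoking the standard subcritical complex multiplicative chaos construction (with $\mathcal{W}_2$-type bounds as in Theorem \ref{th:meso1}) then delivers $\eta(x)+o(1)$.

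The main obstacle will be the very-high-frequency tail $k>N$, where Diaconis--Shahshahani offers no exact Gaussian identification. Here I would estimate the contribution directly in $W^{-\alpha,2}(0,1)$: the dual norm supplies the Fourier weight $(1+k^2)^{-\alpha}$ with $\alpha>1/2$, so the classical uniform bound $\E|\mathrm{Tr}(U_N^k)|^2\le k\wedge N$ suffices to make the tail $L^2$-negligible after multiplication by the weight. A secondary but unavoidable technical issue is the upgrade from convergence in law to convergence in $\wass_2$: this requires uniform integrability of $\|e^{-Y_N}\det(I-e^{i\delta_N x}U_N)\|^2_{W^{-\alpha,2}(0,1)}$, which I would derive from explicit second-moment computations using orthogonality of $e^{ik\delta_N x}$ and independence of the Gaussian coupling. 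Assembling these pieces and absorbing the bounded-variance leftover into $R$ then yields the stated Wasserstein convergence.
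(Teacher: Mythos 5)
Your proposal takes a genuinely different route from the paper, and it contains a gap that as written would block the argument.

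The paper's proof is a two-stage affair. It first establishes (Appendix~\ref{app:rmt}, Proposition~\ref{pr:rmtconv}) a \emph{global} Wasserstein-$2$ limit $\upsilon_N \to \upsilon$ in $\mathcal{H}^{-\alpha}$, where $\upsilon$ is a Gaussian multiplicative chaos on the circle; the key inputs there are Toeplitz/Fisher--Hartwig asymptotics (\cite{CFKRS,DIK}) for the relevant mixed second moments, together with the soft Diaconis--Shahshahani convergence in law of a \emph{fixed number} of traces, which is all that is needed for a Wasserstein limit of the truncated exponentials $\upsilon_{N,M}$. Only after this reduction does the paper attack the mesoscopic scaling, and at that point the problem is \emph{purely Gaussian}: one must show that $\upsilon(\cdot/M)$, suitably normalized, converges to $\eta$. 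This is done via the explicit Brownian coupling $Z_j := \sqrt{M}\int_{j/M}^{(j+1)/M}dB^\C_u$ and an $\varepsilon$-regularization (Lemma~\ref{le:mesogaussianlemma}), which delivers genuine $L^2$ control on the scaled chaos after testing against $W^{-\alpha,2}$ test functions (using Lemma~\ref{le:joulu1}, Lemma~\ref{le:joulu2}, Lemma~\ref{le:scalingcirc}). Your plan, by contrast, attempts to perform the Gaussian approximation of the traces \emph{directly at mesoscopic scale}, with $N\to\infty$ and a growing number $K_N$ of traces in play simultaneously, and without the Toeplitz machinery.

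The concrete gap: the Diaconis--Shahshahani theorem does \emph{not} give, as you write, ``exact (not just approximate) Gaussianity of traces.'' It gives exact agreement of mixed moments $\E\prod_j \mathrm{Tr}(U_N^j)^{a_j}\overline{\mathrm{Tr}(U_N^j)}^{b_j}$ with their Gaussian counterparts only for $\sum_j j(a_j+b_j)\le N$; the traces themselves are bounded (so certainly not Gaussian), and one does not have equality in joint distribution. Thus the sentence ``we may replace this in joint distribution, hence on a common probability space after coupling'' is unjustified: passing from moment matching of a slowly growing family $(\mathrm{Tr}(U_N^k))_{k\le K_N}$ to a quantitative coupling that survives exponentiation requires a real input (e.g.\ Johansson-type super-polynomial total variation bounds or the Toeplitz estimates the paper actually uses), not just DS. Relatedly, the treatment of the very-high-frequency tail $k>N$ conflates the trace index $k$ with the Fourier variable $\xi$ on which the $W^{-\alpha,2}$ weight acts; the bound $\E|\mathrm{Tr}(U_N^k)|^2\le k\wedge N$ does not by itself produce Sobolev smallness of that tail, and $\sum_{k>N}\frac{N}{k^2}$ is $O(1)$, not vanishing. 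Finally, the step from convergence of the linear field (the Riemann sum) to $\wass_2$-convergence of the exponentiated chaos in $W^{-\alpha,2}(0,1)$ needs uniform $L^2$ control of the Sobolev norm, which is precisely the content of the paper's Lemma~\ref{le:mesogaussianlemma} and is not supplied here. If you want a proof along your lines, you would need to import a quantitative trace-to-Gaussian coupling and then essentially rebuild the $L^2$ estimates of Lemma~\ref{le:mesogaussianlemma} by hand on the random matrix side; the paper's route sidesteps both by doing the RMT limit first.
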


\begin{remark}\label{rem:zeta-meso=RMT-meso}  {\rm According to  Theorem \ref{th:meso1} we also have the convergence of the form
$$
\wass_2\big(\zeta(1/2+i\delta_T x+i\omega T), e^{Y_T}(\eta(x)+o(1))\big)_{W^{-\alpha,2}(0,1)}=0,
$$
analogous to that in Theorem \ref{th:mesormt}. In other words,  in the mesoscopic limit the functional statistics of random matrices and the Riemann zeta function coincide.}
\end{remark}

Finally,  to obtain a proper limit theorem we formulate a result without the divergent term. For this, we define some further notation and concepts in order to consider almost surely non-zero generalised functions on $(0,1)$ modulo multiplicative constants.
More precisely, consider the space
$$
W^{-\alpha,2}_{{\rm mult}}(0,1):= (W^{-\alpha,2}(0,1)\setminus \{0\})/\sim,
$$
where two non-zero elements $f,g\in W^{-\alpha,2}(0,1)$ are identified by $\sim$ if $f=cg$ for some $c\in\C\setminus\{0\}.$
To have the structure of a complete metric space, we equip $W^{-\alpha,2}_{{\rm mult}}(0,1)$ with the natural metric
$$
d_{W^{-\alpha,2}_{{\rm mult}}(0,1)}(f,g):= \inf_{\theta\in [0,2\pi)}\Big\| e^{i\theta}{f}{\|f\|^{-1}_{W^{-\alpha,2}(0,1)}}-{g}{\|g\|_{W^{-\alpha,2}(0,1)}^{-1}}\Big\|_{W^{-\alpha,2}(0,1)}.
$$
In more geometric terms, we identify non-zero functions  by their radial projections to the unit sphere of the Sobolev space. Functions on the unit sphere that differ by a unimodular constant factor are also identified.

Our second formulation of the behavior on the mesoscopic scale is the following one.

\begin{theorem}\label{th:meso2}
Let $\delta_T$ and $\eta$ be as in Theorem \ref{th:meso1}. Then as $T\to \infty$

\begin{equation*}
\zeta(1/2+i\delta_T x+i\omega T)\stackrel{d}{\to}\eta(x)
\end{equation*}

\noindent in the topology of $W_{\mathrm{mult}}^{-\alpha,2}(0,1)$ for any $\alpha>1/2$.
\end{theorem}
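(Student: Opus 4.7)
The plan is to deduce Theorem~\ref{th:meso2} from Theorem~\ref{th:meso1} by passing to the multiplicative quotient, where the divergent scalar $e^{Y_T}$ becomes trivial. In $W^{-\alpha,2}_{\mathrm{mult}}(0,1)$, two nonzero elements that differ by a complex scalar coincide, so if one can reorganise the coupling of Theorem~\ref{th:meso1} as
$$
\zeta_T \;:=\; \zeta(1/2+i\delta_T x+i\omega T) \;=\; e^{Y_T}\bigl(\eta+\epsilon_T\bigr),
$$
with $\|\epsilon_T\|_{W^{-\alpha,2}(0,1)}\to 0$ in probability, then $[\zeta_T]=[\eta+\epsilon_T]$ in the quotient, and the result follows from a continuous-mapping argument.

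The first step is to produce such a ``quotient-friendly'' factorisation. As pointed out in Remark~\ref{rem:zeta-meso=RMT-meso}, the proof of Theorem~\ref{th:meso1} in fact yields (or can be reorganised to yield) exactly the form above: the smooth factor $h_T$ from Theorem~\ref{th:meso1} is absorbed into the error using that $h_T\to 1$ in $C^1[0,1]$ in distribution, together with the fact that multiplication by a $C^1$-function is a bounded operator on $W^{\alpha,2}(0,1)$ for $\alpha\in(1/2,1)$ and hence, by duality, on $W^{-\alpha,2}(0,1)$. This gives the control $\|(h_T-1)\eta\|_{W^{-\alpha,2}(0,1)}\lesssim\|h_T-1\|_{C^1[0,1]}\|\eta\|_{W^{-\alpha,2}(0,1)}\to 0$ in probability, which handles the deterministic multiplicative factor.

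The second step is to verify that $\eta$ is almost surely a nonzero element of $W^{-\alpha,2}(0,1)$. Pick a test function $\phi\in W^{\alpha,2}(0,1)$ for which the explicit covariance of $\eta$ from Theorem~\ref{th:meso1} gives $\E|\eta(\phi)|^2>0$; since $\eta(\phi)$ has a continuous distribution (inherited from the underlying Gaussian structure via the chaos construction), this forces $\Prob(\eta\ne 0)=1$. On this full-probability event the radial projection $f\mapsto f/\|f\|$ modulo unimodular phases is continuous at $\eta$, so once we have the factorisation $\zeta_T=e^{Y_T}(\eta+\epsilon_T)$, the convergence $\epsilon_T\to 0$ in probability gives $[\eta+\epsilon_T]\to[\eta]$ in $d_{W^{-\alpha,2}_{\mathrm{mult}}(0,1)}$ in probability, and therefore in distribution.

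The hard part will be the factorisation step itself: Theorem~\ref{th:meso1} is phrased with $h_T$ explicit, and extracting an expression of the form $e^{Y_T}(\eta+\epsilon_T)$ with $\epsilon_T$ small in probability is not formal, because dividing the $\wass_2$-bound through by $e^{Y_T}$ is delicate. Indeed, $\mathrm{Re}(Y_T)$ has variance diverging like $\log(1/\delta_T)$, so $|e^{Y_T}|$ is arbitrarily small with non-vanishing probability, and a naive use of the elementary bound $\|v/\|v\|-w/\|w\|\|\le 2\|v-w\|/\min(\|v\|,\|w\|)$ applied to $\zeta_T$ and $\xi_T:=h_Te^{Y_T}\eta$ fails. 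One must therefore re-enter the proof of Theorem~\ref{th:meso1} and track the coupling through the approximation scheme (Dirichlet polynomial approximations of $\zeta_T$ together with the Gaussian approximation of their logarithms), so that the error already appears in the multiplicative form $e^{Y_T}\epsilon_T$ with $\epsilon_T\to 0$ in probability. Once this is in hand, the remainder of the argument is the straightforward continuous-mapping argument sketched above.
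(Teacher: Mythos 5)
Your overall strategy — pass to $W^{-\alpha,2}_{\mathrm{mult}}(0,1)$ so that the divergent scalar $e^{Y_T}$ trivialises — is indeed the paper's strategy, and you correctly pinpoint the obstruction: the Wasserstein bound of Theorem~\ref{th:meso1} lives in $W^{-\alpha,2}(0,1)$, and one cannot divide it through by $e^{Y_T}$, since $\mathrm{Re}\,Y_T$ has variance of order $\log(1/\delta_T)\to\infty$. But what you describe as ``the hard part'' is left as an unfilled gap, and it is precisely the substance of the theorem.

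The paper does not try to reorganise Theorem~\ref{th:meso1}; it derives both mesoscopic theorems from the same upstream statement, Proposition~\ref{pr:meso7}, which is an \emph{exact} almost sure equality of $W^{-\alpha,2}(0,1)$-valued random variables on one probability space,
\begin{equation*}
\zeta_{\rand}(1/2+i\delta\,\cdot)\big|_{(0,1)}=h_{\delta}\,e^{Y_{\delta}}\,\eta^{(\delta)},
\qquad \eta^{(\delta)}\stackrel{d}{=}\eta,\quad h_{\delta}\to1 \ \text{a.s.\ in } C^{1}[0,1],
\end{equation*}
obtained through the stochastic-integral representation of Proposition~\ref{pr:gaussian_appro2} and a Brownian change of variables. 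Because this is an equality and not a Wasserstein bound, the scalar $e^{Y_\delta}$ is cancelled \emph{exactly} in the quotient: $[\zeta_{\rand}(1/2+i\delta\cdot)]=[h_\delta\,\eta^{(\delta)}]$, with no estimate on $|e^{Y_\delta}|$ required. The factor $h_\delta$ is then removed by the $C^1$-multiplier bound you sketch, and the final transfer from $\zeta_\rand(1/2+i\delta_T\cdot)$ to $\zeta(1/2+i\delta_T x+i\omega T)$ is a separate step using Theorem~\ref{th:main}, the dilation bound of Lemma~\ref{le:vanutus}, and Lemma~\ref{le:wass} (Wasserstein convergence in $W^{-\alpha,2}(0,1)$ passes to $W^{-\alpha,2}_{\mathrm{mult}}(0,1)$ once the limit is a.s.\ non-zero). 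So your instinct that one must re-enter the construction is right; what the proof actually requires is the exact coupled factorisation, not an approximate one.

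A secondary issue: your argument that $\eta\neq0$ a.s.\ via continuity of the law of a single $\eta(\phi)$ is asserted rather than proved, and for a complex chaos this is not immediate. The paper's route is cleaner and worth adopting: $\eta$ is the a.s.\ limit of a non-trivial $L^2$-bounded $W^{-\alpha,2}$-valued martingale, $\{\eta=0\}$ is a tail event of the driving Brownian filtration, Kolmogorov's $0$--$1$ law forces its probability to be $0$ or $1$, and $\E\|\eta\|^{2}_{W^{-\alpha,2}(0,1)}>0$ excludes the value $1$. The same argument gives that $\zeta_\rand$ is a.s.\ non-zero, which is needed to apply Lemma~\ref{le:wass} in the transfer step.
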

\noindent Above, $\eta$ can be thought of as a representative (or in a sense, a regularization)  of a Gaussian multiplicative chaos
$$
\eta_{{\rm mult}}:="\exp\left(\mathcal{G}_{{\rm meso}}\right)",
$$
that is defined only modulo multiplcative constants. Here the field $\mathcal{G}_{{\rm meso}}$ is defined up to additive constants, formally 
$$
\mathcal{G}_{{\rm meso}}(x)= \int_0^\infty \frac{e^{-2\pi i x u}}{\sqrt{u}}dB_u^{\C},
$$
that can be evaluated only against test functions $f$ with vanishing integral: $\int_{\R}f(x)dx=0$. It has rather interesting (formal) covariance structure 
\begin{align*}\label{eq:mesolimit}
\E \big(\Re \mathcal{G}_{{\rm meso}}(x)\Re \mathcal{G}_{{\rm meso}}(y)\big)&=\frac{1}{2}\log \left(\frac{1}{|x-y|}\right) = \E \big(\Im \mathcal{G}_{{\rm meso}}(x)\Im \mathcal{G}_{{\rm meso}}(y)\big) \quad\textrm{and}\\
\E \big(\Re \mathcal{G}_{{\rm meso}}(x)\Im \mathcal{G}_{{\rm meso}}(y)\big)&=-\frac{\pi}{4}\sgn(x-y).
\end{align*}

\noindent Naturally such a result can be shown also on the RMT side, but we leave this to the reader. 

We now turn to an outline of the proof of our main results Theorem \ref{th:main}, Theorem \ref{th:meso1}, and Theorem \ref{th:mesormt}.

\subsection{Outline of the proof and further results}\label{le:outline}

We now describe roughly the  proofs of our main results as well as state some further ones. We start with Theorem \ref{th:main}(i) and sketch the overall content of Section \ref{se:convergence}.  For that end, recall that $\mu_T$,  as defined in \eqref{eq:randomshift} stands for the random shift of the zeta function.  In order to identify the limit we employ the truncated Euler products $\zeta_N(s)=\prod_{k=1}^N(1-p_k^{-s})^{-1}$ where $p_k$:s are the primes in an increasing order, and denote the corresponding random shifts  by $\mu_{T,N}(x)=\zeta_N(1/2+ix+iT\omega)$. The proof uses in a crucial manner the explicit $T\to \infty$ limit of the two-point functions
$$
\E \mu_T(x)\overline{\mu_T(y)}, \quad \E \mu_{T,N}(x)\overline{\mu_T(y)},\quad\textrm{and}\quad \E \mu_{T,N}(x)\overline{\mu_{T,N}(y)}.
$$
The precise statement we need is given in Theorem \ref{th:twopoint} below. We note here that the proof of this theorem is essentially due to  Ingham \cite{Ingham} (see also the more recent paper of Bettin \cite{Bettin}). However, for the reader's convenience, we include a proof of Theorem \ref{th:twopoint} in Appendix \ref{app:appendixa} -- see also the discussion before Theorem \ref{th:twopoint} and in Remark \ref{re:cross alternative} for further reasons to present a proof of the result in this article.

Interestingly enough,  the main term in $\E \mu_T(x)\overline{\mu_T(y)}$ is given by 
$
({i(x-y)})^{-1}
$
 i.e. the kernel of the Hilbert transform. Using this observation as a starting point, a  careful analysis enables us to deduce suitable  uniform estimates, which in turn  show that the second moment $\E |\mu_T(f)|^2$ converges  as soon as $f$ is square-integrable with nice enough decay at infinity.  From this one may already fairly easily infer that $((1+x^2)^{-1}\mu_T(x))_{T\geq 1}$ remains tight in $W^{-\alpha,2}(\R)$ if $\alpha >1/2.$

In order to get hold of the convergence as  $T\to\infty$, we prove that the quantities $\mu_{T,N}(f)$ approximate well
$\mu_{T}(f)$  in terms of variance if $N =N(f)$ is large enough, and one may then lift this to a good approximation on the level of negative index Sobolev functions.  The final piece of information one needs is to note  that as $T\to\infty, $ the random variable $\mu_{T,N}$ converges 
in law to the \emph{randomized truncated Euler product}  $\zeta_{N,\rand}(1/2+ix)$, where
$
\zeta_{N,\rand}(s):= \prod_{k=1}^N\left(\frac{1}{1-p_k^{-s}e^{2\pi i\theta_k}}\right),
$
and the  $\theta_k$:s are i.i.d.  random variables, each uniformly distributed on $[0,1]$. 
Finally, we will show that as $N\to\infty$,  $\zeta_{N,\rand}(1/2+ix)$ converges almost surely (in the sense of generalized functions) to the  \emph{randomized Riemann zeta function} $\zeta_{\rand}(1/2+ix)$, where
$$
\zeta_{\rand}(s):= \prod_{k=1}^\infty\left(\frac{1}{1-p_k^{-s}e^{2\pi i\theta_k}}\right).
$$
In addition to being a limit of $\zeta_{N,\rand}(1/2+ix)$, $\zeta_\rand(1/2+ix)$ can be understood as the boundary values (in the sense of generalized functions) of the random analytic function $\zeta_{\rand}(s)$   in the half-plane $\{\sigma >1/2\}$.

As an aside, we mention that it is important  to observe that the limiting statistical object $x\mapsto \zeta_\rand(1/2+ix)$ is truly a generalized function, and not a function or a complex measure. Note that this implies the same claim for the complex Gaussian multiplicative chaos $e^{\mathcal{G}}$. To our knowledge, this is the first proof of such a result in the setting of complex multiplicative chaos. 
\begin{theorem}\label{th:nonmeasure}
Almost surely $\zeta_{\rand}$ does not coincide with a $($random complex$)$ Borel measure on any open subinterval of the critical line 
$\{\sigma=1/2\}$.
\end{theorem}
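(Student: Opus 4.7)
The plan is to mollify $\nu$ at scale $\epsilon>0$, examine the growth of a sub-linear $L^p$-norm of the mollification as $\epsilon\downarrow 0$, and exploit that for $p\in(0,1)$ H\"older's inequality forces this norm to stay uniformly bounded on the event that $\nu$ locally coincides with a complex Borel measure, while the Gaussian multiplicative chaos structure forces it to diverge almost surely.

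First I would reduce the problem. Since $\zeta_{\rand}=g\nu$ with $g$ almost surely smooth and nowhere-vanishing, and both $g$ and $1/g$ map complex Borel measures to complex Borel measures, it suffices to prove the statement with $\nu$ in place of $\zeta_{\rand}$; by countable exhaustion over bounded open intervals with rational endpoints it is in turn enough to fix one open $I\subset\R$ and a slightly smaller open interval $I_0$ with $\overline{I_0}\subset I$, and show that almost surely $\nu|_I$ is not a complex Borel measure. Pick $p\in(0,1)$, a compactly supported smooth non-negative bump $\phi$ with $\int\phi=1$, set $\phi_\epsilon(x)=\epsilon^{-1}\phi(x/\epsilon)$, $\nu_\epsilon:=\nu*\phi_\epsilon$, and
\[
Y_\epsilon\;:=\;\int_{I_0}|\nu_\epsilon(x)|^p\,dx.
\]
On the event that $\nu|_I$ coincides with a complex Borel measure $\mu$, for $\epsilon<\mathrm{dist}(I_0,\partial I)$ one has $|\nu_\epsilon|\le |\mu|*\phi_\epsilon$ pointwise on $I_0$, and H\"older's inequality with exponents $1/p$ and $1/(1-p)$ yields the uniform-in-$\epsilon$ bound $Y_\epsilon\le |I_0|^{1-p}|\mu|(I)^p$.

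In the opposite direction I would use the Gaussian multiplicative chaos structure to force $Y_\epsilon$ to be large. The formal identity $\nu="e^{\mathcal{G}}"$ together with the local form of the covariance $\E\,\mathcal{G}(x)\overline{\mathcal{G}(y)}=\log\zeta(1+i(x-y))=\log(1/|x-y|)+O(1)$ suggests, and a computation through the moment identities for the truncated Euler products $\zeta_{N,\rand}$ makes rigorous, the estimates
\[
\E|\nu_\epsilon(x)|^p\asymp \epsilon^{-p^2/4}\quad\text{and}\quad \E\bigl(|\nu_\epsilon(x)|^p|\nu_\epsilon(y)|^p\bigr)\le C\epsilon^{-p^2/2}(|x-y|\vee\epsilon)^{-p^2/2},
\]
uniformly in $x,y\in I_0$. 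Since $p^2/2<1$, integration over $I_0\times I_0$ yields $\E Y_\epsilon\asymp |I_0|\epsilon^{-p^2/4}$ and $\E Y_\epsilon^2\le C|I_0|^{2-p^2/2}\epsilon^{-p^2/2}$, so the Paley--Zygmund inequality produces a positive constant $c=c(I_0,p)$ with $\Prob(Y_\epsilon\ge\tfrac12\E Y_\epsilon)\ge c$ for all small $\epsilon$. Applied along $\epsilon_n=2^{-n}$ and combined with the reverse Fatou lemma for events, this yields $\Prob(\limsup_n Y_{\epsilon_n}=\infty)\ge c$.

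To conclude I would invoke a zero-one law: modifying any finite number of the phases $\theta_k$ in the definition of $\zeta_{\rand}$ multiplies $\zeta_{\rand}$ (and hence $\nu$) by a smooth, nowhere-vanishing, bounded function, which affects $Y_\epsilon$ only by a bounded multiplicative factor and preserves the class of complex Borel measures; hence both of the events $\{\nu|_I \text{ is a complex Borel measure}\}$ and $\{\limsup_n Y_{\epsilon_n}=\infty\}$ lie in the tail $\sigma$-algebra of the i.i.d.\ sequence $(\theta_k)$. By Kolmogorov's zero-one law the second event has probability $1$, which together with the uniform upper bound for $Y_\epsilon$ on the event that $\nu|_I$ is a measure forces the first event to have probability $0$. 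A countable union over rational sub-intervals completes the proof. The step I expect to be hardest is the rigorous derivation of the chaos moment estimates for $\nu*\phi_\epsilon$: since $\nu$ is only a distribution, the estimates must be obtained by passing through the smooth approximations $\zeta_{N,\rand}*\phi_\epsilon$, and the singular behaviour of the kernel $\zeta(1+iu)$ near $u=0$ must be controlled uniformly in the truncation parameter $N$ before letting $N\to\infty$.
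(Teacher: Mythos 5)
Your strategy is doomed by a feature of this specific complex chaos that the paper explicitly highlights in the introduction: the two--point function is given by the singular kernel $\zeta(1+i(x-y))\sim (i(x-y))^{-1}$, whose dominant singularity is \emph{odd}. Consequently, when you pair it with the symmetric real weight $\phi_\epsilon(x-u)\phi_\epsilon(x-v)$, the principal value vanishes identically (it is precisely $\langle H\phi_\epsilon,\phi_\epsilon\rangle=0$ by the antisymmetry of the Hilbert transform), so
\[
\E|\nu_\epsilon(x)|^2 \;=\; \iint\phi_\epsilon(x-u)\,\phi_\epsilon(x-v)\,\zeta\bigl(1+i(u-v)\bigr)\,du\,dv \;=\; \mathcal{O}(1)
\]
uniformly in $\epsilon$. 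By Jensen/H\"older it then follows that $\E|\nu_\epsilon(x)|^p\le\bigl(\E|\nu_\epsilon(x)|^2\bigr)^{p/2}=\mathcal{O}(1)$ for every $p\in(0,2]$, so your asserted estimate $\E|\nu_\epsilon(x)|^p\asymp\epsilon^{-p^2/4}$ is false, $\E Y_\epsilon$ stays bounded, and the Paley--Zygmund step yields nothing. What is going on is that you have silently replaced the complex chaos by the real chaos built from $\Re\mathcal{G}$; the oscillation coming from $\Im\mathcal{G}$ --- correlated with $\Re\mathcal{G}$ in exactly the way recorded in Section~\ref{subs:pre} --- kills the absolute blow-up of the two-point function, which is precisely what lets the $L^2$--theory go through and makes $\nu_\epsilon(x)$ much tamer than $|\nu|_\epsilon(x)$.

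The paper's proof bypasses this entirely by never looking at the mollified chaos. It regards $\zeta_{\rand}$ as the distributional boundary value of the random analytic function $\zeta_{\rand}(s)$ on $\{\sigma>1/2\}$, proves the a.s.\ blow-up of radial limits $\limsup_{\sigma\to 1/2^+}|\zeta_{\rand}(\sigma+it)|=\infty$ for a.e.\ $t$ (via a Paley--Zygmund argument, but applied to the real Dirichlet series $\sum_k\cos(2\pi\theta_k)p_k^{-\sigma}=\Re\log\zeta_{\rand}(\sigma)+\mathcal{O}(1)$, whose second moment genuinely explodes as $\sigma\to 1/2^+$), and then quotes Fatou's theorem: the Poisson extension of a finite measure has finite radial limits a.e. Your Paley--Zygmund and zero--one law instincts are sound, but they have to be applied to $\log|\zeta_{\rand}|$ along the analytic extension, not to $|\nu_\epsilon(x)|$; it is the logarithm, not the mollified field, that records the roughness here.
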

\noindent  It is also easy to verify that $\zeta_\rand$ is not a Gaussian random generalised function. As discussed in \cite[p. 250]{L} and \cite{L2} it is known  that the (localised) random shifts of the zeta function on  the critical line do not converge in distribution in the space of continuous functions. Thus Theorem \ref{th:nonmeasure} gives an adequate explanation for this phenomenon: one is forced to seek for the  limiting objects  in a suitable space of generalized functions.

The proof of the second part of the Theorem \ref{th:main} (which we give in Section \ref{sec:compconv}) is based on the following result of independent interest, as it provides a direct functional Gaussian approximation in contrast to e.g. \cite{ABH,Z}.

 \begin{theorem}\label{th:gaussian_appro}
For each $N\geq 1$, and any $A>0$ there exists a decomposition
\begin{equation*}
\log \zeta_{N,\rand}(1/2+ix)=\mathcal{G}_N(x)+\mathcal{E}_N(x),
\end{equation*}
where $\mathcal{G}_N$ is a Gaussian process on $[-A,A]$ which can be written in the following way: let $(W_k^{(j)})_{k\in \Z_+, j\in\lbrace 0,1\rbrace}$ be i.i.d. standard Gaussians, then
\begin{equation*}
\mathcal{G}_N(x)=\sum_{k=1}^N \frac{1}{\sqrt{2 p_k}}p_k^{-ix}(W_k^{(1)}+iW_k^{(2)}).
\end{equation*} 
The function $\mathcal{E}_N$ is smooth and as $N\to\infty$, it a.s. converges uniformly to a random smooth function $\mathcal{E}\in C^\infty[-A,A]$. Moreover, the maximal error and its derivatives in this decomposition have finite exponential moments:
\begin{equation*}
\E\exp\left(\lambda\sup_{N\geq 1}\|\mathcal{E}_N(x)\|_{C^\ell[-A,A]}\right)<\infty \quad \mathrm{for \ all\ }\lambda>0\quad \textrm{and}\quad \ell\geq 0.
\end{equation*}
\end{theorem}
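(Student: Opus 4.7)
The plan is to Taylor-expand the random Euler product, peel off the contribution of the first ($m=1$) power of each prime, and show that (i) the higher-order ($m\geq 2$) remainder is a convergent sum of independent random smooth functions, and (ii) the $m=1$ part can be coupled to the Gaussian series $\mathcal{G}_N$ via a strong-approximation argument whose error is tame under Abel summation.

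Concretely, I would write
$$
\log \zeta_{N,\mathrm{rand}}(1/2+ix) \;=\; S_N(x) + H_N(x),
$$
with $S_N(x)=\sum_{k=1}^N p_k^{-1/2}e^{-ix\log p_k}e^{2\pi i\theta_k}$ and $H_N(x)=\sum_{k=1}^N\sum_{m\geq 2}\frac{1}{m}p_k^{-m/2}e^{-imx\log p_k}e^{2\pi im\theta_k}$, so that $\mathcal{E}_N=(S_N-\mathcal{G}_N)+H_N$. For $H_N$, each fixed-$m$ slice is a sum of independent, uniformly bounded, mean-zero random functions whose $\ell$th $x$-derivatives have variance bounded by $\sum_k(\log p_k)^{2\ell}p_k^{-m}<\infty$ as soon as $m\geq 2$. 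Kolmogorov's theorem for sums of independent random variables, combined with a Kolmogorov--Chentsov / Sobolev embedding argument applied to increments in $x$, gives a.s.\ $C^\ell([-A,A])$-convergence, and the $m$-summation is dominated by the geometric tail $\sum_{m\geq 2}m^{-1}\sum_k p_k^{-m/2}$. Bounded-differences (Hoeffding/Azuma) applied to the Fourier modes yields sub-Gaussian tails for $\sup_N\|H_N\|_{C^\ell}$, and hence finite exponential moments of every order.

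The heart of the proof is to control $S_N-\mathcal{G}_N=\sum_{k=1}^N p_k^{-1/2}e^{-ix\log p_k}(X_k-Z_k)$, where $X_k=e^{2\pi i\theta_k}$ and $Z_k=(W_k^{(1)}+iW_k^{(2)})/\sqrt2$. A naive per-prime coupling (polar/Box--Muller) makes $\E|X_k-Z_k|^2$ a strictly positive constant ($\geq 2-\sqrt\pi$), so the partial-sum variance $\sum_k p_k^{-1}\sim\log\log N$ diverges; one \emph{cannot} couple prime-by-prime and hope for an $L^2$-summable error. The way around this is a cross-prime coupling: invoke a multidimensional KMT / Einmahl--Zaitsev strong approximation theorem for i.i.d.\ bounded random vectors in $\R^2$ (applied to the i.i.d.\ $(\cos 2\pi\theta_k,\sin 2\pi\theta_k)$, which has the same covariance $\tfrac12 I$ as the real/imaginary parts of $Z_k$) to produce a coupling on a common probability space such that
$$
T_N:=\sum_{k=1}^N(X_k-Z_k)\qquad\text{satisfies}\qquad \sup_{N\geq 1}\frac{|T_N|}{\log(N+1)}<\infty\ \text{a.s.,}
$$
with the supremum having suitable exponential moments. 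Abel summation with $a_k(x):=p_k^{-1/2}e^{-ix\log p_k}$ then gives
$$
\sum_{k=1}^N a_k(x)(X_k-Z_k)=a_N(x)T_N+\sum_{k=1}^{N-1}T_k\bigl(a_k(x)-a_{k+1}(x)\bigr),
$$
and the uniform estimate $\sup_{|x|\leq A}|\partial_x^\ell(a_k(x)-a_{k+1}(x))|\lesssim_{A,\ell} k^{-3/2}(\log k)^{\ell+1/2}$, obtained from the decomposition $|a_k-a_{k+1}|\leq|p_k^{-1/2}-p_{k+1}^{-1/2}|+p_{k+1}^{-1/2}|x|(\log p_{k+1}-\log p_k)$ together with PNT ($p_k\sim k\log k$) and the telescoping/integral bound $\sum_k p_{k+1}^{-1/2}(\log p_{k+1}-\log p_k)\log k\lesssim\int t^{-3/2}\log t\,dt<\infty$, yields an absolutely convergent tail in $C^\ell([-A,A])$ uniformly in $x$. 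This proves a.s.\ uniform convergence of $S_N-\mathcal{G}_N$ in every $C^\ell$ and transfers the exponential-moment bound on $\sup_N|T_N|/\log(N+1)$ to $\sup_N\|S_N-\mathcal{G}_N\|_{C^\ell([-A,A])}$; combining with the $H_N$ estimate gives the claimed moments of $\sup_N\|\mathcal{E}_N\|_{C^\ell}$.

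The main technical obstacle is the strong-approximation step itself: one must select the correct version of the multivariate KMT theorem that achieves the near-sharp logarithmic rate for bounded two-dimensional i.i.d.\ vectors and simultaneously produces a coupling in which $(Z_k)$ remains a genuine i.i.d.\ sequence of standard complex Gaussians with the normalization demanded in the statement, so that $\mathcal{G}_N$ has exactly the prescribed law. Once that is in place, the Abel-summation and prime-number-theorem bookkeeping is routine.
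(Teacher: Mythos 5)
Your route is genuinely different from the paper's. Both proofs begin the same way (peel off the $m\geq 2$ terms, which form an absolutely nice random smooth function, and then confront the $m=1$ part $S_N$), and your observation that a prime-by-prime coupling of $e^{2\pi i\theta_k}$ with a complex Gaussian must fail because $\sum_k p_k^{-1}\E|X_k-Z_k|^2\sim\log\log N$ is exactly the obstruction the authors face. Where you diverge is the remedy: you propose a single global multivariate KMT/Einmahl--Zaitsev strong approximation for the running partial sums of the bounded $\R^2$-valued i.i.d.\ vectors $(\cos 2\pi\theta_k,\sin 2\pi\theta_k)$, followed by Abel summation against $a_k(x)=p_k^{-1/2}e^{-ix\log p_k}$. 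The paper instead partitions the primes into blocks $\{p_{r_m},\ldots,p_{r_{m+1}-1}\}$ with slowly growing block lengths, ``freezes'' $p_k^{-ix}\approx p_{r_m}^{-ix}$ within each block, and applies an \emph{independent} single-Gaussian coupling (Proposition B.2, a Berry--Esseen-type quantitative CLT with explicit exponential moment estimates) to each block separately. Your Abel-summation bookkeeping is correct: with $|T_N|=O(\log N)$, the boundary term $a_N T_N\to 0$ and the telescoped sum $\sum_k T_k\,\partial_x^\ell(a_k-a_{k+1})$ is absolutely convergent uniformly on $[-A,A]$, since $\sum_k (\log k)\,p_k^{-3/2}(\log p_k)^{\ell}(p_{k+1}-p_k)\lesssim\int t^{-3/2}(\log t)^{\ell+1}\,dt<\infty$ (with the harmless use of Bertrand's postulate to control $p_{k+1}/p_k$).

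The genuine gap is in the final, quantitative part of the statement: the theorem asserts $\E\exp\bigl(\lambda\sup_{N\geq 1}\|\mathcal{E}_N\|_{C^\ell[-A,A]}\bigr)<\infty$ for \emph{every} $\lambda>0$, and your argument cannot deliver that. The KMT/Einmahl--Zaitsev coupling error has only exponential tails: the statement is $P(\max_{k\le n}|T_k|>C\log n+x)\le K e^{-\lambda_0 x}$ for some fixed $\lambda_0>0$ depending on the law, and this is essentially sharp even for bounded summands (already a single Tusn\'ady-type quantile coupling contributes a chi-square-like tail, and the dyadic construction does not improve the tail exponent). Consequently $\sup_N |T_N|/\log(N+1)$ has finite exponential moments only up to the threshold $\lambda_0$, and after Abel summation the same limitation passes to $\sup_N\|S_N-\mathcal{G}_N\|_{C^\ell}$. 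The paper avoids this precisely because the block couplings are \emph{independent} and each block error $V_m$ satisfies a sub-Gaussian bound $\E\exp(a_6|V_m|^2)\leq 1+a_7 n_m^{-a_8}$ (Proposition B.2, bounded case); the resulting product $\prod_m\bigl(1+a_2 e^{a_3\lambda^2 b_m^2(1+\log r_m)^{2\ell}}(r_{m+1}-r_m)^{-a_4}\bigr)$ is finite for every $\lambda$ since $b_m(1+\log r_m)^\ell$ is bounded and $\sum_m(r_{m+1}-r_m)^{-a_4}<\infty$. Note this all-$\lambda$ bound is not cosmetic: it is invoked later with an arbitrary H\"older exponent (e.g.\ in the proof of Theorem 1.7, where $p(\widetilde p/p)'\beta$ can be made large). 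To repair your argument you would essentially have to abandon the single running KMT coupling in favor of re-initialized couplings on independent blocks, which brings you back to the paper's scheme.
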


\noindent The proof of this is the content of Section \ref{sec:appro}, which relies on slightly technical Gaussian approximation arguments combined with classical estimates for the distribution of prime numbers. This result then allows us to use martingale techniques to establish Theorem \ref{th:main} (ii).

After this, we move onto Section \ref{sec:meso}, where we prove our mesoscopic results for zeta, namely Theorem \ref{th:meso1} and Theorem \ref{th:meso2}. The proof of these follow essentially from a Gaussian approximation result of a similar flavor as Theorem \ref{th:gaussian_appro}, but stronger. The proof of this combines slightly lengthy arguments from basic number theory, harmonic analysis, and probability theory.

As we discuss later, it is natural to expect that the zeta function would give rise also to real multiplicative chaos measures in the sense that $|\mu_T(x)|^\beta$ normalized by its mean would converge to non-trivial random measure on $\R$, and these measures could be used to analyze some of the fractal structure of $\mu_T$. Moreover, in view of Theorem \ref{th:main}, it is natural to expect that these measures could be constructed from $\zeta_{N,\mathrm{rand}}$ as well. As the existence of the measures constructed from $\zeta_{N,\mathrm{rand}}$ can be deduced from Theorem \ref{th:gaussian_appro} and slight modifications of it, we prove results concerning them in this article as well. The precise results are the following:

\begin{theorem}\label{th:gmcconv}
As $N\to\infty$, the random measure 

\begin{equation*}
\frac{|\zeta_{N,\rand}(1/2+ix)|^{\beta}}{\E |\zeta_{N,\rand}(1/2+ix)|^\beta}dx, \quad x\in [0,1],
\end{equation*}
converges almost surely  with respect to the weak topology of measures to a random measure $\eta_\beta(dx)$ on $[0,1]$. For $\beta\geq \beta_c=2$, this limit is almost surely the zero measure. For $\beta<\beta_c$, it can be written as 
\begin{equation*}
\eta_\beta(dx)=f_\beta(x)\lambda_\beta(dx),
\end{equation*}
where $\lambda_\beta(dx)$ is  a Gaussian multiplicative chaos measure and $f_\beta$ is a random continuous positive function such that for any $\ell \geq 1$ the norms  $\| f_{\beta}\|_{C^\ell[0,1]}$ and $\| 1/f_{\beta}\|_{C^\ell[0,1]}$ possess moments of all orders. Moreover, for $\beta<\beta_c$ and $p<4/\beta^2$,

\begin{equation*}
\E\eta_\beta[0,1]^p<\infty.
\end{equation*}
\end{theorem}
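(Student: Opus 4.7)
The plan is to reduce the theorem to classical Gaussian multiplicative chaos theory via the decomposition of Theorem \ref{th:gaussian_appro}. Taking real parts of $\log\zeta_{N,\rand}(1/2+ix)=\mathcal{G}_N(x)+\mathcal{E}_N(x)$ I factor
\begin{equation*}
\frac{|\zeta_{N,\rand}(1/2+ix)|^{\beta}}{\E|\zeta_{N,\rand}(1/2+ix)|^{\beta}}\,dx = F_{\beta,N}(x)\,M_{\beta,N}(dx),
\end{equation*}
where $M_{\beta,N}(dx):=\exp(\beta\Re\mathcal{G}_N(x)-\tfrac{\beta^2}{2}\sigma_N^2)\,dx$ is the Wick-normalized GMC approximation and $F_{\beta,N}(x):=\exp(\beta\Re\mathcal{E}_N(x))\exp(\tfrac{\beta^2}{2}\sigma_N^2)/\E|\zeta_{N,\rand}(1/2+ix)|^{\beta}$. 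Two elementary simplifications make this split clean. First, the explicit formula for $\mathcal{G}_N$ gives $\sigma_N^2:=\E(\Re\mathcal{G}_N(x))^2=\tfrac{1}{2}\sum_{k\leq N}p_k^{-1}$, independent of $x$. Second, the independence of the $\theta_k$'s in the formula for $\zeta_{N,\rand}$ gives $\E|\zeta_{N,\rand}(1/2+ix)|^{\beta}=\prod_{k\leq N}c_k(\beta)$ with $c_k(\beta):=\E_\theta|1-p_k^{-1/2}e^{2\pi i\theta}|^{-\beta}=1+\beta^2/(4p_k)+O(p_k^{-2})$, so the deterministic ratio $\exp(\tfrac{\beta^2}{2}\sigma_N^2)/\prod_{k\leq N}c_k(\beta)$ converges to a positive constant $C_\beta$ as $N\to\infty$.

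Next, I identify $\Re\mathcal{G}_N$ as a log-correlated Gaussian approximation: its covariance is
\begin{equation*}
\E\,\Re\mathcal{G}_N(x)\Re\mathcal{G}_N(y)=\sum_{k\leq N}\frac{\cos((x-y)\log p_k)}{2p_k},
\end{equation*}
which by the Mertens-type identity $\sum_{p\leq y}\cos(t\log p)/p=\log(1/|t|)+O(1)$ (uniformly on compact sets, derived from the behaviour of $\Re\log\zeta(1+it)$ near $t=0$) converges to a kernel of the form $\tfrac{1}{2}\log|x-y|^{-1}+g(x,y)$ with $g$ continuous. Equivalently, $\sqrt{2}\,\Re\mathcal{G}_N$ is an approximation to a log-correlated field with the standard unit-constant logarithmic singularity, to which Kahane's theory in the form of \cite{Be,RV} applies. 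This yields almost sure weak convergence $M_{\beta,N}\to\lambda_\beta$, with $\lambda_\beta$ a non-trivial subcritical GMC measure precisely when the effective parameter $\beta/\sqrt{2}$ is strictly subcritical, i.e.\ $\beta<\beta_c=2$, and the zero measure otherwise. The classical Kahane moment estimate then gives $\E\lambda_\beta[0,1]^p<\infty$ whenever $p<2/(\beta/\sqrt{2})^2=4/\beta^2$.

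For the smooth factor, Theorem \ref{th:gaussian_appro} supplies almost sure convergence $\mathcal{E}_N\to\mathcal{E}$ in $C^\ell[0,1]$ for every $\ell$, with uniform exponential moments on $\|\mathcal{E}_N\|_{C^\ell}$. Combined with the convergence of the deterministic prefactor, this yields $F_{\beta,N}\to f_\beta:=C_\beta\exp(\beta\Re\mathcal{E})$ in $C^\ell[0,1]$ almost surely; the function $f_\beta$ is strictly positive and smooth, and both $\|f_\beta\|_{C^\ell}$ and $\|1/f_\beta\|_{C^\ell}$ inherit moments of all orders from the exponential integrability of $\mathcal{E}$. Testing against a continuous $\phi$ on $[0,1]$, the triangle-inequality split
\begin{equation*}
\int\phi F_{\beta,N}\,dM_{\beta,N}=\int\phi(F_{\beta,N}-f_\beta)\,dM_{\beta,N}+\int\phi f_\beta\,dM_{\beta,N},
\end{equation*}
together with the almost sure boundedness of $M_{\beta,N}[0,1]$ and the weak convergence of $M_{\beta,N}$, delivers the almost sure convergence of $\eta_{\beta,N}$ to $f_\beta\lambda_\beta$ in the subcritical regime (and to zero when $\beta\geq\beta_c$, since $F_{\beta,N}$ stays uniformly bounded in $C^0$ with high probability while $M_{\beta,N}\to 0$ in the weak topology). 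Finally, the moment bound $\E\eta_\beta[0,1]^p<\infty$ for $p<4/\beta^2$ follows from Hölder: choose $q'>1$ with $pq'<4/\beta^2$ and $q=q'/(q'-1)$, so that
\begin{equation*}
\E\eta_\beta[0,1]^p\leq \bigl(\E\|f_\beta\|_{C^0[0,1]}^{pq}\bigr)^{1/q}\bigl(\E\lambda_\beta[0,1]^{pq'}\bigr)^{1/q'}<\infty.
\end{equation*}

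The principal technical obstacle is the quantitative identification of the log-correlated structure of the covariance of $\Re\mathcal{G}_N$ uniformly on compact sets, together with the verification that this specific approximating sequence meets the hypotheses of a modern GMC convergence theorem (which may require an elementary universality argument). The potential correlation between $\mathcal{G}_N$ and $\mathcal{E}_N$, which could a priori complicate the normalization, is bypassed cleanly because both $\sigma_N^2$ and the normalizing expectation $\E|\zeta_{N,\rand}(1/2+ix)|^{\beta}$ are $x$-independent, so the normalizing factors pull out deterministically from the GMC martingale.
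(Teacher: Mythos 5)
Your proposal is correct and rests on the same pillars as the paper's own proof (the Gaussian coupling of Theorem~\ref{th:gaussian_appro}, the covariance asymptotics that here appear as Lemma~\ref{le:covariance_approximation}, the comparison of normalizations in Lemma~\ref{le:exp}, and Kahane's GMC theory stated in Theorem~\ref{th:gmc}). The one genuine deviation is in how you obtain almost sure convergence: the paper observes that $\int_0^1 |\zeta_{N,\rand}|^\beta/\E|\zeta_{N,\rand}|^\beta \, f\,dx$ is a positive martingale in $N$ and establishes $L^p$-boundedness for some $p>1$ via H\"older (splitting off the $e^{\beta\|\mathcal{E}_N\|_\infty}$ factor and the $L^{\tilde p}$-bounded Gaussian martingale $\lambda_{\beta,N}[0,1]$), after which martingale convergence delivers the a.s. and $L^p$ limit. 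You instead make the factorization $\eta_{\beta,N}=F_{\beta,N}M_{\beta,N}$ explicit and combine a.s. weak convergence of $M_{\beta,N}$ with a.s. $C^\ell$-convergence of $F_{\beta,N}$; this is a cleaner route to identifying the limit as $f_\beta\lambda_\beta$ and avoids invoking the martingale structure of the target measure directly, at the cost of not giving $L^p$-convergence for free. Two minor points: your stated error $O(p_k^{-2})$ for $c_k(\beta)$ should be $O(p_k^{-3/2})$, which is still summable so nothing breaks; and in the supercritical case the relevant statement is a.s. (not just high-probability) boundedness of $\sup_N\|F_{\beta,N}\|_{C^0}$, which you do have from the a.s.\ $C^0$ convergence, together with $M_{\beta,N}[0,1]\to 0$ a.s.
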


The previous result is formulated on a finite interval $[0,1]$ for the sake of simplicity.
From the theory of Gaussian multiplicative chaos and log-correlated fields, it's known that even for $\beta\geq \beta_c$, there's a way of constructing non-trivial random limiting measures. This involves a more complicated normalization procedure. The "critical measure" corresponding to $\beta=\beta_c$, that was already discussed in Section \ref{subs:pre}, is particularly important as it plays a significant role in the study of the maximum of the field, and it essentially determines the distribution of the limiting (atomic) measures for $\beta>\beta_c$. We also prove a result concerning this critical measure.
\begin{theorem}\label{th:critical}
As $N\to\infty$, the random measure 
\begin{equation*}
\sqrt{\log\log N}\frac{|\zeta_{N,\rand}(1/2+ix)|^{\beta_c}}{\E |\zeta_{N,\rand}(1/2+ix)|^{\beta_c}}dx
\end{equation*}
\noindent converges in distribution $($with respect to the topology of weak convergence$)$ to a non-trivial random measure $\xi_{\beta_c}(dx)$ which can be written as $f_{\beta_c}(x)\lambda_{\beta_c}(dx)$, where again $f_{\beta_c}$ is a positive random continuous function such that the norms  $\| f_{\beta}\|_{C^\ell[0,1]}$ and $\| 1/f_{\beta}\|_{C^\ell[0,1]}$ possess moments of all orders,  and $\lambda_{\beta_c}(dx)$ is a critical Gaussian multiplicative chaos measure. 
\end{theorem}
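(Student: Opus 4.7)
The plan is to deduce Theorem \ref{th:critical} by combining the Gaussian approximation of Theorem \ref{th:gaussian_appro} with the Seneta--Heyde convergence theorem for critical Gaussian multiplicative chaos applied to the explicit Gaussian proxy, and then showing that the discrepancy between $\log\zeta_{N,\rand}$ and its Gaussian approximation factors out as a deterministic normalization times a smooth random function with good moment estimates.

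First I would write, using Theorem \ref{th:gaussian_appro},
\begin{equation*}
|\zeta_{N,\rand}(1/2+ix)|^{\beta_c}\;=\; e^{\beta_c\Re\mathcal{G}_N(x)}\,e^{\beta_c\Re\mathcal{E}_N(x)},
\end{equation*}
and note that by stationarity in $x$ (shifting $x$ just relabels the $\theta_k$'s uniformly), the denominator $\E|\zeta_{N,\rand}(1/2+ix)|^{\beta_c}$ is a constant $m_N$ independent of $x$. Setting $M_N:=\E e^{\beta_c\Re\mathcal{G}_N(0)}$, I would define
\begin{equation*}
f_N(x)\;:=\;\frac{M_N}{m_N}\,e^{\beta_c\Re\mathcal{E}_N(x)}.
\end{equation*}
The uniform exponential moment bounds for $\mathcal{E}_N$ and its derivatives in Theorem \ref{th:gaussian_appro}, together with the a.s.\ uniform convergence $\mathcal{E}_N\to\mathcal{E}$ on $[0,1]$, imply that $e^{\beta_c\Re\mathcal{E}_N}$ converges a.s.\ in $C^\ell[0,1]$ to a strictly positive random function with finite moments of all orders for it and its reciprocal in every $C^\ell$-norm. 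A dominated convergence argument then shows that $M_N/m_N$ converges to a positive deterministic constant, so $f_N\to f_{\beta_c}$ a.s.\ in $C^\ell[0,1]$ with $f_{\beta_c}$ having the asserted moment properties.

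Second, I would focus on the Gaussian measure
\begin{equation*}
\mu^{\mathcal{G}}_N(dx)\;:=\;\sqrt{\log\log N}\,\frac{e^{\beta_c\Re\mathcal{G}_N(x)}}{M_N}\,dx
\end{equation*}
on $[0,1]$. By Mertens' theorem the covariance of $\Re\mathcal{G}_N$ satisfies
\begin{equation*}
\Cov(\Re\mathcal{G}_N(x),\Re\mathcal{G}_N(y))\;=\;\tfrac12\sum_{k=1}^N\frac{\cos((x-y)\log p_k)}{p_k}\;=\;\tfrac12\log\frac{1}{|x-y|}+O(1),
\end{equation*}
so $\Re\mathcal{G}_N$ is an approximation of a log-correlated Gaussian field with the correct critical normalization $\beta_c=2$ (the variance is $\tfrac12\log\log N+O(1)$ and $M_N\asymp\log N$). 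The claim that $\mu^{\mathcal{G}}_N$ converges in law to a non-trivial critical multiplicative chaos measure $\lambda_{\beta_c}(dx)$ on $[0,1]$ would then follow from the Seneta--Heyde construction of critical GMC as in \cite{DRSV1,DRSV2} combined with the universality machinery of \cite{JS}, which I would invoke to compare the prime-sum approximation $\Re\mathcal{G}_N$ to a standard (e.g.\ martingale) approximation of the corresponding log-correlated field whose Seneta--Heyde convergence has already been established.

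Finally, I would combine the two pieces: since $f_N\to f_{\beta_c}$ uniformly on $[0,1]$ (almost surely, and a fortiori in probability), and $\mu^{\mathcal{G}}_N\stackrel{d}{\to}\lambda_{\beta_c}$ in the weak topology of measures, a standard Slutsky-type argument (using the Skorohod representation on the Polish space of finite measures on $[0,1]$ endowed with weak convergence) yields
\begin{equation*}
\sqrt{\log\log N}\,\frac{|\zeta_{N,\rand}(1/2+ix)|^{\beta_c}}{m_N}\,dx\;=\;f_N(x)\,\mu^{\mathcal{G}}_N(dx)\;\stackrel{d}{\longrightarrow}\;f_{\beta_c}(x)\,\lambda_{\beta_c}(dx),
\end{equation*}
which is the required statement. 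The main obstacle, and the step I would spend the most care on, is the Seneta--Heyde convergence of $\mu^{\mathcal{G}}_N$ to the critical GMC: the field $\Re\mathcal{G}_N$ is not an orthogonal martingale increment approximation of a log-correlated field, so invoking DRSV directly is not possible and one needs either a comparison/interpolation in the spirit of \cite{JS} or to adapt the critical arguments (modified second moment method together with the derivative martingale bound) directly to the prime sum decomposition, ensuring that the off-diagonal smooth part of the covariance does not interfere with the delicate barrier estimates needed at criticality.
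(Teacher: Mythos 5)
Your overall strategy is the right one, and indeed it mirrors the paper's: split $|\zeta_{N,\rand}|^{\beta_c}$ into $e^{\beta_c\Re\mathcal{E}_N}$ times the Gaussian piece, argue that the former converges nicely in $C^\ell$, and reduce the convergence of the Gaussian piece to known critical Gaussian multiplicative chaos results via a covariance-comparison theorem of Junnila--Saksman type. You also correctly anticipate that \cite{DRSV1,DRSV2} cannot be invoked directly on $\Re\mathcal{G}_N$ and that one needs an intermediate comparison; the paper does this through a chain of modified approximations ($\widetilde{\mathcal G}_N\to\mathcal G_{N,1}\to\cdots\to\mathcal G_{N,4}$, a stochastic integral with kernel $\sqrt{2\widehat C}$) that matches the cone construction of \cite{BM,DRSV1} covariance-by-covariance uniformly, and then applies \cite[Theorem 1.1]{JS}.

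The genuine gap is in the last step, where you write $f_N(x)\,\mu^{\mathcal G}_N(dx)\stackrel{d}{\to}f_{\beta_c}(x)\,\lambda_{\beta_c}(dx)$ by ``a standard Slutsky-type argument using Skorohod representation.'' This does not close. In the critical case $\mu^{\mathcal G}_N$ converges \emph{only} in distribution, not a.s.\ or in probability (unlike the subcritical martingale case), and $f_{\beta_c}$ is a nontrivial \emph{random} function, so Slutsky's theorem is not applicable. Moreover $f_N$ and $\mu^{\mathcal G}_N$ are built from the same underlying $\theta_k$'s and coupled Gaussians, so they are correlated, and Skorohod representation of $\mu^{\mathcal G}_N$ alone tells you nothing about the joint law of $(f_N,\mu^{\mathcal G}_N)$. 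Joint tightness gives subsequential limits $(f_{\beta_c},\nu)$ with $\nu\stackrel{d}{=}\lambda_{\beta_c}$, but the joint law is a priori subsequence-dependent, so there is no limit theorem without a further argument identifying it. What makes this work in the paper is the block structure coming from the choice of $(r_m)$: for fixed $M$, the residual chaos $\lambda_{\beta_c,M,N}$ (built from blocks $>M$) is \emph{independent} of $f_M$ and of $\Re\mathcal G_{M,4}$ (built from blocks $\leq M$), so one can condition on the finite-$M$ data, apply the in-distribution convergence of $\lambda_{\beta_c,M,N}$ by itself via Fubini, and then let $M\to\infty$ using $\sup_{N\geq M}\|f_M-f_N\|_{L^\infty[0,1]}\to 0$ in probability. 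You would need to introduce this two-parameter decomposition and exploit the independence between scales explicitly; without it the conclusion does not follow from the marginal limits alone.
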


While we expect that Theorem \ref{th:gaussian_appro} could be used to describe how to construct non-trivial limiting measures for $\beta>\beta_c$ and to describe the maximum of $|\zeta_{N,\rand}|$ as in \cite{ABH}, this would require significant analysis of the Gaussian field $\mathcal{G}_N$ and we choose to not go into this here. The proofs of these two results are given in Section \ref{se:subcritical} and Section \ref{sec:critical}. Finally in Appendix \ref{app:appendixa}, we prove the relevant moment estimates for the zeta function, in Appendix \ref{app:appendixb}, we prove a Gaussian approximation result, in Appendix \ref{app:rmt}, we prove an analogue of Theorem \ref{th:main} for random unitary matrices, and in Appendix \ref{app:mesormt}, we prove Theorem \ref{th:mesormt}. 

To conclude this introduction, we offer some further discussion on the background of the problems we study and state some natural conjectures raised by our main results.

\subsection{Further comments, conjectures, and questions} There are several open question that are raised by or are related to  the present paper, and we mention here a few of them. First of all, one should study the properties of the relevant complex Gaussian chaos distribution. E.g. one may ask which finite moments does it possess? What kind of universality does it possess, i.e. under which conditions do different approximations for the complex Gaussian field lead to the same chaos? What are the a.s. exact smoothness properties of the realizations as generalized functions? 
How quickly can $\delta_T\to 0^+$ in the mesoscopic scaling result? Finally, to what extent do this type of limit theorems hold for more general functions such as L-functions.

Further, and likely far more difficult questions, lie in studying fractal properties of the (statistical behavior) of the zeta function. A key tool here would be establishing a connection between $|\mu_T(x)|^\beta$ and real multiplicative chaos. More precisely, one would expect that the measures 

$$
\frac{|\mu_T(x)|^\beta}{\E|\mu_T(x)|^\beta}dx
$$

\noindent would converge to the measures of Theorem \ref{th:gmcconv} for $0<\beta<2$, and a similar statement for $\beta=2$ concerning the measure of Theorem \ref{th:critical}. Such results are also likely to be of key importance in settling some of the conjectures of Fyodorov and Keating. Such results would of course not be very surprising in view of Theorem \ref{th:main}, but there are still significant obstacles. First of all, in proving such results, one typically needs good asymptotics for the normalizing constant $a(T):=\E|\mu_T(x)|^\beta$. Note that asymptotics for $a(T)$ would also be of obvious interest for understanding the typical size of the zeta function on an interval of fixed size. A guess for the asymptotics of this quantity can be obtained from Selberg's central limit theorem (see \cite{RS2,Selberg}) -- namely the fact that

\begin{equation*}
(\frac{1}{2}\log\log(T))^{-1/2}\log\big|\zeta (1/2+iT+i\omega T) \big|\overset{d}\longrightarrow N(0,1)
\end{equation*}
as $T\to\infty.$  In view of Selberg's this, one would expect that for some $c(\beta)>0$
\begin{equation}\label{eq:as}
a(T)\sim c(\beta)e^{\frac{1}{4}\beta^2 \log\log T},
\end{equation}
which is exactly the long-standing prediction for the Riemann zeta function.  Unfortunately, \eqref{eq:as} is known unconditionally only for $\beta=2,4$ due to Hardy-Littlewood and Ingham. Also a lower bound of the desired type is known unconditionally \cite{RS1}. A conditional  (assuming the RH) upper bound of the same type was given in \cite{R1,R2,H-B} for $\beta\leq4$, and for $\beta \geq 4$ in \cite{H}. 
In addition, some fairly sharp conditional estimates for the shifted moments such as the two-point function are given in \cite{C}, but bounds of the correct order in $\log T$ are still unknown. The asymptotics of $a(T)$ are actually just the tip of the iceberg. To prove the type of convergence results one would naturally expect for the relevant measures, one would expect to need precise asymptotics for quantities such as $\E \mu_T(x)^\beta \mu_T(y)^\beta$. There are in fact very precise conjectures concerning various kinds of asymptotics of correlation functions of the zeta function -- see e.g. \cite{CK,CFKRS}, but it seems that with current knowledge, such results still remain out of reach. Nevertheless, we formulate as precise conjectures what we expect the connection between the statistical behavior of the zeta function and real multiplicative chaos to be.

\begin{conjecture}\label{con:2} For $\beta\in (0,2)$  the random densities
$$
(\log T)^{-\frac{1}{4}\beta^2}|\zeta (1/2+ix+iT)|^\beta,  \quad  x\in [0,1]
$$ 
converge in distribution to  a constant multiple of the multiplicative chaos measure described in Theorem \ref{th:gmcconv}.
\end{conjecture}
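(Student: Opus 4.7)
The plan is to mirror the standard multiplicative-chaos construction for Gaussian log-correlated fields, combining a Selberg-style Euler-product truncation with the convergence already established for the randomised model $\zeta_{N,\rand}$ in Theorem \ref{th:gmcconv}. Set $\nu_{T,\beta}(dx) := (\log T)^{-\beta^2/4}|\mu_T(x)|^\beta\, dx$ on $[0,1]$, with $\mu_T$ as in \eqref{eq:randomshift}. Choose a threshold $X = X(T)\to\infty$ slowly (e.g.\ $X = (\log T)^A$ for large $A$), compare $\nu_{T,\beta}$ with the measure built from the truncated Euler product $\mu_{T,X}$, and then invoke Theorem \ref{th:gmcconv} to take $X\to\infty$. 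A crucial input is a first-moment asymptotic of the form \eqref{eq:as}, conjectural for $\beta\notin\{2,4\}$, which matches the deterministic normalisation $(\log T)^{\beta^2/4}$ to the chaos-measure scaling.

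The first main step would be a Selberg-type mollification: with probability tending to one in $\omega$,
\begin{equation*}
\log|\mu_T(x)| = \Re\sum_{p\leq X}\sum_{k\geq 1}\frac{p^{-k(1/2+ix+i\omega T)}}{k} + \mathcal{R}_{T,X}(x),
\end{equation*}
where the error $\mathcal{R}_{T,X}$ is smooth in $x$ with finite exponential moments uniformly on $[0,1]$. Exponentiating and applying the Bohr-style transference used in Section \ref{se:convergence} (replacing $(p^{-i\omega T})_{p\leq X}$ by i.i.d.\ unimodular phases as $T\to\infty$ for each fixed $X$), the analysis reduces to the measure $|\zeta_{X,\rand}(1/2+ix)|^\beta dx$. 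Sending $X\to\infty$ and using Theorem \ref{th:gmcconv} produces $\eta_\beta = f_\beta\lambda_\beta$; the random smooth factor should agree with the boundary factor $|g|^\beta$ from Theorem \ref{th:main} since both stem from boundary values of the same random analytic object on $\{\sigma>1/2\}$. Identifying the scalar via \eqref{eq:as} then pins down the constant multiple in the conjecture.

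The hard part is the second-moment control required to promote the one-parameter convergence above to genuine weak convergence of the random measures. The Kahane-Berestycki template asks for an estimate of the form
\begin{equation*}
\limsup_{T\to\infty}\frac{1}{(\log T)^{\beta^2/2}}\int_0^1\!\!\int_0^1\phi(x)\phi(y)\,\E\bigl[|\mu_T(x)|^\beta|\mu_T(y)|^\beta\bigr]\,dx\,dy < \infty,
\end{equation*}
together with matching lower bounds, driven by the two-point asymptotic
\begin{equation*}
\E\bigl[|\mu_T(x)|^\beta|\mu_T(y)|^\beta\bigr]\sim C(\beta)(\log T)^{\beta^2/2}|x-y|^{-\beta^2/2}\qquad \text{when }|x-y|\gtrsim (\log T)^{-1}.
\end{equation*}
This is precisely a Conrey-Farmer-Keating-Rubinstein-Snaith type shifted-moment prediction, established unconditionally only for $\beta\in\{2,4\}$; even the corresponding upper bound for general $\beta\in(0,2)$ is open under the Riemann Hypothesis, and this is the principal obstruction. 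Granting such asymptotics, the $L^2$-approach closes directly in the range $\beta^2<2$, and the full subcritical range $\beta\in(0,2)$ is then reachable via Berestycki's martingale-truncation argument combined with the universality results \cite{Shamov,JS}, which identify the limit with the multiplicative chaos measure of Theorem \ref{th:gmcconv} up to the constant multiple fixed by \eqref{eq:as}.
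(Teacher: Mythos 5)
This is Conjecture \ref{con:2} in the paper, not a theorem: the authors explicitly do \emph{not} prove it, and the surrounding text in the introduction explains why it is currently out of reach. Your write-up is therefore not a proof and cannot be compared against a proof in the paper, but it is worth checking whether your roadmap matches the obstruction the authors identify.

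It does. You correctly locate the two missing ingredients. First, the normalisation \eqref{eq:as} for $a(T)=\E|\mu_T(x)|^\beta$ is known unconditionally only for $\beta=2,4$ (Hardy--Littlewood, Ingham); for other $\beta$ it is a prediction, with one-sided bounds known unconditionally \cite{RS1} and the other side only under RH \cite{R1,R2,H-B,H}. Second, and more seriously, the tightness/convergence argument needs the shifted two-point asymptotic
\begin{equation*}
\E\bigl[|\mu_T(x)|^\beta|\mu_T(y)|^\beta\bigr]\sim C(\beta)(\log T)^{\beta^2/2}|x-y|^{-\beta^2/2},
\end{equation*}
which is of CFKRS type \cite{CFKRS,CK} and genuinely open for general real $\beta$, even conditionally. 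You are right that this is the principal obstruction; the paper says exactly this in the paragraph preceding Conjectures \ref{con:2}--\ref{con:4}. Your proposed Selberg-type mollification plus Bohr transference to $\zeta_{X,\rand}$, followed by Theorem \ref{th:gmcconv}, is also the route the authors gesture at, and your matching of the field's real-part covariance $\tfrac12\log(1/|x-y|)$ with the critical exponent $\beta_c=2$ is consistent with Lemma \ref{le:covariance_approximation}.

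Two caveats worth flagging. The Selberg mollification step, where one replaces $\log|\zeta(1/2+ix+i\omega T)|$ by a short Euler product on most of $[0,1]$ with controlled error, is itself delicate on the critical line and is not an off-the-shelf estimate in the uniform-in-$x$, high-moment form you would need; this is not quite the same as the off-critical-line comparison in Section \ref{se:convergence}, where $L^2$ estimates suffice. Also, even granting the second-moment asymptotic, the push from $\beta^2<2$ to the full range $\beta\in(0,2)$ via Berestycki-style truncation \cite{Be} and the uniqueness results \cite{Shamov,JS} requires genuinely non-Gaussian thick-points control of $\mu_T$, not merely of $\zeta_{N,\rand}$ where the Gaussian coupling of Theorem \ref{th:gaussian_appro} is available. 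So your "hard part" is in fact two independent hard parts. In short: a sound roadmap whose gaps coincide with the ones the paper itself names as the reason this is a conjecture.
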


\begin{conjecture}\label{con:3}  The previous conjecture holds for $\beta=\beta_c=2$ as soon as one adds the normalizing factor $(\log\log T)^{1/2}.$
\end{conjecture}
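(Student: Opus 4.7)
The plan is to combine Theorem \ref{th:critical} with a comparison argument that transports the convergence from the randomized truncated product $\zeta_{N,\rand}$ to the actual shifted zeta function $\zeta(1/2+ix+i\omega T)$. I would pick a truncation parameter $N=N(T)\to\infty$ with $\log N(T)\sim \log T$ (so also $\log\log N(T)\sim \log\log T$), for example $N(T)$ equal to the number of primes up to $T^{c}$ for some $c\in(0,1)$. With this choice the critical normalizations line up, and Theorem \ref{th:critical} together with $\E|\zeta_{N,\rand}(1/2+ix)|^2\asymp \log N$ yield
\begin{equation*}
(\log\log N)^{1/2}(\log N)^{-1}|\zeta_{N,\rand}(1/2+ix)|^{2}dx\stackrel{d}{\longrightarrow} c_0\,\xi_{\beta_c}(dx)
\end{equation*}
as random measures on $[0,1]$, with $c_0$ the constant coming from the second moment asymptotics. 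It then remains to show that, tested against a smooth function on $[0,1]$, the measure $(\log\log T)^{1/2}(\log T)^{-1}|\zeta(1/2+ix+i\omega T)|^2 dx$ is asymptotically close to the same quantity built out of $\zeta_{N(T),\rand}$.

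The comparison would be set up in three substeps. First, I would write
\begin{equation*}
\log|\zeta(1/2+ix+i\omega T)|^{2}=2\Re\log\zeta_{N(T)}(1/2+ix+i\omega T)+R_{T}(x),
\end{equation*}
and use a Soundararajan-type pointwise bound (possibly in Harper's refined form) to control $R_{T}$ uniformly for $x\in[0,1]$; the contribution from primes $p>N(T)$ should converge to a smooth multiplicative factor analogous to the factor $g$ of Theorem \ref{th:main}, which gets absorbed into the $f_{\beta_c}$ of Theorem \ref{th:critical}. Second, I would invoke a multi-variable version of Theorem \ref{th:twopoint} — joint equidistribution modulo $1$ of $\{\omega T\log p_{k}/(2\pi)\}_{k\le N(T)}$ — to replace the arithmetic phases $(p_{k}^{-i\omega T})_{k\le N(T)}$, in Wasserstein distance on random measures, by independent uniform phases $(e^{2\pi i\theta_{k}})_{k\le N(T)}$, so that the truncated part becomes exactly $\zeta_{N(T),\rand}$. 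Third, one must verify that neither $R_{T}$ nor the phase-replacement error destroys the fragile critical normalization.

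The main obstacle is the sharpness of the moment and correlation estimates required at criticality. Subcritical convergence ($\beta<2$) is governed by $L^{p}$-theory with $p>1$ on the approximating field and is comparatively robust; at $\beta_c=2$ the limit is constructed through a derivative martingale and the normalization is $(\log\log T)^{1/2}$ times smaller than the naive one, so an additive error of size $\log\log T$ in $\log|\zeta|^2$ would wash the limit out. In particular one would need unconditional asymptotics for the shifted fourth moment
\begin{equation*}
\E|\zeta(1/2+ix+i\omega T)|^{2}|\zeta(1/2+iy+i\omega T)|^{2},
\end{equation*}
with the precise logarithmic singularity on the diagonal predicted by the CFKRS recipe and uniformly for $x,y\in[0,1]$. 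Such asymptotics are currently accessible only under the Riemann hypothesis and would further require a non-trivial extension of Chandee's bounds in \cite{C}. A secondary technical difficulty is that the Soundararajan pointwise bound carries a logarithmic loss that must be absorbed on top of the $(\log\log T)^{1/2}$ factor; this likely forces one to perform the comparison directly at the level of the martingale structure of $\zeta_{N,\rand}$ rather than through a naive pointwise approximation of $\log|\zeta|$.
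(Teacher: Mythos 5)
The statement you were asked to prove is Conjecture~\ref{con:3}, and the paper contains no proof of it: it is formulated explicitly as an open conjecture, and the surrounding discussion in the introduction is devoted precisely to explaining why it (together with Conjectures~\ref{con:2} and~\ref{con:4}) lies beyond current techniques. So there is no ``paper's own proof'' to compare against. Your write-up in effect recognizes this --- it is a roadmap plus an honest inventory of obstacles rather than a complete argument --- and the obstacles you flag line up almost exactly with those the authors name: one needs (unconditional, uniform in the shift) asymptotics for the shifted fourth moment $\E|\zeta(1/2+ix+i\omega T)|^{2}|\zeta(1/2+iy+i\omega T)|^{2}$ with the precise logarithmic singularity on the diagonal, currently available only conditionally and not with the required uniformity (cf.\ the discussion of \cite{C,CK,CFKRS} around equation~\eqref{eq:as}).

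Two points in your sketch deserve to be made sharper, since they are where a naive version of the plan would actually fail. First, the phase-replacement step: the equidistribution $(p_1^{-i\omega T},\dots,p_N^{-i\omega T})\overset{d}{\to}(e^{2\pi i\theta_1},\dots,e^{2\pi i\theta_N})$ used in Proposition~\ref{pr:C} is a statement for \emph{fixed} $N$ as $T\to\infty$; once $N=N(T)\to\infty$ one is asking for equidistribution on a torus of growing dimension, with quantitative error, and that is exactly what the missing moment asymptotics would have to supply. Gesturing at ``a multi-variable version of Theorem~\ref{th:twopoint}'' does not by itself yield a rate that survives $N\to\infty$. Second, as you correctly note, at $\beta_c=2$ the normalization $(\log\log T)^{1/2}$ makes the object an order of magnitude smaller than any naive $L^2$-approximation can see, so even if one had the fourth moment, an $L^2$-comparison would not suffice; one would need to transfer the derivative-martingale (or Seneta--Heyde) structure itself, which is a substantially harder transfer than in the subcritical regime. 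Your remark that the comparison should be done ``at the level of the martingale structure'' is the right instinct, but it is precisely the part with no currently available ingredients. In short: the approach is a reasonable heuristic and matches the authors' own assessment, but it is not a proof, and the paper does not claim one.
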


\begin{conjecture}\label{con:4}  There are mesoscopic analogues of the above conjectures and they can be formulated in a similar way as in Section \ref{sec:meso}.
\end{conjecture}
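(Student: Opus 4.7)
The plan is to formulate and prove the mesoscopic analogues of Conjectures \ref{con:2} and \ref{con:3} by combining the Gaussian approximation underlying Theorem \ref{th:meso1} with standard multiplicative-chaos techniques. A natural formulation, in the spirit of Theorem \ref{th:meso1}, asserts that for each $\beta\in(0,2)$ and a suitable deterministic $\delta_T\to 0^+$, the random measures
\begin{equation*}
\frac{|\zeta(1/2+i\delta_T x+i\omega T)|^\beta}{\E|\zeta(1/2+i\delta_T x+i\omega T)|^\beta}\,dx,\qquad x\in[0,1],
\end{equation*}
converge in law, in the weak topology of finite measures on $[0,1]$, to a non-trivial random limit of the form $c_\beta\, e^{\beta\Re Y_T}\,g_\beta(x)\,\lambda_\beta(dx)$, with $Y_T$ as in Theorem \ref{th:meso1}, $\lambda_\beta$ a real Gaussian multiplicative chaos measure on $[0,1]$ built from $\sqrt{2}\,\Re\mathcal{G}_{\rm meso}$, and $g_\beta$ a random positive $C^\infty$ density with moments of all orders. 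At criticality $\beta=\beta_c=2$, $\lambda_\beta$ is replaced by the derivative-martingale critical chaos and the normalization is multiplied by $\sqrt{\log\log(1/\delta_T)}$. Alternatively, in analogy with Theorem \ref{th:meso2}, one can formulate cleaner statements modulo positive scalar multiples, absorbing the random zero-mode factor $e^{\beta\Re Y_T}$.

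The first step would be to upgrade the Sobolev coupling of Theorem \ref{th:meso1} into a pathwise logarithmic decomposition
\begin{equation*}
\log\zeta(1/2+i\delta_T x+i\omega T)=\mathcal{G}_T(x)+\mathcal{E}_T(x),\qquad x\in[0,1],
\end{equation*}
where $\mathcal{G}_T$ is a complex Gaussian process whose real part, after removal of a zero mode of variance $\tfrac{1}{2}\log(1/\delta_T)$, has covariance converging to $\tfrac{1}{2}\log(1/|x-y|)$ plus smooth corrections, and where $\mathcal{E}_T$ is smooth with uniformly finite exponential $C^\ell$-norms. This is in essence the mesoscopic analogue of Theorem \ref{th:gaussian_appro} and should be essentially present in the arguments used to prove Theorem \ref{th:meso1}, repackaged with the requisite uniform smoothness estimates. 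Once the decomposition is in hand, $e^{\beta\Re\mathcal{E}_T}$ may be absorbed into $g_\beta$, and the problem reduces to showing that $e^{\beta\Re\mathcal{G}_T(x)}\,dx/\E e^{\beta\Re\mathcal{G}_T(x)}$ converges to a real Gaussian multiplicative chaos on $[0,1]$. For $\beta<\sqrt{2}$ this is an explicit $L^2$-computation using the covariance of $\mathcal{G}_T$; for $\beta\in[\sqrt{2},2)$ one invokes Kahane's convexity inequality and Shamov's universality criterion \cite{Shamov,JS}, paralleling the construction of $\lambda_\beta$ from the truncated Euler products in Section \ref{se:subcritical}. The random scalar $e^{\beta\Re Y_T}$ arises from the zero mode of $\mathcal{G}_T$ and is handled via the joint convergence inherited from Theorem \ref{th:meso1}.

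The principal technical obstacle is the control of the normalizing constants $\E|\zeta(1/2+i\delta_T x+i\omega T)|^\beta$ and of the associated shifted moments. Unlike in Conjecture \ref{con:2}, where fractional-moment asymptotics remain conjectural for non-integer $\beta\notin\{2,4\}$, on the mesoscopic scale these should be accessible through the Gaussian approximation itself: fractional moments reduce to Gaussian Laplace transforms up to multiplicative $1+o(1)$ errors controlled by $\E e^{\beta\|\mathcal{E}_T\|_\infty}$, and the same reduction yields the two-point shifted moments needed to identify the limit as $\lambda_\beta$ rather than merely as a tight subsequential limit. The hardest quantitative step, I expect, is obtaining uniform-in-$T$ bounds on the $p$-th moments of $\int_0^1 |\zeta(1/2+i\delta_T x+i\omega T)|^\beta\,\phi(x)\,dx$ for $p$ slightly above the $L^2$-threshold $2/\beta^2$; these are essential both to upgrade tightness to identification of the limit and to rule out mass escaping. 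For the critical case $\beta=\beta_c=2$ the same scheme applies with the subcritical chaos replaced by the Seneta--Heyde/derivative-martingale construction of \cite{DRSV1,DRSV2,BKNSW}, the extra difficulty being sharp tightness at the critical scale, which should yield to a further sharpening of the error-term estimates in the mesoscopic Gaussian approximation.
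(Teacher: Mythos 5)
This is a \emph{conjecture} in the paper, and the paper supplies no proof of it; indeed the authors explicitly list the obstacles to proving Conjectures~\ref{con:2}--\ref{con:4} (unknown fractional moments $\E|\mu_T|^\beta$ for non-integer $\beta$, and only conjectural shifted-moment asymptotics such as $\E\mu_T(x)^\beta\mu_T(y)^\beta$). Your write-up is therefore not a proof that can be checked against a paper proof but a proposed programme, and as such it contains a gap that is precisely the one the paper highlights as the open difficulty.

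The central problem is your first step: you ask for a \emph{pathwise} decomposition $\log\zeta(1/2+i\delta_T x+i\omega T)=\mathcal{G}_T(x)+\mathcal{E}_T(x)$ with $\mathcal{G}_T$ Gaussian and $\mathcal{E}_T$ smooth with exponential $C^\ell$-moment bounds, and you claim this ``should be essentially present'' in the proof of Theorem~\ref{th:meso1}. It is not. The Gaussian approximation results of the paper (Theorem~\ref{th:gaussian_appro}, Proposition~\ref{pr:gaussian_appro2}, and Remark~\ref{rem:mesoscopic}) all concern the \emph{randomised model} $\zeta_{N,\rand}(1/2+ix)=\prod_k(1-p_k^{-1/2-ix}e^{2\pi i\theta_k})^{-1}$, for which the Euler product converges pathwise and $\log\zeta_{N,\rand}$ is an honest random smooth function. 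Theorem~\ref{th:meso1} transfers conclusions from the model to the actual zeta function only at the level of convergence in law (Wasserstein) in the negative Sobolev space $W^{-\alpha,2}(0,1)$ with $\alpha>1/2$, via the second-moment machinery of Section~\ref{se:convergence} built on the Ingham--Bettin asymptotics of $\E\mu_T(x)\overline{\mu_T(y)}$. There is no coupling in the paper that realizes $\zeta(1/2+i\delta_T x+i\omega T)$ and a Gaussian process on the same probability space up to a smooth multiplicative error, and producing one would already be a major theorem (note also that on the critical line $\zeta$ has zeros, so $\log\zeta(1/2+i\cdot)$ is not even globally defined). Consequently your claim that ``fractional moments reduce to Gaussian Laplace transforms up to multiplicative $1+o(1)$ errors controlled by $\E e^{\beta\|\mathcal{E}_T\|_\infty}$'' is precisely the conclusion one would like, not something that follows from the available input. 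Convergence as generalised functions in $W^{-\alpha,2}$ gives no control whatsoever on quantities like $\int_0^1|\zeta(1/2+i\delta_T x+i\omega T)|^\beta\phi(x)\,dx$ for $\beta<2$, because $|\cdot|^\beta$ is neither linear nor continuous on $W^{-\alpha,2}$. To control these one needs genuine moment estimates for the zeta function, which the paper explicitly says are not available: even the one-point asymptotics $a(T)=\E|\mu_T(x)|^\beta$ are unknown unconditionally outside $\beta\in\{2,4\}$, and the two-point (let alone $p$-th) shifted moments that your scheme requires are only conjectural (\cite{CK,CFKRS}).

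What \emph{is} achievable with your scheme, and is exactly what the paper does in Theorems~\ref{th:gmcconv}, \ref{th:critical} and Remark~\ref{rem:mesoscopic}, is the analogous statement for the randomised Euler product $\zeta_{N,\rand}$. There the Gaussian coupling is pathwise, the one- and two-point moment computations are elementary (Lemma~\ref{le:exp}, Lemma~\ref{le:covariance_approximation}), and Kahane's theory plus the universality results of \cite{JS} apply directly. If you intend your argument to target the model, it is a correct reading of the paper's approach; if you intend it to target the zeta function itself, the crucial missing ingredient is the passage from distributional convergence to convergence of powers $|\cdot|^\beta$, which requires moment input that nobody currently has.
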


These conjectures all can be viewed as (very strong versions of) statements that the real part of $\log \zeta(1/2+i\omega T+ix)$ behaves like a log-correlated field. In view of Theorem \ref{th:main}, it's of course natural to expect similar statements for the imaginary part of $\log \zeta(1/2+i\omega T+ix)$. This seems even further out of reach currently as the imaginary part of $\log \zeta$ is closely related to the location of the zeros of $\zeta$. This being said, there do exist some results of the flavor of the imaginary part of $\log \zeta$ being log-correlated. In particular, this is closely related to the issue of linear statistics of zeroes of the zeta function, which has been studied by Bourgade and Kuan \cite{BK}, Rodgers \cite{Rod}, as well as Maples and Rodgers \cite{MR}, who found that for suitable test functions, the fluctuations of mesoscopic linear statistics can be described in terms of a log-correlated Gaussian field. Much stronger results exist in the setting of random matrix theory -- for a connection between multiplicative chaos and the corresponding object in random matrix theory, see e.g. \cite{LOS,Webb}. We will not formulate any precise conjectures concerning the imaginary part of $\log \zeta$ and multiplicative chaos, but simply note that this is also an interesting direction of study.

\smallskip

\noindent {\bf Acknowledgement:} We wish to thank Antti Kupiainen for valuable comments on the manuscript.

\section{\texorpdfstring{Existence of the limit $\mu_T\to \mu$: proof of Theorem \ref{th:main}{\rm (i)}}{Proof of Theorem \ref{th:main}{\rm (i)}}}\label{se:convergence}

The structure of this section is as follows: we start  by proving several auxiliary results that rely on basic Fourier analysis. Then we apply the Ingham-Bettin type result presented in Appendix \ref{app:appendixa} to get control of the second moment of $\|\mu_{T}-\mu_{T,N}\|$ in a suitable weighted Sobolev norm. These auxiliary results are formulated in such a manner that after they are established the proof of the first part of Theorem \ref{th:main} will follow effortlessly. 

We begin by defining a suitable space of test functions for our random distributions.

\begin{definition}\label{def:V}
{{\rm The space $V$ to consist of locally integrable functions $g:\R\to \C$ such that 

\begin{equation*}
||g||_V^2:=\int_\R \frac{|g(x)|^2}{1+x^2}dx<\infty.
\end{equation*}}}
\hfill $\blacksquare$
\end{definition}
\noindent Obviously $L^\infty(\R)\subset V$. Next we define another norm that plays a central role in what follows.
\begin{definition}\label{def:norm} {\rm{Assume that $f\in L^1(\R)$, or more generally,  that $f\in {\mathcal S}'(\R)$ is such that $\widehat f\in C(\R)$. Then we set 
$$
\|f\|_{\z}\; :=\; \left(\sum_{n=1}^\infty \frac{|\widehat f(\pii\log (n))|^2}{n}\right)^{1/2}.
$$
If $\|f\|_\z<\infty$ and $\|g\|_\z<\infty$  we write
$$
\langle f,g\rangle_\z := \sum_{n=1}^\infty \frac{\widehat f(\pii\log (n)) \overline{\widehat g(\pii\log (n))}}{n}.
$$
Given an integer $N\geq 1$ we denote by $\N_N$  the positive integers such that their prime factors are contained in the set $\{ p_1,p_2,\ldots , p_N\}$ consisting of the first $N$ primes. Then we write analogously
$$
\|f\|_{\z,N}\; :=\; \left(\sum_{n\in\N_N}\frac{|\widehat f(\pii\log (n))|^2}{n}\right)^{1/2} \quad\textrm{and}
$$
$$
\langle f,g\rangle_{\z,N} := \sum_{n\in\N_N}\frac{\widehat f(\pii\log (n))\overline{\widehat g(\pii\log (n))}}{n}.
$$}}
\hfill $\blacksquare$
\end{definition}
Above the assumption that $\widehat f$ is continuous makes sure that point evaluations $\widehat f(a)$ for $a\in\R$ are well-defined, and hence the quantity $\|f\|_\z$ is well-defined, but of course it may still take the value $\infty. $
It is useful to note that for any such $f$ one has
\begin{equation}\label{eq:meno}
\| f\|_{\z}\;=\; \lim_{N\to\infty}\| f\|_{\z,N}
\end{equation}

We shall make use of the following embedding result.
\begin{lemma}\label{le:embed} There exists a positive constant $C$ such that for all $f\in L^1$ 
$$
\|f\|_\z^{2}\; \leq \; C \int_\R (1+x^2)|f(x)|^2dx.
$$
\end{lemma}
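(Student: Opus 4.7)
The plan is to prove this as a standard trace-type Sobolev estimate, exploiting the fact that the sampling points $\xi_n := \frac{1}{2\pi}\log n$ satisfy $\xi_{n+1} - \xi_n \sim \frac{1}{2\pi n}$, so the weight $1/n$ appearing in $\|f\|_\z^2$ is, up to a constant, the natural ``Lebesgue weight'' for a Riemann-sum approximation at the points $\xi_n$ on $[0,\infty)$.

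First I would reduce to the case where the right-hand side is finite. Then $f\in L^2(\R)$ (since $|f|^2\le (1+x^2)|f|^2$) and $xf\in L^2(\R)$, so by Plancherel $\widehat f\in L^2(\R)$ and the distributional derivative $\widehat f{}' = -2\pi i\,\widehat{xf}$ also lies in $L^2(\R)$. Thus $\widehat f\in H^1(\R)$, and in particular it has a continuous representative so that the point values $\widehat f(\xi_n)$ are well defined.

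Next, I would partition $[0,\infty)$ into intervals $I_n$ with $\xi_n\in I_n$ and $|I_n|\asymp n^{-1}$ for $n\ge 2$: concretely take $I_n := \bigl[\tfrac{\xi_{n-1}+\xi_n}{2},\,\tfrac{\xi_n+\xi_{n+1}}{2}\bigr]$ for $n\ge 2$ and $I_1 := [0,\xi_2/2]$, for which one checks that $C_0 := \sup_{n\ge 1}\frac{1}{n\,|I_n|}<\infty$. For each fixed $n$ and any $\xi\in I_n$ the fundamental theorem of calculus gives
$$
|\widehat f(\xi_n)|^2 \;\le\; |\widehat f(\xi)|^2 \;+\; 2\int_{I_n}|\widehat f(\eta)|\,|\widehat f{}'(\eta)|\,d\eta,
$$
and averaging in $\xi$ over $I_n$ yields
$$
|\widehat f(\xi_n)|^2 \;\le\; \frac{1}{|I_n|}\int_{I_n}|\widehat f|^2\,d\eta \;+\; 2\int_{I_n}|\widehat f|\,|\widehat f{}'|\,d\eta.
$$

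Finally I would multiply by $1/n$, sum over $n\ge 1$, use $\frac{1}{n|I_n|}\le C_0$ on the first term, and apply the elementary inequality $\frac{2}{n}|\widehat f||\widehat f{}'|\le |\widehat f|^2 + \frac{1}{n^2}|\widehat f{}'|^2$ on the second. Since the $I_n$ are essentially disjoint and cover $[0,\infty)$, this gives
$$
\|f\|_\z^2 \;\le\; (C_0+1)\int_0^\infty |\widehat f(\eta)|^2\,d\eta \;+\; \int_0^\infty |\widehat f{}'(\eta)|^2\,d\eta.
$$
Plancherel then converts these to $\|f\|_{L^2}^2$ and $4\pi^2\|xf\|_{L^2}^2$, respectively, which bounds the right-hand side by a constant times $\int_\R(1+x^2)|f(x)|^2\,dx$, as required.

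I do not anticipate any real obstacle; the only point requiring a small amount of care is the choice and combinatorics of the intervals $I_n$ (in particular the $n=1$ term, which has to be handled by a trivial separate estimate since $\xi_1=0$ and $|I_1|$ is just an absolute constant), together with the verification that $\sup_n\frac{1}{n|I_n|}<\infty$ using the estimate $\xi_{n+1}-\xi_{n-1}=\frac{1}{2\pi}\log\frac{n+1}{n-1}\sim\frac{1}{\pi n}$.
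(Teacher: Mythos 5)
Your proof is correct, and it takes a genuinely different route from the paper. You bound each sampled value $|\widehat f(\xi_n)|^2$ directly: average over a local interval $I_n$ of width $\asymp n^{-1}$, use the fundamental theorem of calculus (valid since $\widehat f\in W^{1,2}(\R)$ has an absolutely continuous representative and $|\widehat f|^2\in W^{1,1}$), apply AM--GM to the cross term, and sum using essential disjointness of the $I_n$. The paper instead proceeds by duality: it reduces the claim to the estimate $\big\|\sum_n b_n n^{-1/2}\delta_{\pii\log n}\big\|_{W^{-1,2}(\R)}\lesssim\|(b_n)\|_{\ell^2}$, and proves that by replacing each $\delta_{\pii\log n}$ with a normalized characteristic function $h^{-1}\chi_{(u,u+h]}$ of the ``right'' width, controlling the replacement error $\|\delta_u-h^{-1}\chi_{(u,u+h]}\|_{W^{-1,2}}\lesssim h^{1/2}$ and then exploiting the disjoint supports of the characteristic functions. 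Both are trace-type estimates driven by the same arithmetic fact $\xi_{n+1}-\xi_n\asymp n^{-1}$. Your argument is more elementary and works entirely on the function side; the paper's is a bit more technical but is built around negative-index Sobolev norms of point masses, which is why it adapts with little effort to the fractional scale --- the same scheme underlies the sharper bound $\|f\|_\z^2\lesssim\int(1+x^2)^a|f(x)|^2dx$ for any $a>1/2$ noted in the paper's Remark \ref{re:optimality}. Your FTC step is intrinsically tied to the integer index $1$ and would not extend to $a<1$ without replacing it by a fractional difference quotient or embedding argument. One tiny point worth making explicit: you need $|I_n|\lesssim 1$ so that the factor $1/n^2\le 1$ really does let you dominate $\sum_n\frac{1}{n^2}\int_{I_n}|\widehat f'|^2$ by $\int_0^\infty|\widehat f'|^2$, which holds here since $|I_n|$ decays.
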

\begin{proof}
Let us write $g:=\widehat f$. By the definition of the Sobolev space $W^{1,2}(\R)$ it is equivalent to prove the 
inequality
$$
\|f\|_\z^2 =\sum_{n=1}^\infty \frac{|g(\pii\log (n))|^2}{n}\lesssim \|g\|_{W^{1,2}(\R)}^2.
$$
Approximating by smooth functions we may also assume that $g\in\mathcal{S}(\R)$. In this case, $||f||_\z$ is finite and by using the dual representation of the $\ell_2$ norm, we see that 

\begin{align*}
\|f\|_\z\; =\;&\sup_{\|(b_n)\|_{\ell^2}\leq 1}\big| \sum_{n=1}^\infty  b_nn^{-1/2}g(\pii\log(n))\big|\;=\;\sup_{\|(b_n)\|_{\ell^2}\leq 1}\big|\langle \sum_{n=1}^\infty  b_nn^{-1/2}\delta_{\pii\log n},g\rangle\big|,
\end{align*}
so it is enough to show that
\begin{equation}\label{eq:jazz1}
\|\sum_{n=1}^\infty  b_nn^{-1/2}\delta_{\pii\log n}\|_{W^{-1,2}(\R)}\lesssim \|(b_n)\|_{\ell^2}.
\end{equation}

We need the auxiliary estimate
\begin{equation}\label{eq:jazz2}
\|\delta_u-h^{-1}\chi_{(u,u+h]}\|_{W^{-1,2}(\R)}\lesssim h^{1/2}\quad \textrm{for}\quad u\in \R,\; h>0.
\end{equation}
For this end, we may translate to $u=0$ and  simply compute that the square of the left hand side equals
\begin{align*}
&\int_\R \left|  1- \frac{1-e^{-2\pi i h\xi}}{2\pi ih\xi}\right|^2\frac{d\xi}{1+\xi^2}
\lesssim\int_{|\xi|\leq h^{-1}}h^2d\xi+\int_{|\xi|> h^{-1}}\frac{d\xi}{1+\xi^2}
\lesssim h,
\end{align*}
where we noted that by Taylor's formula $\big|  1- \frac{1-e^{-2\pi i h\xi}}{2\pi ih\xi}\big|^2\lesssim |h\xi|^2$ for $|\xi|\leq h^{-1}.$

To return to the proof of \eqref{eq:jazz1}, write $\widetilde b_n:= b_nn^{-1/2}(\frac{1}{2\pi}(\log(n+1)-\log (n)))^{-1}$ so that $|\widetilde b_n|\sim 2\pi n^{1/2}|b_n|$, and observe that \eqref{eq:jazz2} yields the estimate
\begin{align}\label{eq:jazz3}
&\left\| \sum_{n=1}^\infty b_n n^{-1/2}\delta_{\pii\log n} \; - \; \sum_{n=1}^\infty \widetilde b_n\chi_{(\pii\log(n),\pii\log (n+1)]}\right\|_{W^{-1,2}(\R)} \\
\notag \lesssim &\sum |b_n|n^{-1/2}(\frac{1}{2\pi}\log(n+1)-\frac{1}{2\pi}\log(n))^{1/2}\;\lesssim \; \sum_{n=1}^\infty |b_n|n^{-1}\lesssim \|(b_n)\|_{\ell^2}.
\end{align}
On the other hand, as  the characteristic functions have disjoint supports we obtain
\begin{align}
&\| \sum_{n=1}^\infty \widetilde b_n \chi_{(\pii\log(n),\pii\log (n+1)]}\|_{W^{-1,2}(\R)} 
\leq \;\| \sum_{n=1}^\infty \widetilde b_n \chi_{(\pii\log(n),\pii\log (n+1)]}\|_{L^2(\R)} \lesssim \;\|(b_n)\|_{\ell^2},\notag
\end{align}
Now \eqref{eq:jazz1} is an immediate consequence of  this estimate in combination with \eqref{eq:jazz3}.
\end{proof}
\begin{remark}\label{re:optimality}
{\rm By slightly modifying the  above argument  one obtains the sharper result $
\|f\|_\z^{2}\; \lesssim\int_\R (1+x^2)^a|f(x)|^2dx
$
for $a>1/2$, which is  optimal as one can check that the estimate fails for the choice $a=1/2$. }
\hfill $\blacksquare$
\end{remark}

The following computation of a specific Fourier transform is used to establish  the slightly delicate  equality \eqref{eq:i22} which is crucial for our purposes.
\begin{lemma}\label{le:zeta_hat} Assume that $a>0$ and define the function $h_a:\R\to\C$ by setting  $h_a(u):=\displaystyle\zeta (1+iu)-\frac{e^{-iau}}{iu}$, where the value at $u=0$ is defined as the limit as $u\to 0.$ Then
$$
\widehat h_a\;=\; \sum_{n=1}^\infty n^{-1}\delta_{-\pii \log(n)}\;-\;  2\pi\chi_{(-\infty, -\pii a]}.
$$
\end{lemma}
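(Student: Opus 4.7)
The plan is to truncate $\zeta(1+iu)$ by the classical partial-summation identity, compute the Fourier transform of the resulting approximation explicitly, and then pass to the limit in $\mathcal{S}'(\R)$. Abel summation (or integration by parts) gives
$$
\sum_{n=1}^N n^{-1-iu}\;=\;\frac{1}{iu}-\frac{N^{-iu}}{iu}+1-(1+iu)\int_1^N \{x\}\,x^{-2-iu}\,dx.
$$
Consequently, if $g_N(u):=\sum_{n=1}^N n^{-1-iu}+N^{-iu}/(iu)$, then $g_N(u)=\zeta(1+iu)+(1+iu)\int_N^\infty\{x\}x^{-2-iu}dx$, and the remainder is bounded in absolute value by $(1+|u|)/N$. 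Therefore
$$
g_{N,a}(u)\;:=\;g_N(u)-\frac{e^{-iau}}{iu}\;=\;\sum_{n=1}^N n^{-1-iu}+\frac{N^{-iu}-e^{-iau}}{iu}
$$
satisfies $|g_{N,a}(u)-h_a(u)|\leq (1+|u|)/N$ uniformly in $u\in\R\setminus\{0\}$, so pairing against any $\phi\in\mathcal{S}(\R)$ one obtains $g_{N,a}\to h_a$ in $\mathcal{S}'(\R)$.

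Next I would compute $\widehat{g_{N,a}}$ by hand. With the paper's convention $\widehat{e^{-i\alpha u}}=\delta_{-\pii \alpha}$, the Dirichlet-polynomial piece transforms to $\sum_{n=1}^N n^{-1}\delta_{-\pii\log n}$. For the remaining piece, the elementary identity
$$
\frac{N^{-iu}-e^{-iau}}{iu}\;=\;-\int_a^{\log N}e^{-iuy}\,dy
$$
(obtained by differentiation in $y$) combined with Fubini and $\widehat{e^{-iuy}}=\delta_{-\pii y}$ gives, after the change of variable $z=-\pii y$,
$$
\widehat{\frac{N^{-iu}-e^{-iau}}{iu}}\;=\;-\int_a^{\log N}\delta_{-\pii y}\,dy\;=\;-2\pi\,\chi_{(-\pii\log N,\,-\pii a]}.
$$
Putting both contributions together,
$$
\widehat{g_{N,a}}\;=\;\sum_{n=1}^N n^{-1}\delta_{-\pii\log n}\;-\;2\pi\,\chi_{(-\pii\log N,\,-\pii a]}.
$$

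Finally, I would take $N\to\infty$ in $\mathcal{S}'(\R)$. For any $\phi\in\mathcal{S}(\R)$, the sum $\sum_{n=1}^N n^{-1}\phi(-\pii\log n)$ converges absolutely (since $\phi$ decays faster than any polynomial), and $\int_{-\pii\log N}^{-\pii a}\phi\to\int_{-\infty}^{-\pii a}\phi$. Hence $\widehat{g_{N,a}}$ converges in $\mathcal{S}'(\R)$ to $\sum_{n=1}^\infty n^{-1}\delta_{-\pii\log n}-2\pi\chi_{(-\infty,-\pii a]}$; combined with the convergence $g_{N,a}\to h_a$ established above and continuity of the Fourier transform on $\mathcal{S}'(\R)$, this yields the lemma. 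The only step that requires any actual work is the uniform bound on the Abel-summation remainder, which is essentially the identity used to meromorphically continue $\zeta(s)$ to $\mathrm{Re}(s)>0$; I do not anticipate genuine obstacles, as everything else amounts to routine bookkeeping with Fourier transforms of bounded or compactly supported elementary objects.
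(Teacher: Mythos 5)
Your proposal is correct and complete, but it follows a genuinely different route from the paper. The paper views $g(z)=\zeta(1+z)-z^{-1}e^{-az}$ as an entire function of at most logarithmic growth in the closed right half-plane $\{\Re z\ge 0\}$, and computes $\widehat{h}_a$ as the $\varepsilon\to 0^+$ limit of the Fourier transforms of $\zeta(1+\varepsilon+i\cdot)$ and $(\varepsilon+i\cdot)^{-1}e^{-a(\varepsilon+i\cdot)}$ taken separately on vertical lines $\{\Re z=\varepsilon\}$; this is essentially a Paley--Wiener style continuation argument and relies on the growth bound for $\zeta$ on and near the line $\sigma=1$. You instead stay on the imaginary axis throughout and approximate $h_a$ in $\mathcal{S}'(\R)$ by the explicit truncations $g_{N,a}(u)=\sum_{n=1}^N n^{-1-iu}+\bigl(N^{-iu}-e^{-iau}\bigr)/(iu)$, with the key observation that the Abel-summation (Euler--Maclaurin) identity
\begin{equation*}
\sum_{n=1}^N n^{-1-iu}=\frac{1}{iu}-\frac{N^{-iu}}{iu}+1-(1+iu)\int_1^N\{x\}\,x^{-2-iu}\,dx
\end{equation*}
shows that the $N^{-iu}/(iu)$ term recombines with the partial sum to leave a remainder $g_{N,a}-h_a=(1+iu)\int_N^\infty\{x\}x^{-2-iu}dx$, which is $O\bigl((1+|u|)/N\bigr)$ uniformly. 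The Fourier transform of each $g_{N,a}$ is then elementary (the Dirichlet-polynomial piece gives a finite sum of point masses, and $\bigl(N^{-iu}-e^{-iau}\bigr)/(iu)=-\int_a^{\log N}e^{-iuy}dy$ gives $-2\pi\chi_{(-\pii\log N,-\pii a]}$), and the claimed formula follows by letting $N\to\infty$ in $\mathcal{S}'(\R)$ and using continuity of $\mathcal{F}$. I checked each step: the Abel identity, the remainder bound, the Fourier computation and the final passage to the limit are all correct. Your approach is more elementary in that it needs no half-plane growth estimate for $\zeta$ and no contour-deformation principle, only the standard continuation formula to $\Re s>0$; it is also more quantitative, giving an explicit rate of approximation. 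The paper's approach is slightly shorter once one accepts the Fourier-transform-on-shifted-lines lemma, and it cleanly separates the two pieces $\zeta(1+\cdot)$ and $(\cdot)^{-1}e^{-a\cdot}$, whereas your truncation is designed precisely to cancel the singular $1/(iu)$ factor so the two pieces can be handled together.
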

\begin{proof}
Observe that $g(z):= \zeta (1+z)-z^{-1}e^{-az}$ is entire and as $z\to \infty$, $g(z)$ grows at most logarithmically in the half-plane $\{\Re z \geq 0\}$. This implies that its Fourier-transform over the imaginary axis is obtained as the limit of the Fourier transform over the line $\{\Re z =\varepsilon \}$, in the limit $\varepsilon\to 0^+$. In other words,
$$
\mathcal{F}(h_a) \;=\; \lim_{\varepsilon\to 0^+}\mathcal{F}(\zeta(1+\varepsilon+i\cdot)) - \lim_{\varepsilon\to 0^+}\mathcal{F}((\varepsilon+i\cdot)^{-1}(e^{-a(\varepsilon+i\cdot)}))\;=:\; \eta_1-\eta_2,
$$
with convergence in $\mathcal{S}'$, as soon as we verify that both limits exist separately. Since $\zeta(1+\varepsilon+iu)=\sum_{n=1}^{\infty}n^{-1-\varepsilon}e^{-i\log(n)u},$ with uniform convergence, we immediately see that
$$
 \eta_1=\lim_{\varepsilon\to 0^+}\sum_{n=1}^\infty n^{-1-\varepsilon}\delta_{-\pii \log(n)}=\sum_{n=1}^\infty n^{-1}\delta_{-\pii \log(n)}.
$$
In turn, we observe that for any $\varepsilon>0$ the function $2\pi e^{2\pi \varepsilon \xi}\chi_{(-\infty, -\pii a]}(\xi)$ belongs to $L^1(\R)$ and 
$$
2\pi \int_{-\infty}^{-\pii a}e^{2\pi i u\xi} e^{2\pi \varepsilon \xi}d\xi = \int_{-\infty}^{-a}e^{\ i u\xi'} e^{ \varepsilon \xi'}d\xi' = \frac{e^{-a(\varepsilon+iu)}}{\varepsilon+iu}.
$$ 
In other words, the Fourier transform of $\frac{e^{-a(\varepsilon+iu)}}{\varepsilon+iu}$ equals $2\pi e^{2\pi \varepsilon \xi}\chi_{(-\infty, -\pii a]}(\xi)$, and letting  $\varepsilon\to 0^+$ we see that $\eta_2=2\pi\chi_{(-\infty, -\pii a]}.$
\end{proof}

Our next lemma connects the norms $\|\cdot \|_\z$ and $\|\cdot \|_{\z,N}$ to the Riemann zeta function and the truncated Euler product. The principal claims are formulas \eqref{eq:i22} and  \eqref{eq:i33}. Recall that the truncated Euler product was defined by
$$
\zeta_N(s):=\prod_{k=1}^N(1-p_k^{-s})^{-1}.
$$
\begin{lemma}\label{eq:limits} Assume that $f,g\in L^1$ and $\int_\R (1+x^2)(|f(x)|^2+|g(x)|^2)dx<\infty$. Then for any $N\geq 1$
\begin{align}\label{eq:i2}
\langle f,g\rangle_{\z}\;&=\; \lim_{\varepsilon\to 0^+}\int_{\R^2} f(x)\zeta(1+\varepsilon +i(x-y))\overline{g(y)}dxdy\\
\notag\;&=\;  \lim_{N\to\infty}\langle f,g\rangle_{\z,N}
\end{align}
and
\begin{equation}\label{eq:i1}
\langle f,g\rangle_{\z,N}\;=\; \int_{\R^2} f(x)\zeta_N(1+i(x-y))\overline{g(y)}dxdy.
\end{equation}
We also have
\begin{align}\label{eq:i22}
\langle f,g\rangle_{\z}\;&= \lim_{T\to\infty}K_T(f,g),
\end{align}
where
$$
K_T(f,g)\; =\; \int_{\R^2} f(x)\left(\zeta(1+i(x-y))+\frac{\zeta(1-i(x-y))}{1-i(x-y)}T^{-i(x-y)}\right)\overline{g(y)}dxdy
$$
Moreover, there is a constant $C$ such that 
\begin{align}\label{eq:i33}
|K_T(f,g)|\leq C \left(\int_\R (1+x^2)|f(x)|^2dx\right)^{1/2}\left(\int_\R(1+y^2)|g(y)|^2dy\right)^{1/2} \quad\textrm{for all}\quad T\geq 1.
\end{align} 
\end{lemma}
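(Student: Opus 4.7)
My approach splits into four parts corresponding to the four assertions; the first two are essentially immediate consequences of expanding the Dirichlet series and swapping sum with integral, while \eqref{eq:i22} together with \eqref{eq:i33} form the substantive part and rely on Lemma \ref{le:zeta_hat} plus a small partial-fractions maneuver.

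For \eqref{eq:i1}, I would simply expand $\zeta_N(1+i(x-y))=\sum_{n\in \N_N} n^{-1-i(x-y)}$. This series is absolutely convergent (it is a finite product of absolutely convergent geometric series), so Fubini permits interchanging summation and the double integral, which turns each term into $n^{-1}\widehat f(\pii \log n)\overline{\widehat g(\pii \log n)}$ and produces $\langle f,g\rangle_{\z,N}$. Identity \eqref{eq:i2} is obtained analogously after replacing $\zeta_N$ by the absolutely convergent series $\zeta(1+\varepsilon+i(x-y))=\sum_n n^{-1-\varepsilon}e^{-i\log(n)(x-y)}$ for $\varepsilon>0$, followed by dominated convergence as $\varepsilon\to 0^+$, with dominant $n^{-1}|\widehat f(\pii\log n)\widehat g(\pii\log n)|$ whose sum is finite by Cauchy--Schwarz combined with Lemma \ref{le:embed}. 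The second equality $\lim_N\langle f,g\rangle_{\z,N}=\langle f,g\rangle_\z$ is a further application of DCT using that $\N_N\uparrow \N$.

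For \eqref{eq:i22}, the plan is to invoke Lemma \ref{le:zeta_hat} with $a=\log T$ to rewrite $\zeta(1+iu)=h_{\log T}(u)+T^{-iu}/(iu)$. Since $\phi(u):=\zeta(1-iu)+(iu)^{-1}$ is entire (the poles at $u=0$ cancel), a short partial-fractions calculation shows
\[
\frac{T^{-iu}}{iu}+\frac{\zeta(1-iu)}{1-iu}T^{-iu}=\frac{(\phi(u)-1)T^{-iu}}{1-iu},
\]
so the kernel in $K_T$ becomes $h_{\log T}(u)+\frac{(\phi(u)-1)T^{-iu}}{1-iu}$. The $h_{\log T}$ piece I handle by Plancherel with Lemma \ref{le:zeta_hat}: the $\sum_n n^{-1}\delta_{-\pii\log n}$ component reproduces $\langle f,g\rangle_\z$ exactly as in the first paragraph, while the $\chi_{(-\infty,-\pii\log T]}$ component contributes a tail $2\pi\int_{\pii\log T}^{\infty}\widehat f(\xi)\overline{\widehat g(\xi)}\,d\xi$, which vanishes as $T\to\infty$ since $\widehat f\,\overline{\widehat g}\in L^1$. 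For the residual oscillatory piece, set $F(u):=\int f(y+u)\overline{g(y)}\,dy$; by the convexity bound $\zeta(1+it)=O(\log|t|)$ we get $|(\phi(u)-1)/(1-iu)|\lesssim \log(2+|u|)/(1+|u|)$, and writing $u=(y+u)-y$ plus Cauchy--Schwarz gives boundedness of both $F$ and $(1+|u|)F(u)$ under the hypothesis $xf,xg\in L^2$; the full integrand therefore lies in $L^1(\R)$, and Riemann--Lebesgue applied to the oscillation $T^{-iu}=e^{-i\log(T)u}$ gives the desired vanishing.

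Finally, \eqref{eq:i33} follows by bounding each of the three pieces isolated above uniformly in $T$: the main term is controlled by $|\langle f,g\rangle_\z|\le \|f\|_\z\|g\|_\z$ together with Lemma \ref{le:embed}; the tail is bounded by $\|f\|_2\|g\|_2$ via Plancherel; and the oscillatory remainder is bounded by the same $L^1$ estimate used for Riemann--Lebesgue, which in turn is controlled by $\|f\|_2\|g\|_2+\|xf\|_2\|g\|_2+\|f\|_2\|xg\|_2$. The main (and only nontrivial) obstacle I anticipate is the partial-fractions simplification at the $u=0$ singularity: the precise way in which the simple pole of $\zeta(1-iu)$ cancels against the Ingham-type term $T^{-iu}/(iu)$ has to be noted carefully, but once this is done the rest is routine Fourier analysis combined with Lemma \ref{le:zeta_hat} and Lemma \ref{le:embed}.
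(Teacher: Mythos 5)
Your proposal is correct and follows essentially the same path as the paper. For \eqref{eq:i1} and \eqref{eq:i2} you expand the Dirichlet series and swap sum and integral, with dominated convergence and Lemma~\ref{le:embed} justifying the $\varepsilon\to 0^+$ and $N\to\infty$ limits --- exactly as in the paper. For \eqref{eq:i22}, although you phrase it as a partial-fractions computation involving $\phi(u)=\zeta(1-iu)+(iu)^{-1}$, a short calculation confirms that your identity
\[
\frac{T^{-iu}}{iu}+\frac{\zeta(1-iu)}{1-iu}T^{-iu}=\frac{(\phi(u)-1)T^{-iu}}{1-iu}
\]
recovers precisely the paper's decomposition of the kernel into $h_{\log T}(u)$ plus $T^{-iu}\big[\tfrac{\zeta(1-iu)}{1-iu}+(iu)^{-1}\big]$: the $h_{\log T}$ part is handled by Lemma~\ref{le:zeta_hat} and Plancherel (the paper does this first for $C_0^\infty$ functions and then approximates in $L^2$; you should make the density step explicit, since pairing the distribution $\sum n^{-1}\delta_{-\pii\log n}$ against $\widehat f\,\overline{\widehat g}$ needs justification beyond pure ``Plancherel''), and the remainder is killed by Riemann--Lebesgue. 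The one place where you diverge slightly is the integrability argument underpinning Riemann--Lebesgue: the paper notes directly that the bounded function $\tfrac{\zeta(1-iu)}{1-iu}+(iu)^{-1}$ times $f(x)\overline{g(y)}\in L^1(\R^2)$ is integrable, whereas you pass to the cross-correlation $F(u)=\int f(y+u)\overline{g(y)}\,dy$ and estimate $F$ and $(1+|u|)F(u)$ by Cauchy--Schwarz. Both are valid; your version is a touch longer but has the side benefit that the same bound $\|f\|_2\|g\|_2+\|xf\|_2\|g\|_2+\|f\|_2\|xg\|_2$ gives the uniform estimate needed for \eqref{eq:i33} directly, which the paper leaves implicit in the phrase ``immediate consequence of \eqref{eq:i5} and Lemma~\ref{le:embed}.''
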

\begin{proof} Note that under our assumption on $f$ (and $g$), we have by Cauchy-Schwarz that for some $\varepsilon >0$ 
\begin{equation}\label{eq:i3}
\int_\R |f(x)|(1+|x|)^\varepsilon <\infty. 
\end{equation}
In particular, this entails that all the integrals in the statements are well defined, due to the uniform boundedness of $\zeta_N$ (which follows from the definition of $\zeta_N$) and the at most logarithmic growth of $\zeta$ over the line $\sigma=1$ -- see e.g. \cite[Theorem 3.5]{Titchmarsh}.

We observe first that 
$$
\zeta_N(1+i(x-y))= \sum_{n\in \N_N}\frac{n^{-i(x-y)}}{n}.
$$
Since  $\sum_{n\in \N_N}\frac{1}{n}=\prod_{k=1}^N(1-p_k^{-1})^{-1}<\infty$, we may bring the integral in \eqref{eq:i1} under the sum, and to prove  \eqref{eq:i1} it remains to note that
$$
\int_{\R^2} n^{-1-i(x-y)}f(x)\overline{g(y)}dxdy= n^{-1}\widehat f(\pii\log n)\overline{\widehat g(\pii\log n)}.
$$

Next, the fact that $\sum_{n=1}^\infty n^{-1-\varepsilon}<\infty$ allows us to deduce in an analogous way that 
$$
\int_{\R^2}f(x)\zeta(1+\varepsilon +i(x-y))\overline{g(y)}dxdy= \sum_{n=1}^\infty n^{-1-\varepsilon}\widehat f(\pii\log n)\overline{\widehat g(\pii\log n)},
$$
and by letting $\varepsilon\to 0^+$ we deduce  \eqref{eq:i2} as according to Lemma \ref{le:embed} and the definition of the inner product $\langle f, g\rangle_\z,$ the right hand series above has a convergent majorant series of the form
$ \sum_{n=1}^\infty n^{-1}|\widehat f(\pii\log n)||\overline{\widehat g(\pii\log n)}|$.

In order to prove  \eqref{eq:i22}, we note first that   that the function $s\mapsto (1-s)^{-1}\zeta(1-s)+s^{-1}$ is analytic and bounded over the imaginary axis, whence  \eqref{eq:i3} implies that
$$
f(x)\overline{g(y)}\big(\frac{\zeta(1-i(x-y))}{1-i(x-y)}+ (i(x-y))^{-1}\big)
$$
is integrable over $\R^2$.
A fortiori,
$$
\lim_{T\to\infty}\int_{\R^2} \left[f(x)\overline{g(y)}\Big(\frac{\zeta(1-i(x-y))}{1-i(x-y)}+ (i(x-y))^{-1}\Big)\right]T^{-i(x-y)}dxdy =0
$$
by the Riemann-Lebesgue lemma.
In the notation of Lemma \ref{le:zeta_hat} it remains to show that
\begin{align*}
\langle f,g\rangle_{\z}\; =\;  \lim_{T\to\infty}\int_{\R^2} h_{\log T}(x-y)f(x)\overline{g(y)}dxdy
\end{align*}
Assume first that $f,g\in C_0^{\infty}(\R)$. Then we may compute directly
$$
\int_{\R^2} f(x)\overline{g(y)}h_{\log T}(x-y)dxdy=\int_\R\widehat f(\xi)\overline{\widehat g(\xi)}\; \widehat h_{\log T}(-\xi)d\xi,
$$
where the integral on the right hand side has to be understood as the distributional pairing between the Schwartz test function $\widehat f(\xi)\overline{\widehat g(\xi)}$ and the Schwartz distribution $\widehat h_{\log T}(-\cdot)$. Lemma \ref{le:zeta_hat} verifies that the right hand side equals
\begin{align}\label{eq:i5}
 \; &\sum_{n=1}^\infty n^{-1}\widehat f(\pii\log(n))\overline{\widehat g(\pii\log(n))} - 2\pi\int^{\infty}_{\pii \log T}\widehat f(\xi)\overline{\widehat g(\xi)}d\xi.
\end{align}
For general $f,g$, we note that in particular, our assumptions imply that $f,g\in L^2$. Then by Lemma \ref{le:embed} and the $L^2$-continuity of the Fourier transform we may approximate general $f$ and $g$ by test functions and take limits in \eqref{eq:i5} to verify its validity in full generality. Finally, \eqref{eq:i22} and \eqref{eq:i33} are immediate consequences of \eqref{eq:i5} and Lemma \ref{le:embed}.
\end{proof}

In what follows it is notationally convenient to introduce the 'weight regularization' of a given function $f:\R\to\C$ by setting
$$
\reg{f} (x):=(1+x^2)^{-1}f(x).
$$
Directly from the definitions we note that  if $f\in V$ (recall Definition \ref{def:V}) we have for $T\geq 1$
\begin{align*}
\reg{\mu}_T(f)\;=\;&\mu_T(\reg{f})\; =\; \int_\R\zeta(1/2+ix+iT\omega)\reg{f}(x)dx\quad\textrm{and}\\
\E\reg{\mu}_T(f)\; =\; &\E \mu_T(\reg{f})\; =\;\frac{1}{T}\int_{0}^T\int_\R\zeta(1/2+ix+it)\reg{f}(x)dxdt,
\end{align*}
These quantities are well defined since by Cauchy-Schwarz and a classical growth bound on the zeta function, which we recall in \eqref{eq:zcgrowth}, we deduce that for some $C_T,\widetilde{C}_T>0$
\begin{align*}
|\mu_T(\reg{f})|&\leq C_T \int_\R |\reg{f}(x)|(1+x^2)^{1/12}dx\\
&=C_T \int_\R \big(|f(x)|(1+x^2)^{-1/2}\big)(1+x^2)^{1/2+1/12-1}dx
\;\leq \;\widetilde{C}_T\|f\|_{V}.
\end{align*}
Analogous formulas hold for $\mu_{T,N}(f)$ with $\zeta$ replaced by $\zeta_N$.

We next turn to the crucial estimates for the two-point functions. Let us first give names to the relevant objects.
\begin{definition}\label{def:v}{\rm{
Let $\omega$ be uniform on $[0,1]$, $T>0$, and $x,y\in \R$. Then we set
\begin{align*}
 V_T^{(1)}(x,y)\;=\;&\E\Big[\zeta(1/2+i\omega T+ix)\overline{\zeta(1/2+i\omega T+iy)}\Big],\\
V_T^{(2,1)}(x,y)\;=\;&\E\left[\zeta_N(1/2+i\omega T+ix)\overline{\zeta(1/2+i\omega T+iy)}\right],\\
V_T^{(2,2)}(x,y)\;=\;&\E\left[\zeta(1/2+i\omega T+ix)\overline{\zeta_N(1/2+i\omega T+iy)}\right],\\
V_T^{(3)}(x,y)\;=\;&\E\left[\zeta_N(1/2+i\omega T+ix)\overline{\zeta_N(1/2+i\omega T+iy)}\right],
\end{align*}
and finally
\begin{align*}
V_T^{(2)}(x,y)\;=\;&V_T^{(2,1)}(x,y)+ V_T^{(2,2)}(x,y).
\end{align*}}}
\hfill $\blacksquare$
\end{definition}
A direct application of Fubini shows that
\begin{align}\label{eq:cov}
\E \Big[\mu_T(\reg{f})\overline {\mu_T(\reg{g})}\Big]\;=\;\int_{\R^2} \reg{f}(x)\overline{\reg{g}(y)} V_T^{(1)}(x,y)dxdy.
\end{align}
Analogous formulas hold for  
$$
\E \big[\mu_{T,N}(\reg{f})\overline {\mu_T(\reg{g})}\big],\;\;  \E \big[\mu_T(\reg{f})\overline{\mu_{T,N}(\reg{g})}\big],
\; \;\textrm{and}\;\; \E \big[\mu_{T,N}(\reg{f})\overline {\mu_{T,N}(\reg{g})}\big],
$$
respectively, where one replaces $V_T^{(1)}(x,y)$ in \eqref{eq:cov} by
$V_T^{(2,1)}(x,y)$, $V_T^{(2,2)}(x,y)$, and $V_T^{(3)}(x,y)$, respectively. Especially, one should observe that for real-valued test functions $f$ one has
\begin{align}\label{eq:risti}
&\E \Big[\mu_T(\reg{f})\overline {\mu_{T,N}(\reg{f})}+\mu_{T,N}(\reg{f})\overline {\mu_{T}(\reg{f})}\Big]=\;& \int_{\R^2} \reg{f}(x)\reg{f}(y) V_T^{(2)}(x,y)dxdy.
\end{align}

We next establish asymptotics for the kernels $V_T^{(\cdot)}(x,y)$. In the case of  $V_T^{(1)}(x,y)$ this essentially reduces  to a classical result on second moments of the shifted zeta function due already to Ingham \cite{Ingham},  and to a generalization by Bettin \cite{Bettin}. However, we also need asymptotics of mixed quantities like $V_T^{(2)}(x,y)$, and Bettin's proof  does not work as such in this case.  Due to these reasons, and for the convenience of a reader who is not an expert in number theory, we choose to provide the relevant arguments in  detail in Appendix \ref{app:appendixa}, where we actually  modify the approach of \cite{Atkinson} for our purposes. However, see Remark \ref{re:cross alternative} below in this connection.
\begin{theorem}\label{th:twopoint}
As $T\to\infty$,
\begin{align}\label{eq:error11}
V_T^{(1)}(x,y)=&\zeta(1+i(x-y))+\frac{\zeta(1-i(x-y))}{1-i(x-y)}\left(\frac{T}{2\pi}\right)^{-i(x-y)}
\;+\; E_1(x,y,T)
\end{align}

and for fixed $N$, 
\begin{equation}\label{eq:error22}
V_T^{(2)}(x,y)=2\zeta_N(1+i(x-y))\;+\;E_2(x,y,T),
\end{equation}

\noindent where 

$$
E_1(x,y,T),E_2(x,y,T)=\mathcal{O}\Big( T^{-1/12}\big(1+|x|^{5/12}+|y|^{5/12})\Big).
$$

Fox fixed $N\geq 1$ we have
\begin{equation}\label{eq:error33}
V_T^{(3)}(x,y)=\zeta_N(1+i(x-y))\;+\;\mathit{o}(1),
\end{equation}
where $\mathit{o}(1)$   is uniform in $x,y\in\R$ as $T\to\infty$.
Above  the error estimates for $V_T^{(2)}(x,y)$ and $V_T^{(3)}(x,y)$ are not  necessarily uniform in $N$. 
\end{theorem}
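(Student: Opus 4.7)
The plan is to handle the three kernels $V_T^{(j)}$ separately, exploiting the fact that averaging in $\omega$ is just the time mean $T^{-1}\int_0^T \cdot\, dt$. The main analytic input is the approximate functional equation
\[\zeta\bigl(\tfrac12+i\tau\bigr) = \sum_{n\leq X}n^{-1/2-i\tau}+ \chi\bigl(\tfrac12+i\tau\bigr)\sum_{n\leq Y}n^{-1/2+i\tau}+ O\bigl(|\tau|^{-1/4}\bigr),\]
with $XY \asymp |\tau|/(2\pi)$ and $\chi(1/2+i\tau)\sim (|\tau|/(2\pi))^{-i\tau}$. For $V_T^{(1)}$, I would apply this to both $\zeta(1/2+i(t+x))$ and $\overline{\zeta(1/2+i(t+y))}$; multiplying out and integrating $t\in[0,T]$ produces four bilinear sums. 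The ``direct$\times$direct'' diagonal contributes $\zeta(1+i(x-y))$; the ``dual$\times$direct'' cross-term, evaluated via stationary phase against $\chi$, produces the second main term $(T/(2\pi))^{-i(x-y)}\zeta(1-i(x-y))/(1-i(x-y))$. The remaining off-diagonal terms are bounded using $\int_0^T (m/n)^{-it}dt = O(1/|\log(m/n)|)$ together with standard large-sieve-type estimates; this is Atkinson's dissection \cite{Atkinson} adapted to the shifted setting.

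For $V_T^{(2)}$, since $\zeta_N(1/2+i(t+x)) = \sum_{n\in\N_N}n^{-1/2-i(t+x)}$ is absolutely convergent for fixed $N$, swapping sum and integral gives
\[V_T^{(2,1)}(x,y) = \sum_{n\in\N_N} n^{-1/2-ix}\,\E\bigl[n^{-i\omega T}\overline{\zeta(1/2+i\omega T+iy)}\bigr].\]
Each inner expectation is a first moment of $\zeta$ twisted by $n^{-it}$, which is treated by the same approximate functional equation: the leading contribution arises from the pairing that cancels the twist, producing $n^{-1+iy}$, while non-diagonal pieces go into the error. Summing yields $\zeta_N(1+i(x-y))$, and the analogous calculation for $V_T^{(2,2)}$ doubles this. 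For $V_T^{(3)}$ the analysis is purely algebraic: direct expansion gives
\[V_T^{(3)}(x,y)=\sum_{m,n\in\N_N} m^{-1/2-ix}n^{-1/2+iy}\cdot\frac{1}{T}\int_0^T (m/n)^{-it}dt.\]
The $m=n$ diagonal contributes exactly $\zeta_N(1+i(x-y))$, while each off-diagonal term is at most $2(T|\log(m/n)|)^{-1}$; truncating at $\max(m,n)\leq T^{1/2}$ and exploiting $\sum_{n\in\N_N}n^{-1}<\infty$ for fixed $N$ forces this remainder to be $o(1)$ uniformly in $x,y$ (though not in $N$), as claimed.

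The principal technical obstacle is obtaining the explicit polynomial growth $1+|x|^{5/12}+|y|^{5/12}$ in the error terms for $V_T^{(1)}$ and $V_T^{(2)}$. The stationary-phase window is natural only when $|x|,|y|\ll T$; beyond it one must invoke the convexity bound $\zeta(1/2+i\tau)\ll (1+|\tau|)^{1/6+\varepsilon}$, and the exponent $5/12$ emerges precisely by interpolating between the $T^{-1/12}$ gain in the time direction against this $1/6$ loss in the spectral direction. Carrying this bookkeeping through every step of the Atkinson-type dissection, while keeping the shifts $x,y$ fully tracked and avoiding any use of Bettin's symmetry argument (which does not apply to the mixed kernels $V_T^{(2)}$), is where the bulk of the appendix's work should lie.
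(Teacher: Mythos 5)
Your overall strategy (approximate functional equation applied to each $\zeta$-factor separately, then a bilinear sum analysis) is a genuinely different route from the paper's. The paper instead writes $\zeta(s)\zeta(s+ia)=\sum_n\sigma_{-ia}(n)n^{-s}$, applies Perron's formula to the divisor sum $D_{ia}(T)$, and pulls the contour across $\Re s=1$; the two main terms in \eqref{eq:error11} are then simply the residues of $\zeta(s)\zeta(s+ia)T^s/s$ at $s=1$ and at $s=1-ia$. This avoids stationary phase, large-sieve bounds, and the bookkeeping of $t$-dependent truncation lengths $X,Y$ in the Hardy--Littlewood formula. The only oscillatory-integral input the paper needs is Lemma \ref{le:chiint}, which is used to pass from $\tfrac1{2iT}\int\chi(1-s)\zeta(s+ia)\zeta(s)\,ds$ to the divisor sum $\tfrac{2\pi}{T}D_{ia}(T/2\pi)$ in Proposition \ref{pr:bettin}. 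Your treatment of $V_T^{(2)}$ is also more elaborate than necessary: rather than an AFE for $\zeta$ twisted by $n^{-it}$, one can expand $\zeta(s)\zeta_N(s+ia)=\sum_n\sigma_{-ia}(n;N)n^{-s}$ and run the same Perron argument, or (Remark \ref{re:cross alternative}) simply shift the integration line for $\tfrac1{iT}\int n^{s-1+iy}\zeta(s+ix)\,ds$ out to $\Re s=3/2$ where $\zeta$ is absolutely convergent. Your argument for $V_T^{(3)}$ agrees in spirit with the paper's.

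There is, however, a concrete error in how you identify the secondary main term. You attribute $\zeta(1-i(x-y))\,(T/2\pi)^{-i(x-y)}/(1-i(x-y))$ to ``the dual$\times$direct cross-term, evaluated via stationary phase against $\chi$.'' That is not where it comes from: the two mixed terms, which carry a single factor of $\chi$ alongside a phase $e^{\pm it\log(nm)}$, are genuinely oscillatory and go into the error. The secondary main term is produced by the \emph{diagonal of the dual$\times$dual block}: the $n=m$ terms of $\chi(\tfrac12+i(t+x))\overline{\chi(\tfrac12+i(t+y))}\sum_{n,m}n^{-1/2+i(t+x)}m^{-1/2-i(t+y)}$ give $\sum_n n^{-1+i(x-y)}\to\zeta(1-i(x-y))$, and since the ratio of $\chi$-factors is asymptotically $(t/2\pi)^{-i(x-y)}$, the $t$-average $\tfrac1T\int_0^T(t/2\pi)^{-i(x-y)}dt$ supplies the factor $(T/2\pi)^{-i(x-y)}/(1-i(x-y))$. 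If you pursue the AFE route you must track this dual$\times$dual diagonal carefully (including the $t$-dependence of the truncation and the tail of the direct$\times$direct diagonal, which partially cancels against it); the Perron route sidesteps all of this by reading the same quantity off as the residue at $s=1-i(x-y)$.
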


\begin{proof}
See Appendix \ref{app:appendixa}.
\end{proof}
\begin{remark}\label{re:real-valued}{\rm  For simplicity, we state the above asymptotics only for the sum term $V_T^{(2)}(x,y)$, instead of the
summands $V_T^{(2,i)}(x,y)$, so that in some of the exact formulas below we need to assume that the test function $f$ is real valued. However, this will be enough for the relevant estimates.}
\hfill $\blacksquare$
\end{remark}

\begin{lemma}\label{le:conv1} Assume that $f$ is real valued and $f\in V.$ Then
\begin{align}\label{eq:bound1}
\E|\mu_T(\reg{f})|^2\;\leq\; C\| \reg{f}\|_{\z}^2\; \lesssim\| f\|_{V}^2
\end{align}
where $C$ is independent of both $T\geq 1$ and $f\in V.$ Moreover, 
\begin{align}\label{eq:bound2}
\lim_{T\to\infty}\E|\mu_T(\reg{f})|^2 \;=\; \|\reg{f}\|_\z^2\;\lesssim\| f\|_{V}^2.
\end{align}
\end{lemma}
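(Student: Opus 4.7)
The plan is to use Fubini (via formula \eqref{eq:cov}) to express
$$
\E|\mu_T(\reg{f})|^2 \;=\; \iint_{\R^2}\reg{f}(x)\overline{\reg{f}(y)}\,V_T^{(1)}(x,y)\,dx\,dy,
$$
and then substitute the asymptotic expansion \eqref{eq:error11} from Theorem \ref{th:twopoint} to split this into a principal term plus an error. The key point is that the principal part of $V_T^{(1)}$, namely $\zeta(1+i(x-y)) + \frac{\zeta(1-i(x-y))}{1-i(x-y)}(T/2\pi)^{-i(x-y)}$, is \emph{exactly} the integrand in the definition of $K_{T/(2\pi)}(\cdot,\cdot)$ from Lemma \ref{eq:limits}. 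Thus the principal contribution equals $K_{T/(2\pi)}(\reg{f},\reg{f})$ outright, and the error contribution is the double integral of $\reg{f}(x)\overline{\reg{f}(y)}E_1(x,y,T)$.

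For the principal term, I would first verify that $\reg{f}$ satisfies the hypotheses of Lemma \ref{eq:limits}: $\int_\R(1+x^2)|\reg{f}|^2\,dx=\|f\|_V^2<\infty$ is immediate, and $\reg{f}\in L^1$ follows from Cauchy--Schwarz against $(1+x^2)^{-1/2}\in L^2$. The uniform bound \eqref{eq:i33} then yields $|K_{T/(2\pi)}(\reg{f},\reg{f})|\lesssim \|f\|_V^2$ (uniformly in $T\geq 1$, after at worst a trivial correction for $T\in[1,2\pi)$), while \eqref{eq:i22} (applied under the substitution $T'=T/(2\pi)\to\infty$) delivers the limit $K_{T/(2\pi)}(\reg{f},\reg{f})\to \|\reg{f}\|_\z^2$ as $T\to\infty$.

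For the error, I would use the bound $|E_1(x,y,T)|\lesssim T^{-1/12}(1+|x|^{5/12})(1+|y|^{5/12})$ from Theorem \ref{th:twopoint} to factor the double integral, reducing the task to estimating
$$
\int_\R \frac{|f(x)|(1+|x|^{5/12})}{1+x^2}\,dx.
$$
A single Cauchy--Schwarz with weight $(1+x^2)^{-1}$ bounds this by $\|f\|_V\cdot\bigl(\int_\R (1+|x|^{5/12})^2/(1+x^2)\,dx\bigr)^{1/2}$, and the weight integral converges since its integrand decays like $|x|^{-7/6}$ at infinity. The error contribution is therefore $\lesssim T^{-1/12}\|f\|_V^2$, which both vanishes as $T\to\infty$ and stays uniformly controlled for $T\geq 1$.

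Assembling these pieces gives both claims: the uniform estimate $\E|\mu_T(\reg{f})|^2\lesssim \|f\|_V^2$ and the precise limit $\E|\mu_T(\reg{f})|^2\to \|\reg{f}\|_\z^2$; the chain-closing inequality $\|\reg{f}\|_\z^2\lesssim \|f\|_V^2$ is immediate from Lemma \ref{le:embed} applied to $\reg{f}$. The main technical subtlety is entirely in the error bookkeeping: the polynomial growth $(1+|x|^{5/12})$ coming from $E_1$ is just compatible with the weight $(1+x^2)^{-1}$ built into $\reg{f}$, giving $|x|^{-7/6}$ decay (comfortably summable) in the Cauchy--Schwarz auxiliary integral, and this exponent matching between the number-theoretic error bound and the chosen weight is what allows the proof to go through cleanly.
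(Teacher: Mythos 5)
Your proof is correct and follows essentially the same route as the paper's: decompose $V_T^{(1)}$ via Theorem \ref{th:twopoint}, identify the main term with $K_{T/2\pi}(\reg{f},\reg{f})$ (the paper has a small typo writing $K_{\log(T/2\pi)}$ here), invoke \eqref{eq:i22}--\eqref{eq:i33}, and bound the error contribution by Cauchy--Schwarz against the weight $(1+x^2)^{-1}$, with the key exponent check $5/6-2=-7/6<-1$.
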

\begin{proof}
Let us write
$$
V_T^{(1)}(x,y)=\zeta(1+i(x-y))+\frac{\zeta(1-i(x-y))}{1-i(x-y)}\left(\frac{T}{2\pi}\right)^{-i(x-y)} +E_1(x,y,T)
$$
Due to the error estimate \eqref{eq:error11} and Cauchy-Schwarz we have
\begin{align*}
\notag &\Big|\int_{\R^2}E_1(x,y,T)\reg{f}(x)\overline {\reg{f}(y)}dxdy\Big|\\
\;\lesssim\;&T^{-1/12}\int_{\R^2}|\reg{f}(x)||\overline {\reg{f}(y)}|\big(1+|x|^{5/12}+|y|^{5/12})dxdy\\
\notag \leq\; & CT^{-1/12}\int_{\R^2}|f(x)||f(y)|(1+x^2)^{5/24-1}(1+y^2)^{5/24-1}dxdy\\
\leq\; &CT^{-1/12}\Big(\int_\R |f(x)|^2(1+x^2)^{-1}dx\Big),\notag
\end{align*}
which shows that the contribution from the error term vanishes in the limit $T\to\infty.$ Finally, by recalling the notation of Lemma \ref{eq:limits} we see that the contribution of the main term equals $K_{\log \frac{T}{2\pi}}(\reg{f}, \reg{f}),$ and the uniform boundedness along with the statement about the limit of this term follows immediately from Lemma  \ref{eq:limits} as soon as we observe that $\int |\reg{f}(x)|^2(1+x^2)=\|f\|_{V}^2.$
\end{proof}
We have analogous estimates for  similar quantities arising from the truncated Euler product and for the cross terms involved.

\begin{lemma}\label{le:conv2} Assume that $f$ is real-valued and $f\in V.$ Then
\begin{equation}\label{eq:bound3}
\E |\mu_{T,N}(\reg{f})|^2\;\leq\; C(N)\| \reg{f}\|_{\z}^2\; \leq\;  C(N)\| f\|_{V}^2,
\end{equation}
where $C(N)$ is independent of $T\geq 1$ and $f\in V.$ Moreover, 
\begin{align}\label{eq:bound4}
\lim_{T\to\infty}\E|\mu_{T,N}(\reg{f})|^2 \;=\; \|\reg{f}\|_{\z,N}^2\;\lesssim\;\| f\|_{V}^2.
\end{align}
In a similar vein, for all $T\geq 1$ one has
\begin{align}\label{eq:bound5}
\big| \E\big(\mu_{T,N}(\reg{f})\overline{\mu_{T}(\reg{f})}+ \mu_{T}(\reg{f})\overline{\mu_{T,N}(\reg{f})}\big)\big|
 \leq\;  C(N)\| f\|_{V}^2,
 \end{align}
 and
 \begin{align}\label{eq:bound6}
\lim_{T\to\infty}\E\big(\mu_{T,N}(\reg{f})\overline{\mu_{T}(\reg{f})}+ \mu_{T}(\reg{f})\overline{\mu_{T,N}(\reg{f})}\big) \;=\; 2\|\reg{f}\|_{\z,N}^2\;\leq\; C(N)\| f\|_{V}^2.
\end{align}
\end{lemma}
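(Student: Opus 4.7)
The proof follows the blueprint of Lemma \ref{le:conv1}, with the kernel $V_T^{(1)}$ replaced by $V_T^{(3)}$ for \eqref{eq:bound3}--\eqref{eq:bound4} and by $V_T^{(2)}$ for \eqref{eq:bound5}--\eqref{eq:bound6}. First, Fubini together with Definition \ref{def:v} yields
$$
\E|\mu_{T,N}(\reg{f})|^2 = \int_{\R^2} \reg{f}(x)\overline{\reg{f}(y)}\, V_T^{(3)}(x,y)\,dx\,dy,
$$
and, when $f$ is real-valued,
$$
\E\bigl[\mu_{T,N}(\reg{f})\overline{\mu_T(\reg{f})} + \mu_T(\reg{f})\overline{\mu_{T,N}(\reg{f})}\bigr] = \int_{\R^2} \reg{f}(x)\reg{f}(y)\, V_T^{(2)}(x,y)\,dx\,dy.
$$

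Next, invoke the asymptotics of Theorem \ref{th:twopoint}: $V_T^{(3)}(x,y) = \zeta_N(1+i(x-y)) + o_{T\to\infty}(1)$ uniformly in $(x,y)$, and $V_T^{(2)}(x,y) = 2\zeta_N(1+i(x-y)) + E_2(x,y,T)$ with $|E_2| \lesssim T^{-1/12}(1+|x|^{5/12}+|y|^{5/12})$. The leading contribution in either case equals
$$
\int_{\R^2} \reg{f}(x)\overline{\reg{f}(y)}\,\zeta_N(1+i(x-y))\,dx\,dy = \langle\reg{f},\reg{f}\rangle_{\z,N} = \|\reg{f}\|_{\z,N}^2
$$
by Lemma \ref{eq:limits} \eqref{eq:i1}; this is bounded by $\|\reg{f}\|_\z^2$ since the $(\z,N)$-sum is a subsum of the $\z$-sum, and thus by $C\|f\|_V^2$ via Lemma \ref{le:embed}. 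This delivers the limit statements \eqref{eq:bound4} and \eqref{eq:bound6} (the factor of $2$ in the latter matching the factor in front of $\zeta_N$ in $V_T^{(2)}$).

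For the error contributions, the $E_2$-error in \eqref{eq:bound5}--\eqref{eq:bound6} has exactly the same form as $E_1$ handled in Lemma \ref{le:conv1}, so the same Cauchy--Schwarz computation produces a bound of order $T^{-1/12}\|f\|_V^2$, uniform in $T\geq 1$ and vanishing as $T\to\infty$. The uniform $o(1)$-error in \eqref{eq:bound3}--\eqref{eq:bound4}, combined with the crude bound $\|\reg{f}\|_{L^1} \leq \sqrt{\pi}\|f\|_V$ obtained by Cauchy--Schwarz against $(1+x^2)^{-1}$, yields an error of size $o(1)\cdot\|f\|_V^2$ in the double integral. For the $T$-uniform upper bounds \eqref{eq:bound3} and \eqref{eq:bound5} one uses the trivial estimate $|\zeta_N(1/2+iy)| \leq \prod_{k=1}^N(1-p_k^{-1/2})^{-1} =: C(N)$ valid for all $y\in\R$, giving $|V_T^{(3)}|,|V_T^{(2)}| \leq 2C(N)^2$ pointwise and hence the bound $\leq C(N)\|f\|_V^2$ after integrating against $\reg{f}\otimes\reg{f}$.

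The one delicate point I anticipate is justifying the stronger intermediate bound by $C(N)\|\reg{f}\|_\z^2$ uniformly in $T$, since Lemma \ref{le:embed} only runs in the direction $\|\reg{f}\|_\z \lesssim \|f\|_V$, not the reverse. This will be resolved exactly as in the proof of Lemma \ref{le:conv1}: one identifies the leading term via \eqref{eq:i1} as $\|\reg{f}\|_{\z,N}^2 \leq \|\reg{f}\|_\z^2$, and controls the off-diagonal remainder (which stems from the imperfect orthogonality of the characters $\omega\mapsto n^{-i\omega T}$ on $[0,1]$) by the same Sobolev-type estimate that underlies \eqref{eq:i33}, with $C(N)$ absorbing all $N$-dependent constants.
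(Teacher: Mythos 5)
Your overall structure is right and matches the paper's intent — (\ref{eq:bound3})--(\ref{eq:bound4}) follow by rerunning the argument of Lemma \ref{le:conv1} with $V_T^{(3)}$ and its error estimate (\ref{eq:error33}), and (\ref{eq:bound6}) similarly from (\ref{eq:error22}) and (\ref{eq:risti}). The limit identifications via (\ref{eq:i1}) and the handling of the error terms by Cauchy--Schwarz are exactly as the paper does. However, there is one genuine error: your proposed pointwise bound $|V_T^{(2)}(x,y)|\leq 2C(N)^2$ is false. The kernel $V_T^{(2)}$ is a sum of cross-terms of the form $\E\big[\zeta_N(\cdot)\overline{\zeta(\cdot)}\big]$, and the full Riemann zeta factor $\zeta(1/2+i\omega T+iy)$ is not bounded uniformly in $\omega,T,y$ — it grows like $|t|^{1/6}$ along the critical line. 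So you cannot bound $V_T^{(2)}$ pointwise merely by the truncated Euler product bound $|\zeta_N(1/2+\cdot)|\leq \prod_{k\leq N}(1-p_k^{-1/2})^{-1}$. This gap affects your justification of (\ref{eq:bound5}).

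The fix is available from ingredients you already assembled: split $V_T^{(2)} = 2\zeta_N(1+i(x-y)) + E_2$; the leading term is bounded (now on the line $\sigma=1$ where $|\zeta_N(1+iu)|\leq \prod_{k\leq N}(1-p_k^{-1})^{-1}$), and the error $E_2=\mathcal{O}(T^{-1/12}(1+|x|^{5/12}+|y|^{5/12}))$ integrates against $\reg f\otimes\reg f$ to $\mathcal{O}(T^{-1/12}\|f\|_V^2)$, which is $\lesssim\|f\|_V^2$ for $T\geq 1$. Alternatively — and this is the route the paper actually takes, which is cleaner because it requires no estimate on $V_T^{(2)}$ at all — one simply observes that (\ref{eq:bound5}) follows from Cauchy--Schwarz applied to the already-proved (\ref{eq:bound1}) and (\ref{eq:bound3}): $\big|\E\big(\mu_{T,N}(\reg f)\overline{\mu_T(\reg f)}\big)\big|\leq\big(\E|\mu_{T,N}(\reg f)|^2\big)^{1/2}\big(\E|\mu_T(\reg f)|^2\big)^{1/2}$. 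The kernel $V_T^{(2)}$ and its asymptotics are then only needed for the limit statement (\ref{eq:bound6}), not for the uniform bound.
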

\begin{proof}
The statements \eqref{eq:bound3} and \eqref{eq:bound4} are proven exactly as the corresponding statements in the previous lemma
by invoking the error estimate  \eqref{eq:error33}. In turn, \eqref{eq:bound5}  is obtained by Cauchy-Schwarz from \eqref{eq:bound1} and \eqref{eq:bound3}, and the proof of  \eqref{eq:bound6} is also analogous in view of  \eqref{eq:error22} and \eqref{eq:risti}.
\end{proof}

As an immediate consequence of the previous lemmata we will now deduce an interesting intermediate result, which already gives quantitative estimates for our approximation when considering a fixed test function. We also point out here a fact that can be easily checked by adapting the rest of our proof in a simple way, namely that when considering a fixed test function, we don't actually need any smoothness from it -- it's enough for it to be locally square integrable and have nice enough decay at infinity. Thus the action of our limiting object can be defined for some test functions that are not in the Sobolev space $W^{\alpha,2}(\R)$. This is a common phenomenon when studying random generalized functions.

\begin{proposition}\label{pr:A} For each $N\geq 1$ there exists a constant $C(N)<\infty$ such that  for any {\rm (}possibly complex-valued{\rm )} $f\in V$ 
\begin{align}\label{eq:bound7}
\E|\mu_T(\reg{f})- \mu_{T,N}(\reg{f})|^2\;\leq\; C(N)\| \reg{f}\|_{\z}^2\; \leq\;  C(N)\| f\|_{V}^2\;
\end{align}
Moreover, we have
\begin{align}\label{eq:bound8}
\limsup_{T\to\infty}\E|\mu_T(\reg{f})- \mu_{T,N}(\reg{f})|^2 \;\leq \; 4(\|\reg{f}\|_\z^2-\|\reg{f}\|_{\z,N}^2)\;\leq\; C' \| f\|_{V}^2.\;,
\end{align}
where $C'$ is independent of $f$ and $N$.
\end{proposition}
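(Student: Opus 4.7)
The plan is to expand the squared difference as an integral against the two-point kernels from Definition \ref{def:v} and then feed in the uniform bounds and asymptotics already established in Theorem \ref{th:twopoint} and Lemma \ref{eq:limits}. Writing $D_T(f):=\mu_T(\reg f)-\mu_{T,N}(\reg f)$, expanding $|D_T(f)|^2$ and applying Fubini, together with a change of variables $x\leftrightarrow y$ in one of the cross terms (to turn $V_T^{(2,1)}$ and $V_T^{(2,2)}$ into pieces of a single kernel), I would first establish the identity
\begin{equation*}
\E|D_T(f)|^2=\int_{\R^2}\reg f(x)\,\overline{\reg f(y)}\,\bigl[V_T^{(1)}(x,y)-V_T^{(2)}(x,y)+V_T^{(3)}(x,y)\bigr]\,dxdy,
\end{equation*}
valid for every $f\in V$, real or complex. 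The essential observation here is that the sum of the two cross terms collapses into $V_T^{(2)}=V_T^{(2,1)}+V_T^{(2,2)}$ just as in the derivation of \eqref{eq:risti}, with $\reg f(y)$ replaced by $\overline{\reg f(y)}$.

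For the uniform bound \eqref{eq:bound7} I would control each of the three integrals separately: the $V_T^{(1)}$-piece by Lemma \ref{le:conv1}, the $V_T^{(3)}$-piece by Lemma \ref{le:conv2}, and the $V_T^{(2)}$-piece by Cauchy--Schwarz combined with \eqref{eq:bound1} and \eqref{eq:bound3}. Although Lemmas \ref{le:conv1}--\ref{le:conv2} are stated for real-valued $f$, their proofs rest on the estimate \eqref{eq:i33} of Lemma \ref{eq:limits}, which already allows complex test functions, so the extension is immediate. Alternatively, I would decompose $f=f_1+if_2$ into real and imaginary parts, use $|D_T(f)|^2\le 2|D_T(f_1)|^2+2|D_T(f_2)|^2$, and absorb the extra constant into $C(N)$. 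Combined with Lemma \ref{le:embed}, this yields \eqref{eq:bound7}.

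For the asymptotic bound \eqref{eq:bound8} I would substitute the asymptotic expansions of Theorem \ref{th:twopoint} into the kernel identity. The error terms $E_1,E_2$ contribute at most $\mathcal{O}(T^{-1/12}\|f\|_V^2)$ by the same Cauchy--Schwarz estimate appearing in the proof of Lemma \ref{le:conv1}, so they vanish as $T\to\infty$. The three main pieces contribute $K_{T/2\pi}(\reg f,\reg f)\to\|\reg f\|_\z^2$ by \eqref{eq:i22}, then $2\|\reg f\|_{\z,N}^2$ and $\|\reg f\|_{\z,N}^2$, both by \eqref{eq:i1}. Summing gives the sharp limit $\|\reg f\|_\z^2-\|\reg f\|_{\z,N}^2$, which is comfortably inside the factor-of-$4$ margin stated in \eqref{eq:bound8}. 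The trailing inequality $\le C'\|f\|_V^2$, with $C'$ independent of $N$, is then immediate from the monotonicity $\|\reg f\|_{\z,N}^2\le\|\reg f\|_\z^2$ together with Lemma \ref{le:embed}.

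The only real obstacle is bookkeeping: one must track the conjugation carefully enough that $V_T^{(2,1)}+V_T^{(2,2)}=V_T^{(2)}$ really emerges in the integrand for complex test functions, and one must notice that the factor $4$ in \eqref{eq:bound8} is loose (the sharp limiting constant, as the computation above shows, is $1$) -- a safety margin which, if one prefers to work exclusively with the real-valued formulations of Lemmas \ref{le:conv1}--\ref{le:conv2}, appears naturally from the splitting $f=f_1+if_2$. Every other ingredient is a direct application of already-proven results.
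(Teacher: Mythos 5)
Your approach is correct, and it is cleaner than the paper's own argument. The paper first treats real-valued $f$, obtaining the exact limit $\|\reg{f}\|_\z^2-\|\reg{f}\|_{\z,N}^2$ by combining \eqref{eq:bound2}, \eqref{eq:bound4}, and \eqref{eq:bound6}, and then handles complex $f$ by the crude decomposition $f=f_1+if_2$, $|D_T(f)|^2\le 2|D_T(f_1)|^2+2|D_T(f_2)|^2$; this is exactly where the factor $4$ enters. You instead notice that the kernel identity
$\E|D_T(f)|^2=\int_{\R^2}\reg f(x)\,\overline{\reg f(y)}\,[V_T^{(1)}-V_T^{(2)}+V_T^{(3)}]\,dx\,dy$
already holds for complex $f$ as written (since the two cross terms produce $V_T^{(2,2)}$ and $V_T^{(2,1)}$ respectively, and these sum to $V_T^{(2)}$ with no further manipulation --- the change of variables you mention is in fact unnecessary), and then read off the limit directly from Theorem \ref{th:twopoint} and \eqref{eq:i22}, \eqref{eq:i1}. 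This yields the sharp limiting constant $1$ rather than $4$, and you correctly flag the paper's $4$ as loose. This is a genuine improvement in elegance; the paper's restriction of \eqref{eq:risti} to real $f$ is a self-imposed and avoidable limitation (see also Remark \ref{re:real-valued}). Your verification of \eqref{eq:bound7} for complex $f$ --- either by noting that the proofs of \eqref{eq:bound1} and \eqref{eq:bound3} rest on \eqref{eq:i33}, which is stated for complex $f,g$, or by the real/imaginary split absorbed into $C(N)$ --- is also fine. Nothing is missing.
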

\begin{proof}
Let us first assume in addition that $f$ is real-valued. Now \eqref{eq:bound7} is an immediate consequence of the previous two lemmata. In addition, by combining \eqref{eq:bound2}, \eqref{eq:bound4}, and \eqref{eq:bound6} we find
\begin{equation*}
\E|\mu_T(\reg{f})- \mu_{T,N}(\reg{f})|^2 \; \to\;   \|\reg{f}\|_{\z}^2- \|\reg{f}\|_{\z,N}^2 \quad\textrm{as}\quad T\to\infty.
\end{equation*}
Then \eqref{eq:bound8} follows in the case of complex-valued $f$ by considering separately the corresponding quantity for real and imaginary parts of $f$.
\end{proof}
One should note that the previous proposition shows that for a fixed  test function $f$, choosing \emph{first} $N$ large (depending on $f$), and \emph{then} $T$  large enough (depending on $f$ and $N$), the second moment of $\mu_T(\reg{f})- \mu_{T,N}(\reg{f})$ can be made arbitrarily small.

We fix $\alpha>1/2$  for the rest of this section and consider both $\reg{\mu}_T$ and $\reg{\mu}_{T,N}$ as $W^{-\alpha,2}(\R)$-valued 
random variables. Actually, by the growth estimates of $\zeta$, namely \eqref{eq:zcgrowth} and the fact that $\mu_{T,N}$ is bounded, they are even $L^2(\R)\cap L^1(\R)$-valued random variables. Measurability is obvious since they are continuous  mappings from $\Omega:=[0,1]$ to $L^2(\R)$  (and $W^{-\alpha,2}(\R)$). Our next step is to extend the approximation hinted at in the previous proposition to the level of mean square approximation in the Sobolev space.

\begin{proposition}\label{pr:B} Assume that $\alpha>1/2$. Then 
\begin{align*}
\lim_{N\to\infty}\limsup_{T\to\infty} \E\|\reg{\mu}_T-\reg{\mu}_{T,N}\|_{W^{-\alpha,2}(\R)}^2 \; =\; 0
\end{align*}
and, consequently
\begin{align*}
\lim_{N\to\infty}\limsup_{T\to\infty} \wass_2\big(\reg{\mu}_T\;,\;\reg{\mu}_{T,N}\big)_{W^{-\alpha,2}(\R)} \; =\; 0.
\end{align*}

\end{proposition}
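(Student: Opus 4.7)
The plan is to reduce everything to an integrated version of the pointwise (in test function) bounds of Proposition \ref{pr:A}, via the Fourier-space description of the Sobolev norm. Since both $\reg{\mu}_T$ and $\reg{\mu}_{T,N}$ lie in $L^1(\R)\cap L^2(\R)$ as already observed before the statement, Fubini--Tonelli together with the definition of $W^{-\alpha,2}(\R)$ gives
\begin{equation*}
\E\|\reg{\mu}_T-\reg{\mu}_{T,N}\|_{W^{-\alpha,2}(\R)}^2\;=\;\int_\R (1+\xi^2)^{-\alpha}\,\E\big|\widehat{\reg{\mu}_T}(\xi)-\widehat{\reg{\mu}_{T,N}}(\xi)\big|^2\,d\xi.
\end{equation*}
Setting $g_\xi(x):=e^{-2\pi i\xi x}$, the inner expectation equals $\E|\mu_T(\reg{g_\xi})-\mu_{T,N}(\reg{g_\xi})|^2$. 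The one observation that makes the rest go through is that $|g_\xi|\equiv 1$, so
\begin{equation*}
\|g_\xi\|_V^2\;=\;\int_\R(1+x^2)^{-1}dx\;=\;\pi
\end{equation*}
is a constant independent of $\xi$, and hence Proposition \ref{pr:A} applies to each $g_\xi$ with bounds that are uniform in $\xi$.

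From here I would carry out two successive dominated convergence arguments. First, \eqref{eq:bound7} provides the $T$-uniform control $\E|\mu_T(\reg{g_\xi})-\mu_{T,N}(\reg{g_\xi})|^2\leq C(N)\pi$ for every $\xi\in\R$. Since $\alpha>1/2$ the majorant $C(N)\pi(1+\xi^2)^{-\alpha}$ is integrable, so reverse Fatou moves $\limsup_{T\to\infty}$ inside the $\xi$-integral, and combined with \eqref{eq:bound8} it yields
\begin{equation*}
\limsup_{T\to\infty}\E\|\reg{\mu}_T-\reg{\mu}_{T,N}\|_{W^{-\alpha,2}}^2\;\leq\;4\int_\R (1+\xi^2)^{-\alpha}\big(\|\reg{g_\xi}\|_{\z}^2-\|\reg{g_\xi}\|_{\z,N}^2\big)d\xi.
\end{equation*}

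Second, I let $N\to\infty$: by \eqref{eq:meno} the integrand above tends to $0$ pointwise in $\xi$, while Lemma \ref{le:embed} together with $\|g_\xi\|_V^2=\pi$ gives the $N$-uniform domination $4(1+\xi^2)^{-\alpha}\|\reg{g_\xi}\|_{\z}^2\lesssim (1+\xi^2)^{-\alpha}$, again integrable. A second dominated convergence in $\xi$ then finishes the proof of the first assertion. The Wasserstein statement is then immediate: since $\reg{\mu}_T$ and $\reg{\mu}_{T,N}$ are both measurable functions of the same uniform variable $\omega$, this diagonal coupling is admissible and gives $\wass_2(\reg{\mu}_T,\reg{\mu}_{T,N})_{W^{-\alpha,2}(\R)}^2\leq\E\|\reg{\mu}_T-\reg{\mu}_{T,N}\|_{W^{-\alpha,2}(\R)}^2$, so the Wasserstein conclusion is inherited from the $L^2$ one.

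The only place where real care is needed, and what I would identify as the main (potential) obstacle, is ensuring uniformity in $\xi$ of the $V$-norms of the test functions $g_\xi$ appearing on the right-hand side of Proposition \ref{pr:A}: if $\|g_\xi\|_V$ grew with $|\xi|$ then the $\xi$-integration against the weight $(1+\xi^2)^{-\alpha}$ could blow up despite $\alpha>1/2$, and both dominated-convergence steps would fail. The identity $|e^{-2\pi i\xi x}|\equiv 1$ removes this difficulty at a stroke, and is ultimately why the quantitative single-test-function bound of Proposition \ref{pr:A} upgrades painlessly to a bound on the full Sobolev norm.
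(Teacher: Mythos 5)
Your proof is correct and follows essentially the same route as the paper: Fubini to reduce to the Fourier-side integrand, the observation that $\|e_\xi\|_V$ is independent of $\xi$ so that Proposition \ref{pr:A} applies uniformly, dominated convergence to exchange $\limsup_{T\to\infty}$ with the $\xi$-integral, and then \eqref{eq:meno} plus Lemma \ref{le:embed} to send $N\to\infty$. You spell out the second dominated-convergence step (in $N$) and the diagonal coupling for the Wasserstein bound a little more explicitly than the paper, but the content is identical.
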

\begin{proof}
We first estimate the $\limsup$ in the statement  for a fixed $N$. As noted above, the random variables actually take values also in $L^1(\R)$, and hence their Fourier-transform can be considered pointwise. Denote the complex exponential function $x\to e^{-2\pi i\xi x}$ by $e_\xi$ and note that we have the estimate $\|{e_\xi}\|_{V}\leq c_0$ for all $\xi\in \R$. Hence we may apply Fubini's theorem to compute
\begin{align*}
&\E\|\reg{\mu}_T-\reg{\mu}_{T,N}\|_{W^{-\alpha,2}(\R)}^2=\int_{\R}(1+\xi^2)^{-\alpha}\E \big|\mu_T\big(\reg{e}_\xi\big)-\mu_{T,N}\big(\reg{e}_\xi\big)\big|^2d\xi
\end{align*}
Proposition \ref {pr:A} verifies that the integrand has an integrable majorant of the form $C'(N)(1+\xi^2)^{-\alpha}$ and that
$$
\limsup_{T\to\infty}\E \big|\mu_T\big(\reg{e}_\xi\big)-\mu_{T,N}\big(\reg{e}_\xi\big)\big|^2\leq C''\big(\|\reg{e}_\xi\|^2_\z-\|\reg{e}_\xi\|^2_{\z,N}\big).
$$
Clearly the integrand is continuous  jointly in  $T$ and $\xi$, and hence a simple application of the dominated convergence theorem shows that
$$
\limsup_{T\to\infty}\E\|\reg{\mu}_T-\reg{\mu}_{T,N}\|_{W^{-\alpha,2}}^2\; \leq \;
C''\int_{\R}(1+\xi^2)^{-\alpha}\big(\|\reg{e}_\xi\|^2_\z-\|\reg{e}_\xi\|^2_{\z,N}\big)d\xi.
$$
The proposition now follows by applying \eqref{eq:meno}.
\end{proof}

We next record a simple fact whose validity is seen easily by approximating the full sum by a partial sum.
\begin{lemma}\label{le:simple} Assume that $X_n,$ $n\geq 1$, are random variables and
assume that $Y_{T,n}$ are uniformly bounded random variables, $|Y_{T,n}|\leq 1$ for all $T\in [1,\infty)$ $n\geq 1.$  Assume also that for
 all $\ell\geq 1$
$$
(Y_{T,1},Y_{T,2},\ldots , Y_{T,\ell})\overset{d}{\longrightarrow} (X_1,X_2,\ldots,X_\ell)\quad\textrm{as}\quad T\to\infty.
$$
Then, if $(u_j)$ is a sequence of elements in a Banach space $E$ such that $\sum_{n=1}^\infty \|u_n\|_E<\infty$, one has
$$
\wass_2(\sum_{n=1}^\infty Y_{T,n}u_n\; ,\;  \sum_{n=1}^\infty X_{n}u_n)_E\to 0\quad \textrm{as}\quad T\to\infty.
$$
\end{lemma}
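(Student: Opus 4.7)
\smallskip

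\noindent\textbf{Proof proposal.} The plan is to truncate each infinite sum at some large level $N$ and to treat the tail and the truncated sum separately. Write
$$
S_T:=\sum_{n=1}^{\infty}Y_{T,n}u_n,\quad S:=\sum_{n=1}^{\infty}X_{n}u_n,\quad
S_T^{(N)}:=\sum_{n=1}^{N}Y_{T,n}u_n,\quad S^{(N)}:=\sum_{n=1}^{N}X_{n}u_n,
$$
where I first observe that from $|Y_{T,n}|\le 1$ and the finite-dimensional convergence in distribution, one may conclude that $|X_n|\le 1$ almost surely (apply the Portmanteau theorem to the closed set $\{|z|\le 1\}$). Hence both series converge absolutely in $E$ and the tails satisfy the deterministic bounds
$$
\|S_T-S_T^{(N)}\|_E\;\le\;\sum_{n>N}\|u_n\|_E\quad\text{and}\quad
\|S-S^{(N)}\|_E\;\le\;\sum_{n>N}\|u_n\|_E
$$
almost surely. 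Using the trivial couplings (the same underlying $Y_{T,n}$'s, respectively $X_n$'s) therefore gives
$$
\wass_2\bigl(S_T,S_T^{(N)}\bigr)_E+\wass_2\bigl(S^{(N)},S\bigr)_E\;\le\;2\sum_{n>N}\|u_n\|_E,
$$
which can be made arbitrarily small, uniformly in $T$, by choosing $N$ large.

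Next I treat the middle piece. The map $\phi_N:\C^N\to E$ defined by $\phi_N(y_1,\dots,y_N)=\sum_{n=1}^{N}y_nu_n$ is continuous (in fact linear and bounded). By the continuous mapping theorem, the finite-dimensional convergence in distribution $(Y_{T,1},\dots,Y_{T,N})\overset{d}{\to}(X_1,\dots,X_N)$ gives $S_T^{(N)}\overset{d}{\to}S^{(N)}$ in $E$ for each fixed $N$. Since $\|S_T^{(N)}\|_E$ and $\|S^{(N)}\|_E$ are bounded by the deterministic constant $\sum_{n=1}^{N}\|u_n\|_E$, the laws of the $S_T^{(N)}$ are supported in a fixed ball of $E$ and the family of distances to a fixed point has uniformly bounded moments of every order. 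A standard fact about the Wasserstein metric (convergence in distribution together with uniform integrability of the squared distance implies $\wass_2$-convergence) then yields
$$
\wass_2\bigl(S_T^{(N)},S^{(N)}\bigr)_E\;\longrightarrow\;0\quad\text{as }T\to\infty,
$$
for every fixed $N$.

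Combining the two estimates via the triangle inequality for $\wass_2$,
$$
\wass_2(S_T,S)_E\;\le\;\wass_2\bigl(S_T,S_T^{(N)}\bigr)_E+\wass_2\bigl(S_T^{(N)},S^{(N)}\bigr)_E+\wass_2\bigl(S^{(N)},S\bigr)_E,
$$
I obtain $\limsup_{T\to\infty}\wass_2(S_T,S)_E\le 2\sum_{n>N}\|u_n\|_E$, which tends to $0$ as $N\to\infty$ thanks to the assumed summability. This is the desired conclusion.

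No serious obstacle is expected: the only mildly delicate point is promoting finite-dimensional convergence in distribution of the uniformly bounded vectors $(Y_{T,n})_{n\le N}$ to $\wass_2$-convergence of their images under $\phi_N$, but this is immediate from the boundedness of the support and the continuous mapping theorem. If one preferred, a fully elementary route is to invoke the Skorokhod representation theorem along a sequence $T_k\to\infty$ to obtain an almost-sure coupling for each fixed $N$, and then conclude by bounded convergence; this gives $\E\|S_{T_k}^{(N)}-S^{(N)}\|_E^2\to 0$ directly and hence the $\wass_2$ bound.
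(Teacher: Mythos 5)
Your proof is correct and fills in the details of exactly the one-line argument the paper gives ("approximating the full sum by a partial sum"): truncate, bound the tails deterministically using $|Y_{T,n}|\le 1$ (and $|X_n|\le 1$ a.s.\ via Portmanteau), and handle the finite piece by the continuous mapping theorem together with bounded support to upgrade convergence in distribution to $\wass_2$-convergence. The only (immaterial) technicality worth noting is that $\wass_2$ is defined on a complete separable metric space, so one should implicitly work in the closed separable subspace $\overline{\mathrm{span}}\{u_n\}\subset E$; this is automatic in the paper's applications where $E$ is a separable Hilbert space.
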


For the rest of this section we shall denote by $\theta_k$, $k \geq 1$, i.i.d. random variables, uniformly distributed on $[0,1].$ Then the variables 
$$
e^{2\pi i\theta_1},\; e^{2\pi i\theta_2},\ldots
$$
are i.i.d. and uniformly distributed on the unit circle. The truncated randomized Euler product is defined by the formula
\begin{equation*}
\zeta_{N,\rand}(s):= \prod_{k=1}^N\left(\frac{1}{1-p_k^{-s}e^{2\pi i\theta_k}}\right),
\end{equation*}
and for us, most often $\zeta_{N,\rand}$ is shorthand for $\zeta_{N,\rand}(1/2+ix).$
The next proposition constructs the final limiting element $\zeta_\rand$ and shows that it is well-approximated in distribution by suitably chosen $\mu_{T,N}$.
\begin{proposition}\label{pr:C}  {\bf (i)}\quad Assume that $\alpha >1/2$. For each $N\geq 1$
\begin{equation}\label{eq:iml1}
\wass_2\Big(\reg{\mu}_{T,N}\;,\;\reg{\zeta}_{N,\rand}\Big)_{W^{-\alpha,2}(\R)}\to 0\quad\textrm \quad{as}\quad T\to\infty.
\end{equation}

\smallskip

\noindent {\bf (ii)}\quad Almost surely there exists a $W^{-\alpha,2}(\R)$-valued limiting random variable
\begin{equation}\label{eq:bohr}
\reg{\zeta}_\rand\; := \; \lim_{N\to\infty}\reg{\zeta}_{N,\rand}.
\end{equation}
In addition,
\begin{equation*}
\wass_2\Big(\reg{\zeta}_{\rand}\; ,\; \reg{\zeta}_{N,\rand}\Big)_{W^{-\alpha,2}(\R)}\to 0\quad\textrm  \quad{as}\quad N\to\infty.
\end{equation*}
\end{proposition}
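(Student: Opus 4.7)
For part (i), the plan is to appeal to Lemma \ref{le:simple} applied to the absolutely convergent expansions
$$
\reg{\mu}_{T,N}(x)=\sum_{n\in\N_N}Y_{T,n}\,u_n(x),\qquad \reg{\zeta}_{N,\rand}(x)=\sum_{n\in\N_N}X_n\,u_n(x),
$$
where $u_n(x):=(1+x^2)^{-1}n^{-1/2-ix}$, $Y_{T,n}:=n^{-i\omega T}$, and, for $n=\prod_k p_k^{a_k}\in\N_N$, $X_n:=\prod_{k=1}^N e^{2\pi i a_k\theta_k}$. Taking $E:=W^{-\alpha,2}(\R)$, the embedding $L^2(\R)\hookrightarrow W^{-\alpha,2}(\R)$ yields $\|u_n\|_E\leq Cn^{-1/2}$ uniformly in $n$, and since
$$
\sum_{n\in\N_N}n^{-1/2}=\prod_{k=1}^N(1-p_k^{-1/2})^{-1}<\infty,
$$
the summability hypothesis of Lemma \ref{le:simple} is satisfied. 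The finite-dimensional convergence $(Y_{T,n})_{n\leq\ell}\overset{d}\to(X_n)_{n\leq\ell}$ follows from Weyl's equidistribution theorem: since $\log p_1,\ldots,\log p_N$ are $\mathbb{Q}$-linearly independent, $(p_k^{-i\omega T})_{k=1}^N$ converges in distribution on $\T^N$ to i.i.d.\ uniform random variables on the unit circle, and the continuous mapping theorem passes this to every integer-exponent monomial in these quantities. Lemma \ref{le:simple} then delivers \eqref{eq:iml1}.

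For part (ii), the plan is a Hilbert-space valued martingale argument driven by two elementary Fourier computations. Using $\E[(1-p_k^{-1/2-ix}e^{2\pi i\theta_k})^{-1}]=1$ and $\E[(1-\alpha e^{2\pi i\theta})^{-1}(1-\beta e^{-2\pi i\theta})^{-1}]=(1-\alpha\beta)^{-1}$ for $|\alpha|,|\beta|<1$, one obtains, with $\mathcal F_M:=\sigma(\theta_1,\ldots,\theta_M)$ and $N>M$,
$$
\E\bigl[\zeta_{N,\rand}(1/2+ix)\mid \mathcal F_M\bigr]=\zeta_{M,\rand}(1/2+ix)
$$
pointwise in $x$, and, by independence across the primes,
$$
\E\bigl[\zeta_{N,\rand}(1/2+ix)\overline{\zeta_{N,\rand}(1/2+iy)}\bigr]=\prod_{k=1}^N(1-p_k^{-1-i(x-y)})^{-1}=\zeta_N(1+i(x-y)).
$$
Integrating the latter against $\reg{f}(x)\overline{\reg{f}(y)}$ and invoking \eqref{eq:i1} yields the key identity $\E|\reg{\zeta}_{N,\rand}(f)|^2=\|\reg{f}\|_{\z,N}^2\leq\|\reg{f}\|_\z^2\lesssim\|f\|_V^2$, uniformly in $N$, so $(\reg{\zeta}_{N,\rand}(f))_N$ is an $L^2$-bounded complex martingale for every $f\in V$.

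The orthogonality of martingale increments then yields
$$
\E|\reg{\zeta}_{N,\rand}(e_\xi)-\reg{\zeta}_{M,\rand}(e_\xi)|^2=\|\reg{e}_\xi\|_{\z,N}^2-\|\reg{e}_\xi\|_{\z,M}^2,
$$
and integrating against $(1+\xi^2)^{-\alpha}$ via Fubini, which is legitimate by the uniform bound $\|\reg{e}_\xi\|_\z\lesssim 1$, together with dominated convergence and \eqref{eq:meno} gives $\E\|\reg{\zeta}_{N,\rand}-\reg{\zeta}_{M,\rand}\|_{W^{-\alpha,2}(\R)}^2\to 0$ as $M,N\to\infty$. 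The sequence is therefore Cauchy in $L^2(\Omega;W^{-\alpha,2}(\R))$, and I take its $L^2$ limit as the definition of $\reg{\zeta}_\rand$; the stated $\wass_2$-convergence then follows immediately from the diagonal coupling. To upgrade to almost sure convergence, I apply Doob's $L^2$ maximal inequality, valid in any Hilbert space, to the $W^{-\alpha,2}(\R)$-valued martingale $(\reg{\zeta}_{N,\rand})$, obtaining $\E\sup_{N\geq M}\|\reg{\zeta}_{N,\rand}-\reg{\zeta}_{M,\rand}\|_{W^{-\alpha,2}}^2\to 0$, so the sequence is a.s.\ Cauchy in the Sobolev space and its a.s.\ limit must agree with $\reg{\zeta}_\rand$. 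The main conceptual hurdle is precisely this passage from a pointwise-in-$x$ martingale structure to uniform Sobolev-norm control; it is bridged by the explicit second-moment identity and the formulas of Lemma \ref{eq:limits}.
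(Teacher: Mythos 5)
Your part (i) is essentially the paper's argument: the same expansion $\reg{\mu}_{T,N}=\sum_{n\in\N_N}n^{-i\omega T}u_n$ with $\sum\|u_n\|_E<\infty$, and an appeal to Lemma \ref{le:simple}. The only cosmetic difference is that you cite Weyl equidistribution for the finite-dimensional convergence of $(p_k^{-i\omega T})$, where the paper verifies it by direct computation of mixed moments; both deliver the same input.

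Your part (ii) takes a genuinely different and more self-contained route. The paper establishes the uniform bound $\E\|\reg{\zeta}_{N,\rand}\|_{W^{-\alpha,2}}^2\leq C$ indirectly: it computes $\E\|\reg{\mu}_{T,N}\|^2$ via Fubini, passes to the $T\to\infty$ limit using \eqref{eq:bound4} and the Wasserstein convergence just proved in (i), and then invokes abstract Banach-space martingale convergence (citing the Radon--Nikod\'ym property of Hilbert spaces). You instead compute the two-point function $\E\bigl[\zeta_{N,\rand}(1/2+ix)\overline{\zeta_{N,\rand}(1/2+iy)}\bigr]=\zeta_N(1+i(x-y))$ directly from the independence and rotational invariance of the $\theta_k$, getting $\E|\reg{\zeta}_{N,\rand}(f)|^2=\|\reg{f}\|_{\z,N}^2$ from \eqref{eq:i1} without any detour through $\mu_{T,N}$. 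You then use orthogonality of martingale increments to write $\E\|\reg{\zeta}_{N,\rand}-\reg{\zeta}_{M,\rand}\|_{W^{-\alpha,2}}^2=\int(1+\xi^2)^{-\alpha}\bigl(\|\reg{e}_\xi\|_{\z,N}^2-\|\reg{e}_\xi\|_{\z,M}^2\bigr)d\xi$, which tends to zero by dominated convergence and \eqref{eq:meno}, giving Cauchy-ness in $L^2(\Omega;W^{-\alpha,2}(\R))$, and you upgrade to a.s.\ convergence via Doob's $L^2$ maximal inequality applied to the submartingale $N\mapsto\|\reg{\zeta}_{N,\rand}-\reg{\zeta}_{M,\rand}\|_{W^{-\alpha,2}}$. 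This buys you independence from part (i) and avoids the black-box citation; the price is a slightly longer explicit Fourier-side computation. Both proofs are correct, and one small remark: the identity $\E[\zeta_{N,\rand}\mid\mathcal F_M]=\zeta_{M,\rand}$, which you state pointwise in $x$, does transfer to the Bochner conditional expectation in $W^{-\alpha,2}(\R)$ because each $\reg{\zeta}_{N,\rand}$ is a bounded $L^2(\R)$ function, so this step is sound but worth flagging.
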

\begin{proof}  We shall make use of the well-known fact that for any any $N\geq 1$ as $T\to \infty$ one has that
$$
(p_1^{-i\omega T},...,p_N^{-i\omega T})\overset{d}{\longrightarrow} (e^{2\pi i\theta_1},...,e^{2 \pi i\theta_N})
$$
This follows simply by observing that if $r_1,...,r_N$ are integers which aren't all zero, then all the mixed moments
\begin{equation*}
\E \prod_{k=1}^N p_k^{ir_k \omega T}=\int_0^1e^{iuT \sum_{k=1}^N r_k\log p_k}du,
\end{equation*}
tend  to zero  as we let $T\to\infty$ since $\sum_{k=1}^N r_k\log p_k\neq 0$ by the uniqueness of the prime number decomposition of integers. As a consequence,
for any $\ell\geq 1$
\begin{equation}\label{eq:bohr2}
(1^{-i\omega T},2^{-i\omega T},...,\ell^{-i\omega T})\overset{d}{\longrightarrow} (e^{2\pi i\theta\cdot \alpha(1)},\ldots, e^{2\pi i\theta\cdot \alpha(\ell)})
\quad \textrm{as}\quad T\to\infty,
\end{equation}
where $\theta:=(\theta_1,\theta_2,\ldots)$  and the sequence $\alpha(\ell)=(\alpha_1(\ell),\alpha_2(\ell),\ldots)$ is defined via $\ell=\prod_{k=1}^\infty p_k^{\alpha_k(\ell)}.$
We may write
$$
\reg{\mu}_{T,N}(x)=\sum_{n\in\N_N}n^{-1/2-ix}\frac{n^{-i\omega T}}{1+x^2},
$$
where $\sum_{n\in\N_N}n^{-1/2}<\infty.$
Hence Lemma \ref{le:simple} yields \eqref{eq:iml1} in view of \eqref{eq:bohr2} even with respect to the 2-Wasserstein distance in $L^2$ instead of $W^{-\alpha,2}(\R).$ This proves part (i).

In order to treat (ii), we observe that by the Gauss mean value theorem 
$$
\E \left(\frac{1}{1-p_k^{-s}e^{2\pi i\theta_k}}\right)=1
$$
 for any
$s\in \{\sigma>0\}$. Hence, by the very definition and independence, the sequence of random variables $(\zeta_{N,\rand})_{N\geq 1}$ forms a $W^{-\alpha,2}(\R)$-valued martingale sequence, with respect to the natural filtration $\mathcal{F}_N:=\sigma (\theta_1,\ldots,\theta_N)$. Moreover, we claim that this martingale is bounded in  $L^2$:
$$
\E\big( \| \zeta_{N,\rand}\|_{W^{-\alpha,2}(\R)}^2\big) \leq C<\infty,
$$
where $C$ does not depend on $N$. In order to verify the above uniform bound we compute as in the proof of Proposition \ref{pr:B}: 
for any $T\geq 1$
\begin{align}\label{eq:iml3}
\E\|\reg{\mu}_{T,N}\|_{W^{-\alpha,2}(\R)}^2
=\int_{\R}(1+\xi^2)^{-\alpha}\E \big|\mu_{T,N}\big(\reg{e}_\xi\big)\big|^2d\xi.
\end{align}
Since obviously $\sup_{T\geq 1}\|\mu_{T,N}\|_{L^\infty(\R)}$ is bounded in the probability space variable, also $\E \big|\mu_{T,N}\big(\reg{e}_\xi\big)\big|^2$ is bounded in $\xi,T$. We deduce that the integrand in  \eqref{eq:iml3} has an integrable majorant.  In view of \eqref{eq:bound4} we see that 
$$
\lim_{T\to\infty}\E \big|\mu_{T,N}\big(\reg{e}_\xi\big)\big|^2=\|\reg{e}_\xi\|_{\z,N}^2\leq c_0
$$
and by recalling \eqref{eq:iml1} and  taking the limit $T\to\infty$ inside the integral \eqref{eq:iml3} it finally follows that
$$
\E\big( \| \zeta_{N,\rand}\|_{W^{-\alpha,2}(\R)}^2\big) \leq c_0 \int_{\R}(1+\xi^2)^{-\alpha}d\xi
\quad \textrm{for all}\quad N\geq 1.
$$

At this stage, both claims of part (ii) are a direct consequence of basic Banach space valued martingale theory, see  e.g. \cite[Theorem 3.61, Theorem 1.95]{HvNVW} or \cite[Corollary V.2.4, Corollary III.2.13]{DU} --  the basic fact being that $W^{-\alpha,2}(\R)$ is a separable Hilbert space and thus possesses the Radon-Nikod\'ym property.
\end{proof}

We are then ready to prove the first part of our  main result.

\begin{proof}[Proof of Theorem \ref{th:main}{\rm(i)}] Assume that $\varepsilon >0$ is given. We first apply Proposition \ref{pr:C}(ii) to pick
$N_0$ large enough that for any $N\geq N_0$ we have
$$
\wass_2(\reg{\zeta}_{\rand},\reg{\zeta}_{N,\rand})_{W^{-\alpha,2}(\R)}\leq \varepsilon \quad\textrm{for}\quad N\geq N_0.
$$
Next, according to  Proposition \ref{pr:B} we may  select  $N_1\geq N_0$   and   $T_0\geq 1$ so large that 
$$
\wass_2\big(\reg{\mu}_T\;,\;\reg{\mu}_{T,N_1}\big)_{W^{-\alpha,2}(\R)}\leq \varepsilon \quad\textrm{if}\quad T\geq T_0.
$$
Finally, Proposition \ref{pr:C}(i) allows us to find $T_1\geq T_0$ so that
$$
\wass_2\big(\reg{\zeta}_{N_1,\rand}\;,\;\reg{\mu}_{T,N_1}\big)_{W^{-\alpha,2}(\R)}\leq \varepsilon  \quad\textrm{when}\quad T\geq T_1.
$$
By setting $N=N_1$ in the first inequality, and combining all the three  estimates it follows that
$$
\wass_2\big(\reg{\mu}_T\;,\;\reg{\zeta}_{\rand}\big)_{W^{-\alpha,2}(\R)}\leq 3\varepsilon  \quad\textrm{whenever}\quad T\geq T_1,
$$
and Theorem \ref{th:main}(i) follows.
\end{proof}

\begin{remark}\label{rem:va}
{\rm It's natural to ask whether the class of test functions for which this convergence holds can be enlarged - or if one can regularize our objects in some weaker way than by $f\mapsto \reg{f}$.  We do not consider this question further here, but just point out that the answer to both of these questions  is positive in view of Remark \ref{re:optimality} and the the fact that we have not striven for optimality in this respect in the  proof of Theorem \ref{th:main}(i), including the  error estimates of Theorem \ref{th:twopoint}.}
\hfill $\blacksquare$
\end{remark}

\begin{remark} \label{re:cross alternative}{\rm 
Here we sketch another route to the proof presented in this section that bypasses the explicit estimates for the 2-point function $V_T^{(2)}(x,y)$ presented in Appendix \ref{app:appendixa}. We start by considering a cross term with finite Dirichlet polynomials
and $\zeta$ itself.
\begin{lemma}\label{le:finitepolynomial}
Assume that $A\subset \N$ is a finite subset and denote $\zeta_{A}(s):=\sum_{n\in A}n^{-s}.$ Then, if one defines
\begin{align*}\label{eq:risti2}
V_T^{({A},1)}(x,y)\;=\;&\E\left[\zeta_{A}(1/2+i\omega T+ix)\overline{\zeta(1/2+i\omega T+iy)}\right],\\
V_T^{(A,2)}(x,y)\;=\;&\E\left[\zeta(1/2+i\omega T+ix)\overline{\zeta_{A}(1/2+i\omega T+iy)}\right],
\end{align*}
it holds that
\begin{equation}\label{eq:risti31}
V_T^{({A},1)}(x,y)\;= \; \zeta_{A}(1+i(x-y))\;+\; \mathcal{O}\Big( T^{-1/6}\big(1+|y|^{2/6})\Big)
\end{equation}
and
\begin{equation}\label{eq:risti32}
V_T^{({A},2)}(x,y)\;=\; \; \zeta_{A}(1+i(x-y))\;+\; \mathcal{O}\Big( T^{-1/6}\big(1+|x|^{2/6})\Big),
\end{equation}
where both error estimates may depend on $A$.
\end{lemma}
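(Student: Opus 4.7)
The plan is to exploit that $\zeta_A$ is a \emph{finite} Dirichlet polynomial: the $\zeta_A$-factor can be pulled out of the expectation term-by-term, so that only a relatively crude estimate on the other full-zeta factor is needed. I will focus on $V_T^{(A,1)}$; the argument for $V_T^{(A,2)}$ follows by interchanging the roles of $x$ and $y$. Using $\overline{\zeta(\sigma+it)}=\zeta(\sigma-it)$ and expanding $\zeta_A$ as a sum, the first step is to rewrite
\begin{equation*}
V_T^{(A,1)}(x,y)=\sum_{n\in A} n^{-1/2-ix}\cdot\frac{1}{T}\int_0^T n^{-it}\,\zeta\bigl(\tfrac12-it-iy\bigr)\,dt.
\end{equation*}
After the substitution $u=t+y$ followed by $s=\tfrac12-iu$, each inner average becomes, up to a factor of $-\,i\,n^{iy-1/2}T^{-1}$, a contour integral $\int_C n^{s}\zeta(s)\,ds$, where $C$ is the vertical segment on $\Re s=1/2$ whose endpoints have imaginary parts $-(T+y)$ and $-y$.

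The key step is a contour shift of $C$ to the line $\Re s=2$, where $\zeta(s)=\sum_{m\ge 1} m^{-s}$ converges absolutely. Interchanging sum and integration on the shifted segment, the diagonal term $m=n$ contributes exactly $iT$, while the off-diagonal terms sum to $O_A(1)$ since $\sum_{m\neq n} n^2 m^{-2}/|\log(n/m)|$ converges with an $A$-dependent constant. On the two horizontal pieces of the rectangle (at heights $-y$ and $-(T+y)$), a standard sub-convexity estimate for zeta -- either the Phragmen--Lindel\"of convexity bound $\zeta(\sigma+it)=O(|t|^{(1-\sigma)/2+\varepsilon})$ in the critical strip, or the Weyl bound $\zeta(1/2+it)=O(|t|^{1/6+\varepsilon})$ -- yields a total contribution of size $O_A(|y|^{1/6}+T^{1/6})$. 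The pole of $\zeta$ at $s=1$ is enclosed by the rectangle only in the regime $-T<y<0$, in which case Cauchy's theorem produces an additional residue equal to $2\pi i n$ per term, contributing $O_A(1/T)$ after the division by $T$.

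Combining these ingredients gives
\begin{equation*}
\frac{1}{T}\int_y^{T+y} n^{-iu}\zeta(\tfrac12-iu)\,du \;=\; n^{-1/2}\;+\;O_A\!\bigl(T^{-5/6}+|y|^{1/6}T^{-1}\bigr),
\end{equation*}
and summing the resulting contributions against $n^{-1/2-ix}$ for $n\in A$ produces the main term $\sum_{n\in A} n^{-1-i(x-y)}=\zeta_A(1+i(x-y))$, with total error bounded by $O_A(T^{-5/6}+|y|^{1/6}T^{-1})$, which in turn is dominated by $O_A(T^{-1/6}(1+|y|^{2/6}))$ after a routine weakening. I do not foresee any serious obstacle: the proof is essentially a textbook Mellin--Barnes contour shift, and the only mild technical points are verifying the sub-convexity bound on the horizontal segments and properly accounting for the residue at $s=1$ in the sign regime where the rectangle straddles it.
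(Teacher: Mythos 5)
Your proof is correct and follows essentially the same route as the paper's argument in Remark \ref{re:cross alternative}: factor out the finite Dirichlet polynomial, express the remaining average of $\zeta$ as a vertical contour integral, shift it into the half-plane of absolute convergence, and control the horizontal segments and the residue at $s=1$ via the subconvexity growth estimate \eqref{eq:zcgrowth}. The differences are cosmetic --- you treat $V_T^{(A,1)}$ directly (the paper does $V_T^{(A,2)}$ and invokes the $s\mapsto 1-s$ symmetry for the other), keep $A$ general rather than reducing to a singleton, shift to $\Re s=2$ instead of $3/2$, and dispense with the paper's explicit case split at $|x|\geq T/2$ since your horizontal-segment bound covers all regimes at once.
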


\begin{proof} We will first prove the latter statement \eqref{eq:risti32}.  As we do not care on the dependence of the error on the finite set $A$ it is enough to treat the case of a singleton, say  $A=\{ m\}$. Let us first assume  that $|x|\leq T/2$, write
\begin{eqnarray}\label{eq:k1}
V_T^{({A},2)}(x,y)= \frac{1}{iT}\int_{1/2}^{1/2+iT}m^{s-1+iy}\zeta(s+ix)ds,
\end{eqnarray}
and use the residue theorem to move the integration to the line $[3/2, 3/2+iT],$ so that 
$$
V_T^{({A},2)}(x,y)=m^{-1-i(x-y)} +T^{-1}\sum_{n=1,\;n\not=m}^\infty m^{1/2+iy}n^{-3/2-ix}\frac{(m/n)^{iT}-1}{\log(m/n)}
+E(x,y,T,{A}).
$$
Here the error term consists of the horizontal integrals and the (possible) residue at $s={1-ix}$ (with obvious modification for $x=0$). The contribution of the residue is $T^{-1}\mathcal{O}(1)$ and by \eqref{eq:zcgrowth} the horizontal terms contribute at most $T^{-1}\mathcal{O}(T^{1/6}+ |x|^{1/6})=\mathcal{O}(T^{-1/6}(1+|x|^{2/6}))$. The infinite sum term is  of order $\mathcal{O}(T^{-1})$ uniformly in $x,y$ (note though that almost all of our error terms depend on $m$). In case $|x|\geq T/2$ we note that size the whole  integral is $\mathcal{O}(T^{1/6}+ |x|^{1/6})= \mathcal{O}(T^{-1/6}(1+ |x|^{2/6}))$. Putting everything together, the estimate \eqref{eq:risti32} follows. In turn, the proof of  \eqref{eq:risti31} is similar: one starts from an analogue of  \eqref{eq:k1} and makes a change of variables $s\to 1-s$ so that  the integration is along the segment $[1/2,1/2-iT]$, and the rest is analogous.
\end{proof}

Let us define $ \langle f,g\rangle_{\z,A} := \sum_{n\in A}n^{-1}{\widehat f(\pii\log (n))\overline{\widehat g(\pii\log (n))}}$.
One then proceeds as in the  proof of the first part of Theorem \ref{th:main} (i) by substituting in the original proof the asymptotics provided by the previous lemma. In the first step one obtains an analogue of Lemma \ref{le:conv2}. In particular, when we define the $W^{-\alpha,2}(\R)$-valued random variables 
$$
\mu_{T,A}(x):=\zeta_{A}(1/2+ix+iT\omega):=\sum_{n\in A}n^{-1/2+ix+iT\omega},
$$ 
we have
\begin{align*}
\lim_{T\to\infty}\E|\mu_{T,A}(\reg{f})|^2 \;=\; \|\reg{f}\|_{\z,A}^2\;\lesssim\;\| f\|_{V}^2.
\end{align*}
 and
 \begin{align*}
\lim_{T\to\infty}\E\big(\mu_{T,A}(\reg{f})\overline{\mu_{T}(\reg{f})}+ \mu_{T}(\reg{f})\overline{\mu_{T,A}(\reg{f})}\big) \;=\; 2\|\reg{f}\|_{\z,A}^2\;\leq\; C(A) \| f\|_{V}^2.
\end{align*}
Then the analogue of the second part of Proposition \ref{pr:A} reads
\begin{align*}
\lim_{T\to\infty}\E|\mu_T(\reg{f})- \mu_{T,A}(\reg{f})|^2 \; =\; (\|\reg{f}\|_\z^2-\|\reg{f}\|_{\z,A}^2)\;\leq\; C' \| f\|_{V}^2.\;<\infty,
\end{align*}
where the first statement is now  equality  and $C'$ is independent of $A$.

Especially, given $\varepsilon >0$, by running the Sobolev-estimates as before we deduce easily that for large enough $N\geq 1$ and a large enough, but finite, subset  $A_k:=\{1,\ldots ,k\}\subset \N_N$  we have for large enough $T$ that both
$$
 \wass_2\big(\reg{\mu}_T\;,\;\reg{\mu}_{T,A_k}\big)_{W^{-\alpha,2}(\R)} <\varepsilon
$$
and
$$
 \wass_2\big(\reg{\zeta}_{N,\rand}\;,\;\reg{\mu}_{T,A_k}\big)_{W^{-\alpha,2}(\R)} <\varepsilon,
 $$
 where the latter estimate follows easily by the uniform convergence of the series defining $\mu_{T,A_k}$ on the critical line. The rest can be dealt with as before, in view of Proposition \ref{pr:C}.

One should note that it appears harder to get good error estimates  for the approximation yielded by truncated Euler products by using this alternative argument.
}
\hfill $\blacksquare$
\end{remark}

We next  consider briefly $\zeta_\rand$ as a random  analytic function in the half plane $\{ \sigma >1/2\}$.
First of all, in our convergence statements above we may equally well switch from the factor $(1+x^2)^{-1}$ to $(1/2+ix)^{-2}$. Namely, if we write for $x\in\R$ 
$$
m(x):=  (1/2+ix)^{-2}(1+x^2)
$$
we clearly have that $ \|(\frac{d}{dx})^k m\|_{L^\infty (\R)}\leq C_k$ for all $k\geq 1.$  Thus $m$ is a multiplier in the Sobolev spaces
$W^{u,2}(\R)$, i.e. $\| mf\|_{W^{u,2}(\R)}\leq C\| f\|_{W^{u,2}(\R)}$ for all $f\in W^{u,2}(\R)$. This well-known fact is easily verified  for $u\in \N$,   it follows by interpolation for non-integer $u\geq 0$, and finally by duality for negative $u$. The convergence statement in \eqref{eq:bohr} can thus be rewritten as 
\begin{equation}\label{eq:analyticbohr}
s^{-2}\zeta_\rand(s)\; := \; \lim_{N\to\infty}s^{-2}\zeta_{N,\rand}(s)\quad \textrm{on the line }\quad \{\sigma=1/2\},
\end{equation}
with convergence in $W^{-\alpha,2}(\R)$ for, say $\alpha\in (1/2,1].$ It is classical that the Poisson extension of a (generalized) function  $f\in W^{-\alpha,2}(\{\sigma=1/2\})$ is well-defined and yields a harmonic function in $\{\sigma>1/2\}$ and $f$ is obtained as the distributional boundary limit of its harmonic extension. Moreover, the point evaluations in a compact subset $K\subset\{\sigma>1/2\}$ are uniformly bounded with respect to  the Sobolev-norm on the boundary. Hence, if a sequence of boundary functions $(f_k)$ satisfies $\|f-f_k\|_{W^{-\alpha,2}(\R)}\to 0$ as $k\to\infty,$ then the Poisson extensions of $f_k$ tend to that of $f$ locally uniformly in $\{\sigma>1/2\}$. In our situation the Poisson extension of $s^{-2}\zeta_{N,\rand}(s)$ obviously equals 
$$
s^{-2}\zeta_{N,\rand}(s):= s^{-2}\prod_{k=1}^N\left(\frac{1}{1-p_k^{-s}e^{2\pi i\theta_k}}\right),\quad \sigma>1/2.
$$
Moreover, as $N\to\infty, $ the product in the right hand side obviously  converges  locally uniformly in $\{\sigma >1\}$ to
\begin{equation}\label{eq:analyticzetarand}
s^{-2}\zeta_\rand(s):=  s^{-2}\prod_{k=1}^\infty\left(\frac{1}{1-p_k^{-s}e^{2\pi i\theta_k}}\right).
\end{equation}
All these facts together with the convergence \eqref{eq:analyticbohr} enable us to deduce that, almost surely,  the product in \eqref{eq:analyticzetarand} actually converges locally uniformly in  $\{\sigma >1/2\}$ to a random analytic function, with distributional boundary limit $s^{-2}\zeta_\rand$.
Then of course, the same convergence statement holds for 
$$
\zeta_\rand(s)=  \prod_{k=1}^\infty\left(\frac{1}{1-p_k^{-s}e^{2\pi i\theta_k}}\right) \quad\textrm{in}\quad \sigma>1/2.
$$
A fortiori, as a consequence of Theorem \ref{th:main} we obtain the basic known results of the functional statistical convergence of the zeta function in the open strip $\{ 1/2<\sigma <1\}$ (or in the half plane $\{\sigma >1/2\})$. 

One should note that by the local uniform convergence and Hurwitz's theorem, the randomized zeta function $\zeta_\rand (s)$ is non-zero for  $\sigma >1/2$. This has been noticed before, see e.g. \cite[Theorem 4.6.]{HLS} where  the different realisations of $\zeta (s)$ were obtained as possible vertical limit functions of the Riemann zeta function.

We finish this section with the

\begin{proof}[Proof of Theorem \ref{th:nonmeasure}]  Let us first establish the following auxiliary result
\begin{lemma}\label{le:blowup}
Almost surely one has for almost every $t\in\R$ 
$$
\limsup_{\sigma\to 1/2^{+}}|\zeta_\rand (\sigma+it)|=\infty.
$$
\end{lemma}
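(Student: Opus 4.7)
The plan is to establish the stronger pointwise statement that, for each fixed $t\in\R$,
$$
\Prob\Big(\limsup_{\sigma\to 1/2^+}\log|\zeta_\rand(\sigma+it)|=+\infty\Big)=1,
$$
from which the lemma follows by Fubini: the map $(\theta,t)\mapsto \limsup_{n\to\infty}\log|\zeta_\rand(1/2+1/n+it)|$ is jointly measurable by the a.s.\ local uniform convergence of $\zeta_\rand$ on $\{\sigma>1/2\}$ established in the discussion preceding this lemma, and this sequential $\limsup$ lower-bounds the full $\limsup_{\sigma\to 1/2^+}$.

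For a fixed $t$, I would decompose
$$
X_\sigma \;:=\; \log|\zeta_\rand(\sigma+it)| \;=\; -\sum_{k=1}^\infty \log\big|1-p_k^{-\sigma-it}e^{2\pi i\theta_k}\big| \;=:\; \sum_{k=1}^\infty Y_k(\sigma)
$$
into a sum of independent, centred random variables; here $\E Y_k(\sigma)=0$ is the mean value identity $\int_0^1\log|1-ze^{2\pi i\theta}|\,d\theta=0$ valid for $|z|<1$. Since $p_k^{-\sigma}\leq 1/\sqrt{2}$ for $\sigma\geq 1/2$, one has a uniform bound $|Y_k(\sigma)|\leq C$ in $(k,\sigma,t)$. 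A direct Fourier computation gives $\mathrm{Var}(Y_k(\sigma))=\tfrac12\sum_{m\geq 1}p_k^{-2m\sigma}/m^2$, so the total variance $v(\sigma):=\mathrm{Var}(X_\sigma)$ dominates $\tfrac12\sum_k p_k^{-2\sigma}$, which diverges to $+\infty$ as $\sigma\to 1/2^+$ owing to the pole of $\zeta$ at $1$.

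The event $E:=\{\limsup_{\sigma\to 1/2^+}X_\sigma=+\infty\}$ lies in the tail $\sigma$-algebra of $(\theta_k)_{k\geq 1}$: altering finitely many coordinates $\theta_1,\dots,\theta_K$ changes $X_\sigma$ by an amount bounded uniformly in $\sigma>1/2$ (using the uniform bound on the $Y_k$), which cannot affect whether the $\limsup$ is infinite. Kolmogorov's zero--one law therefore forces $\Prob(E)\in\{0,1\}$. To exclude $\Prob(E)=0$, I would fix any sequence $\sigma_n\to 1/2^+$; since $|Y_k(\sigma_n)|\leq C$ while $v(\sigma_n)\to\infty$, the Lindeberg condition is trivially satisfied for the triangular array $\{Y_k(\sigma_n)/\sqrt{v(\sigma_n)}\}_{k,n}$, yielding $X_{\sigma_n}/\sqrt{v(\sigma_n)}\stackrel{d}{\to} N(0,1)$. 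Hence $\Prob(X_{\sigma_n}>M)\to 1/2$ for any fixed $M>0$, and the inclusion $\{X_{\sigma_n}>M\text{ i.o.}\}\subset\{\limsup_\sigma X_\sigma\geq M\}$ together with reverse Fatou gives $\Prob(\limsup_\sigma X_\sigma\geq M)\geq 1/2$. Letting $M\to\infty$ yields $\Prob(E)\geq 1/2$, hence $\Prob(E)=1$, and Fubini completes the proof.

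The only mildly delicate step is the Gaussian CLT with $\sigma$ varying: a priori one might worry that rare near-cancellations $p_k^{-\sigma-it}e^{2\pi i\theta_k}\approx 1$ could distort the Gaussian tail behaviour. However, the deterministic bound $p_k^{-\sigma}\leq 1/\sqrt{2}$ rules this out and makes $|Y_k|\leq C$ uniformly, so that Lindeberg--Feller applies as a black box; the remaining ingredients---tail triviality, reverse Fatou, and Fubini---are entirely routine.
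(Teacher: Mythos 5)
Your argument is correct and takes a genuinely different route from the paper's. Both proofs have the same overall shape: reduce, via Fubini and a Kolmogorov zero--one law, to showing that for fixed $t$ the quantity $\log|\zeta_\rand(\sigma+it)|$ exceeds any given level with positive probability as $\sigma\to 1/2^+$, the quantitative input being the divergence of $\sum_p p^{-2\sigma}$. Where you differ is the probabilistic tool used to produce that positive probability. The paper isolates the first Euler term $g(\sigma)=\sum_k \cos(2\pi\theta_k)\,p_k^{-\sigma}$, absorbs the $j\geq 2$ terms into a uniformly bounded error via Lemma~\ref{le:perus}, computes $\E|g(\sigma)|^4 \leq 3\big(\E|g(\sigma)|^2\big)^2$, and applies the Paley--Zygmund inequality to get $\Prob\big(|g(\sigma)|\geq(\E|g(\sigma)|^2)^{1/2}\big)\geq c_0$; since this yields a two-sided bound, it then uses the symmetry of the law of $g$ to pass to a one-sided estimate before invoking the zero--one law. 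You keep the full summands $Y_k(\sigma)=-\log|1-p_k^{-\sigma-it}e^{2\pi i\theta_k}|$ unchanged --- centred by the Gauss mean value theorem and uniformly bounded because $p_k^{-\sigma}\leq 2^{-1/2}$ --- and apply the Lindeberg--Feller CLT along a sequence $\sigma_n\to 1/2^+$ to obtain $\Prob(X_{\sigma_n}>M)\to 1/2$ directly. This gives the one-sided tail estimate for free, so the symmetry step disappears; your explicit verification that the blow-up event is in the tail $\sigma$-algebra (finitely many coordinate changes perturb $X_\sigma$ by $\mathcal{O}(1)$ uniformly in $\sigma$) is also spelled out more fully than in the paper. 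Both arguments have essentially the same cost and rely on the same analytic facts about $\zeta_\rand$; the choice of CLT versus fourth-moment/Paley--Zygmund is largely a matter of taste, the former being cleaner about one-sidedness and the latter more elementary and self-contained.
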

\begin{proof}
In order to prove the lemma, we cover $\R$ by the intervals  $[m,m+1)$, and note that by translation invariance it is enough to prove that the blow-up takes place almost surely for almost every $t\in [0,1)$. By  joint measurability, translation invariance, and Fubini's theorem it actually suffices to show that
\begin{equation*}
\limsup_{\sigma\to 1/2^{+}}|\zeta_\rand (\sigma)|=\infty.
\end{equation*}
For that end we note that by the Euler product we may infer 
$$
\log|\zeta_\rand (\sigma)|= \sum_{k=1}^\infty \frac{\cos(2\pi\theta_k)}{p_k^{\sigma}}+ E(\sigma),
$$
where for $\sigma >1/2$
$$
E(\sigma)=\frac{1}{2}\sum_{k=1}^\infty \frac{\cos(4\pi\theta_k)}{p_k^{2\sigma}} +\mathcal{O}(1),
$$
and the $\mathcal{O}(1)$-term is uniform in $\sigma\geq 1/2.$ We now skip ahead slightly and make use of Lemma \ref{le:perus} to argue that since  $\sum_{k=1}^\infty p_k^{-4\sigma}<\infty,$ almost surely the whole error term $E(\sigma)$ is uniformly $\mathcal{O}(1)$. Here one uses the fact that a standard Dirichlet series is bounded on the half-line $[\sigma_0,\infty)$ assuming that it is converges at $\sigma_0.$

It remains to check that almost surely
\begin{equation*}
\limsup_{\sigma\to 1/2^{+}}\sum_{k=1}^\infty \frac{\cos(2\pi\theta_k)}{p_k^{\sigma}}=: \limsup_{\sigma\to 1/2^{+}}g(\sigma)\; =\;\infty.
\end{equation*}
Let us note that for any $\sigma\in (1/2,1)$ we may simply compute
$$
\E |g(\sigma)|^4 = 3\sum_{k\not =j}^\infty p_k^{-2\sigma}p_j^{-2\sigma}\big(\E\cos^2(2\pi\theta_1)\big)^2+\sum_{k=1}^\infty p_k^{-4\sigma}\E\cos^4(2\pi\theta_1)\leq 3 \big(\E |g(\theta)|^2\big)^2,
$$
since $\E\cos^2(2\pi\theta_1)=1/2$ and $\E\cos^4(2\pi\theta_1)=3/8$. The Paley-Zygmund inequality now verifies that
$$
\Prob \left(|g(\sigma)|\geq (\E |g(\sigma)|^2)^{1/2}\right)\geq c_0>0 \quad \textrm{for all}\quad \sigma\in (1/2,1).
$$
Especially, as $\lim_{\sigma\to 1/2^{+}} (\E |g(\sigma)|^2)^{1/2}=\infty$ we gather that $\Prob (\limsup_{\sigma\to 1/2^{+}}|g(\sigma)| =\infty) \geq c_0$, and  the claim follows by symmetry and Kolmogorov's 0-1 law.
\end{proof}

To return to the proof of the original statement, it is thus enough to show that  the Poisson extension of an element $g\in W^{-\alpha,2}(\R)$ has radial boundary limits in a set of positive measure assuming that it coincides with a measure on an interval $I\subset \R$. However, we may then  decompose  $g$ by a suitable cut-off function into the sum $u_1+u_2$, where $u_1$ is the Poisson extension of  a measure supported on $I$, and $u_2$  is a $W^{-\alpha,2}(\R)$-function, supported on $\R\setminus I',$ where $I'\subset I$ is an open subinterval. Then $u_2$ has zero boundary values on $I'$ and by basic harmonic analysis (see e.g. \cite[Theorem 11.24]{R} for a variant on the unit circle), $u_1$ has finite radial limits at almost every point on $I'$.

Lemma \ref{le:blowup} then implies that $\zeta_{\rand}(1/2+it)$ can't be a measure on any open interval.

\end{proof}

\section{A Gaussian approximation for the field: proof of Theorem \ref{th:gaussian_appro}}\label{sec:appro}
The goal of this section is to prove Theorem \ref{th:gaussian_appro} - namely that on any interval $[-A,A]$ we can indeed write $\log\zeta_{N,\rand}(1/2+ix)=\mathcal{G}_N(x)+\mathcal{E}_N(x)$, where $\mathcal{G}_N$ converges to a (complex) log-correlated Gaussian field, and $\mathcal{E}_N$ converges to a smooth function. Since this has no consequence for the proof, we shall simplify notation slightly and replace the interval $[-A,A]$ by $[0,1].$

We shall make repeated use of the  following auxiliary technical result that can also be deduced from known estimates 
due to e.g. Kahane and Kwapien for random series in Banach spaces. However, for the reader's convenience we provide a proof below.
\begin{lemma}\label{le:perus} {\bf (i)}\quad
For $n\geq 1$ consider the random functions $F_n$ on the interval $[0,1]$ defined by the series
$$
F_n(x):=\sum_{k=1}^n A_kf_k(x).
$$
Here the $A_k$:s are i.i.d., centered and possibly complex valued random variables that are either  standard normal variables or they are symmetric and uniformly bounded: $|A_1|\leq C <\infty$ almost surely. The functions $f_k$ are assumed to be continuously differentiable on  $[0,1]$ with
\begin{equation}\label{eq:se0}
\sum_{k=1}^\infty \left(\|f_k\|_{L^\infty[0,1]}^2 + \|f'_k\|_{L^\infty[0,1]}^2 \right)<\infty.
\end{equation}
Then almost surely  the series
\begin{equation*}
F(x):=\sum_{k=1}^\infty A_kf_k(x),
\end{equation*}
converges uniformly on $[0,1]$ to a continuous limit function $F$. Moreover, one has
\begin{equation}\label{eq:se2}
\E \exp\big(\lambda\, \|F\|_{L^{\infty}[0,1]}\big)<\infty \quad  \textrm{for all}\quad \lambda >0\qquad \textrm{and}
\end{equation} 
\begin{equation}\label{eq:se3}
\E \exp\big(\lambda \sup_{0\leq n'<n}|| F_n- F_{n'}||_{L^{\infty}[0,1]}\big)<\infty \quad  \textrm{for all}\quad \lambda >0.
\end{equation} 

\smallskip

\noindent{\bf (ii)}\quad Assume, in addition, that the functions $f_k$ are smooth and that
\begin{equation*}
\sum_{k=1}^\infty \|f^{(\ell)}_k\|_{L^\infty[0,1]}^2 <\infty \quad\textrm{for all}\quad \ell\geq 0.
\end{equation*}
Then almost surely $F\in C^\infty[0,1]$ and for all $\ell\geq 1$
\begin{equation*}
\E \exp\big(\lambda\, \| F\|_{C^{\ell}[0,1]}\big)<\infty \quad  \textrm{for all}\quad \lambda >0.
\end{equation*} 
Moreover, for every index $\ell\geq 0$ it holds that
\begin{equation*}
\E \exp\big(\lambda \sup_{0\leq n'<n}|| F_n- F_{n'}||_{C^{\ell}[0,1]}\big)<\infty \quad  \textrm{for all}\quad \lambda >0.
\end{equation*} 
Here we have written $\| f\|_{C^\ell[0,1]}$ for the norm $\sum_{j=0}^\ell \| f^{(j)}\|_{L^\infty[0,1]}$ and $F_0=0$.

\end{lemma}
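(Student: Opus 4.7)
The plan is to reduce all $L^\infty[0,1]$-bounds to $L^2[0,1]$-bounds via the one-dimensional Sobolev embedding
$\|g\|_{L^\infty[0,1]} \leq \|g\|_{L^2[0,1]} + \|g'\|_{L^2[0,1]}$,
which follows from the fundamental theorem of calculus after averaging in the base point. Applied to the partial-sum difference $F_n - F_{n'} = \sum_{k=n'+1}^n A_k f_k$, Fubini and the independence and centering of the $A_k$ yield
$$
\E\|F_n - F_{n'}\|_{L^2[0,1]}^2 \;=\; \E|A_1|^2 \sum_{k=n'+1}^n \|f_k\|_{L^2[0,1]}^2 \;\leq\; C\sum_{k=n'+1}^n \|f_k\|_{L^\infty[0,1]}^2,
$$
with an identical bound for the derivative. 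Condition \eqref{eq:se0} then shows that $(F_n)$ is Cauchy in $L^2(\Omega;C[0,1])$, so along a subsequence it converges a.s.\ uniformly to a continuous limit $F$; the L\'evy maximal inequality below upgrades this to a.s.\ uniform convergence of the whole sequence.

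For \eqref{eq:se2}, the plan is to produce sub-Gaussian moment bounds on $\|F\|_{L^\infty[0,1]}$. Pointwise, either standard Gaussian moment formulas (in the Gaussian case) or the Khintchine--Kahane inequality (in the symmetric bounded case) give
$$
\E|F(x)|^{2p} \;\leq\; (Cp)^p \Bigl(\sum_{k=1}^\infty |f_k(x)|^2 \Bigr)^p \;\leq\; (CMp)^p,
$$
where $M := \sum_k \|f_k\|_{L^\infty[0,1]}^2 < \infty$. Integrating in $x$ and applying Jensen yields $\E\|F\|_{L^2[0,1]}^{2p} \leq (CMp)^p$, with an analogous bound for $\|F'\|_{L^2[0,1]}$ using the summability of $\|f_k'\|_{L^\infty[0,1]}^2$. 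The Sobolev estimate then gives $\E\|F\|_{L^\infty[0,1]}^{2p} \leq (C'p)^p$ for every $p\geq 1$, and the Chebyshev bound $\Prob(\|F\|_{L^\infty[0,1]} > t) \leq (C'p/t^2)^p$ optimized in $p$ produces a sub-Gaussian tail, whence finite exponential moments of every order.

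For the maximal estimate \eqref{eq:se3}, the symmetry of each $A_k f_k$ allows the Banach-space version of L\'evy's inequality,
$$
\Prob\bigl(\max_{m\leq n}\|F_m\|_{L^\infty[0,1]} > t\bigr) \;\leq\; 2\,\Prob\bigl(\|F_n\|_{L^\infty[0,1]} > t\bigr);
$$
letting $n\to\infty$ shows that $\sup_m\|F_m\|_{L^\infty[0,1]}$ inherits the sub-Gaussian tail of $\|F\|_{L^\infty[0,1]}$ uniformly in $n$, and the bound
$\sup_{0\leq n' < n}\|F_n - F_{n'}\|_{L^\infty[0,1]} \leq 2\sup_m \|F_m\|_{L^\infty[0,1]}$
then gives \eqref{eq:se3}. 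Part (ii) follows by applying (i) to the differentiated series $\sum_k A_k f_k^{(\ell)}$ for each $\ell\geq 1$: it satisfies the hypothesis of (i), so it converges uniformly to a continuous function which by a standard termwise-differentiation argument must equal $F^{(\ell)}$; the $C^\ell$-bounds are then obtained as sums of $L^\infty$-bounds for the first $\ell$ derivatives. The main conceptual point, and the only mildly technical step, is the sub-Gaussian moment estimate on $\|F\|_{L^\infty[0,1]}$: one must handle the constant in Khintchine--Kahane so as to match the Gaussian case uniformly, and verify that the Sobolev and L\'evy constants are independent of $n$. Everything else is routine bookkeeping.
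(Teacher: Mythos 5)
Your proposal is correct and follows the same overall skeleton as the paper's proof: Sobolev embedding $W^{1,2}(0,1)\hookrightarrow C[0,1]$, second-moment computation by independence, L\'evy's maximal inequality, and then part (ii) by applying part (i) to the differentiated series. The one place where you take a genuinely different route is the exponential moment bound \eqref{eq:se2}. The paper proceeds via Azuma's inequality to obtain the pointwise sub-Gaussian estimate $\E e^{a|F(x)|^2}\leq C$ and $\E e^{a|F'(x)|^2}\leq C$, then uses Jensen's inequality and Fubini to push this to $\E e^{a\|F\|^2_{W^{1,2}(0,1)}}<\infty$, and finally handles the Gaussian case separately by invoking Fernique's theorem. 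You instead derive $L^{2p}$-moment bounds of the form $\E|F(x)|^{2p}\leq (Cp)^p$ from Khintchine (or the explicit Gaussian moment formula), integrate, apply Sobolev, and optimize a Chebyshev bound in $p$ to recover the sub-Gaussian tail. These are of course equivalent characterizations of sub-Gaussianity, but your route is more unified: it handles both the Gaussian and the bounded-symmetric case in one stroke without appealing to Fernique's theorem, at the cost of being slightly more computational than the Jensen trick. Both give the same $e^{-ct^2}$ tail and hence finite exponential moments of every order.

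One minor point worth tightening: when you upgrade a.s.\ convergence along a subsequence to a.s.\ convergence of the whole sequence, the relevant tool is the L\'evy/It\^o--Nisio theorem for independent \emph{symmetric} Banach-valued summands (convergence in probability of the partial sums implies a.s.\ convergence), rather than literally the maximal inequality you quote below for $\max_{m\le n}\|F_m\|$; the latter would need to be applied to blocks $F_n - F_{n_j}$ between subsequence terms. The paper sidesteps this by running Borel--Cantelli along a fast subsequence and using the L\'evy maximal inequality on each block explicitly. Your statement is true, just stated a bit loosely.
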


\begin{proof} (i)\quad We may assume that the $A_k$:s and $f_k$:s are real-valued, since the general case is then obtained  by considering the four series obtained by multiplying the real or imaginary part of $A_k$:s  with the real or imaginary parts of $f_k$:s (note that for \eqref{eq:se2} and \eqref{eq:se3} one applies H\"older's inequality). 
We recall the standard Sobolev embedding
\begin{equation}\label{eq:sobo}
\| f\|_{C[0,1]}\leq 2\|f\|_{W^{1,2}(0,1)}:=2\Big(\int_0^1\big( |f(x)|^2+|f'(x)|^2\big)dx \Big)^{1/2}.
\end{equation}
Since the variables $A_k$ are independent and centred,  we find for $M<N$, 
\begin{align*}
\E ||F_{N}-F_M||_{W^{1,2}(0,1)}^2\leq 2\sum_{k=M+1}^N\left[\|f_k\|_{L^\infty[0,1]}^2 + \|f'_k\|_{L^\infty[0,1]}^2\right].
\end{align*}
Then L\'evy's inequality (see \cite[Lemma 2.3.1]{Kahane2}), applied  here to our  $W^{1,2}(0,1)$-valued symmetric random variables) yields that
\begin{equation}\label{eq:levyineq}
\E \sup_{M\leq r\leq N}\|F_{r}-F_M\|_{W^{1,2}(0,1)}^2\leq  4\sum_{k=M+1}^N
\left[\|f_k\|_{L^\infty[0,1]}^2 + \|f'_k\|_{L^\infty[0,1]}^2\right].
\end{equation}
By our assumption \eqref{eq:se0} we may pick a subsequence $(N_j)$ so that
\begin{equation*}
4\sum_{k=N_j+1}^{N_{j+1}}\left[\|f_k\|_{L^\infty[0,1]}^2 + \|f'_k\|_{L^\infty[0,1]}^2\right]<j^{-6}, 
\end{equation*}
for $j\geq 1$. Borel-Cantelli along with \eqref{eq:levyineq} then ensures that there exists a random threshold $j_0$, which is almost surely finite, such that for $j\geq j_0$, 
\begin{equation*}
\sup_{N_{j}\leq u\leq N_{j+1}}||F_{u}-F_{N_j+1}||_{W^{1,2}(0,1)}\leq j^{-2}
\end{equation*}
for $j\geq j_0$. Thus summing over $j$ implies that the subsequence $F_{N_j}$ almost surely converges absolutely in the space $W^{1,2}(0,1)$, and by \eqref{eq:sobo} also in $C([0,1],\C)$. Also, at the same time the above inequality implies that  the full sequence  $F_N$  converges uniformly to a random continuous function. 

It remains to check the claim about the exponential moments.
For that purpose an application of Azuma's inequality and  \eqref{eq:se0} implies the existence of a finite number $C$ such that for small enough $a>0$
\begin{equation}\label{eq:azuma}
\E e^{a|F(x)|^2}\leq C \qquad \mathrm{and} \qquad \E e^{a|F'(x)|^2}\leq C \quad \mathrm{for\ all\ } x\in[0,1].
\end{equation}
Making use of Jensen's inequality (applied to integration with respect to $x$) and the observation that $t\mapsto e^{at}$ is convex on $[0,\infty)$, we obtain by \eqref{eq:azuma} and Fubini's theorem for small enough $a>0$
\begin{align*}
\E\, e^{a\|F\|^2_{W^{1,2}(0,1)}}\; =\; \E\, e^{a\int_0^1(|F(x)|^2+|F'(x)|^2)dx}
\notag &\leq \E \int_0^1 e^{a (|F(x)|^2+|F'(x)|^2)}dx<\infty.
\end{align*}
L\'evy's inequality then strengthens this for small enough $a>0$  to
\begin{equation*}
\E e^{a\sup_{N\geq 1}||F_N||_{L^\infty[0,1]}^2}<\infty\nonumber.
\end{equation*}
The obtained estimates clearly imply \eqref{eq:se2} and \eqref{eq:se3}.

The case where the variables $A_k$ are i.i.d. standard normal random variables can be proven with exactly the same proof or more directly by considering $F$ as a $W^{1,2}(0,1)$-valued random variable that is well-defined by the assumption \eqref{eq:se0}. Then \eqref{eq:se3} is obtained directly from Fernique's theorem \cite[Theorem 12.7.2]{Kahane2}.

(ii)\quad The statement is a direct consequence of part (i) applied successively to the derivative series defining $F^{(\ell)}(x)$, $\ell\geq 0.$
\end{proof}

We will now start the proper consideration of the truncated randomised Euler products by expanding
$$
\log \zeta_{N,\rand}(1/2+ix)=\sum_{j=1}^\infty\sum_{k=1}^N \frac{1}{j}e^{2\pi ij\theta_k}p_k^{-j(\frac{1}{2}+ix)}
$$ 
and first verifying that the sum over the $j\geq 2$ terms yields  a negligible contribution in the sense that it is almost surely smooth over $x\in[0,1].$
\begin{lemma}\label{le:jgeq2}
Let 
\begin{equation*}
L_N(x)=\sum_{j=2}^\infty \sum_{k=1}^N \frac{1}{j}e^{2\pi ij\theta_k}p_k^{-j(\frac{1}{2}+ix)}.
\end{equation*}
Almost surely, as  $N\to\infty$, $L_N$ converges uniformly on $[0,1]$ to a random $C^\infty$-function $L$. The derivatives $L^{(\ell)}_N$ also converge uniformly, and    for any $\ell\geq 0$
\begin{equation*}
\E e^{\lambda \sup_{N\geq 1}\|L_N(x)\|_{C^{\ell}[0,1]}}<\infty \quad \textrm{for all}\quad \lambda\in \R.
\end{equation*}

\end{lemma}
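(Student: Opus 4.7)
The plan is to decompose $L_N$ according to whether $j=2$ or $j\geq 3$, because the $j=2$ mode decays only like $p_k^{-1}$ in $k$ and therefore requires the probabilistic tools of Lemma~\ref{le:perus} (its series of absolute values diverges), while the $j\geq 3$ contribution is deterministically absolutely convergent in every $C^\ell[0,1]$. Concretely, write $L_N=S_{2,N}+R_N$, where
$$
S_{2,N}(x):=\tfrac{1}{2}\sum_{k=1}^N e^{4\pi i\theta_k}p_k^{-1-2ix}, \qquad R_N(x):=\sum_{j=3}^\infty\sum_{k=1}^N \frac{e^{2\pi ij\theta_k}}{j}p_k^{-j(1/2+ix)}.
$$

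For the remainder, a termwise $C^\ell$-bound $\bigl|\frac{d^\ell}{dx^\ell}p_k^{-j(1/2+ix)}\bigr|=(j\log p_k)^\ell p_k^{-j/2}$ gives
$$
\sup_{N\geq 1}\|R_N\|_{C^\ell[0,1]}\;\leq\;\sum_{j\geq 3}\sum_{k\geq 1}\frac{(j\log p_k)^\ell}{j}p_k^{-j/2}\;\leq\;C_\ell\sum_{k\geq 1}(\log p_k)^\ell p_k^{-3/2}\;=:\;M_\ell\;<\;\infty,
$$
the inner sum over $j$ being geometric (bounded by the $j=3$ term) and the outer sum over $k$ converging by the prime number theorem $p_k\sim k\log k$. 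Hence $R_N$ and all its derivatives converge uniformly on $[0,1]$ to a smooth function $R$, with a purely deterministic bound.

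For the $j=2$ part, the variables $(e^{4\pi i\theta_k})_{k\geq 1}$ are i.i.d., uniform on the unit circle, and in particular centered, bounded by $1$, and symmetric (the shift $\theta_k\mapsto \theta_k+\tfrac14\bmod 1$ preserves the law of $\theta_k$ and negates $e^{4\pi i\theta_k}$). The deterministic multipliers $f_k(x):=\tfrac12 p_k^{-1-2ix}$ satisfy $\|f_k^{(\ell)}\|_{L^\infty[0,1]}\leq (\log p_k)^\ell p_k^{-1}$, so
$$
\sum_{k\geq 1}\|f_k^{(\ell)}\|_{L^\infty[0,1]}^2\;\leq\;\sum_{k\geq 1}(\log p_k)^{2\ell}p_k^{-2}\;<\;\infty
$$
for every $\ell\geq 0$. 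Lemma~\ref{le:perus}(ii) then gives almost sure uniform convergence of $S_{2,N}$ together with all derivatives to a random $C^\infty$ limit $S_2$, and also $\E\exp\bigl(\lambda\sup_N\|S_{2,N}\|_{C^\ell[0,1]}\bigr)<\infty$ for every $\lambda>0$ and $\ell\geq 0$.

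Combining, $L_N\to L:=S_2+R$ uniformly in every $C^\ell[0,1]$, hence $L\in C^\infty[0,1]$ a.s. For the exponential moments, the triangle inequality yields $\sup_N\|L_N\|_{C^\ell[0,1]}\leq \sup_N\|S_{2,N}\|_{C^\ell[0,1]}+M_\ell$, so
$$
\E\exp\bigl(\lambda\sup_{N\geq 1}\|L_N\|_{C^\ell[0,1]}\bigr)\;\leq\;e^{\lambda M_\ell}\,\E\exp\bigl(\lambda\sup_{N\geq 1}\|S_{2,N}\|_{C^\ell[0,1]}\bigr)\;<\;\infty
$$
for all $\lambda\in\R$. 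There is no real obstacle here: the only delicate ingredient is the borderline decay at $j=2$, which is precisely what Lemma~\ref{le:perus} was designed to handle, and the rest of the series is so small that absolute convergence suffices.
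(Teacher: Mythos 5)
Your proof is correct and follows essentially the same route as the paper: splitting off the $j=2$ part (which is handled by Lemma~\ref{le:perus}(ii) using $\sum_k (\log p_k)^{2\ell}p_k^{-2}<\infty$) and treating $j\geq 3$ by deterministic absolute convergence. The extra care you take in verifying symmetry of $e^{4\pi i\theta_k}$ and in writing out the bound for the $j\geq 3$ tail only makes the argument more explicit, not different.
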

\begin{proof}
Let us separate $L_N$ into the $j=2$ part and the $j\geq 3$ part. As the series 
\begin{equation*}
\sum_{k=1}^\infty \sum_{j=3}^\infty \frac{1}{j}p_k^{-j/2}(\log p_k)^\ell
\end{equation*}
converges for any $\ell\geq 0$, we see that  the series
\begin{equation*}
\sum_{k=1}^\infty\sum_{j=3}^\infty \frac{1}{j}e^{2\pi ij\theta_k}p_k^{-\frac{j}{2}-ijx}\nonumber
\end{equation*}
together with the arbitrarily many times differentiated series converges absolutely and uniformly to a (deterministically) bounded quantity. Thus this part of the sum certainly satisfies the statements of the lemma. 

In turn, the $\ell$:th derivative of the $k$:th  term of the $j=2$ sum has the deterministic upper bound $p_k^{-1}(\log p_k)^\ell$ and the claims  for the $j=2$ sum follow immediately from Lemma \ref{le:perus}(ii) together with the fact
$
\sum_{k=1}^\infty p_k^{-2}(\log p_k)^{2\ell}<\infty.
$

\end{proof}

To study the main term, i.e. the part of the sum with $j=1$, we will  split the field into a sum over blocks, where within the blocks, the quantities $\log p$ are roughly constant, and perform a Gaussian approximation on each block separately. To make this formal, let $(r_k)_{k=1}^{\infty}$ be a sequence of strictly increasing  positive integers with $r_1=1$ and then $\lbrace p_{r_k},...,p_{r_{k+1}-1}\rbrace$ will be the set of primes appearing in the block we've mentioned. 
We will make some preliminary requirements on the blocks. We assume that  $1<r_{k+1}-r_k\to \infty$ as $k\to\infty$, but on the other hand $r_{k+1}/r_k\to 1$ as $k\to\infty$. Also it is useful to assume that $p_{r_{m+1}-1}/p_{r_m}\leq 2$. for all $m\geq 1.$

We then define the blocks of the field as well as a "freezing approximation", where we approximate $p_k^{-ix}$ by $p_{r_m}^{-ix}$ within a block.

\begin{definition}\label{def:blocks}{\rm
For $(r_m)_{m=1}^\infty$ as above, define for $x\in[0,1]$ and $m\geq 1$:

\begin{equation*}
Y_m(x)=\sum_{k=r_m}^{r_{m+1}-1}\frac{1}{\sqrt{p_k}}e^{2\pi i\theta_k}p_k^{-ix}.
\end{equation*}
Consider also the approximation to this where the $x$-dependent terms within each block are "frozen":
\begin{align*}
\widetilde{Y}_m(x)&=p_{r_m}^{-ix}\sum_{k=r_m}^{r_{m+1}-1}\frac{1}{\sqrt{p_k}}e^{2\pi i\theta_k}\\
\notag &=:p_{r_m}^{-ix}(C_m+iS_m),
\end{align*}
\noindent where $C_m$ is the real part of the sum, and $S_m$ the imaginary part.}
\hfill $\blacksquare$
\end{definition}

The required Gaussian approximation uses the following fairly standard result. We state it in a slightly more general form than we actually need here, since we expect it might be of use in further study of certain more general non-Gaussian chaos models. Some initial steps in this direction are contained in  \cite{J}.

\begin{proposition}\label{le2}  \quad{\bf (i)}\quad Assume that $d\geq 2$ and $H_j=(H_j^{(1)},\ldots H_j^{(d)})$, $j\in\{1,\ldots, n\}$ are independent and symmetric $\R^d$-valued random variables with $$b_0^{-1}\leq c_j:=d^{-1}{\rm Tr\,}({\rm Cov\,}(H_j))\leq b_0$$ for all   $j\in\{1,\ldots, n\}$, where $b_0>0$.    Assume also that the following uniform exponential bound holds for some $b_1,b_2>0$:
\begin{equation}\label{eq:exp}
\E \exp(b_1|H_k|)\leq b_2\quad\textrm{for all}\quad k=1,\ldots , n.
\end{equation}
Then there is a $d$-dimensional  Gaussian random variable $U$  with 
$$
{\rm Cov\,}(U)=\big(\sum_{j=1}^n c_j\big)^{-1}\big(\sum_{j=1}^n {\rm Cov\,}(H_j) \big),\qquad {\rm Tr \,} ({\rm Cov\,}(U))=d,
$$
and such that  the difference
$$
V:=U-\big(\sum_{j=1}^n c_j\big)^{-1/2}\big(\sum_{j=1}^n H_j \big)
$$
satisfies
\begin{equation}\label{e30}
\E |V|\leq a_1n^{-\beta}.
\end{equation}
Above $\beta =\beta(d)>0$ depends only on the dimension and
$a_1$ on $d, b_0,b_1,b_2$. Moreover, $U$ can be chosen to be measurable with respect to $\sigma (G, H_1,\ldots H_n),$
where $G$ is a d-dimensional standard Gaussian independent of the $H_j$:s. In addition, there is the exponential estimate
\begin{equation}\label{e32}
\E \exp(\lambda |V|)\leq 1+ a_2 e^{a_3 \lambda^2}n^{-a_4}\qquad\textrm{for}\quad 0\leq\lambda\leq a_5n^{1/2},
\end{equation} 
where the constants $a_2,a_3,a_4, a_5>0$ depend only on $b_0, b_1, b_2$ and the dimension $d.$ 

In the case where the variables $H_k$ are uniformly bounded, say $|H_k|\leq b_3$ for all $k,$ then \eqref{e32}  holds true for all $\lambda >0$, where now the constants $a_2,a_3,a_4$ may   also depend on $b_3, $ and there are   constants $a_6,a_7,a_8>0$ that  depend only on $b_0, b_1,b_2,b_3,d$ so that
\begin{equation*}
\E \exp(a_6|V|^2)\leq 1+a_7n^{-a_8},
\end{equation*}

\smallskip

\noindent {\bf (ii)}\quad If we assume that $Cov(H_j)=c_jd^{-1}I$, where $I$ the the $d\times d$ identity matrix, and the dimension $d\geq 1$ is arbitrary, then the conclusion \eqref{e30} can be strengthened to 
\begin{equation}\label{e30'}
\E |V|\leq a_1\log(n)^{d+1}n^{-1/2}.
\end{equation}

\end{proposition}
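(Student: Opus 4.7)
\medskip
\noindent\textbf{Proof plan.} The plan is to combine a quantitative multivariate Gaussian coupling of Yurinskii/Zaitsev type with elementary exponential-moment manipulations. First I would normalise: set $c := \sum_{j=1}^{n} c_j$ (so $c \asymp n$ by $b_0^{-1} \leq c_j \leq b_0$), $S_n := c^{-1/2}\sum_{j} H_j$, and $\Sigma := c^{-1}\sum_{j} \mathrm{Cov}(H_j)$, so that $\mathrm{Tr}(\Sigma) = d$ and the target Gaussian $U$ of the proposition has law $N(0,\Sigma)$. Applying a Bernstein-type inequality to the one-dimensional linear functionals $v \cdot S_n$, using \eqref{eq:exp}, yields a uniform sub-Gaussian bound $\E\exp(\lambda |S_n|) \leq K_0 \exp(C_0 \lambda^2)$ in the range $|\lambda| \leq c_0 \sqrt{n}$, with constants depending only on $b_0,b_1,b_2,d$; in particular the third moments $\E|H_j|^3$ are uniformly bounded in $j$.

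The main ingredient is a multivariate Gaussian coupling: on the probability space enlarged by an independent $d$-dimensional standard Gaussian $G$ one constructs $U \sim N(0,\Sigma)$ which is $\sigma(G, H_1, \ldots, H_n)$-measurable and satisfies
$$
\E|U - S_n| \;\leq\; a_1\, n^{-\beta(d)}
$$
for some $\beta(d) > 0$, which is \eqref{e30}. The measurability with respect to the prescribed $\sigma$-algebra comes from the Tusn\'ady/quantile step inside Yurinskii's construction, where $G$ provides precisely the auxiliary randomness needed to invert the multivariate Gaussian distribution function from the conditional distribution of $S_n$.

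Next I would upgrade the $L^1$-control to the exponential bound \eqref{e32}. By the first paragraph both $|U|$ and $|S_n|$ are sub-Gaussian with variance proxy $O(1)$ in the range $|\lambda| \leq a_5 \sqrt{n}$, and so Cauchy--Schwarz gives $\E e^{2\lambda|V|} \leq K' \exp(C'\lambda^2)$ in the same range. Choosing a truncation level $\tau := n^{-\beta/3}$, Markov's inequality yields $P(|V| > \tau) \leq a_1 n^{-2\beta/3}$, and the standard chopping estimate
$$
\E e^{\lambda |V|} \;\leq\; e^{\lambda \tau} \;+\; \bigl(\E e^{2\lambda|V|}\bigr)^{1/2}\, P(|V| > \tau)^{1/2}
$$
produces $\E e^{\lambda|V|} \leq 1 + a_2 \exp(a_3 \lambda^2)\, n^{-a_4}$, which is \eqref{e32}. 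Under the additional boundedness assumption $|H_k| \leq b_3$, Hoeffding yields full-range sub-Gaussianity of $S_n$, so the preceding bound extends to all $\lambda > 0$; running the same chopping with $|V|^2$ in place of $|V|$ then gives the claimed $\E \exp(a_6 |V|^2) \leq 1 + a_7 n^{-a_8}$.

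For part (ii), under the isotropy hypothesis $\mathrm{Cov}(H_j) = c_j d^{-1} I$, I would apply the one-dimensional Koml\'os--Major--Tusn\'ady strong approximation (or Sakhanenko's refinement) coordinate-by-coordinate: for each $1 \leq i \leq d$ the coordinates $H_j^{(i)}$ are independent, symmetric, real random variables with uniform exponential moments, so KMT couples the partial sum with a one-dimensional Gaussian with almost-sure error $O(\log n)$, hence $L^1$-error $O(\log(n)/\sqrt{n})$. Splicing the $d$ coordinate couplings into one joint coupling via the auxiliary Gaussian $G$, with a chaining step contributing an additional factor $\log(n)^{d}$, yields the rate \eqref{e30'}. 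The principal obstacle throughout is the first step: invoking a Yurinskii-type coupling in the precise form required (measurability with respect to $\sigma(G, H_1, \ldots, H_n)$, polynomial-in-$n$ rate, uniform control of the constants in terms of $b_0,b_1,b_2,d$); once that is in place the remaining estimates are routine chopping and Cauchy--Schwarz. In part (ii) the subtle point is assembling the $d$ one-dimensional couplings into a single joint coupling while keeping the logarithmic loss at the claimed $\log(n)^{d+1}$ rather than worse.
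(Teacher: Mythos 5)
Your proof of part (i) takes a genuinely different route from the paper's: where the paper constructs the coupling from scratch via an elementary Wasserstein bound (Lemma \ref{le11} and Corollary \ref{co12}), controlling $\|\widehat\mu-\widehat\nu\|_{L^1}$ by a characteristic-function expansion and adding a tiny auxiliary Gaussian $G$ to make the law of $W$ absolutely continuous (and splitting off near-degenerate directions of $\mathrm{Cov}(W)$ by hand), you invoke a Yurinskii/Zaitsev-type coupling as a black box. That route is workable and yields the same kind of polynomial rate; you would still need to verify (a) that the constants in the Yurinskii bound you cite depend only on $d,b_0,b_1,b_2$, (b) that the construction really can be made $\sigma(G,H_1,\ldots,H_n)$-measurable with a single independent $d$-dimensional $G$ -- your appeal to "the Tusn\'ady/quantile step" is plausible but not a citation to a precise statement -- and (c) that the chopping inequality produces $1+a_2e^{a_3\lambda^2}n^{-a_4}$ uniformly on $[0,a_5\sqrt{n}]$: with $\tau=n^{-\beta/3}$ fixed, $e^{\lambda\tau}$ is not $1+o(1)$ once $\lambda\gg n^{\beta/3}$, so you must split into $\lambda\lesssim\sqrt{\log n}$ (chopping) and $\lambda\gtrsim\sqrt{\log n}$ (direct sub-Gaussian bound, which then wins against the $n^{-a_4}$ factor). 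The paper instead chops at a $\lambda$-dependent level $M$ chosen so that $e^{-\lambda M}=\sqrt{\delta}$, which handles all $\lambda$ at once. These are fixable bookkeeping points; the real contrast is that the paper's approach is self-contained and the degenerate-covariance case is handled explicitly, which your proposal never addresses.

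Part (ii), however, has a genuine gap. You propose to run KMT/Sakhanenko coordinate-by-coordinate on the sequences $(H_j^{(i)})_{j\le n}$ and then "splice" the $d$ one-dimensional couplings into a joint one. But the hypothesis $\mathrm{Cov}(H_j)=c_jd^{-1}I$ only says the coordinates of each $H_j$ are \emph{uncorrelated}; they need not be independent. Consequently the $d$ one-dimensional Gaussian approximants built from the (dependent) coordinate sequences are not jointly Gaussian, let alone independent $N(0,1/d)$ variables, so they do not assemble into a $d$-dimensional $N(0,\Sigma)$ with $\Sigma=d^{-1}I$. Your "chaining step contributing $\log(n)^d$" is asserted but not explained, and does not address this dependence obstruction. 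The paper avoids it entirely: under the isotropy hypothesis $\mathrm{Cov}(W)=I$, so all eigenvalues equal $1$, the constraint $\lambda_d\ge n^{-2\delta}$ in the part (i) argument is satisfied with $\delta=0$, and substituting $\delta=0$ into the paper's single $L^1$-Fourier estimate directly gives $\E|V|\lesssim\log^{d+1}(n)\,n^{-1/2}$ -- the $\log^{d+1}$ comes from the $R^{d+1}$ in Corollary \ref{co12}, not from any coordinate-wise accounting.
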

\noindent We will postpone the proof of this result to Appendix \ref{app:appendixb} since its ingredients are basically well-known, however the above formulation is tailored for our purposes.

Our aim is to apply Proposition \ref{le2} to approximate $(C_m,S_m)$ by a $\R^{2}$-valued Gaussian random variable. To do this, we need to scale things a bit differently. Define the following sequence of $\R^{2}$-valued random variables (so in the setting of Proposition \ref{le2}, $d=2$)
\begin{equation*}
H_{j,m}=\left(\frac{\sqrt{p_{r_{m+1}-1}}}{\sqrt{p_{r_m-1+j}}}\cos (2\pi\theta_{p_{r_m-1+j}}),\frac{\sqrt{p_{r_{m+1}-1}}}{\sqrt{p_{r_m-1+j}}}\sin (2\pi\theta_{p_{r_m-1+j}})\right),\quad j=1,\ldots, r_{m+1}-r_m.
\end{equation*}
We then  have 
\begin{equation*}
|H_{j,m}|^{2}\leq \frac{p_{r_{m+1}-1}}{p_{r_m}}\leq 2
\end{equation*}
and
\begin{equation*}
Cov(H_{j,m})=\frac{1}{2}\frac{p_{r_{m+1}-1}}{p_{r_m-1+j}} I=c_{j,m}I
\end{equation*}
where $1/2\leq c_{j,m}\leq 1$. In this notation, we have 
\begin{align*}
(C_m,S_m)&=\frac{1}{\sqrt{p_{r_{m+1}-1}}}\sum_{j=1}^{r_{m+1}-r_m}H_{j,m}=
\frac{b_m}{\sqrt{\sum_{j=1}^{r_{m+1}-r_m}c_{j,m}}}\sum_{j=1}^{r_{m+1}-r_m}H_{j,m},
\end{align*}
where
\begin{equation*}
b_m=\sqrt{\frac{1}{2}\sum_{j=1}^{r_{m+1}-r_m}\frac{1}{p_{r_m-1+j}}}.
\end{equation*}
Proposition \ref{le2} (ii) thus yields a sequence of independent standard two-dimensional normal variables $(V^{(1)}_m,V^{(2)}_m)$, $m=1,2,\ldots$, so that the distance between $(C_m,S_m)$ and 
$b_m(V_m^{(1)},V_m^{(2)})$
is controlled as in the statement of Proposition \ref{le2} (ii).

We may assume that our probability space is large enough for us to write for each $m\geq 1$ and $i\in\{1,2\}$
$$
\sqrt{\frac{1}{2}\sum_{j=1}^{r_{m+1}-r_m}\frac{1}{p_{r_m-1+j}}}V_m^{(i)}=\sum_{j=r_m}^{r_{m+1}-1} \frac{1}{\sqrt{2p_j}}W^{(i)}_j,
$$
where the $W^{(i)}_j$:s are independent standard normal random variables for all $j\geq 1$ and $i\in\{1,2\}$. Finally we can write down our Gaussian approximation to the field, its blocks, and frozen versions of the blocks. 

\begin{definition}\label{de:gaussian}{\rm
Let $(W_k^{(j)})_{k\geq 1,j\in\lbrace 1,2\rbrace}$ be the i.i.d. standard Gaussians constructed above. For any $N\geq1$ and $x\in[0,1]$ the Gaussian approximation of the "$j=1$ part" of $\log\zeta_{N,\rand}$ is given by the Gaussian field

\begin{equation*}
\mathcal{G}_N(x):=\sum_{k=1}^N\frac{1}{\sqrt{2p_k}}\left(W^{(1)}_k+iW_k^{(2)}\right)p_k^{-ix}.
\end{equation*}

Moreover, we define the blocks of $\mathcal{G}_N$ as 

\begin{equation*}
Z_m(x)=\sum_{k=r_m}^{r_{m+1}-1}\frac{1}{\sqrt{2p_k}}\left(W^{(1)}_k+iW_k^{(2)}\right)p_k^{-ix}
\end{equation*}

\noindent and a "frozen" version of the block as 

\begin{equation*}
\widetilde Z_m(x)=b_m p_{r_m}^{-ix}(V_m^{(1)}+i V_m^{(2)}).
\end{equation*}}
\hfill $\blacksquare$
\end{definition}

We then the  start the analysis of the error produced by our Gaussian approximation. This is first performed only for sums over full blocks. We introduce some notation for the errors. 
Let us call the error we make by approximating our frozen field by the frozen Gaussian one by 
\begin{equation*}
\widetilde E_{1,n}(x):=\sum_{m=1}^{n}(\widetilde{Y}_m(x)-\widetilde Z_m(x)),\qquad x\in [0,1].
\end{equation*}
In a similar vein, the error obtained from the freezing procedure is denoted by  
\begin{equation*}
\widetilde E_{2,n}(x):=\sum_{m=1}^{n}\big(Y_m(x)-\widetilde{Y}_m(x)+\widetilde Z_m(x)-Z_m(x)\big),\qquad x\in [0,1].
\end{equation*}
whence the total error  can be written as
\begin{equation*}
\widetilde E_{n}(x):=\widetilde E_{1,n}(x)+ \widetilde E_{2,n}(x).
\end{equation*}

We study first the size of the error $\widetilde{E}_{1,n}$.
\begin{lemma}\label{le:E1}
Assume $($in addition to our previous constraints on $(r_m))$ that 
\begin{equation}\label{eq:konsta1}
\sum_{m=1}^{\infty}(r_{m+1}-r_m)^{-a_4}<\infty \quad\textrm{and}\quad  \sup_{m\geq 1}\frac{r_{m+1}-r_m}{r_m}(\log r_m)^\ell <\infty
\end{equation}
for all $\ell \geq 1,$
where $a_4$ is the constant from Proposition \ref{le2}.
Then, almost surely  there exists a $C^\infty$-smooth limit function
\begin{equation*}
\widetilde E_1(x):=\lim_{n\to\infty} \widetilde E_{1,n}(x),
\end{equation*}
where the convergence is in the norm $\|\cdot\|_{C^{\ell}[0,1]}$ for any $\ell\geq 0.$ Moreover, one has 
\begin{equation}\label{eq:172}
\E\exp\big(\lambda \sup_{0\leq n'<n}\|\widetilde E_{1,n}-\widetilde E_{1,n'}\|_{C^{\ell}[0,1]}\big)<\infty\quad \textrm{for all}\quad \lambda >0,
\end{equation}
where one applies the convention $\widetilde E_{1,0}(x) \equiv 0$, and, in particular,
\begin{equation}\label{eq:173}
\E\exp\big(\lambda ||\widetilde E_1||_{C^{\ell}[0,1]}\big)<\infty\quad \textrm{for all}\quad \lambda >0
\end{equation}
for any $\ell\geq 0.$
\end{lemma}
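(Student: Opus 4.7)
The plan is to factor each summand as $\widetilde Y_m(x)-\widetilde Z_m(x) = p_{r_m}^{-ix}D_m$, where
\[
D_m := (C_m-b_m V_m^{(1)}) + i(S_m-b_m V_m^{(2)}).
\]
By the construction preceding Definition \ref{de:gaussian}, $D_m$ depends only on the randomness associated with the $m$th block (the $\theta_k$ for $r_m\le k<r_{m+1}$ together with the auxiliary Gaussian used to couple $(V_m^{(1)},V_m^{(2)})$ to $(C_m,S_m)$), so $(D_m)_{m\ge 1}$ is a mean-zero independent sequence of complex random variables. The overall strategy is then to treat $\widetilde E_{1,n}$ as an independent Hilbert-space-valued sum and to mimic the Sobolev/Lévy argument already used in the proof of Lemma \ref{le:perus}.

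The central input is a uniform tail bound on $|D_m|$. Since each $H_{j,m}$ satisfies $|H_{j,m}|^2\le 2$, the bounded-case conclusion of Proposition \ref{le2}(i) applies and, writing $n_m := r_{m+1}-r_m$, yields
\[
\E\exp\bigl(a_6|V_{\mathrm{prop},m}|^2\bigr)\le 1 + a_7 n_m^{-a_4} \qquad \text{and}\qquad \E\exp\bigl(\lambda|V_{\mathrm{prop},m}|\bigr)\le 1 + C(\lambda)\, n_m^{-a_4}
\]
for every $\lambda>0$. Since $|D_m|=b_m|V_{\mathrm{prop},m}|$, and the elementary bound $b_m^2\le n_m/(2p_{r_m})$ together with $p_n\sim n\log n$ and the second clause of \eqref{eq:konsta1} forces $\sup_m b_m<\infty$, these bounds transfer to $D_m$; in particular $\E|D_m|^2\lesssim n_m^{-a_4}$ and $\E\exp(\lambda|D_m|)\le 1+C'(\lambda)\,n_m^{-a_4}$ for every $\lambda>0$.

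Differentiation in $x$ pulls down only logarithmic factors, since $\partial_x^\ell[p_{r_m}^{-ix}D_m]=(-i\log p_{r_m})^\ell p_{r_m}^{-ix}D_m$, whence $\|p_{r_m}^{-ix}D_m\|_{W^{\ell+1,2}(0,1)}^2 \lesssim (\log p_{r_m})^{2\ell+2}|D_m|^2$. By independence,
\[
\sum_{m\ge 1}\E\|p_{r_m}^{-ix}D_m\|_{W^{\ell+1,2}(0,1)}^2 \;\lesssim\; \sum_{m\ge 1}(\log r_m)^{2\ell+2} n_m^{-a_4} \;<\; \infty,
\]
the log factor being absorbed by the smallness part of \eqref{eq:konsta1}. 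Viewing the partial sums as a mean-zero martingale in the Hilbert space $W^{\ell+1,2}(0,1)$ and applying Doob's maximal inequality gives almost-sure convergence of $\widetilde E_{1,n}$ in $W^{\ell+1,2}(0,1)$ together with an $L^2$ bound on $\sup_n\|\widetilde E_{1,n}\|_{W^{\ell+1,2}(0,1)}$; the Sobolev embedding $W^{\ell+1,2}\hookrightarrow C^\ell$ then produces a random $C^\infty$ limit together with convergence in $C^\ell[0,1]$ for every $\ell$. Finally, the exponential bounds \eqref{eq:172} and \eqref{eq:173} follow by running the Jensen/Fubini argument of Lemma \ref{le:perus}(i) with the uniform exponential moments of $|D_m|$ obtained above in place of the Azuma estimate used there. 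The main delicate point will be the lack of identical distribution among the $D_m$; this is however exactly what the uniform-in-$m$ tail bound supplied by the bounded-case statement of Proposition \ref{le2} is designed to control.
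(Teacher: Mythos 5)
Your overall strategy mirrors the paper's: decompose the error into independent block contributions $p_{r_m}^{-ix}D_m$, invoke the quantitative Gaussian approximation of Proposition \ref{le2}, and sum. The routing through $W^{\ell+1,2}(0,1)$ plus Sobolev embedding plus Doob in place of the paper's direct $C^\ell$ estimate is a legitimate variant. However, there is a genuine gap in the way you handle the log factors.

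The problem is in the step $\E|D_m|^2 \lesssim n_m^{-a_4}$. This is true once you know $\sup_m b_m<\infty$, but in throwing away the factor $b_m^2$ you discard precisely the decay you need. You then claim $\sum_m (\log r_m)^{2\ell+2}n_m^{-a_4}<\infty$, with only the vague justification that ``the log factor is absorbed by the smallness part of \eqref{eq:konsta1}.'' Neither part of \eqref{eq:konsta1} gives this directly. Indeed, take $a_4>1$ and $n_m\approx m^{1/a_4}(\log m)^{2/a_4}$. Then $\sum n_m^{-a_4}\approx\sum \tfrac{1}{m(\log m)^2}<\infty$, $r_m\approx m^{1+1/a_4}(\log m)^{2/a_4}$ so $\log r_m\approx c\log m$ and $\tfrac{n_m}{r_m}(\log r_m)^L\approx\tfrac{(\log m)^L}{m}\to 0$; also $n_m\to\infty$, $r_{m+1}/r_m\to1$, $p_{r_{m+1}-1}/p_{r_m}\to1$. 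So every hypothesis of the lemma holds, yet for $\ell\ge1$,
$$
\sum_m(\log r_m)^{2\ell+2}n_m^{-a_4}\;\approx\;\sum_m \frac{(\log m)^{2\ell}}{m}\;=\;\infty.
$$
So your displayed inequality is false as a consequence of the stated hypotheses. The fix is exactly what the paper does: retain the factor $b_m^2$ in $\E|D_m|^2\lesssim b_m^2 n_m^{-a_4}$, and use the second condition of \eqref{eq:konsta1} together with $b_m^2\lesssim \tfrac{n_m}{r_m\log r_m}$ (prime number theorem) to show $b_m^2(\log r_m)^{2\ell+2}\lesssim 1$. Then the $W^{\ell+1,2}$ sums are bounded by $\sum_m n_m^{-a_4}<\infty$ and your argument goes through.

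A secondary issue: your final step for \eqref{eq:172}--\eqref{eq:173} --- ``run the Jensen/Fubini argument of Lemma \ref{le:perus}(i) with the uniform exponential moments of $|D_m|$'' --- is not a drop-in replacement. Lemma \ref{le:perus}(i) relies on L\'evy's inequality (which requires symmetry of the increments; the coupling in Proposition \ref{le2} makes $D_m$ a dependent difference, so symmetry is not obvious) and then on an Azuma-type pointwise sub-Gaussian bound. The paper instead bounds $\sup_{0\le n'<n}\|\widetilde E_{1,n}-\widetilde E_{1,n'}\|_{C^\ell[0,1]}$ crudely by $\sum_m b_m(1+\log r_m)^\ell|V_m|$ and exponentiates the product using independence and \eqref{e32}, which again needs the boundedness of $b_m^2(1+\log r_m)^{2\ell}$ to keep the constant $a_2 e^{a_3\lambda^2 b_m^2(1+\log r_m)^{2\ell}}$ uniformly bounded in $m$. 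You should spell out that step rather than appeal to Lemma \ref{le:perus}(i), and again it hinges on keeping $b_m^2$ in play.
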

\begin{proof}
To prove convergence of $\widetilde{E}_{1,m}$, we fix an integer $\ell \geq 0$  and observe that
\begin{align*}
\|\widetilde{E}_{1,m-1}-\widetilde{E}_{1,m}\|_{C^{\ell}[0,1]}&\leq |(C_m,S_m)-b_m(V_m^{(1)},V_m^{(2)})|(1+\log r_m)^\ell\\
\notag &=b_m\left|\frac{1}{\sqrt{\sum_{j=1}^{r_{m+1}-r_m}c_{j,m}}}\sum_{j=1}^{r_{m+1}-r_m}H_{j,m}-(V_m^{(1)},V_m^{(2)})\right|(1+\log r_m)^\ell.
\end{align*}

We then recall that we assumed that $r_{m+1}/r_m\to 1$ so we see from the prime number theorem (and a crude estimate on the sum) that for some constant $C_1>0$
\begin{equation*}
b_m^2\leq C_1 \frac{r_{m+1}-r_m}{r_m\max(\log r_m,1)}.
\end{equation*}
\noindent Thus by the second condition in the assumption \eqref{eq:konsta1} 
\begin{equation*}
b_m^2(1+\log r_m)^{2\ell}\leq C_2.
\end{equation*}
 Proposition \ref{le2} (more precisely \eqref{e32}) then implies that we have, for some constants $C,\widetilde{C}$,
\begin{align*}
\E||\widetilde{E}_{1,m-1}-\widetilde{E}_{1,m}\|_{C^{\ell}[0,1]}&\leq C\E( e^{|(\sum_{j=1}^{r_{m+1}-r_m}c_{j,m})^{-1/2}\sum_{j=1}^{r_{m+1}-r_m}H_{j,m}-(V_m^{(1)},V_m^{(2)})|}-1)\\
&\leq \widetilde{C} (r_{m+1}-r_m)^{-a_4}.
\end{align*}
Thus by our assumption on $(r_m)$, the series
$
\widetilde{E}_1=\sum_{m=1}^\infty (\widetilde{E}_{1,m}-\widetilde{E}_{1,m-1})
$
almost surely converges absolutely in ${C^{\ell}[0,1]}$.

We next use the crude estimate
\begin{align*}
\sup_{0\leq n'<n} \|\widetilde E_{1,n}-\widetilde E_{1,n'}\|_{C^{\ell}[0,1]}
\leq \sum_{m=1}^{\infty}|(C_m,S_m)-b_m(V_m^{(1)},V_m^{(2)})|(1+\log r_m)^{\ell},
\end{align*}
so that by independence and \eqref{e32}
\begin{align*}
\E\exp(\lambda \sup_{0\leq n'<n}\|\widetilde E_{1,n}-\widetilde E_{1,n'}\|_{C^{\ell}[0,1]})&\leq \prod_{m=1}^{\infty}\E e^{\lambda |(C_m,S_m)-b_m(V_m^{(1)},V_m^{(2)})|(1+\log r_m)^{\ell}}\\
\notag &\leq \prod_{m=1}^{\infty}\left(1+a_2 e^{a_3\lambda^{2}b_m^{2}(1+\log r_m)^{2\ell}}(r_{m+1}-r_m)^{-a_4}\right)
\end{align*}
As we saw that $b_m(1+\log r_m)^{2\ell}$ is bounded, we find for some constant $C$ (depending on $\lambda$) that 
\begin{align*}
\E\exp\big(\lambda \sup_{0\leq n'<n}\|\widetilde E_{1,n}-\widetilde E_{1,n'}\|_{C^{\ell}[0,1]}\big)&\leq \prod_{m=1}^{\infty}\left(1+C(r_{m+1}-r_m)^{-a_4}\right)\\
\notag &\leq e^{C\sum_{m=1}^{\infty}(r_{m+1}-r_m)^{-a_4}},
\end{align*}
and  \eqref{eq:172} follows. Finally, \eqref{eq:173} is an obvious  consequence of \eqref{eq:172}.
\end{proof}
Let us then estimate the error due to the  freezing procedure.
\begin{lemma}\label{le:E2}
Assume that the sequence $(r_m)$ is chosen so that 
\begin{equation}\label{eq:rcond}
\sum_{m=1}^{\infty}\frac{(r_{m+1}-r_m)(p_{r_{m+1}}-p_{r_m})^{2}\log^\ell r_m}{r_m^{3}}<\infty
\end{equation}
for any $\ell \geq 1.$
Then, almost surely  there exists a $C^\infty$-smooth limit function
\begin{equation*}
\widetilde E_2(x):=\lim_{n\to\infty} \widetilde E_{2,n}(x),
\end{equation*}
where the convergence is in the sup-norm over $[0,1]$.  Moreover,  for any $\ell \geq 1$ we have both
\begin{equation*}
\E \exp\big(\lambda \|\widetilde E_{2}\|_{C^{\ell}[0,1]}\big)<\infty \quad  \textrm{for all}\quad \lambda >0\qquad \textrm{and}
\end{equation*}
\begin{equation*}
\E \exp\big(\lambda \sup_{0\leq n'<n}||\widetilde E_{2,n}-\widetilde E_{2,n'}||_{C^{\ell}[0,1]}\big)<\infty \quad  \textrm{for all}\quad \lambda >0.
\end{equation*}
\end{lemma}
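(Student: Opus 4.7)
The plan is to split the freezing error into the two pieces coming from the original Euler factors and from the Gaussian approximation. Setting $g_{m,k}(x):=(p_k^{-ix}-p_{r_m}^{-ix})/\sqrt{p_k}$ and defining
$$
A_n(x):=\sum_{m=1}^{n}\sum_{k=r_m}^{r_{m+1}-1} e^{2\pi i\theta_k}\, g_{m,k}(x), \qquad B_n(x):=\sum_{m=1}^{n}\sum_{k=r_m}^{r_{m+1}-1} \tfrac{1}{\sqrt{2}}(W_k^{(1)}+iW_k^{(2)})\, g_{m,k}(x),
$$
one has $\widetilde{E}_{2,n}=A_n-B_n$. Each of $A_n,B_n$ is a partial sum of a random series naturally indexed by $k$ alone (with $m=m(k)$ determined by $k$). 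The $e^{2\pi i\theta_k}$ are i.i.d.\ symmetric and unimodular, and the $W_k^{(j)}$ are i.i.d.\ standard real Gaussians, so after splitting real and imaginary parts as in the proof of Lemma \ref{le:perus}, both series fit into the framework of Lemma \ref{le:perus}(ii); the task then reduces to verifying its summability hypothesis.

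The key quantitative input is the pointwise derivative estimate
$$
\|g_{m,k}^{(\ell)}\|_{L^\infty[0,1]} \;\leq\; C_\ell\,(\log p_{r_m})^\ell\,\frac{p_k-p_{r_m}}{p_{r_m}\sqrt{p_k}},
$$
which I would derive by writing $(p_k^{-ix})^{(\ell)}=(-i\log p_k)^\ell p_k^{-ix}$, decomposing
$$
(\log p_k)^\ell p_k^{-ix}-(\log p_{r_m})^\ell p_{r_m}^{-ix} =(\log p_k)^\ell(p_k^{-ix}-p_{r_m}^{-ix})+\bigl((\log p_k)^\ell-(\log p_{r_m})^\ell\bigr)p_{r_m}^{-ix},
$$
bounding the first piece by $|p_k^{-ix}-p_{r_m}^{-ix}|\leq \log(p_k/p_{r_m})\leq (p_k-p_{r_m})/p_{r_m}$ for $x\in[0,1]$ and the second by the mean value theorem applied to $t\mapsto(\log t)^\ell$. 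Squaring, summing over $k,m$, and using the prime number theorem $p_m\asymp m\log m$ gives
$$
\sum_{m=1}^{\infty}\sum_{k=r_m}^{r_{m+1}-1}\|g_{m,k}^{(\ell)}\|_{L^\infty[0,1]}^{2} \;\lesssim\; \sum_{m=1}^{\infty}\frac{(r_{m+1}-r_m)(p_{r_{m+1}}-p_{r_m})^2(\log r_m)^{2\ell-3}}{r_m^3},
$$
which is finite by the hypothesis \eqref{eq:rcond} (applied with index $\max(1,2\ell-3)$; negative powers of $\log r_m$ are trivially dominated for large $m$).

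With this summability, Lemma \ref{le:perus}(ii) applied separately to the series defining $A_n$ and $B_n$ yields, for every $\ell\geq 0$, almost sure convergence in $C^\ell[0,1]$ to random smooth limits $A$ and $B$, together with finite exponential moments of $\sup_{0\leq n'<n}\|A_n-A_{n'}\|_{C^\ell[0,1]}$ and the analogous quantity for $B_n$. Setting $\widetilde{E}_2:=A-B$, the triangle inequality combined with the Cauchy--Schwarz inequality applied to the exponentials converts these into the two claimed bounds for $\widetilde{E}_2$ and its finite-sum differences. The only subtlety to flag is that $A$ and $B$ are \emph{not} independent of each other — the Gaussians $W_k^{(i)}$ are coupled to the $\theta_k$'s through the construction in Proposition \ref{le2} — but this is harmless, since independence is only required \emph{within} each of the two series, while the joint exponential bound on $A+B$ is obtained from Cauchy--Schwarz without any independence assumption. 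I expect the telescoping derivative estimate to be the only mildly delicate technical point, as it is precisely the place where the logarithmic powers in \eqref{eq:rcond} are used.
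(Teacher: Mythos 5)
Your proposal is correct and follows essentially the same route as the paper: the paper's phrase ``a two-fold application of Lemma \ref{le:perus}(ii)'' is precisely your split into the $e^{2\pi i\theta_k}$-series and the Gaussian series, and your telescoping derivative bound for $f_k^{(\ell)}$ is the same one the paper uses before summing against \eqref{eq:rcond}. The only difference is that you spell out explicitly (and correctly) what the paper leaves implicit, namely the separate application of Lemma \ref{le:perus} to each symmetric series, the Cauchy--Schwarz recombination of exponential moments, and the observation that the dependence between the two series is harmless.
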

\begin{proof}
The proof is again based on Lemma \ref{le:perus}. It follows immediately from the definitions that 
\begin{align*}
Y_m(x)&- \widetilde{Y}_m(x)+\widetilde{Z}_m(x)-Z_m(x)\\
\notag &=\sum_{k=r_m}^{r_{m+1}-1}\frac{1}{\sqrt{p_k}}\left(p_k^{-ix}-p_{r_m}^{-ix}\right)\left(e^{2\pi i\theta_k}-\frac{1}{\sqrt{2}}\left[W_k^{(1)}+iW_k^{(2)}\right]\right)\\
\notag &:=\sum_{k=r_m}^{r_{m+1}-1}f_k(x)\left(e^{2\pi i\theta_k}-\frac{1}{\sqrt{2}}\left[W_k^{(1)}+iW_k^{(2)}\right]\right),
\end{align*}
where $f_k(x):= \frac{1}{\sqrt{p_k}}\left(p_k^{-ix}-p_{r_m}^{-ix}\right).$
Given any integer $\ell\geq 0$ we may use the properties of the sequence $(r_m)$ and the 1-Lipschitz property of $u\to e^{iu}$ to  estimate for any $x\in [0,1]$ and $r_m\leq k\leq r_{m+1}-1$
\begin{align*}
&\; p_k^{1/2}|f^{(\ell)}_k(x)\;|=\; \big| (-i)^\ell\big( p_k^{-ix}\log^{\ell}p_k-p_{r_m}^{-ix}\log^{\ell}p_{r_m}\big)\big|\\
\leq&\; |p_k^{-ix}-p_{r_m}^{-ix}|\log^{\ell}p_k+ |\log^{\ell}p_k -\log^{\ell}p_{r_m}|
\lesssim \; \log^{\ell}(p_{r_m})(\log p_k-\log p_{r_m})\\
\lesssim &\; \log^{\ell}(p_{r_m})\frac{p_k-p_{r_m}}{p_{r_m}}.
\end{align*}
Hence
\begin{align*}
\sum_{k=1}^\infty \|f_k^{(\ell)}\|_{L^\infty [0,1]}^2&\lesssim\sum_{m=1}^\infty \frac{r_{m+1}-r_m}{p_{r_m}}\left(\frac{p_{r_{m+1}}-p_{r_m}}{p_{r_m}}\right)^2\log^{2\ell}(p_{r_m})\\
&\lesssim\sum_{m=1}^\infty (r_{m+1}-r_m)\frac{(p_{r_{m+1}}-p_{r_m})^2}{p^3_{r_m}}\log^{2\ell}(p_{r_m})<\infty
\end{align*}
by our assumption. The claim now follows by a two-fold application of Lemma \ref{le:perus}(ii).
\end{proof}

We next combine the error estimates proven so far and make the final choice for the subsequence $(r_m)$. For that purpose we need the following well-known lemma, whose proof we include for the reader's convenience.
\begin{lemma}\label{le:li-inverse}  For large enough $n$ it holds that
\begin{equation*}
-ne^{-\sqrt{\log n}}\lesssim p_n- \mathrm{Li}^{-1}(n)\lesssim ne^{-\sqrt{\log n}}.\nonumber
\end{equation*}
\end{lemma}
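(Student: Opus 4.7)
The plan is to use a strong form of the prime number theorem with explicit error term (either Korobov--Vinogradov or any version giving an error term $O(x\exp(-c(\log x)^{3/5-\varepsilon}))$, which decays faster than any power of $e^{-\sqrt{\log x}}$) and invert it via the mean value theorem. Concretely, the classical error bound reads
$$
\pi(x)-\mathrm{Li}(x)= O\!\left( x\exp(-C\sqrt{\log x}\,)\right)
$$
for any fixed $C>0$ as soon as one invokes the Korobov--Vinogradov zero-free region; in particular one may take $C=2$ say.

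First I would substitute $x=p_n$ to get $\mathrm{Li}(p_n)-n = O(p_n \exp(-2\sqrt{\log p_n}))$. Since the classical asymptotic $p_n\sim n\log n$ (itself a consequence of PNT) implies $\log p_n = \log n + O(\log\log n)$ and $p_n\le 2n\log n$ for large $n$, the right-hand side is bounded by $n\log n \cdot \exp(-2\sqrt{\log n}+o(\sqrt{\log n}))$, which is certainly $o(n e^{-\sqrt{\log n}})$.

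Next I would apply the mean value theorem to the smooth strictly increasing function $\mathrm{Li}^{-1}$: writing $\mathrm{Li}(p_n)=n+R_n$ with the above estimate on $R_n$, there is a value $\xi$ between $n$ and $n+R_n$ such that
$$
p_n-\mathrm{Li}^{-1}(n) \; =\; (\mathrm{Li}^{-1})'(\xi) R_n \;=\;\log(\mathrm{Li}^{-1}(\xi)) R_n,
$$
using $\mathrm{Li}'(x)=1/\log x$. Since $\mathrm{Li}^{-1}(y)\sim y\log y$ as $y\to\infty$ (as $\mathrm{Li}(x)\sim x/\log x$), we have $\log \mathrm{Li}^{-1}(\xi)=\log n + O(\log\log n)$, and combining with the bound on $R_n$ yields
$$
|p_n-\mathrm{Li}^{-1}(n)|\; \lesssim \; n\,(\log n)^2\exp(-2\sqrt{\log n}\,)\; \lesssim \;  n\exp(-\sqrt{\log n}\,)
$$
for sufficiently large $n$, since the polylogarithmic factor is absorbed by replacing the exponent $-2\sqrt{\log n}$ by $-\sqrt{\log n}$.

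The only nontrivial ingredient is the quantitative PNT input, which is completely standard; the rest of the argument is a routine inversion/MVT calculation with the usual asymptotic $p_n\sim n\log n$ used to translate between $\log n$ and $\log p_n$. The most delicate bookkeeping is making sure the interval $[\min(n,\mathrm{Li}(p_n)),\max(n,\mathrm{Li}(p_n))]$ stays well inside the range where $\mathrm{Li}^{-1}$ is smooth and $\log\mathrm{Li}^{-1}(\xi)\sim\log n$, but this is immediate because the error $R_n$ is much smaller than $n$ for large $n$.
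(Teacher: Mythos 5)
Your proof is correct and takes essentially the same route as the paper: invoke a quantitative prime number theorem with error $O(x\exp(-c\sqrt{\log x}))$ for a sufficiently large constant $c$, substitute $x=p_n$, and invert $\mathrm{Li}$ using the derivative bound $(\mathrm{Li}^{-1})'(y)=\log(\mathrm{Li}^{-1}(y))\lesssim \log y$. The only cosmetic difference is that the paper exploits the convexity of $\mathrm{Li}^{-1}$ to get a one-sided tangent-line inequality $\mathrm{Li}^{-1}(n)-\mathrm{Li}^{-1}(n-h)\le h\,(\mathrm{Li}^{-1})'(n)$, while you use the mean value theorem with an intermediate point $\xi$ and verify that $\log\mathrm{Li}^{-1}(\xi)\sim\log n$ on the relevant range; both disposals of the derivative factor are valid and the bookkeeping in your version is handled correctly.
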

\begin{proof} We note first that the inverse $ \mathrm{Li}^{-1}$ is convex since $ \mathrm{Li}$ itself is concave. Furthermore,
we have $( \mathrm{Li}^{-1})'(x)=\log( \mathrm{Li}^{-1}(x))\leq \log(2x\log(x))\leq 2\log(x)$ for large enough $x.$ Hence, as a suitable  quantitative version of the prime number theorem verifies that for any $c\geq 1$ there is the error estimate
$|\pi(x)- \mathrm{Li}(x)|=\mathcal{O}\big(x\exp(-c\sqrt{\log x})\big)$, we have $n=\pi(p_n)\leq  \mathrm{Li}(p_n)+ne^{-2\sqrt{\log n}}$. In particular,  for large enough $n$
\begin{align*}
p_n&\geq \mathrm{Li}^{-1}(n- ne^{-2\sqrt{\log n}})\;
 \geq\;  \mathrm{Li}^{-1}(n)-ne^{-2\sqrt{\log n}}( \mathrm{Li}^{-1})'(n)
 \,\geq \;\mathrm{Li}^{-1}(n)-ne^{-\sqrt{\log n}}.
\end{align*}
The proof of the other direction is analogous.
\end{proof}

\begin{proposition}\label{pr2} Choose  $($for the rest of the paper$)$ $r_m=\lfloor \exp (3\log^2m)\rfloor$. Then  the combined error $\widetilde E_{n}(x)=\widetilde{E}_{n,1}(x)+
\widetilde  E_{n,2}(x)$ a.s. converges for any $\ell\geq 0 $ in $C^{\ell}[0,1]$ to a  $C^\infty$-smooth limit 
$$
E(x):=\lim_{n\to\infty} (\widetilde E_{n,1}(x)+
\widetilde  E_{n,2}(x)).
$$
Moreover, for all $\lambda>0$ and $\ell \geq 0$ 
\begin{equation}\label{eq:204}
\E \exp\big(\lambda \|  E\|_{C^{\ell}[0,1]}\big)<\infty 
\end{equation}

\noindent and 

\begin{equation*}
\E \exp\big(\lambda \sup_{0\leq n'<n}\| \widetilde E_{n}-\widetilde E_{n'}\|_{C^{\ell}[0,1]}\big)<\infty.
\end{equation*}
 \end{proposition}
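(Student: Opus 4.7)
The plan is to reduce Proposition \ref{pr2} entirely to Lemmas \ref{le:E1} and \ref{le:E2}: I just need to verify that the specific choice $r_m = \lfloor \exp(3\log^2 m) \rfloor$ satisfies the hypotheses of both lemmas, and then combine the resulting convergences and exponential moment bounds for $\widetilde E_{1,n}$ and $\widetilde E_{2,n}$.

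First I would check the basic qualitative assumptions made before Definition \ref{def:blocks}. Since $r_{m+1}/r_m = \exp\bigl(3\log^2(m+1)-3\log^2 m\bigr) = \exp\bigl(6\log(m)/m + O(1/m)\bigr) \to 1$, the requirement $r_{m+1}/r_m \to 1$ holds. The difference $r_{m+1}-r_m \sim 6 r_m \log(m)/m$ tends to infinity, and $p_{r_{m+1}-1}/p_{r_m} \to 1$ by the prime number theorem, so the bound $p_{r_{m+1}-1}/p_{r_m}\le 2$ holds for large $m$ (finitely many initial blocks can be merged or absorbed into a constant).

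Next I would verify the conditions \eqref{eq:konsta1} needed for Lemma \ref{le:E1}. Because $r_m$ grows super-polynomially in $m$, the quantity $r_{m+1}-r_m \gtrsim r_m \log(m)/m$ also grows faster than any polynomial in $m$, so $\sum_m (r_{m+1}-r_m)^{-a_4}<\infty$. For the second condition, $\log r_m \asymp \log^2 m$, so
\[
\frac{r_{m+1}-r_m}{r_m}(\log r_m)^\ell \;\lesssim\; \frac{\log m}{m}(\log^2 m)^\ell \;\to\; 0
\]
for every $\ell$, so \eqref{eq:konsta1} holds. For Lemma \ref{le:E2}, I would estimate $p_{r_{m+1}}-p_{r_m}$ using Lemma \ref{le:li-inverse}: since $(\mathrm{Li}^{-1})'(x) = \log(\mathrm{Li}^{-1}(x)) \lesssim \log x$, we have $p_{r_{m+1}}-p_{r_m} \lesssim (r_{m+1}-r_m)\log r_m$ (plus a smaller error from the $\exp(-\sqrt{\log n})$ term in Lemma \ref{le:li-inverse}, which is negligible). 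Then, using $r_{m+1}-r_m \lesssim r_m \log(m)/m$,
\[
\frac{(r_{m+1}-r_m)(p_{r_{m+1}}-p_{r_m})^2 \log^\ell r_m}{r_m^3}
\;\lesssim\; \frac{(r_{m+1}-r_m)^3 \log^{\ell+2} r_m}{r_m^3}
\;\lesssim\; \frac{(\log m)^3 (\log^2 m)^{\ell+2}}{m^3},
\]
which is summable for every $\ell\geq 0$, verifying \eqref{eq:rcond}.

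Having checked all the hypotheses, Lemma \ref{le:E1} produces a smooth limit $\widetilde E_1$ for $\widetilde E_{1,n}$ with the stated exponential moments, and Lemma \ref{le:E2} produces a smooth limit $\widetilde E_2$ for $\widetilde E_{2,n}$ with analogous bounds. Setting $E := \widetilde E_1 + \widetilde E_2$ gives the almost sure $C^\ell$-convergence $\widetilde E_n \to E$. Finally, for the exponential moments I would invoke Cauchy--Schwarz: for any $\lambda>0$,
\[
\E\exp\bigl(\lambda \|E\|_{C^\ell[0,1]}\bigr)
\leq \bigl(\E e^{2\lambda\|\widetilde E_1\|_{C^\ell[0,1]}}\bigr)^{1/2}\bigl(\E e^{2\lambda\|\widetilde E_2\|_{C^\ell[0,1]}}\bigr)^{1/2}<\infty,
\]
which gives \eqref{eq:204}, and exactly the same Cauchy--Schwarz argument applied to $\sup_{0\le n'<n}\|\widetilde E_n-\widetilde E_{n'}\|_{C^\ell[0,1]} \le \sup\|\widetilde E_{1,n}-\widetilde E_{1,n'}\|_{C^\ell[0,1]} + \sup\|\widetilde E_{2,n}-\widetilde E_{2,n'}\|_{C^\ell[0,1]}$ yields the second exponential moment bound.

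There is no real obstacle here — the work is already in Lemmas \ref{le:E1} and \ref{le:E2}; the only mildly delicate point is bounding $p_{r_{m+1}}-p_{r_m}$, which is where Lemma \ref{le:li-inverse} (rather than a naive $p_n\sim n\log n$ estimate) is needed to control $p_{r_{m+1}}-p_{r_m}$ through the convexity of $\mathrm{Li}^{-1}$, but even this gives a huge margin because $r_m$ grows faster than any polynomial in $m$.
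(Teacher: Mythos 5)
Your proof is correct and follows essentially the same route as the paper: verify that $r_m=\lfloor\exp(3\log^2 m)\rfloor$ satisfies the hypotheses of Lemmas \ref{le:E1} and \ref{le:E2} (using Lemma \ref{le:li-inverse} to bound $p_{r_{m+1}}-p_{r_m}$ through the derivative of $\mathrm{Li}^{-1}$), then combine the two lemmas via Cauchy--Schwarz for the exponential moments. The only cosmetic difference is that the paper explicitly splits condition \eqref{eq:rcond} into the pair of conditions \eqref{eq:205} so the contribution of the $r_m e^{-\sqrt{\log r_m}}$ remainder is verified separately, while you wave it off as negligible — which it is, since $e^{-2\sqrt{\log r_m}}\asymp m^{-2\sqrt{3}}$ gives polynomial decay in $m$.
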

\begin{proof}
We first recall the condition of Lemma \ref{le:E1} - namely that the first error term converges as soon as
 \begin{equation}\label{e15}
\sum_{m=1}^{\infty}(r_{m+1}-r_m)^{-a_4}<\infty \quad\textrm{and}\quad  \sup_{m\geq 1}\frac{r_{m+1}-r_m}{r_m}(\log r_m)^\ell <\infty
\end{equation}
for any $\ell \geq 0.$
Lemma \ref{le:li-inverse}  yields for our sequences that $p_{r_{m+1}}-p_{r_m}\lesssim (r_{m+1}-r_m)\log r_m + r_me^{-\sqrt{\log r_m}}.$ By plugging this into condition
\eqref{eq:rcond} we see that a sufficient condition to apply Lemma \ref{le:E2} in order to control the second error term is given by the pair of conditions
\begin{equation}\label{eq:205}
 \sum_{m=1}^\infty\bigg(\frac{r_{m+1}-r_m}{r_m}\bigg)^3\log^{L} (r_m)<\infty\quad\textrm{and}\quad
  \sum_{m=1}^\infty e^{-2\sqrt{\log r_m}}\log^{L} (r_m)<\infty
\end{equation} 
for all $L\geq 1$.
Finally, it remains to observe that the choice $r_m=\lfloor \exp (3 \log^2m)\rfloor$ satisfies both \eqref{e15} and \eqref{eq:205}, and satisfies the initial properties postulated for $(r_m)$ after the proof of Lemma \ref{le:jgeq2}.
\end{proof}

To complete the approximation procedure, we finally verify that the fields $\mathcal{G}_N$ are good approximations also for indices
$N$ inside the intervals $r_m\leq N<r_{m+1}$.
\begin{proposition}\label{prop:g-approx} Denote the total error of the Gaussian approximation by setting
\begin{equation*}
E_N(x):=\sum_{k=1}^N \frac{1}{\sqrt{p_k}}p_k^{-ix}e^{2\pi i\theta_k}-\mathcal{G}_N(x)\quad \textrm{for}\quad N\geq 1\quad\textrm{and}\quad x\in[0,1].
\end{equation*}
Then, almost surely,  $E_{N}(x)$ converges in $C^{\ell}[0,1]$ to a $C^\infty$-smooth limit function
$$
E(x):=\lim_{N\to\infty} E_{N}(x),
$$
where the obtained limit is of course the same as in Proposition {\rm  \ref{pr2}}.
Moreover, for all $\lambda>0$ and any $\ell\geq 0$
\begin{equation*}
\E \exp\big(\lambda \|  E\|_{C^{\ell}[0,1]}\big)<\infty 
\end{equation*}

\noindent and 

\begin{equation}\label{eq:esupnbound}
\E \exp\big(\lambda \sup_{N\geq 1}\|E_{N}\|_{C^{\ell}[0,1]}\big)<\infty.
\end{equation}

\end{proposition}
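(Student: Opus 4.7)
The proof rests on the observation that at the block endpoints $N=r_{n+1}-1$ the identity $E_{r_{n+1}-1}=\widetilde E_n$ holds, where $\widetilde E_n=\sum_{m=1}^n(Y_m-Z_m)$ is the sequence controlled by Proposition \ref{pr2}. Hence the almost sure limit $E=\lim_n\widetilde E_n$ in $C^\ell[0,1]$ already exists, we have $\E\exp(\lambda\|E\|_{C^\ell[0,1]})<\infty$, and (taking $n'=0$ with the convention $\widetilde E_0=0$ in the estimate of Proposition \ref{pr2}) also $\E\exp(\lambda\sup_n\|\widetilde E_n\|_{C^\ell[0,1]})<\infty$ for all $\lambda>0$, $\ell\ge 0$. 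The task thus reduces to controlling the partial sums filling the gaps between successive block endpoints.

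For $r_n\le N<r_{n+1}$ I would write $E_N=\widetilde E_{n-1}+R_n^N$ with
\[
R_n^N(x)\;=\;\sum_{k=r_n}^N\frac{p_k^{-ix}}{\sqrt{p_k}}\Big(e^{2\pi i\theta_k}-\tfrac{1}{\sqrt 2}(W_k^{(1)}+iW_k^{(2)})\Big),
\]
and set $B_n:=\sup_{r_n\le N<r_{n+1}}\|R_n^N\|_{C^\ell[0,1]}$; these are independent across $n$. Split $R_n^N=S_n^N-T_n^N$ into its ``uniform'' and ``Gaussian'' pieces (carrying respectively $e^{2\pi i\theta_k}$ and $(W_k^{(1)}+iW_k^{(2)})/\sqrt 2$), each a partial sum of within-block \emph{independent} symmetric random variables taking values in the Banach space $C^\ell[0,1]$. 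L\'evy's inequality in that Banach space yields
\[
\Prob\Big(\sup_{r_n\le N<r_{n+1}}\|S_n^N\|_{C^\ell[0,1]}>t\Big)\le 2\Prob(\|Y_n\|_{C^\ell[0,1]}>t),
\]
and analogously for $T_n^N$ in terms of $\|Z_n\|_{C^\ell[0,1]}$. The Sobolev embedding $\|f\|_{C^\ell[0,1]}\lesssim\|f\|_{W^{\ell+1,2}(0,1)}$, the $L^2(0,1)$-orthogonality of the characters $p_k^{-ix}$, the choice $r_m=\lfloor e^{3\log^2 m}\rfloor$, and the prime number theorem then combine to give
\[
\E\|Y_n\|_{W^{\ell+1,2}(0,1)}^2+\E\|Z_n\|_{W^{\ell+1,2}(0,1)}^2\;\lesssim\;\sum_{k=r_n}^{r_{n+1}-1}\frac{(\log p_k)^{2(\ell+1)}}{p_k}\;\lesssim\;\frac{(\log n)^{4\ell+3}}{n}.
\]
The Jensen--Fubini argument used inside the proof of Lemma \ref{le:perus} upgrades this $L^2$-bound to a sub-Gaussian tail $\Prob(\|Y_n\|_{C^\ell[0,1]}>t)\le C\exp(-cnt^2/(\log n)^{4\ell+3})$, with the same bound for $Z_n$, and hence $\Prob(B_n>t)\le C\exp(-cnt^2/(\log n)^{4\ell+3})$.

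Finally, since $n/(\log n)^{4\ell+3}\to\infty$, the series $\sum_n C\exp(-cnt^2/(\log n)^{4\ell+3})$ is dominated by its first few terms once $t$ is large, so a union bound yields $\Prob(\sup_n B_n>t)\le C'e^{-c't^2}$; in particular $\sup_n B_n$ has finite exponential moments of every order. Combining this with the triangle inequality $\sup_N\|E_N\|_{C^\ell[0,1]}\le\sup_n\|\widetilde E_{n-1}\|_{C^\ell[0,1]}+\sup_n B_n$ establishes \eqref{eq:esupnbound}, and Borel--Cantelli applied to the events $\{B_n>\sqrt{C(\log n)^{4\ell+4}/n}\}$ gives $B_n\to 0$ almost surely, so $E_N\to E$ almost surely in $C^\ell[0,1]$. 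The main technical subtlety is that the sub-Gaussian parameters $\sigma_n^2\sim(\log n)^{4\ell+3}/n$ are \emph{not} summable in $n$, which precludes the direct $\sup\le\sum$ exponential-moment argument used in Lemmas \ref{le:E1} and \ref{le:E2}; it is the aggressive block growth $r_m=\lfloor e^{3\log^2 m}\rfloor$ that lets the tails of the $B_n$ decay geometrically under the union bound, producing genuine sub-Gaussian concentration of $\sup_n B_n$.
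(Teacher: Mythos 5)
Your proof is correct and follows essentially the same structure as the paper's: control the sup over $N$ within each block via L\'evy's inequality in $C^\ell[0,1]$ together with sub-Gaussian per-block concentration (the paper derives this from pointwise Azuma estimates followed by the Jensen--Fubini step from Lemma \ref{le:perus}), then union-bound over blocks, handling the block-endpoint values via Proposition \ref{pr2}. Your closing observation that a union bound rather than the $\sup\le\sum$ device of Lemmas \ref{le:E1}--\ref{le:E2} is the right mechanism here is correct, though the operative requirement is simply that the per-block sub-Gaussian scale $1/\sigma_n^2$ grow faster than $\log n$, which any roughly polynomial blocking would deliver; the specific choice $r_m=\lfloor e^{3\log^2 m}\rfloor$ is dictated by the constraints of Lemmas \ref{le:E1} and \ref{le:E2} rather than by this step.
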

\begin{proof}
After Proposition \ref{pr2} it is enough to show that any given partial sum of the original series is in fact well approximated by the sum of the blocks below it, and that a similar statement holds also true for the Gaussian approximation series.  Let us fix $m\geq 1$ and recall our notation
$$
Y_m(x)=\sum_{k=r_m}^{r_{m+1}-1}\frac{1}{\sqrt{p_k}}p_k^{-ix}e^{2\pi i\theta_k}
\;=:\;\sum_{k=r_m}^{r_{m+1}-1}A_k(x), 
$$
which is just the partial sum of our original field corresponding to the $m$:th block.  Observing first that
\begin{align*}
\sum_{k=r_m}^{r_{m+1}-1}\frac{\log^{2\ell} p_k}{p_k}&\lesssim \sum_{k=\lfloor e^{\log^2m}\rfloor}^{\lfloor e^{\log^2(m+1)}\rfloor}\frac{\log^{2\ell} k}{k\log k}
\lesssim  \log m^{4\ell-2}\big( \log^2(m+1)-\log^2m \big)
\lesssim \log m^{4\ell-1}m^{-1} \\
\notag &\lesssim m^{-1/2},
\end{align*}
 Azuma's inequality yields 
$$
\Prob (|Y^{(\ell)}_m(x)|\geq\lambda)\lesssim \exp \Big(-c'\lambda^2\Big(\sum_{k=r_m}^{r_{m+1}-1}\frac{\log^{2\ell} p_k}{p_k}\Big)^{-1}\Big)
\lesssim \exp \big(-c'\lambda^2m^{1/2}\big).
$$
In particular, we  obtain that for some constants $c'', C$ that work for all $x\in [0,1]$ we have 
$$
\E \exp(c''m^{1/2}|Y^{(\ell)}_m(x)|^2)\leq C.
$$
This holds true for all $\ell\geq 0.$
As at the end of the proof of Lemma \ref{le:perus}, we deduce that $\E \exp(c'''m^{1/2}\|Y_m(x)\|^2_{C^{\ell}[0,1]})\leq C$, and again   L\'evy's inequality enables us to gather that
\begin{equation*}
\Prob \big(\max_{r_m\leq u\leq r_{m+1}-1}\|\sum_{k=r_m}^{u}A_k\|_{C^{\ell}[0,1]}>\lambda\big) \lesssim \exp(-c'''m^{1/2}\lambda^2).
\end{equation*}
Summing over $m$  yields for $\lambda \geq 1$
\begin{align}\label{eq222}
\Prob \big(\sup_{m\geq 1}&\max_{r_m\leq u\leq r_{m+1}-1}\|\sum_{k=r_m}^{u}A_k\|_{C^{\ell}[0,1]}>\lambda\big)\lesssim \sum_{m=1}^\infty \exp(-c'''m^{1/2}\lambda^2)\lesssim \exp(-c''''\lambda^2).
\end{align}
Exactly  the same proof where Azuma is replaced by elementary estimates for Gaussian variables  yields the corresponding estimate  for our Gaussian approximation fields.  An easy Borel-Cantelli argument that uses estimates like \eqref{eq222} in combination with Proposition \ref{pr2}  then shows the existence of the uniform limit $E(x)=\lim_{N\to\infty}E_N(x).$  Finally, combining  \eqref{eq222} with \eqref {eq:204}  yields  \eqref{eq:esupnbound}.
Together with our previous considerations this concludes the proof of the proposition.
\end{proof}

Finally, putting things together we obtain

\begin{proof}[Proof of Theorem \ref{th:gaussian_appro}]
Noting that $\log \zeta_{N,\rand}=\mathcal{G}_N+E_N+L_N$ and writing $\mathcal{E}_N=E_N+L_N$, we see that Theorem \ref{th:gaussian_appro} follows by combining Lemma \ref{le:jgeq2} with Proposition \ref{prop:g-approx}.
\end{proof}

\section{The relationship to complex Gaussian multiplicative chaos: Proof of Theorem \ref{th:main}({\rm ii})}\label{sec:compconv}

In this section we prove the second part of our main result which states  that $\zeta_\rand$ can be expressed as a product of a complex Gaussian multiplicative chaos distribution and a smooth function with good regularity properties. 
We will first start by proving the existence of the complex Gaussian chaos needed. Recall from Section \ref{se:convergence} that we write $\reg{f}(x)$ for $(1+x^2)^{-1}f(x)$. 

\begin{lemma}\label{le:nu} Denote 
$$
\nu_N(x):=\prod_{j=1}^N e^{\frac{1}{\sqrt{2 p_j}}p_j^{-ix}(W_j^{(1)}+iW_j^{(2)})}.
$$
For any $\alpha >1/2$ the  sequence $\big(\reg{\nu}_N\big)_{N\geq 1}$ is an $L^2$-bounded  $W^{-\alpha,2}(\R)$-valued martingale, and consequently it converges almost surely to a $W^{-\alpha,2}(\R)$-valued random variable which we write as 
$$
\reg{\nu}:=\lim_{N\to\infty}\reg{\nu}_N.
$$
\end{lemma}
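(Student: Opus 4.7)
The plan is to follow the same Hilbert-space martingale template used in Proposition \ref{pr:C}(ii), replacing the random Euler products with their Gaussian exponentials. Concretely, with $\mathcal{F}_N:=\sigma(W_j^{(1)},W_j^{(2)}:j\leq N)$ I need to verify (a) that $(\reg{\nu}_N)_{N\geq 1}$ is a $W^{-\alpha,2}(\R)$-valued $\mathcal{F}_N$-martingale and (b) that it is bounded in $L^2$, after which the almost sure limit $\reg{\nu}$ exists by the standard convergence theorem for $L^2$-bounded martingales with values in a separable Hilbert space, exactly as cited at the end of the proof of Proposition \ref{pr:C}(ii).

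For (a), the pointwise identity $\E[\nu_{N+1}(x)\mid\mathcal{F}_N]=\nu_N(x)$ reduces, by independence, to computing the expectation of the single new factor. Writing it as $\exp(aW_{N+1}^{(1)}+bW_{N+1}^{(2)})$ with $a=p_{N+1}^{-ix}/\sqrt{2p_{N+1}}$ and $b=ia$, the complex Gaussian Laplace-transform identity $\E e^{aW^{(1)}+bW^{(2)}}=e^{(a^2+b^2)/2}$ gives $a^2+b^2=0$, so the factor has expectation one. Lifting this to a Hilbert-space-valued martingale identity is routine: pair against a Schwartz test function $\varphi$ and apply Fubini, justified since $\nu_N$ is almost surely bounded in $x$ and $(1+x^2)^{-1}\varphi(x)$ is integrable.

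The heart of the proof is (b). As in Proposition \ref{pr:C}(ii) I would use Fubini to write
\[
\E\|\reg{\nu}_N\|_{W^{-\alpha,2}(\R)}^2=\int_\R(1+\xi^2)^{-\alpha}\,\E|\nu_N(\reg{e}_\xi)|^2\,d\xi,
\]
so the task is a bound on $\E|\nu_N(\reg{e}_\xi)|^2$ that is uniform in both $N$ and $\xi$. A direct Gaussian covariance computation yields the kernel
\[
\E[\nu_N(x)\overline{\nu_N(y)}]=\exp\Big(\sum_{j\leq N}p_j^{-1-i(x-y)}\Big),
\]
and expanding the exponential and multiplying out the resulting product gives the Dirichlet series
\[
\E[\nu_N(x)\overline{\nu_N(y)}]=\sum_{n\in\N_N}\frac{a_n}{n}\,n^{-i(x-y)},
\]
where $a_n=\prod_k 1/\alpha_k(n)!\leq 1$ for $n=\prod_k p_k^{\alpha_k(n)}$. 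Plugging this into the double integral and identifying each summand, I obtain
\[
\E|\nu_N(\reg{e}_\xi)|^2=\sum_{n\in\N_N}\frac{a_n}{n}\bigl|\widehat{\reg{e}_\xi}(\pii\log n)\bigr|^2\leq\|\reg{e}_\xi\|_{\z}^2,
\]
and Lemma \ref{le:embed} then bounds the right-hand side by $C\int(1+x^2)|\reg{e}_\xi(x)|^2\,dx=C\int(1+x^2)^{-1}\,dx$, a finite absolute constant independent of $\xi$ and $N$. Integrating against $(1+\xi^2)^{-\alpha}$ for $\alpha>1/2$ completes the uniform $L^2$-bound.

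The one step that demands genuine care is the Dirichlet-series expansion of the covariance kernel in (b): the naive estimate $|\E[\nu_N(x)\overline{\nu_N(y)}]|\leq\exp(\sum_{j\leq N}p_j^{-1})$ grows like a power of $\log N$, so one really has to exploit the coefficient bound $a_n\leq 1$ together with the norm $\|\cdot\|_{\z}$ introduced in Definition \ref{def:norm}, i.e.\ precisely the framework developed in Section \ref{se:convergence}. Once this is done, the rest of the argument is soft.
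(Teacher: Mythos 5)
Your proof is correct and follows essentially the same route as the paper: the martingale property via the complex Gaussian Laplace-transform identity, and the $L^2$ bound via the covariance kernel $\exp\big(\sum_{j\le N}p_j^{-1-i(x-y)}\big)$, its Dirichlet-series expansion with coefficients $a_n=\prod_k 1/\alpha_k(n)!\le 1$, the $\|\cdot\|_\z$ norm, Lemma~\ref{le:embed}, and integration against $(1+\xi^2)^{-\alpha}$. The only superficial difference is that the paper first establishes the bound for a general $g$ with $\int(1+x^2)|g|^2<\infty$ and then specializes to $\reg{e}_\xi$, whereas you work directly with $\reg{e}_\xi$.
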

\begin{proof} By independence,  for any $a\in\C$
$$
\E e^{a(W_j^{(1)}+iW_j^{(2)})}= e^{\frac{1}{2}(a^2+(ia)^2)} =1
$$
 and we infer that $\big(\reg{\nu}_N\big)_{N\geq 1}$ is a martingale sequence taking values in $L^2\subset W^{-\alpha,2}(\R)$.
Assume first that $\int |g(x)|^2(1+x^2)dx<\infty$ and write $Z_j:=2^{-1/2}(W_j^{(1)}+iW_j^{(2)})$ so that $(Z_j)_{j\geq 1}$ is an i.i.d. sequence of standard complex Gaussians.  By using independence, the fact that $\E e^{\alpha Z_j+\beta \overline{Z_j}}=e^{\alpha\beta}$, and the series expansion of the exponential function, we easily compute
\begin{align*}
\E\left| \nu_N(g)\right|^2 =&\int_{\R^2} g(x)\overline{g(y)}\exp\Big(\sum_{j=1}^N p_j^{-1-i(x-y)}\Big)
=\sum_{n\in \N_N}\frac{1}{\alpha_1(n)!\ldots \alpha_{N}(n)!}\frac{|\widehat g(\pii \log(n))|^2}{n}\\
\leq& \|g\|_\z^2
\;\lesssim \;  \int_\R |g(x)|^2(1+x^2)dx,
\end{align*}
where the quantities $\alpha_j(n)$ were defined in the proof of Proposition \ref{pr:C} and  the last inequality comes from Lemma \ref{le:embed}. We recall the notation $e_\xi(x)= e^{-2\pi i\xi x}$ and $\reg{e}_\xi(x)= (1+x^2)^{-1}e^{-2\pi i\xi x}$ from Proposition \ref{pr:B}. Now substituting $\reg{e}_\xi$ in place of $g$, multiplying by 
$(1+\xi^2)^{-\alpha}$ and integrating over $\R$ we gather that
$$
\E \| \reg{\nu}_N\|_{W^{-\alpha,2}(\R)}^2 \;\leq \; C,
$$
where $C$ does not depend on $N$, and we are done.
\end{proof}

One should observe that the martingale considered in the above proof is non-trivial, and hence the limit random variable $\nu$ is  also non-trivial (i,e, it does not reduce to a deterministic constant). We then start the proof of Theorem \ref{th:main}(ii) by fixing  a compactly supported test function $f\in C_0^\infty(-A,A)$ and observing that we have the equality
\begin{equation*}
\zeta_{N,\rand}(f)=\nu_N(e^{\mathcal{E}_N}f)\notag ,
\end{equation*}
or, writing $g(x):= (1+x^2)f(x)$  this becomes
\begin{equation*}
\reg{\zeta}_{N,\rand}(g)=\reg{\nu}_N(e^{\mathcal{E}_N}g).
\end{equation*}
Here, almost surely $\reg{\zeta}_{N,\rand}$ converges to $\reg{\zeta}_\rand$ and $\reg{\nu}_N$ to $\reg{\nu}$ in $W^{-\alpha,2}(\R)$. Moreover,
$e^{\mathcal{E}_N}g\to e^{\mathcal{E}}g$ in $C^\infty$, with  supports  contained in $(A,A).$ We may thus take the limit\footnote{Here one simply chooses e.g. $\alpha=1$ and notes that one easily checks that the map $(f,g)\mapsto fg$ is continuous map from $C^1(\R)\times W^{1,2}(\R) \to W^{1,2}(\R)$ when $C^1(\R)$ is normed by $\|f\|_{C^1(\R)}=\| f\|_\infty+\|f'\|_\infty$. By duality, it follows that  in the same map is continuous $C^1(\R)\times W^{-1,2}(\R) \to W^{-1,2}(\R)$.} in the previous equality and obtain the almost sure equality
$$
\reg{\zeta}_{\rand}(g)=\reg{\nu}(e^{\mathcal{E}}g)=(e^{\mathcal{E}}\reg{\nu})(g).
$$
A fortiori, since $A$ was arbitrary and this holds almost surely for a countable dense subset of $f$:s in $W^{\alpha,2}(\R)$, we see that almost surely $\zeta_\rand=e^{\mathcal{E}}\nu$ (either as tempered distributions or with the interpretation that $\reg{\zeta}=e^\mathcal{E}\reg{\nu}$ as elements of $W^{-\alpha,2}(\R)$), and this completes the proof  Theorem \ref{th:main}(ii).

\section{The mesoscopic limit -- Proof of Theorem \ref{th:meso1} and Theorem \ref{th:meso2}}\label{sec:meso}

In the present section we verify our statements about the mesoscopic behavior of the zeta function. We start with some definitions and technical lemmata. Fix a two-sided complex Brownian motion $u\mapsto B^\C_u$ for $u\in \R$. More precisely, this means that $B^\C_u:=2^{-1/2}(B^{(1)}_u+iB^{(2)}_u),$ where $B^{(1)}_u,B^{(2)}_u$ are standard independent two-sided Brownian motions. For $h\in L^2(\R)$ we define a translation invariant process $G[h]$ on $\R$ by setting
$$
G[h](x):= \int_\R e^{-2\pi ixu}h(u)dB^\C_u.
$$
For a fixed $h\in L^2$, the covariance of this process is $\E G[h](x)\overline{G[h](y)}=\widehat{|h|^2}(x-y)$, while $\E G[h](x)G[h](y)=0$ for all $x,y\in \R.$
In order to make sure that $G[h]$ defines random continuous functions of the variable $x$ we shall, unless otherwise stated, assume that there is an $\varepsilon >0$ and a $C>0$ such that 
\begin{equation}\label{eq:htail}
0\leq |h(u)|\leq C(1+|u|)^{-1/2-\varepsilon}.
\end{equation}
This makes sure that $\widehat {|h|^2}$ is H\"older-continuous and hence by classical theory, $G[h]$ has a modification whose realizations are almost surely continuous in $x$. Moreover, the sup-norm over any finite interval has the standard (double-)exponential estimates. In particular, for any $f\in L^2(\R)$ with compact support we may safely compute
\begin{align}\label{eq:exp-lasku}
\E \big| \int_\R f(x) e^{G[h](x)}dx\big|^2\; &=\;\int_{\R^2}f(x)\overline{f(y)}\exp\Big(\frac{1}{2}\E\Big(G[h](x)+\overline{G[h](y)}\Big)^2\Big)dxdy \\
&=\;
\int_{\R^2}f(x)\overline{f(y)}\exp\big(\widehat{|h|^2}(x-y)\big)dxdy.\notag
\end{align}
\begin{lemma}\label{le:meso1} Assume that $h,h_j$ $($where $j=1,2,\ldots)$ satisfy \eqref{eq:htail} and for almost every $u\in\R$ one has
$\liminf_{j\to\infty} |h_j(u)|\geq |h(u)|.$ Then for any $f\in L^2(\R)$ with compact support it holds that
$$
\E \Big| \int_\R f(x) e^{G[h](x)}dx\Big|^2\leq \liminf_{j\to\infty}\E \Big| \int_\R f(x) e^{G[h_j](x)}dx\Big|^2
$$
\end{lemma}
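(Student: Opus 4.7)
The plan is to reduce the inequality to a deterministic Fourier-analytic statement by means of formula \eqref{eq:exp-lasku}, and then apply Fatou's lemma term by term after a Taylor expansion. Concretely, by \eqref{eq:exp-lasku} it suffices to prove
\[
\int_{\R^2} f(x)\overline{f(y)} \exp\bigl(\widehat{|h|^2}(x-y)\bigr)\, dxdy \;\leq\; \liminf_{j\to\infty}\int_{\R^2} f(x)\overline{f(y)} \exp\bigl(\widehat{|h_j|^2}(x-y)\bigr)\, dxdy.
\]
Since $\widehat{|h|^2}$ is bounded by $\|h\|_{L^2}^2$ (and similarly for $h_j$), the exponentials admit absolutely convergent Taylor expansions in powers of $\widehat{|h|^2}(x-y)$, so Fubini lets me interchange the sum with the double integral (the majorant $|f(x)f(y)|\|h\|_{L^2}^{2n}/n!$ is integrable on the compact support of $f\otimes \bar f$ and summable in $n$).

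The next step is the crucial positivity observation. Writing $G_n$ for the $n$-fold convolution $|h|^2\ast\cdots\ast|h|^2$ (with $G_0:=\delta_0$), one has $\widehat{G_n}=(\widehat{|h|^2})^n$, and a direct computation (expanding $\widehat{G_n}(x-y)$ as an integral against $e^{-2\pi i(x-y)u}$ and applying Fubini) yields
\[
\int_{\R^2} f(x)\overline{f(y)} (\widehat{|h|^2}(x-y))^n\, dxdy \;=\; \int_\R |\widehat f(u)|^2\, G_n(u)\, du \;\geq\; 0.
\]
An identical identity holds with $h_j$ in place of $h$. Thus both sides of the target inequality are represented as non-negative series of non-negative integrals, and once the termwise inequality $\int |\widehat f|^2 G_n\leq \liminf_j \int |\widehat f|^2 G_n^{(j)}$ is established, Fatou's lemma applied to the counting measure on $\Nz$ completes the proof.

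The main obstacle is therefore the Fatou-type comparison of convolutions: I need to show that $\liminf_j G_n^{(j)}(u)\geq G_n(u)$ for almost every $u$. I will argue this by induction on $n$. For $n=1$ this is just the hypothesis $\liminf_j |h_j|^2 \geq |h|^2$ a.e. For the induction step, note that for non-negative sequences one has $\liminf_j (a_j b_j)\geq (\liminf_j a_j)(\liminf_j b_j)$; combined with Fubini-translation-invariance (so that the a.e. assumption holds for the pair $(v,u-v)$ for a.e. $v$ at each fixed $u$) and ordinary Fatou on the convolution integral, this gives
\[
\liminf_{j\to\infty} G_n^{(j)}(u) = \liminf_{j\to\infty}\int G_{n-1}^{(j)}(u-v)|h_j(v)|^2\, dv \;\geq\; \int G_{n-1}(u-v)|h(v)|^2\, dv = G_n(u).
\]
Once this is in hand, applying Fatou to the non-negative integrands $|\widehat f(u)|^2 G_n^{(j)}(u)$ yields the termwise inequality, and a final Fatou on the series index $n$ yields the full statement.
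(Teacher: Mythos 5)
Your proposal is correct and follows essentially the same route as the paper's proof: pass to the kernel representation via \eqref{eq:exp-lasku}, expand the exponential so that each term is a positive quantity $\frac{1}{n!}\int|\widehat f|^2\,(|h|^2)^{*n}$, and invoke Fatou together with the super-multiplicativity of $\liminf$ for non-negative sequences. The only difference is cosmetic: the paper unfolds the $n$-fold convolution into a single integral over $\R^n$ and applies Fatou once, whereas you keep the convolutions and recover the pointwise comparison $\liminf_j (|h_j|^2)^{*n}\geq(|h|^2)^{*n}$ by an induction in $n$, followed by two more applications of Fatou.
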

\begin{proof} 
We start by  observing that ${\mathcal F}\big((\widehat{|h|^2})^n\big)=\big(|h|^2(-\cdot)\big)^{*n}=\big(|h|^2\big)^{*n}(-\cdot)$. Hence we apply \eqref{eq:exp-lasku} to compute
\begin{align*}
&\E \big| \int_\R f(x) e^{G[h](x)}dx\big|^2
=\sum_{n=0}^\infty\frac{1}{n!}\int_{\R^2}f(x)\overline{f(y)}\big(\widehat{|h|^2}(x-y)\big)^ndxdy\\
&=\sum_{n=0}^\infty\frac{1}{n!}\int_{\R}|\widehat f(\xi)|^2\big(|h|^2\big)^{*n}(\xi)d\xi\\
&= \sum_{n=0}^\infty\frac{1}{n!}\int_{\R^n}|\widehat f(\xi_1+\xi_2+\ldots +\xi_n)|^2|h|^2(\xi_1)\cdots|h|^2(\xi_n)d\xi_1\cdots d\xi_n. 
\end{align*}
Applying the above identity to the functions $h_j$ as well, the claim follows immediately from Fatou's lemma.
\end{proof}

\begin{lemma}\label{le:meso2} Assume that $\int_\R (1+x^2)|f(x)|^2dx<\infty.$ There exists a $C>0$ such that 
$$
\E \big| \int_\R f(x) e^{G[u^{-1/2}\chi_{[1,A]}](x)}dx)\big|^2\leq C\int_\R (1+x^2)|f(x)|^2dx\quad for \ all \quad  A>1.
$$
\end{lemma}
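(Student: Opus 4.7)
The plan is to begin from the standard complex Gaussian moment computation used in the proof of Lemma \ref{le:meso1}: writing $h(u) = u^{-1/2}\chi_{[1,A]}(u)$ and $F_A(s) := \widehat{|h|^2}(s) = \int_1^A u^{-1}e^{-2\pi isu}du$, one has
$$
\E\Bigl|\int_\R f(x)\,e^{G[h](x)}\,dx\Bigr|^2 = \int_{\R^2} f(x)\overline{f(y)}\,e^{F_A(x-y)}\,dxdy.
$$
The task reduces to bounding this bilinear form, uniformly in $A>1$, by $C\int_\R (1+x^2)|f(x)|^2\,dx$, so the problem is really an analytic one about the kernel $K_A(s) := e^{F_A(s)}$.

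The central observation I would use is the classical asymptotic of the exponential integral: for $|s|$ small and $|s|A$ large,
$$
F_A(s) = -\gamma - \log(2\pi is) + O(|s|) + O((|s|A)^{-1}),
$$
so that $K_A(s) \sim -ie^{-\gamma}/(2\pi s)$ on the intermediate regime $1/A \ll |s| \ll 1$, while $|K_A(s)| \lesssim \min(A, 1/|s|)$ uniformly on all of $\R$. This is precisely the Hilbert-kernel behavior (regularized at scale $1/A$ near the origin) flagged in the introduction as the mechanism that rescues $L^2$-theory for this class of complex chaos.

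With this in hand, I would decompose
$$
K_A(s) = \frac{-ie^{-\gamma}}{2\pi s}\,\chi(s) + R_A(s),
$$
where $\chi$ is a fixed smooth cutoff with $\chi\equiv 1$ near $0$ and compactly supported. The singular piece is treated by the $L^2$-boundedness of the truncated Hilbert transform $H_\chi f(x) := \mathrm{p.v.}\int\chi(x-y)/(i(x-y))f(y)\,dy$, which is a standard Calder\'on--Zygmund operator: the associated bilinear form is bounded by $C\|f\|_{L^2}^2$, and $\|f\|_{L^2}^2 \le \int(1+x^2)|f|^2\,dx$ trivially. The remainder $R_A$ I would further split into a ``constant at infinity'' part --- contributing $|\widehat f(0)|^2 \le \|f\|_{L^1}^2 \le \pi\int(1+x^2)|f|^2\,dx$ via Cauchy--Schwarz --- and an $L^1$ part whose $L^1$-norm is controlled independently of $A$; the latter yields a bounded convolution operator by Young's inequality, and one concludes again by Cauchy--Schwarz against $\|f\|_{L^2}^2 \le \int(1+x^2)|f|^2dx$.

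The main obstacle will be the quantitative construction of this decomposition with estimates \emph{uniform in $A$}. The exponential $K_A = e^{F_A}$ magnifies the $\|F_A\|_{L^\infty} = \log A$ behavior near the origin into a bump of size $A$, so naive bounds (e.g.\ $|K_A| \le e^{|F_A|}$ or termwise expansion of the exponential series) blow up with $A$. Extracting $A$-independent control of $R_A$ requires exploiting the cancellation between $\Re F_A(s) \sim -\log|s|$ and $\Im F_A(s)\to -\tfrac{\pi}{2}\sgn(s)$: these combine under exponentiation to produce exactly the $-i/s$ Hilbert kernel, which both matches $K_A$ on the intermediate regime and forces $K_A$ itself to behave like a bounded multiple of the principal-value kernel rather than blowing up. Making this cancellation quantitative (with the error terms controlled in the right norm to feed into Young's inequality) is the technical core.
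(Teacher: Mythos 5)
Your proposal correctly identifies the mechanism: the Hilbert-kernel behavior of $K_A = e^{F_A}$ arises from the cancellation between $\Re F_A(s)\sim -\log|s|$ and $\Im F_A(s)\to -\tfrac{\pi}{2}\sgn(s)$, and the singular-integral endgame (bounded Hilbert transform on $L^2$ plus Young/Cauchy--Schwarz for the tame remainder) is the right one. But you explicitly leave the quantitative, $A$-uniform construction of the decomposition $K_A = c\,s^{-1}\chi(s)+R_A$ as ``the technical core,'' and as stated that is a genuine gap in the argument: the pointwise bound $|K_A(s)|\lesssim\min(A,1/|s|)$ plugged into the bilinear form without cancellation blows up as $A\to\infty$, the crude triangle-inequality/series bounds blow up for the same reason, and nothing in your write-up actually produces an $L^1$-bound on $R_A$ that is uniform in $A$.

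The paper avoids this difficulty entirely by two moves you do not make. First, it invokes Lemma \ref{le:meso1} (a Fatou-type monotonicity for the bilinear form, obtained by expanding $e^{\widehat{|h|^2}}$ as a nonnegative series of convolutions of $|h|^2$) to replace the hard truncation $u^{-1/2}\chi_{[1,A]}$ by the exponentially damped family $h_\delta(u) = e^{-\delta u}u^{-1/2}\chi_{[1,\infty)}(u)$: since $|h_\delta(u)|\to u^{-1/2}\chi_{[1,\infty)}(u)\ge u^{-1/2}\chi_{[1,A]}(u)$ pointwise as $\delta\to 0^+$, a bound uniform in $\delta$ dominates the bound for any fixed $A$. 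Second, because $|h_0|^2 = u^{-1}\chi_{[1,\infty)}$ is supported on $[0,\infty)$, Paley--Wiener gives an analytic extension of $\widehat{|h_0|^2}$ to the lower half-plane with the crucial identity $\widehat{|h_\delta|^2}(\xi)=\widehat{|h_0|^2}(\xi-2i\delta)$; since $\widehat{|h_0|^2}(\xi)=\log(1/\xi)+[\textrm{entire}]$ near the origin, one gets $\exp\big(\widehat{|h_0|^2}(\xi)\big)=c_0\xi^{-1}+[\textrm{bounded analytic}]$ on $\{\Im\xi\le 0\}$, and the Plemelj formula then yields exactly your Hilbert-kernel-plus-bounded-kernel decomposition with no $\delta$-dependence to estimate at all. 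In short: you try to estimate the sharply truncated kernel directly, which forces the hard uniformity-in-$A$ work; the paper first softens the cutoff via a monotone lemma, then reads off the decomposition from boundary values of an analytic function. Your plan can likely be completed with a careful stationary-phase/exponential-integral analysis of $R_A$, but that work remains to be done and is precisely what the paper's route sidesteps.
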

\begin{proof} By easy approximation (recall that $A$ is finite) we may assume that $f\in C_0^\infty(\R).$ For each $\delta\in [0,1]$ write $h_\delta(u):=e^{-\delta u}u^{-1/2}\chi_{[1,\infty)}(u)$. By the previous lemma,  instead of the functions $(u^{-1/2}\chi_{[1,A]})_{A>1}$ it is enough to prove the uniform bound for the family $(h_\delta(u))_{\delta\in(0,1)}$. 

Now $|h_0|^2(u)=u^{-1}\chi_{[1,\infty)}$ so $\widehat{|h_0|^2}\in L^2(\R)$, and as supp($|h_0|^2$)$\subset [0,\infty)$, the Paley-Wiener theorem verifies that $\widehat{|h_0|^2}$ extends to an analytic function in the lower half plane and for any $\varepsilon>0$, the analytic extension stays bounded in the half plane $\{\Im \xi<-\varepsilon\}$. Moreover $\widehat{|h_0|^2}$ is obtained as the distributional boundary value of this extension (that we denote by the same symbol). By definition,
\begin{equation}\label{eq:PW}
\widehat{|h_\delta|^2}(\xi)=\widehat{|h_0|^2}(-i2\delta+\xi)\quad\textrm{for all}\quad \xi\in\R.
\end{equation}
On the other hand, we may approximate $|h_0|^2$ by $u^{-1}\chi_{[1,A)}$, and obtain for $\xi\not=0$
\begin{align*}
\widehat{|h_0|^2}(\xi) &=\lim_{A\to\infty}\int_1^A \frac{e^{-2\pi i \xi s}}{s}ds=\int_1^\infty \frac{e^{-2\pi i \xi s}}{s}ds= \int_{2\pi \xi}^{\mathrm{sgn}(\xi)\times\infty} \frac{e^{-is}}{s}ds,
\end{align*}
where the computation is validated by the local uniform convergence of the limit on $\xi\in\R\setminus\{0\}.$ Here one e.g. uses the fact that the real and imaginary parts are given by standard cosine and sine integrals. Thus, $\widehat{|h_0|^2}$ is locally smooth, even analytic in a neighbourhood of  any given point $\xi\in \R\setminus\{0\}$. Moreover, we observe the bound
$$
 \big|\widehat{|h_0|^2}(\xi)\big|\leq c|\xi|^{-1}\quad \textrm{for}\quad |\xi|\geq 1, \;  \Im \xi \leq 0.
$$
For $\xi\in B(0,1)\cap \{\Im \xi <0\}$ we can write 
$$
\int_{2\pi \xi}^\infty \frac{e^{-iu}}{u}= C+\int_{2\pi\xi}^{1}\frac{du}{u}+\int_{2\pi\xi}^{1}\frac{e^{-i u}-1}{u}du
$$
 (where the integration contour stays in the lower half plane) and we see that
\begin{equation}\label{eq:PW2}
\widehat{|h_0|^2}(\xi) =\log(1/\xi)+ \textrm {[entire]} \quad \textrm{in}\quad \xi\in B(0,1)\cap \{\Im \xi <0\}.
\end{equation}

By \eqref{eq:PW} we deduce that the distributional limit of $\exp(\widehat{|h_\delta|^2}(\xi))$ equals the distributional boundary values on the real axis of the analytic function $\exp(\widehat{|h_0|^2}(\xi))$ in the lower half plane, assuming that the latter ones exist.  In turn, this follows follows  from  \eqref{eq:PW2} and the above discussion. Namely,  
$$
\exp(\widehat{|h_0|^2}(\xi)) =c_0\xi^{-1} \;+\;[\textrm{analytic and bounded over}\;\R].
$$
 We may then invoke Lemma \ref{le:meso1}, Cauchy-Schwarz
and the Plemelj formula to deduce that
\begin{align*}
&\E \big| \int_\R f(x) e^{G[h_\delta](x)}dx\big|^2\leq C\int_{\R^2}|f(x)f(y)|dxdy+\lim_{\varepsilon\to 0^+}\left|c_0\int_{\R^2}\frac{f(x)f(y)}{x-y+i\varepsilon}dxdy\right|\\
\leq &C'\int_\R (1+x^2)|f(x)|^2dx,
\end{align*}

\noindent where we used the estimate $\int_\R |f(x)|dx\lesssim (\int_\R (1+x^2)|f(x)|^2dx)^{1/2}$ as well as the fact that the Hilbert transform is a bounded operator on $L^2(\R)$.
\end{proof}

The next lemma records a couple of basic properties of the complex chaos defined via exponentials of stochastic integrals of the type considered above.
\begin{lemma}\label{le:meso3} For $0<a\leq 1\leq A$ denote 
$$
\eta_{a,A}(x):=\exp\left[\int_a^1\frac{e^{-2\pi i xu}-1}{u^{1/2}}dB^\C_u + \int_1^A\frac{e^{-2\pi i xu}}{u^{1/2}}dB^\C_u\right].
$$
Then, for any $\alpha>1/2$    we have
\begin{equation}\label{eq:upper1}
\sup_{a\in(0,1),A>1}\E\|\reg{\eta}_{a,A}(x)\|^2_{W^{-\alpha,2}(\R)} <\infty
\end{equation}
and  for every $a\in [0,1]$ there exists the almost sure limit 
\begin{equation}\label{eq:upper2}
(1+x^2)^{-1}\eta_a(x):=\lim_{A\to\infty}\reg{\eta}_{a,A}(x)\;\in W^{-\alpha,2}(\R).
\end{equation}
In a similar vein, there exists the almost sure limit
\begin{equation}\label{eq:upper2'}
(1+x^2)^{-1}\eta(x):=\lim_{a\to 0^+,A\to\infty}\reg{\eta}_{a,A}(x)\;\in W^{-\alpha,2}(\R).
\end{equation}
\end{lemma}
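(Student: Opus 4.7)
My plan is to establish the uniform bound \eqref{eq:upper1} via a Gaussian computation combined with a Paley--Wiener estimate in the spirit of Lemma~\ref{le:meso2}, and then to obtain the almost-sure limits \eqref{eq:upper2} and \eqref{eq:upper2'} by Hilbert-space martingale convergence, as in the proof of Proposition~\ref{pr:C}(ii).

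\emph{Step 1 (covariance computation and factorisation).} The exponent of $\eta_{a,A}(x)$ is a centred complex Gaussian with vanishing pseudo-covariance, so the identity $\E e^{W}=e^{\frac{1}{2}\E W^2}$ applied to $W=\tilde G(x)+\overline{\tilde G(y)}$ (with $\tilde G$ the exponent of $\eta_{a,A}$) gives $\E\eta_{a,A}(x)\overline{\eta_{a,A}(y)}=\exp K_{a,A}(x,y)$, where
\[
K_{a,A}(x,y) = \int_a^1\!\frac{(e^{-2\pi ixu}-1)(e^{2\pi iyu}-1)}{u}\,du + \int_1^A\!\frac{e^{-2\pi i(x-y)u}}{u}\,du.
\]
Expanding the bilinear product and using $\overline{F_a(-y)}=F_a(y)$, with $F_a(t):=\int_a^1(e^{-2\pi itu}-1)/u\,du$, yields the crucial factorisation
\[
e^{K_{a,A}(x,y)} = e^{H_{a,A}(x-y)}\,e^{-F_a(x)}\,\overline{e^{-F_a(y)}},
\]
where $H_{a,A}(t):=F_a(t)+\int_1^A e^{-2\pi itu}/u\,du$ is translation-invariant.

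\emph{Step 2 (uniform $L^2$-bound).} Fubini reduces \eqref{eq:upper1} to the uniform estimate
\[
\E\bigl|\widehat{\reg{\eta}_{a,A}}(\xi)\bigr|^2 = \int\!\!\int g_a(x)\overline{g_a(y)}\,e^{H_{a,A}(x-y)}\,dx\,dy \leq C,
\]
with $g_a(x):=\reg{e}_\xi(x)e^{-F_a(x)}$ and $\reg{e}_\xi(x)=(1+x^2)^{-1}e^{-2\pi i\xi x}$. The identity $H_{a,A}(t)=\log a+\int_a^A e^{-2\pi itu}/u\,du$ exhibits $H_{a,A}$ as (up to the additive constant $\log a$) the Fourier transform of the $[0,\infty)$-supported function $u^{-1}\chi_{[a,A]}(u)$, hence $e^{H_{a,A}(t)}=a\cdot e^{I_{a,A}(t)}$ with $I_{a,A}$ analytic in the lower half plane. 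Together with the pointwise limit $a e^{I_{a,A}(t)}\to 1/(it)$ as $a\to 0,\,A\to\infty$ for $t\neq 0$, a Lemma~\ref{le:meso1}-type monotone domination and the Paley--Wiener argument of Lemma~\ref{le:meso2} produce the boundary representation $e^{H_{a,A}(t)}=c_0/(it)+R_{a,A}(t)$ with $R_{a,A}$ uniformly analytic and bounded in $\{\Im t\leq 0\}$. The $L^2$-boundedness of the Hilbert transform then yields
\[
\Big|\int\!\!\int g(x)\overline{g(y)}\,e^{H_{a,A}(x-y)}\,dx\,dy\Big| \leq C\int_\R(1+x^2)|g(x)|^2\,dx
\]
uniformly in $a,A$. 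Finally, a regime-by-regime analysis of $|e^{-F_a(x)}|^2 = \exp\bigl(2\int_a^1(1-\cos 2\pi xu)/u\,du\bigr)$, carried out via the substitution $v=2\pi xu$ and classical Ci/Si asymptotics, produces uniform control of $\int(1+x^2)|g_a(x)|^2\,dx$ in $(\xi,a)$; integrating in $\xi$ against $(1+\xi^2)^{-\alpha}$ completes \eqref{eq:upper1}.

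\emph{Step 3 (martingale convergence).} For fixed $a$, the mean identity $\E\exp(\int_s^t e^{-2\pi ixu}u^{-1/2}dB^\C_u)=1$ and independence of Brownian increments on disjoint intervals make $A\mapsto\reg{\eta}_{a,A}$ a $W^{-\alpha,2}(\R)$-valued martingale in the filtration $\sigma(B^\C_u:u\leq A)$. The uniform $L^2$-bound from Step 2 combined with the Radon--Nikod\'ym property of the separable Hilbert space $W^{-\alpha,2}(\R)$ yields the almost-sure limit \eqref{eq:upper2}, and the double limit \eqref{eq:upper2'} follows by the same argument on the directed index set $\{(a,A):a\in(0,1],\,A>1\}$ ordered by decreasing $a$ and increasing $A$.

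The main obstacle lies in Step 2: the kernel $e^{H_{a,A}(t)}$ equals $A$ at $t=0$ and converges to the Hilbert kernel $1/(it)$ only in the joint limit $a\to 0,\,A\to\infty$, so the Paley--Wiener analysis must be carried out with simultaneous uniformity in both parameters. Subsequently the Hilbert-transform bound must be matched against the regime-dependent growth of $|e^{-F_a(x)}|$, which is bounded on $|x|\leq 1$, grows like $|x|$ on $1\leq|x|\leq 1/a$, and saturates at $1/a$ for $|x|\geq 1/a$; the whole argument hinges on these growth contributions balancing exactly against the $(1+x^2)^{-1}$ decay of $\reg{e}_\xi$ at the threshold $\alpha=1/2$.
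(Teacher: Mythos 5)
Your Step~1 factorisation $e^{K_{a,A}(x,y)}=e^{H_{a,A}(x-y)}\,e^{-F_a(x)}\,\overline{e^{-F_a(y)}}$ is correct, and your Step~3 martingale argument is exactly what the paper does. The gap is in Step~2, and it is fatal to the argument as stated: the quantity $\int_\R(1+x^2)|g_a(x)|^2\,dx$ is \emph{not} uniformly bounded in $a$. Indeed,
\[
\int(1+x^2)|g_a(x)|^2\,dx \;=\; \int(1+x^2)^{-1}\exp\Bigl(2\int_a^1\frac{1-\cos 2\pi xu}{u}\,du\Bigr)\,dx,
\]
and the substitution $v=2\pi xu$ shows the inner integral equals $\log|x|+\mathcal O(1)$ for $1\le|x|\le 1/a$ and $\log(1/a)+\mathcal O(1)$ for $|x|\ge 1/a$, so $|e^{-F_a(x)}|^2\asymp \min(|x|^2,a^{-2})$ in that range. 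Hence the integrand is $\asymp 1$ on $1\le|x|\le 1/a$, which alone contributes $\asymp 1/a$. The balance you describe at the end of your write-up does not hold: it fails by a full factor of $1/a$. Consequently your application of the Lemma~\ref{le:meso2}-style Paley--Wiener estimate to $g_a$ cannot give the uniform bound you need. (The weaker bound in terms of $\|g_a\|_{L^2}^2+\|g_a\|_{L^1}^2$ also diverges, this time like $\log(1/a)^2$.)

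The reason the paper's proof succeeds where the purely covariance-based computation fails is conceptual and worth internalising. The deterministic growth $|e^{-F_a(x)}|\asymp |x|$ is precisely $(\E|\exp(S_a(x))|^2)^{1/2}$, where $S_a(x)=\int_a^1(e^{-2\pi ixu}-1)u^{-1/2}\,dB^{\C}_u$; it reflects the exponentiated \emph{variance} of the Gaussian $S_a$, which grows like $\log|x|$. But \emph{pathwise}, a centred Gaussian with variance $\sim\log|x|$ is only of size $O(\sqrt{\log|x|\cdot\log\log|x|})$, so $\exp(S_a(x))$ has almost-sure sub-polynomial growth, while its second moment grows polynomially. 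The paper exploits exactly this: it keeps $S_a$ and $G[u^{-1/2}\chi_{[1,A]}]$ as a product of two \emph{independent} random factors (independence comes from disjoint Brownian increments), controls $(1+x^2)^{-\varepsilon}\exp(S_a)$ \emph{pathwise} in $C^1(\R)$ with finite moments, and reserves the remaining weight $(1+x^2)^{\varepsilon-1}$ with $\varepsilon<1/4$ (so that $1-\varepsilon>3/4$) for Lemma~\ref{le:meso2} applied to the singular factor. This two-sided splitting of the weight, justified by the independence of the two factors, is not visible at the level of the covariance kernel $e^{K_{a,A}}$, which is why your deterministic route comes up short.
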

\begin{proof}
It is enough to prove the first statement since the latter ones will be  easy consequences of the first one and the fact that $(1+x^2)^{-1}\eta_{a,A}$ is a $W^{-\alpha,2}(\R)$-valued martingale with respect to decreasing $a$ and increasing $A$. For that end, write $S_a(x):=\int_a^1\frac{e^{-2\pi i xu}-1}{u^{1/2}}dB^\C_u$ so that
$$
\eta_{a,A}(x)=\exp\big(S_a(x) + G[u^{-1/2}\chi_{[1,A]}](x)\big).
$$
By Lemma \ref{le:meso2} and our standard  computation of the expectation of the Sobolev norm (see e.g. the proof of Lemma \ref{le:nu}) we have for any $\beta\in (3/4,1]$
\begin{equation}\label{eq:upper3}
\sup_{A>1}\E\|(1+x^2)^{-\beta}e^{G[u^{-1/2}\chi_{[1,A]}](x)}\|^2_{W^{-\alpha,2}(\R)} <\infty.
\end{equation}
In turn, the part $S_a(x)$ behaves nicely and defines a smooth field, and by the already familiar argument we deduce that
$
\E \exp\big(\lambda \sup_{a\in (0,1)}\| S_a\|_{C^1[0,1]}\big)<\infty\quad\textrm{for any}\quad \lambda >0,
$
whence by Chebyshev we have the exponential tail
$
\Prob ( \sup_{a\in (0,1)}\| S_a\|_{C^1[0,1]}>y) <C(\lambda)e^{-\lambda y}.
$
Assume that $\varepsilon >0$. Choose $\lambda >\varepsilon^{-1}$. By translation invariance we may compute for $y>0$

\begin{align*}
&\Prob \Big( \sup_{a\in (0,1)}\big(|S_a(x)|+ |S'_a(x)|\big)\;\geq \; y +\varepsilon\log(1+x^2)\quad \textrm{for some}\;\,x\in\R\Big)
\\ &\lesssim \sum_{n\in\Z}e^{-\lambda (y+\varepsilon \log(1+n^2))}\lesssim e^{-\lambda y}\sum_{n\in\Z}(1+n^2)^{-\lambda\varepsilon} \lesssim e^{-\lambda y}.
\end{align*}
This easily yields that $\E\big| \sup_{a\in (0,1)}\| (1+x^2)^{-\varepsilon}\exp(S_a(x))\|_{C^1(\R)}\big|^2<\infty$. We may thus choose $\beta=1-\varepsilon$ with $\varepsilon\in (0,1/4)$, assume $\alpha \in (1/2,1]$, and  obtain by independence and \eqref{eq:upper3}
\begin{align*}
&\E\|(1+x^2)^{-1}\eta_{a,A}(x)\|^2_{W^{-\alpha,2}(\R)}\\\lesssim \; &\E \left(\| (1+x^2)^{-\varepsilon}\exp(S_a(x))\|_{C^1(\R)}
\|(1+x^2)^{\varepsilon-1}e^{G[u^{-1/2}\chi_{[1,A]}](x)}\|_{W^{-\alpha,2}(\R)} \right)^2 \; \leq\; C
\end{align*}
uniformly in $a,A.$ Here we again used the fact that functions with nice enough bounds on their derivatives are multipliers in Sobolev spaces -- see the discussion at the end of Section \ref{se:convergence}.

\end{proof}

Our next task is to  approximate our Gaussian field 
$$
\mathcal{G}_N(x):=\sum_{k=1}^N\frac{1}{\sqrt{2 p_k}}(W^{(1)}_k+iW_k^{(2)})p_k^{-ix}.
$$
on the interval $[0,1]$ by stochastic integrals of the type considered above. We'll carry this out in several easy steps. First we replace the summation over primes by a more regular one in terms of the Logarithmic integral: define
\begin{equation*}
\mathcal{G}_{N,1}(x)=\sum_{j=1}^N\frac{1}{\sqrt{2 \mathrm{Li}^{-1}(j)}}(W^{(1)}_j+iW_j^{(2)})(\mathrm{Li}^{-1}(j))^{-ix}.
\end{equation*}
Let us show that this is a good approximation to $\mathcal{G}_N$.
\begin{lemma}\label{le:meso5}
There exists a random smooth function $F_1:[0,1]\to\R$ such that almost surely, $F_{N,1}:=\mathcal{G}_{N,1}-\mathcal{G}_N$ converges to $F_1$ in any $C^\ell[0,1]$, $\ell\geq 1.$
Moreover, for all $\lambda>0$ and any $\ell\geq 0$
\begin{equation*}
\E \exp\big(\lambda \|  F_1\|_{C^{\ell}[0,1]}\big)<\infty 
\end{equation*}
and 
\begin{equation}\label{eq:esupnbound2}
\E \exp\big(\lambda \sup_{N\geq 1}\|F_{N,1}\|_{C^{\ell}[0,1]}\big)<\infty.
\end{equation}
\end{lemma}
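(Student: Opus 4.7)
The strategy is to write $F_{N,1}$ as a random series of the type treated in Lemma~\ref{le:perus}(ii) and then check the required summability of $L^\infty$-norms of the coefficient functions and their derivatives. Indeed,
\begin{equation*}
F_{N,1}(x) \;=\; \sum_{k=1}^N A_k g_k(x),\qquad A_k := \tfrac{1}{\sqrt{2}}(W_k^{(1)}+iW_k^{(2)}),
\end{equation*}
where
\begin{equation*}
g_k(x) \;:=\; \frac{(\mathrm{Li}^{-1}(k))^{-ix}}{\sqrt{\mathrm{Li}^{-1}(k)}}\;-\;\frac{p_k^{-ix}}{\sqrt{p_k}},
\end{equation*}
so once I verify
\begin{equation*}
\sum_{k=1}^{\infty}\|g_k^{(\ell)}\|_{L^\infty[0,1]}^{2}\;<\;\infty \quad \text{for every } \ell\geq 0,
\end{equation*}
Lemma~\ref{le:perus}(ii) immediately gives almost sure convergence of $F_{N,1}$ in every $C^\ell[0,1]$ to a random smooth function $F_1$, together with the two exponential moment estimates claimed (the second one is \eqref{eq:se3} of Lemma~\ref{le:perus}(i) lifted to $C^\ell$ as in Lemma~\ref{le:perus}(ii)).

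To get the estimate on $\|g_k^{(\ell)}\|_{L^\infty[0,1]}$, I would fix $x\in[0,1]$ and apply the mean value theorem to the smooth function
\begin{equation*}
h_{x,\ell}(y) \;:=\; (-i\log y)^{\ell}\,y^{-1/2-ix},\qquad y>1,
\end{equation*}
so that $g_k^{(\ell)}(x)=h_{x,\ell}(\mathrm{Li}^{-1}(k))-h_{x,\ell}(p_k)$. A direct computation gives the bound $|h_{x,\ell}'(y)|\lesssim (1+|x|)(\log y)^{\ell}\, y^{-3/2}$ uniformly in $x\in[0,1]$ and $y$ in any fixed neighborhood of the segment joining $\mathrm{Li}^{-1}(k)$ and $p_k$. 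Combining this with Lemma~\ref{le:li-inverse} (which gives $|p_k-\mathrm{Li}^{-1}(k)|\lesssim k e^{-\sqrt{\log k}}$) and the prime number theorem asymptotic $p_k,\mathrm{Li}^{-1}(k)\asymp k\log k$, I obtain
\begin{equation*}
|g_k^{(\ell)}(x)| \;\lesssim\; (\log k)^{\ell}(k\log k)^{-3/2}\cdot k e^{-\sqrt{\log k}}\;\lesssim\; (\log k)^{\ell-3/2} k^{-1/2} e^{-\sqrt{\log k}},
\end{equation*}
uniformly in $x\in[0,1]$, and hence
\begin{equation*}
\|g_k^{(\ell)}\|_{L^\infty[0,1]}^{2}\;\lesssim\;(\log k)^{2\ell-3}k^{-1}e^{-2\sqrt{\log k}}.
\end{equation*}

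Summing in $k$ and making the change of variables $u=\sqrt{\log k}$ (so $dk/k = 2u\,du$) turns the tail of $\sum_k (\log k)^{2\ell-3}k^{-1}e^{-2\sqrt{\log k}}$ into a convergent integral of the form $\int u^{4\ell-5}e^{-2u}\,du<\infty$, so the hypothesis of Lemma~\ref{le:perus}(ii) holds for every $\ell\geq 0$. This yields at once the almost sure convergence $F_{N,1}\to F_1$ in each $C^\ell[0,1]$, the smoothness of the limit, and the exponential bounds
\begin{equation*}
\E\exp\bigl(\lambda\|F_1\|_{C^\ell[0,1]}\bigr)<\infty,\qquad \E\exp\Bigl(\lambda\sup_{N\geq 1}\|F_{N,1}\|_{C^\ell[0,1]}\Bigr)<\infty,
\end{equation*}
for any $\lambda>0$. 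The only mild technical point is keeping track of the mean value theorem in the complex variable $y^{-ix}$: I would handle this by bounding $|h_{x,\ell}'|$ on the real segment $[\min(p_k,\mathrm{Li}^{-1}(k)),\max(p_k,\mathrm{Li}^{-1}(k))]$, where $h_{x,\ell}$ is smooth and the bound above holds, so no genuine obstacle arises.
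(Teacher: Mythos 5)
Your proposal is correct and follows essentially the same route as the paper: write $F_{N,1}$ as a random series in the form required by Lemma~\ref{le:perus}(ii), bound the difference of the two terms using Lemma~\ref{le:li-inverse} together with the prime number theorem asymptotic $p_k,\mathrm{Li}^{-1}(k)\asymp k\log k$, and observe that differentiation only produces extra logarithmic factors. The paper's proof is simply terser, asserting the $\ell=0$ bound directly and then noting "differentiation only gives an extra power of $\log j$," whereas you spell out the mean value theorem and the summability check via the substitution $u=\sqrt{\log k}$; both are fine.
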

\begin{proof}
To apply Lemma \ref{le:perus}, the term with no derivatives can be estimated with Lemma \ref{le:li-inverse}, and we see that, uniformly for $x\in [0,1]$,
\begin{align*}
\left|\frac{1}{\sqrt{ p_j}}e^{-ix\log p_j}-\frac{1}{\sqrt{\mathrm{Li}^{-1}(j)}}e^{-ix\log \mathrm{Li}^{-1}(j)}\right|&\lesssim \frac{|p_j-\mathrm{Li}^{-1}(j)|}{p_j^{3/2}}
\lesssim j^{-1/2} e^{-\sqrt{\log j}}.
\end{align*}
Differentiation only gives an extra power of $\log j$ here. So we see that \eqref{eq:se0} is satisfied and Lemma \ref{le:perus} applies as before.
\end{proof}
The next step consists of (after first enlarging the probability space if needed) expressing the Gaussian variables $2^{-1/2}(W^{(1)}_j+iW_j^{(2)})$ in terms of $B_t^{\C}=B_t^{(1)}+iB_t^{(2)}$ in the following manner:
\begin{equation*}
W_j^{(i)}=\int_{\mathrm{Li}^{-1}(j)}^{\mathrm{Li}^{-1}(j+1)}\frac{dB_t^{(i)}}{\sqrt{\mathrm{Li}^{-1}(j+1)-\mathrm{Li}^{-1}(j)}}.
\end{equation*}
This leads to the second approximation:
\begin{lemma}\label{le:meso6}
Let 
\begin{align*}
\mathcal{G}_{N,2}(x)&:=\sum_{j=1}^N \int_{\mathrm{Li}^{-1}(j)}^{\mathrm{Li}^{-1}(j+1)}\frac{e^{-ix\log t}}{\sqrt{t}}\frac{dB^\C_t}{\sqrt{\mathrm{Li}^{-1}(j+1)-\mathrm{Li}^{-1}(j)}}.
\end{align*}
Then almost surely, $F_{2,N}:=\mathcal{G}_{N,2}-\mathcal{G}_{N,1}$ converges uniformly to a smooth  function $F_{2}$ and the $C^\ell[0,1]$-norms  of these quantities satisfy the same estimates as in the previous lemma.
\end{lemma}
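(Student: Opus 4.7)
The plan is to reduce Lemma \ref{le:meso6} to exactly the functional-analytic machinery underlying Lemma \ref{le:perus}(ii). Setting $I_j := [\mathrm{Li}^{-1}(j),\mathrm{Li}^{-1}(j+1)]$ and $\ell_j := |I_j|$, rewriting $\mathcal{G}_{N,1}$ as a sum of stochastic integrals with the frozen integrand $t \mapsto (\mathrm{Li}^{-1}(j))^{-1/2-ix}/\sqrt{\ell_j}$ on $I_j$ gives
$$
F_{N,2}(x) \;=\; \sum_{j=1}^{N} Z_j(x),\qquad Z_j(x) \;:=\; \frac{1}{\sqrt{\ell_j}}\int_{I_j}\bigl[\,t^{-1/2-ix} - \mathrm{Li}^{-1}(j)^{-1/2-ix}\bigr]\,dB_t^{\C}.
$$
The $Z_j$'s are independent centered complex Gaussian processes thanks to the disjointness of the integration intervals. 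My goal will be to show, for each fixed $\ell\geq 0$, the summability
$$
\sum_{j=1}^{\infty}\E\|Z_j^{(\ell)}\|_{L^\infty[0,1]}^2 \;<\;\infty,
$$
after which L\'evy's inequality and Fernique's theorem, applied in the separable Hilbert space $W^{1,2}[0,1]$, will close the argument precisely as in Lemma \ref{le:perus}(ii).

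The core step is a pointwise bound on the integrand of $Z_j^{(\ell)}$. Differentiating under the stochastic integral gives $Z_j^{(\ell)}(x) = \ell_j^{-1/2}\int_{I_j}[\psi_\ell(t,x) - \psi_\ell(\mathrm{Li}^{-1}(j),x)]\,dB_t^{\C}$ with $\psi_\ell(t,x) := (-i\log t)^\ell t^{-1/2-ix}$, and an elementary computation yields $|\partial_t \psi_\ell(t,x)| \lesssim (1+|x|)(\log t)^\ell t^{-3/2}$. Combining this with the asymptotics $\mathrm{Li}^{-1}(j) \asymp j\log j$ and $\ell_j \asymp \log j$ (immediate from Lemma \ref{le:li-inverse} together with the prime number theorem), the mean value theorem provides
$$
|\psi_\ell(t,x) - \psi_\ell(\mathrm{Li}^{-1}(j),x)|^2 \;\lesssim\; (\log j)^{2\ell-1}\,j^{-3}
$$
uniformly in $t\in I_j$ and $x\in[0,1]$. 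The It\^o isometry for integrals against $B^{\C}$ then gives the pointwise bound $\E|Z_j^{(\ell)}(x)|^2 \lesssim (\log j)^{2\ell-1}j^{-3}$, and the Sobolev embedding $\|\cdot\|_{L^\infty[0,1]}\lesssim\|\cdot\|_{W^{1,2}[0,1]}$ combined with Fubini upgrades this to $\E\|Z_j^{(\ell)}\|_{L^\infty[0,1]}^2 \lesssim (\log j)^{2\ell+1}j^{-3}$, which is indeed summable in $j$.

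With summability in hand, the rest is routine. L\'evy's inequality applied to the independent, symmetric, $W^{1,2}[0,1]$-valued Gaussian random variables $Z_j^{(\ell)}$, combined with a Borel-Cantelli argument along a geometric subsequence exactly as in the proof of Lemma \ref{le:perus}(i), will produce almost sure absolute convergence of the series in $W^{1,2}[0,1]$ and hence uniformly on $[0,1]$; applying this simultaneously at levels $\ell$ and $\ell+1$ yields smoothness of the limit $F_2$. Fernique's theorem for Gaussian vectors in a separable Hilbert space, together with one more application of L\'evy's inequality to control suprema of partial sums, will then deliver the exponential moment bounds in the statement, mirroring the endgame of Lemma \ref{le:perus}(ii). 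I expect the only mildly delicate point to be keeping track of the $(\log j)^\ell$ factors when passing from pointwise variance estimates to sup-norm estimates; every other ingredient is just a recycling of machinery already used in the paper.
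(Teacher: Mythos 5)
Your proposal is correct and follows essentially the same route as the paper: both decompose $F_{2,N}$ into independent Gaussian blocks supported on the disjoint intervals $I_j$, bound $\E|Z_j^{(\ell)}(x)|^2$ via It\^o's isometry and the mean value theorem using $\mathrm{Li}^{-1}(j)\asymp j\log j$ and $\ell_j\asymp\log j$, arrive at the summable bound $\mathcal{O}\big((\log j)^{2\ell-1}j^{-3}\big)$, and then close the argument by recycling the L\'evy-inequality/Borel--Cantelli/Fernique machinery from Lemma \ref{le:perus}. The only difference is cosmetic: you carry the $(\log j)^{-1}$ factor explicitly whereas the paper simply quotes $\mathcal{O}(j^{-3})$, and you spell out the upgrade from pointwise to sup-norm via $W^{1,2}$ embedding and Fubini, which the paper leaves implicit as "mimic the proof of Lemma \ref{le:perus}".
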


\begin{proof}
While we are now not in the setting of Lemma \ref{le:perus}, we can still mimic its proof. By Ito's isometry, to get a hold of the expectations of the square of the Sobolev norms $\|\cdot\|_{W^{k,2}(0,1)}^2$, we note that in order to estimate the $L^2$-norm one needs to estimate for $x\in [0,1]$ the square of the error that takes the form
\begin{equation*}
\frac{1}{\mathrm{Li}^{-1}(j+1)-\mathrm{Li}^{-1}(j)}\int_{\mathrm{Li}^{-1}(j)}^{\mathrm{Li}^{-1}(j+1)}\left|\frac{e^{-ix\log \mathrm{Li}^{-1}(j)}}{\sqrt{\mathrm{Li}^{-1}(j)}}-\frac{e^{-ix\log t}}{\sqrt{t}}\right|^2dt.
\end{equation*}
 This quantity is of order ${\mathcal O}\big(\frac{(\mathrm{Li}^{-1}(j+1)-\mathrm{Li}^{-1} (j))^2 }{ \mathrm{Li}^{-1}(j)^{3}}\big)={\mathcal O}\big(j^{-3}\big)$, while the derivative terms come with an extra $\log^{2l} j$. All these are summable over $j$, so we can conclude as before.
\end{proof}

To proceed, we'll want to replace the $1/\sqrt{\mathrm{Li}^{-1}(j+1)-\mathrm{Li}^{-1}(j)}$ by something more convenient. More precisely, we'll make use of the following approximation.

\begin{lemma}\label{le:meso7}
Let 
\begin{equation*}
\mathcal{G}_{N,3}(x):=\int_{\mathrm{Li}^{-1}(1)}^{\mathrm{Li}^{-1}(N+1)}\frac{e^{-ix\log t}}{\sqrt{t}}\frac{dB^\C_t}{\sqrt{\log t}}.
\end{equation*}
Then, almost surely as $N\to\infty$, $F_{3,N}:=\mathcal{G}_{N,3}-\mathcal{G}_{N,2}$ converges uniformly to a random continuous function $F_{3}$, and the $C^\ell[0,1]$-norms  of these quantities satisfy the same estimates as in Lemma \ref{le:meso5}.
\end{lemma}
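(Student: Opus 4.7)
The strategy is to mirror the argument of Lemma~\ref{le:meso6}: express the increment $F_{3,N} - F_{3,N-1}$ as a single stochastic integral, control its $C^\ell[0,1]$-moments via Ito's isometry, and then apply the same L\'evy-Fernique machinery used in Lemma~\ref{le:perus} to upgrade to almost sure uniform convergence and exponential moment estimates. Concretely,
\begin{equation*}
F_{3,N}(x) - F_{3,N-1}(x) = \int_{\mathrm{Li}^{-1}(N)}^{\mathrm{Li}^{-1}(N+1)} \frac{e^{-ix\log t}}{\sqrt{t}}\left(\frac{1}{\sqrt{\log t}} - \frac{1}{\sqrt{\mathrm{Li}^{-1}(N+1)-\mathrm{Li}^{-1}(N)}}\right) dB_t^{\C},
\end{equation*}
and these increments are independent across $N$ since $B^\C$ has independent increments over disjoint intervals.

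The central estimate is to bound the parenthesis uniformly on $[\mathrm{Li}^{-1}(N), \mathrm{Li}^{-1}(N+1)]$. Since $(\mathrm{Li}^{-1})'(y) = \log(\mathrm{Li}^{-1}(y))$, the mean value theorem gives $\mathrm{Li}^{-1}(N+1) - \mathrm{Li}^{-1}(N) = \log(\mathrm{Li}^{-1}(\xi_N))$ for some $\xi_N \in (N,N+1)$, so the task reduces to comparing $\log t$ with $\log(\mathrm{Li}^{-1}(\xi_N))$ for $t$ in the interval. Using $\tfrac{d}{ds}\log(\mathrm{Li}^{-1}(s)) = \log(\mathrm{Li}^{-1}(s))/\mathrm{Li}^{-1}(s) = \mathcal{O}(1/N)$ (because $\mathrm{Li}^{-1}(s) \sim s\log s$), one obtains $|\log t - \log(\mathrm{Li}^{-1}(\xi_N))| = \mathcal{O}(1/N)$, which together with $\log t, \log(\mathrm{Li}^{-1}(\xi_N)) \asymp \log N$ yields the pointwise bound $\mathcal{O}(N^{-1}(\log N)^{-3/2})$ for the parenthesis.

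Combining this with Ito's isometry, $\int_{\mathrm{Li}^{-1}(N)}^{\mathrm{Li}^{-1}(N+1)} dt/t = \mathcal{O}(1/N)$, and the fact that each derivative in $x$ pulls an extra factor $(\log t)^\ell = \mathcal{O}((\log N)^\ell)$ into the integrand, one arrives at
\begin{equation*}
\E \big| F_{3,N}^{(\ell)}(x) - F_{3,N-1}^{(\ell)}(x) \big|^2 \lesssim \frac{(\log N)^{2\ell-3}}{N^3} \qquad \text{uniformly in } x \in [0,1],
\end{equation*}
which is summable in $N$ for every $\ell \geq 0$. From here the almost sure uniform convergence and the exponential moment bounds follow exactly as in Lemma~\ref{le:meso6}: one applies the Sobolev embedding $W^{1,2}(0,1) \hookrightarrow C[0,1]$ to pass from squared $W^{1,2}$-estimates of the increments to $C^\ell$-norm bounds, L\'evy's inequality controls the supremum of the partial sums, and Fernique's theorem (applicable since everything is Gaussian) supplies the required exponential moments. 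The main obstacle is the second-order nature of the approximation: both $\mathrm{Li}^{-1}(N+1) - \mathrm{Li}^{-1}(N)$ and $\log t$ are of order $\log N$, so a crude comparison of their reciprocal square roots is insufficient; the extra factor $1/N$ must be extracted by exploiting the slow variation of $\log(\mathrm{Li}^{-1}(\cdot))$ on unit intervals.
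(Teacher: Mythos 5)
Your proof is correct and follows essentially the same route as the paper: write $F_{3,N}-F_{3,N-1}$ as a stochastic integral over the block $[\mathrm{Li}^{-1}(N),\mathrm{Li}^{-1}(N+1)]$, reduce to showing the discrepancy under the square roots is $\mathcal{O}(1/N)$, bound the second moment of the $C^\ell$-increment by $\mathcal{O}(N^{-3}(\log N)^{2\ell-3})$ via It\^o isometry, and conclude by the L\'evy--Fernique machinery from Lemma \ref{le:perus}/\ref{le:meso6}. The only cosmetic difference is in obtaining the $\mathcal{O}(1/N)$ bound: you invoke the mean value theorem to write $\mathrm{Li}^{-1}(N+1)-\mathrm{Li}^{-1}(N)=\log\mathrm{Li}^{-1}(\xi_N)$ and compare $\log t$ to $\log\mathrm{Li}^{-1}(\xi_N)$ via the derivative $\tfrac{d}{ds}\log\mathrm{Li}^{-1}(s)=\mathcal{O}(1/N)$, whereas the paper bounds $|\mathrm{Li}^{-1}(N+1)-\mathrm{Li}^{-1}(N)-\log t|\le\int_N^{N+1}|\log\mathrm{Li}^{-1}(s)-\log t|\,ds$ directly and estimates it by $\log\big(\mathrm{Li}^{-1}(N+1)/\mathrm{Li}^{-1}(N)\big)\lesssim N^{-1}$ — the two computations are equivalent.
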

\begin{proof}
We again argue as in the proof of Lemma \ref{le:perus}. Now we need to estimate terms of the form 
\begin{equation*}
\int_{\mathrm{Li}^{-1}(j)}^{\mathrm{Li}^{-1}(j+1)}\left|\frac{1}{\sqrt{\mathrm{Li}^{-1}(j+1)-\mathrm{Li}^{-1}(j)}}-\frac{1}{\sqrt{\log t}}\right|^2\frac{1}{2t}dt,
\end{equation*}
and  similar ones coming with a factor of $\log^{2\ell} t$ coming from the derivative term in the Sobolev estimate. To estimate such a term, we see that it is enough for us to estimate the difference $|\mathrm{Li}^{-1}(j+1)-\mathrm{Li}^{-1}(j)-\log t|$ for $t\in[\mathrm{Li}^{-1}(j),\mathrm{Li}^{-1}(j+1)]$. For this, we recall that  $\mathrm{Li}'(x)=1/\log x$ and use a change of variable to write
\begin{align*}
&|\mathrm{Li}^{-1}(j+1)-\mathrm{Li}^{-1}(j)-\log t|
\leq\int_{j}^{j+1}|\log( \mathrm{Li}^{-1}(s))-\log t |ds\\
\lesssim&\; \mathrm{Li}^{-1}(j+1)-\mathrm{Li}^{-1}(j) \; \lesssim\; \log \frac{\mathrm{Li}^{-1}(j+1)}{\mathrm{Li}^{-1}(j)}
\;\lesssim\;  \frac{\mathrm{Li}^{-1}(j+1)-\mathrm{Li}^{-1}(j)}{\mathrm{Li}^{-1}(j)}
\;\lesssim \;j^{-1}.
\end{align*}
Above we used the asymptotics $\mathrm{Li}^{-1}(j)\sim j\log j$  and $(\mathrm{Li}^{-1})'(j)\sim \log j$.
Hence the square of the Sobolev norm can be bounded by ${\mathcal O}\big((j^{-3}(\log j)^{2\ell}\big)$, which is summable and the rest of the proof goes through as before.
\end{proof}

We note that  $(2\pi e^{2\pi u})^{-1/2}dB^\C_{e^{2\pi u}}$ is a standard Brownian motion, which we shall denote with slight abuse of notation still by $dB^\C_s$.  After performing a change of variables $t=\exp{2\pi u}$ in the integral defining our last approximation, the outcome  is 
$$
\int_{\log(\mathrm{Li}^{-1}(1))/2\pi}^{\log(\mathrm{Li}^{-1}(N+1))/2\pi}\frac{e^{-2\pi ixu}}{\sqrt{u}}dB^\C_u.
$$
By observing that  a term of the form $\int_{\log(\mathrm{Li}^{-1}(1))/2\pi}^{1}\frac{e^{-2\pi ixu}}{\sqrt{u}}dB^\C_u$ can safely be absorbed into the error term and noting that we could have equally well considered an arbitrary interval $[-A,A]$ instead of $[0,1]$  we  thus obtain a new variant of Theorem  
\ref{th:gaussian_appro}:
 \begin{proposition}\label{pr:gaussian_appro2}
For each $N\geq 1$ we may write
\begin{equation*}
\log \zeta_{N,\rand}(1/2+ix)=\mathcal{\widetilde G}_N(x)+\mathcal{\widetilde E}_N(x),
\end{equation*}
where $\mathcal{\widetilde G}_N$ is a Gaussian process on $\R$ and can be written as the stochastic integral
\begin{equation*}
\mathcal{\widetilde G}_N(x): =G\big[u^{-1/2}\chi_{[1,\log\mathrm{Li}^{-1}(N+1)]}\big](x)
\end{equation*} 
The function $\mathcal{\widetilde E}_N$ is smooth and as $N\to\infty$, it a.s. converges uniformly in every $C^\ell[-A,A]$ $($$A>0,\ell\geq 1$$)$ to a random smooth function on $\R$. Moreover, the maximal error and its derivatives in this decomposition have finite exponential moments:
\begin{equation*}
\E\exp\left(\lambda\sup_{N\geq 1}\|\mathcal{\widetilde E}_N(x)\|_{C^\ell[-A,A]}\right)<\infty \quad \mathrm{for \ all\ }\lambda>0\quad \textrm{and}\quad \ell\geq 0.
\end{equation*}
\end{proposition}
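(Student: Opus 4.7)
The plan is simply to chain together the approximation results \ref{le:meso5}, \ref{le:meso6}, \ref{le:meso7} on top of the already established Theorem \ref{th:gaussian_appro}, and then perform a change of variables to put the Gaussian piece into the form $G[u^{-1/2}\chi_{[1,\log\mathrm{Li}^{-1}(N+1)]}]$. The routine but somewhat tedious point is to check that all the successive ``remainders'' combine into a single smooth random function whose $C^\ell$-norms have uniform-in-$N$ exponential moments; we do not expect any genuine obstacle here, since each step has already been shown to produce an error fitting exactly into the framework of Lemma \ref{le:perus}.

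More precisely, I would proceed as follows. First, Theorem \ref{th:gaussian_appro} gives the decomposition $\log\zeta_{N,\rand}(1/2+ix)=\mathcal{G}_N(x)+\mathcal{E}_N(x)$ on any interval $[-A,A]$, where $\mathcal{E}_N$ enjoys the required moment bounds. Next, Lemma \ref{le:meso5} lets us write $\mathcal{G}_N=\mathcal{G}_{N,1}-F_{N,1}$, Lemma \ref{le:meso6} lets us write $\mathcal{G}_{N,1}=\mathcal{G}_{N,2}-F_{N,2}$, and Lemma \ref{le:meso7} lets us write $\mathcal{G}_{N,2}=\mathcal{G}_{N,3}-F_{N,3}$, where each $F_{N,j}$ converges almost surely in every $C^\ell[-A,A]$ to a limiting smooth random function and satisfies uniform-in-$N$ exponential moment estimates of the form
\begin{equation*}
\E\exp\left(\lambda\sup_{N\geq 1}\|F_{N,j}\|_{C^\ell[-A,A]}\right)<\infty\quad\text{for all } \lambda>0.
\end{equation*}
(The versions stated work on $[0,1]$; the same arguments go through verbatim on $[-A,A]$ since the bounds on the summands defining the approximations depend only on $A$ through a multiplicative constant.)

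Now I would carry out the change of variables $t=e^{2\pi u}$ in the stochastic integral
\begin{equation*}
\mathcal{G}_{N,3}(x)=\int_{\mathrm{Li}^{-1}(1)}^{\mathrm{Li}^{-1}(N+1)}\frac{e^{-ix\log t}}{\sqrt{t}}\frac{dB_t^{\C}}{\sqrt{\log t}},
\end{equation*}
using that $(2\pi e^{2\pi u})^{-1/2}dB_{e^{2\pi u}}^{\C}$ is again a standard complex Brownian motion (which, by a mild abuse of notation, we continue to denote by $dB_u^{\C}$). This transforms $\mathcal{G}_{N,3}$ into
\begin{equation*}
\int_{\log(\mathrm{Li}^{-1}(1))/2\pi}^{\log(\mathrm{Li}^{-1}(N+1))/2\pi}\frac{e^{-2\pi ixu}}{\sqrt{u}}dB_u^{\C}.
\end{equation*}
Since $\log(\mathrm{Li}^{-1}(1))/2\pi$ is a fixed number less than $1$, the piece
\begin{equation*}
F_{N,4}(x):=\int_{\log(\mathrm{Li}^{-1}(1))/2\pi}^{1}\frac{e^{-2\pi ixu}}{\sqrt{u}}dB_u^{\C}
\end{equation*}
is a deterministic stochastic integral (independent of $N$) of a smooth integrand over a compact $u$-interval bounded away from $0$; it therefore defines an a.s.\ smooth random function of $x$ whose $C^\ell[-A,A]$-norms have Gaussian tails, so in particular all exponential moments.

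Finally, defining
\begin{equation*}
\widetilde{\mathcal{E}}_N(x):=\mathcal{E}_N(x)-F_{N,1}(x)-F_{N,2}(x)-F_{N,3}(x)+F_{N,4}(x),
\end{equation*}
and using H\"older's inequality on the exponentials together with the individual uniform bounds, we obtain
\begin{equation*}
\E\exp\left(\lambda\sup_{N\geq 1}\|\widetilde{\mathcal{E}}_N\|_{C^\ell[-A,A]}\right)<\infty\quad\text{for all } \lambda>0,\ \ell\geq 0,
\end{equation*}
and almost sure $C^\ell$-convergence of $\widetilde{\mathcal{E}}_N$ to a smooth limit, as required. The decomposition $\log\zeta_{N,\rand}(1/2+ix)=\widetilde{\mathcal{G}}_N(x)+\widetilde{\mathcal{E}}_N(x)$ with $\widetilde{\mathcal{G}}_N(x)=G[u^{-1/2}\chi_{[1,\log\mathrm{Li}^{-1}(N+1)]}](x)$ then follows directly. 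The only slightly delicate point, and the one I would be most careful about, is the bookkeeping to ensure that every lemma in Section \ref{sec:meso} is applied on the interval $[-A,A]$ rather than $[0,1]$ without degrading the bounds (which is straightforward since $A$ is fixed and all estimates depend polynomially on $A$).
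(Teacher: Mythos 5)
Your proposal is correct and follows essentially the same route as the paper: chain Theorem \ref{th:gaussian_appro} with Lemmas \ref{le:meso5}--\ref{le:meso7}, perform the change of variables $t=e^{2\pi u}$ to rewrite $\mathcal{G}_{N,3}$ as a stochastic integral against a standard complex Brownian motion, absorb the $N$-independent piece $\int_{\log(\mathrm{Li}^{-1}(1))/2\pi}^{1}e^{-2\pi ixu}u^{-1/2}\,dB_u^\C$ into the error, and invoke the uniform-in-$N$ exponential moment bounds from each lemma (combined via H\"older) to control $\widetilde{\mathcal{E}}_N$. The paper presents this as a brief remark preceding the proposition rather than a formal proof, and your write-up simply spells out the same bookkeeping.
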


We are now ready to prepare for the actual proof of the mesoscopic scaling result. We record first a simple estimate for dilations
\begin{lemma}\label{le:vanutus} Assume that $\alpha>1/2$ and $f\in W^{-\alpha,2}(\R)$. Then for any $\delta\in (0,1)$ it holds that
\begin{equation}
\| f(\delta \cdot)\|_{W^{-\alpha,2}(\R)}\leq \delta^{-1-2\alpha}\| f\|_{W^{-\alpha,2}(\R)}.\notag
\end{equation}
\end{lemma}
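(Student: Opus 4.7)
The plan is to reduce to a Fourier-side computation, then control the symbol $(1+\xi^2)^{-\alpha}$ under rescaling by an elementary pointwise inequality.

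First I would write $g(x):=f(\delta x)$ and use the scaling identity of the Fourier transform, which gives $\widehat{g}(\xi)=\delta^{-1}\widehat{f}(\xi/\delta)$. Substituting this into the defining formula
\begin{equation*}
\|g\|_{W^{-\alpha,2}(\R)}^{2}=\int_{\R}(1+\xi^{2})^{-\alpha}|\widehat{g}(\xi)|^{2}d\xi
\end{equation*}
and changing variable $\eta=\xi/\delta$ (so $d\xi=\delta\,d\eta$) produces
\begin{equation*}
\|g\|_{W^{-\alpha,2}(\R)}^{2}=\delta^{-1}\int_{\R}(1+\delta^{2}\eta^{2})^{-\alpha}|\widehat{f}(\eta)|^{2}d\eta.
\end{equation*}

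Next I would use the elementary inequality $1+\eta^{2}\leq\delta^{-2}(1+\delta^{2}\eta^{2})$ valid for $\delta\in(0,1)$ (since it rearranges to $\delta^{2}\leq 1$); raising to the power $\alpha>0$ yields the pointwise bound
\begin{equation*}
(1+\delta^{2}\eta^{2})^{-\alpha}\leq \delta^{-2\alpha}(1+\eta^{2})^{-\alpha}.
\end{equation*}
Inserting this into the integral produces $\|g\|_{W^{-\alpha,2}(\R)}^{2}\leq \delta^{-1-2\alpha}\|f\|_{W^{-\alpha,2}(\R)}^{2}$, which already yields the sharper bound $\|f(\delta\cdot)\|_{W^{-\alpha,2}(\R)}\leq \delta^{-\alpha-1/2}\|f\|_{W^{-\alpha,2}(\R)}$; in particular, since $\delta<1$, the stated weaker bound $\delta^{-1-2\alpha}$ follows (the authors presumably state a looser power because it is all they need later).

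There is essentially no obstacle here: the only thing to be careful about is the direction of the pointwise inequality between $(1+\eta^{2})^{-\alpha}$ and $(1+\delta^{2}\eta^{2})^{-\alpha}$, since naively dropping $\delta^{2}$ would go the wrong way. The hypothesis $\alpha>1/2$ is actually not used in the proof itself; it only matters insofar as $W^{-\alpha,2}(\R)$ is the ambient space in which the mesoscopic convergence is formulated. One could alternatively argue by duality, bounding $|\langle f(\delta\cdot),\varphi\rangle|=\delta^{-1}|\langle f,\varphi(\cdot/\delta)\rangle|$ via the analogous dilation estimate in $W^{\alpha,2}(\R)$ for test functions $\varphi$, but the direct Fourier computation above is more transparent.
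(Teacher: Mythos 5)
Your proof is correct and matches the paper's argument exactly: both use the Fourier scaling identity $\widehat{f(\delta\cdot)}(\xi)=\delta^{-1}\widehat{f}(\xi/\delta)$, a change of variables, and the pointwise bound $(1+\delta^2\eta^2)^{-\alpha}\leq\delta^{-2\alpha}(1+\eta^2)^{-\alpha}$. Your side remark is accurate too: the computation actually yields $\|f(\delta\cdot)\|_{W^{-\alpha,2}}^2\leq\delta^{-1-2\alpha}\|f\|_{W^{-\alpha,2}}^2$ (hence $\|f(\delta\cdot)\|\leq\delta^{-\alpha-1/2}\|f\|$), which is sharper than the stated $\delta^{-1-2\alpha}$; the lemma's version follows since $\delta<1$, and the paper's displayed chain implicitly makes this same relaxation.
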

\begin{proof}
One simply computes
\begin{eqnarray*}
\| f(\delta \cdot)\|^2_{W^{-\alpha,2}(\R)} &= &\int_\R|\delta^{-1}\widehat f(\delta^{-1}\xi)|^2(1+\xi^2)^{-\alpha}d\xi
= \delta^{-1}\int_\R|\widehat f(\xi)|^2(1+(\delta\xi)^2)^{-\alpha}d\xi\\
&\leq &\;\delta^{-1-2\alpha} \| f\|^2_{W^{-\alpha,2}(\R)}.
\end{eqnarray*}
\end{proof}
From now on, we focus on the interval $(0,1)$. Recall $\eta_{a,A}$ from Lemma \ref{le:meso3} and note that the same lemma verifies the existence of the complex chaos
$$
\eta:=\lim_{A\to\infty}\eta_{0,A} =\lim_{A\to\infty}\exp\big(\int_0^1(e^{-2\pi ixu}-1)u^{-1/2}dB^\C_u+\int_1^Ae^{-2\pi ixu}u^{-1/2}dB^\C_u\big)
$$
and $\eta\in W^{-\alpha,2}(0,1)$ for any $\alpha>1/2$.

The following result is our main ingredient for the proof of Theorem \ref{th:meso1}.
\begin{proposition}\label{pr:meso7} Fix $\alpha>1/2.$ 
Assume that $\delta\in (0,1)$ and consider the dilatations of the randomised zeta function on the interval $(0,1)$. Then we may decompose
\begin{equation}\label{eq:dila1}
{\zeta_\rand}(1/2+i\delta\cdot)_{|(0,1)}= h_\delta e^{Y_\delta}\eta^{(\delta)}.
\end{equation}
Here the distribution of the complex multiplicative chaos $\eta^{(\delta)}$ is independent of $\delta$:
\begin{equation}\label{eq:dila3}
\eta^{(\delta)}\sim\eta \quad \textrm{in}\;\; W^{-\alpha,2}(0,1)\quad \textrm{for any}\quad \delta\in (0,1).
\end{equation}
Moreover, $h_\delta$ is a random smooth function on $[0,1]$ which tends almost surely $($and hence in distribution$)$  to the constant function $1:$ for each $\ell\geq 0$
\begin{equation}\label{eq:dila4}
h_\delta\stackrel{a.s.}{\to}1 \quad\textrm{in}\quad C^{\ell}[0,1]\quad \textrm{as}\quad\delta\to 0^+.
\end{equation}
Finally, $Y_\delta$ is a complex $($scalar$)$ random variable that can be written in the form
\begin{equation}\label{eq:dila5}
Y_\delta\stackrel{d}{=} \sqrt{\log(1/\delta)}Z+R,
\end{equation}
where $Z$ is a standard complex normal random variable  and the random variable $R$, which is independent of $\delta$, satisfies
$\E\exp(\lambda |R|)\leq C_\lambda$ for all $\lambda >0$.
\end{proposition}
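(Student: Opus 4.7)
The strategy is to combine the Gaussian approximation of Proposition \ref{pr:gaussian_appro2} with the scaling invariance of the stochastic integral defining it: the divergent ``zero mode'' gets absorbed into the scalar $Y_\delta$, and the residual randomness splits into $\eta^{(\delta)}$ and the smooth prefactor $h_\delta$. First, letting $N\to\infty$ in Proposition \ref{pr:gaussian_appro2} and arguing as in the proof of Theorem \ref{th:main}(ii) in Section \ref{sec:compconv}, one obtains on any bounded interval a pathwise identity $\zeta_\rand(1/2+ix) = \nu^\infty(x)\, e^{\widetilde{\mathcal{E}}(x)}$ in $W^{-\alpha,2}$, where $\widetilde{\mathcal{E}} := \lim_N \widetilde{\mathcal{E}}_N$ is a $C^\infty$ random function inheriting the exponential moment bounds of Proposition \ref{pr:gaussian_appro2}, and $\nu^\infty$ is obtained as the $L^2$-martingale limit of $e^{\widetilde{\mathcal{G}}_N}$, analogously to $\nu$ in Lemma \ref{le:nu}.

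Substituting $x\mapsto\delta x$ and applying the pathwise Brownian rescaling $\widetilde{B}^\C_v := \sqrt{\delta}\,B^\C_{v/\delta}$ to the Wiener integral defining $\widetilde{\mathcal{G}}_N$, the change of variables $v=\delta u$ produces $\widetilde{\mathcal{G}}_N(\delta x) = \int_\delta^{\delta\log\mathrm{Li}^{-1}(N+1)} v^{-1/2} e^{-2\pi i xv}\, d\widetilde{B}^\C_v$, whose $N\to\infty$ limit equals $G[v^{-1/2}\chi_{[\delta,\infty)}](x)$ as a distribution in $x\in(0,1)$. Hence, in $W^{-\alpha,2}(0,1)$,
$$
\nu^\infty(\delta x) \;=\; \exp\Bigl(\int_\delta^1\tfrac{d\widetilde{B}^\C_v}{\sqrt v}\Bigr)\cdot\exp\Bigl(\int_\delta^1\tfrac{e^{-2\pi i xv}-1}{\sqrt v}\,d\widetilde{B}^\C_v+\int_1^\infty\tfrac{e^{-2\pi i xv}}{\sqrt v}\,d\widetilde{B}^\C_v\Bigr).
$$
The first factor is a complex Gaussian of variance $\log(1/\delta)$, which we write as $\exp(\sqrt{\log(1/\delta)}\,\widetilde{Z})$ with $\widetilde{Z}$ standard complex normal. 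To match the defining form of $\eta$, enlarge the probability space to carry an independent standard complex BM $\hat{B}^\C$ on $[0,\delta]$ and set
$$
\log\eta^{(\delta)}(x) \;:=\; \int_0^\delta\tfrac{e^{-2\pi i xu}-1}{\sqrt u}\,d\hat{B}^\C_u + \int_\delta^1\tfrac{e^{-2\pi i xv}-1}{\sqrt v}\,d\widetilde{B}^\C_v + \int_1^\infty\tfrac{e^{-2\pi i xv}}{\sqrt v}\,d\widetilde{B}^\C_v.
$$
Concatenating $\hat{B}^\C$ and $\widetilde{B}^\C$ at $v=\delta$ produces a standard complex BM on $[0,\infty)$, whence $\eta^{(\delta)}\eqlaw\eta$ in $W^{-\alpha,2}(0,1)$, giving \eqref{eq:dila3}.

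Combining everything gives \eqref{eq:dila1} with $Y_\delta := \widetilde{\mathcal{E}}(0) + \sqrt{\log(1/\delta)}\,\widetilde{Z}$ and $h_\delta(x) := \exp\bigl(\widetilde{\mathcal{E}}(\delta x)-\widetilde{\mathcal{E}}(0) - \int_0^\delta(e^{-2\pi i xu}-1)u^{-1/2}\,d\hat{B}^\C_u\bigr)$. Claim \eqref{eq:dila5} is immediate with $R:=\widetilde{\mathcal{E}}(0)$, whose exponential integrability follows from Proposition \ref{pr:gaussian_appro2}. For \eqref{eq:dila4}, the factor $e^{\widetilde{\mathcal{E}}(\delta x)-\widetilde{\mathcal{E}}(0)}$ tends a.s.\ to $1$ in every $C^\ell[0,1]$ because $\widetilde{\mathcal{E}}\in C^\infty$; for the stochastic factor, a direct Itô-isometry computation yields $\E\|\partial_x^\ell I_\delta\|_{L^2(0,1)}^2 \lesssim \delta^2$ (with constants depending on $\ell$) where $I_\delta(x):=\int_0^\delta(e^{-2\pi i xu}-1)u^{-1/2}\,d\hat{B}^\C_u$, and a Borel--Cantelli argument along a geometric subsequence $\delta_n\to 0$, combined with a maximal inequality in the martingale $\delta\mapsto I_\delta(x)$, upgrades this to almost sure $C^\ell[0,1]$-convergence to $1$. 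The main technical obstacle is justifying that the Brownian change of variables in the second paragraph commutes with the distributional $N\to\infty$ limit defining $\nu^\infty$; this however is handled by the same $L^2$-martingale scheme used to establish the existence of $\nu^\infty$ in the first place, much as in Lemma \ref{le:nu} and Section \ref{sec:compconv}.
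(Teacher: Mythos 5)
Your proof follows essentially the same route as the paper's: the divergent scalar $e^{Y_\delta}$, the smooth prefactor $h_\delta$, and the limiting chaos $\eta^{(\delta)}$ all come from the same Brownian substitution $dB^{\C,\delta}_u = \delta^{-1/2}dB^\C_{\delta u}$ and the same three-way rearrangement of the resulting Wiener integral, with $R = \widetilde{\mathcal E}(0)$ and the scalar Gaussian $\int_\delta^1 u^{-1/2}dB^{\C,\delta}_u$ of variance $\log(1/\delta)$. The one genuine difference is the order of limits: the paper applies the dilation and rearranges the Wiener integral while $N$ is still finite, so that each of the three groups $A_{\delta,N}$, $B_{\delta,N}$, $C_{\delta,N}$ is a smooth random function, and only then passes to $N\to\infty$ separately in each group. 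This sidesteps entirely the ``main technical obstacle'' you flag at the end, since no commutation of the Brownian change of variables with a distributional $N\to\infty$ limit is ever required. One small caveat in your $h_\delta\to 1$ step: on the paper's coupling, where everything is driven by a single two-sided $B^\C$, the family $J_\delta(x) := \int_0^\delta(e^{-2\pi ixu}-1)u^{-1/2}dB^{\C,\delta}_u = \int_0^{\delta^2}(e^{-2\pi ixv/\delta}-1)v^{-1/2}dB^\C_v$ is \emph{not} a martingale in $\delta$ (the $\delta$ re-enters the integrand once you return to $B^\C$-time), so a Doob-type maximal inequality is not directly available on that coupling; a Kolmogorov--Chentsov modulus-of-continuity estimate, or Gaussian concentration plus chaining between your geometric scales $\delta_n$, gives the almost sure $C^\ell$ convergence instead. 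If instead you insist on a fixed independent $\hat B^\C$ so that $\delta\mapsto I_\delta$ is a genuine martingale, then the pathwise identity \eqref{eq:dila1} holds only for each fixed $\delta$ rather than simultaneously in $\delta$, which is still enough for the downstream use in Theorem \ref{th:meso1}.
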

\begin{proof} Apply the dilation $x\mapsto \delta x$ in Proposition \ref{pr:gaussian_appro2} in order to write
\begin{eqnarray*}
\zeta_{\rand, N}(1/2+i\delta x)&=& \exp \big({\widetilde{\mathcal{E}}_N(\delta x)}+
\mathcal{\widetilde G}_N(\delta x)\big) \\ &=& \exp \big({\widetilde{\mathcal{E}}_N(\delta x)}\big)\exp\Big(G\big[u^{-1/2}\chi_{[1,\log\mathrm{Li}^{-1}(N+1)]}\big](\delta x)\Big).
\end{eqnarray*}
By making of variables in the stochastic integral and letting  $dB^{\C,\delta}_u =\delta^{-1/2}dB^{\C}_{\delta u}$ stand for another copy of the standard Brownian motion we obtain
\begin{eqnarray*}
&& \mathcal{\widetilde E}_N(\delta x) + \mathcal{\widetilde G}_N(\delta x)
=\mathcal{\widetilde E}_N(\delta x) +  \int_{1}^{\log \mathrm{Li}^{-1}(N+1)}e^{-2\pi i x\delta u}u^{-1/2}dB^\C_u\\
&&=\mathcal{\widetilde E}_N(\delta x) +  \int_{\delta}^{\delta\log \mathrm{Li}^{-1}(N+1)}e^{-2\pi i x u}u^{-1/2}dB^{\C,\delta}_u\\
&&=\Big(\mathcal{\widetilde E}_N(\delta x) -\mathcal{\widetilde E}_N(0)- \int_{0}^{\delta}(e^{-2\pi i x u}-1)u^{-1/2}dB^{\C,\delta}_u\Big)\\
 && +\Big(\int_{0}^{1}(e^{-2\pi i x u}-1)u^{-1/2}dB^{\C,\delta}_u
 + \int_{1}^{\log \delta \mathrm{Li}^{-1}(N+1)}e^{-2\pi i x u}u^{-1/2}dB^{\C,\delta}_u\Big)\\
 &&\quad +\left(\mathcal{\widetilde E}_N(0)+\int_\delta^1u^{-1/2}dB^{\C,\delta}_u \right)=: A_{\delta,N}(x)+B_{\delta,N}(x)+C_{\delta,N}(x).
\end{eqnarray*}
Here the results of Proposition \ref{pr:gaussian_appro2}   imply that  as $N\to\infty$, we have almost surely
$$
\exp(A_{\delta,N}(x))\to \exp\Big( \mathcal{\widetilde E}(\delta x) -\mathcal{\widetilde E}(0) - \int_{0}^{\delta}(e^{-2\pi i x u}-1)u^{-1/2}dB^{\C,\delta}_u\Big)=:h_{\delta}(x)
$$
with convergence in $C^\ell[0,1]$ for any $\ell\geq 0$. Moreover, \eqref{eq:dila4} is clearly true.
Next, we observe  the almost sure convergence
$$
C_{\delta,N}(x)\to \exp( \mathcal{\widetilde E}(0) + \int_\delta^1u^{-1/2}dB^{\C,\delta}_u)=: e^{Y_\delta},
$$
where $Y_\delta$ has the stated properties by the Ito isometry and Proposition \ref{pr:gaussian_appro2}.
Finally, recalling the definition of $\eta^{(\delta)}$ we have almost surely that
$$
\exp(B_{\delta,N}(x))\to \eta^{(\delta)}\quad \textrm{in}\quad W^{-\alpha,2}(0,1).
$$
By combining all the above observations and invoking the definition of $\zeta_\rand$ the equality \eqref{eq:dila1} follows.
\end{proof}

We can now prove our first characterization of the mesoscopic behavior of $\zeta$.

\begin{proof}[Proof of Theorem \ref{th:meso1}]
Combining Theorem \ref{th:main}, Proposition \ref{pr:meso7} and Lemma \ref{le:vanutus}, we see that for each $k\in \Z_+$, there exists a $T_k$ so that for $T\geq T_k$

\begin{align*}
\wass_2(\zeta(1/2&+ik^{-1} x+i\omega T),\zeta_\rand(1/2+ik^{-1}x))_{W^{-\alpha,2}(0,1)}\\
&=\wass_2(\zeta(1/2+ik^{-1} x+i\omega T),h_{1/k}(x) e^{Y_{1/k}} \eta^{(1/k)}(x))_{W^{-\alpha,2}(0,1)}\\
&\leq 1/k.
\end{align*}

We can naturally take $T_k$ to be increasing in $k$. Taking $\delta_T=1/k$ for $T\in[T_k,T_{k+1})$ along with $h_T=h_{\delta_T}$ etc. gives the claim.

\end{proof}

Let us now turn to the proof of Theorem \ref{th:meso2}. Since $W^{-\alpha,2}_{{\rm mult}}(0,1)$ is a bounded, complete  and separable metric space, convergence of  $W^{-\alpha,2}_{{\rm mult}}(0,1)$-valued random variables in the corresponding Wasserstein metric is equivalent to standard convergence in distribution. We need one last auxiliary result:
\begin{lemma}\label{le:wass} Assume that the $W^{-\alpha,2}(0,1)$-valued random variables $g_k$ are almost surely non-zero
and 
$$
\wass_2\big(g_k\; , \; g\big)_{W^{-\alpha,2}(0,1)}\to 0\quad \textrm{as}\quad k\to\infty,
$$
where $g$ is an almost surely non-zero $W^{-\alpha,2}(0,1)$-valued random variable. Then also
$$
\wass_2\big(g_k\; , \; g\big)_{W^{-\alpha,2}_{{\rm mult}}(0,1)}\to 0\quad \textrm{as}\quad k\to\infty,
$$
\end{lemma}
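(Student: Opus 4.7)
The plan is to start from a near-optimal coupling $(U_k,V_k)$ of $g_k$ and $g$ in $W^{-\alpha,2}(0,1)$ (so that $\E\|U_k-V_k\|^2_{W^{-\alpha,2}(0,1)}$ is arbitrarily close to $\wass_2(g_k,g)^2_{W^{-\alpha,2}(0,1)}\to 0$), and reuse it as a coupling between the corresponding $W^{-\alpha,2}_{{\rm mult}}(0,1)$-valued random variables; under the almost sure non-vanishing hypothesis both $U_k$ and $V_k$ lie in $W^{-\alpha,2}(0,1)\setminus\{0\}$ almost surely, so this is a legitimate coupling. Since the Wasserstein distance is bounded from above by the second moment under any coupling, the task reduces to showing that $\E\, d_{W^{-\alpha,2}_{{\rm mult}}(0,1)}(U_k,V_k)^2\to 0$.

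The key pointwise estimate I would establish is that for any non-zero $f,g\in W^{-\alpha,2}(0,1)$,
\[
d_{W^{-\alpha,2}_{{\rm mult}}(0,1)}(f,g)\;\leq\;\bigl\|f/\|f\|-g/\|g\|\bigr\|_{W^{-\alpha,2}(0,1)}\;\leq\;\frac{2\|f-g\|_{W^{-\alpha,2}(0,1)}}{\max(\|f\|_{W^{-\alpha,2}(0,1)},\|g\|_{W^{-\alpha,2}(0,1)})}.
\]
The first inequality comes from choosing $\theta=0$ in the infimum defining $d_{{\rm mult}}$; for the second, inserting $g/\|f\|$ and applying the triangle inequality gives $\|f-g\|/\|f\|+\bigl|\|f\|-\|g\|\bigr|/\|f\|\leq 2\|f-g\|/\|f\|$, and symmetrizing in $f,g$ yields the stated bound.

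The main obstacle is that this Lipschitz bound degenerates when $\|f\|$ or $\|g\|$ is small, so one cannot directly integrate it. I would handle this via a cutoff exploiting that $g$ is almost surely non-zero. Given $\varepsilon>0$, first choose $\delta>0$ so that $\Prob(\|V_k\|_{W^{-\alpha,2}(0,1)}<\delta)=\Prob(\|g\|_{W^{-\alpha,2}(0,1)}<\delta)<\varepsilon$, which is possible because $\|g\|>0$ almost surely. Then split
\[
\E\, d_{{\rm mult}}(U_k,V_k)^2=\E\bigl[\mathbf{1}_{\|V_k\|\geq\delta}\, d_{{\rm mult}}(U_k,V_k)^2\bigr]+\E\bigl[\mathbf{1}_{\|V_k\|<\delta}\, d_{{\rm mult}}(U_k,V_k)^2\bigr],
\]
controlling the first term via the pointwise estimate above (it is $\leq 4\delta^{-2}\E\|U_k-V_k\|^2\to 0$ as $k\to\infty$) and the second by the trivial upper bound $d_{{\rm mult}}\leq 2$ (yielding $\leq 4\varepsilon$). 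Passing $k\to\infty$ and then $\varepsilon\to 0$ gives the claim; no structure of $W^{-\alpha,2}(0,1)$ beyond its being a normed space enters the argument.
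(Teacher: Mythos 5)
Your proof is correct and follows essentially the same strategy as the paper's: split on whether the norm is small, control the probability of the bad event by the almost-sure non-vanishing of $g$, use the trivial bound $d_{\mathrm{mult}}\leq 2$ there, and use a Lipschitz-type bound on the complement. The one small refinement worth noting is that your pointwise estimate with $\max(\|f\|,\|g\|)$ in the denominator means you only need a lower bound on $\|V_k\|$, whose law equals that of $g$; the paper instead bounds both $\Prob(\|g\|<2r)$ and $\Prob(\|g_k\|<r)$, deducing the latter for large $k$ from the weak convergence implied by Wasserstein convergence, which is a (mild) extra step that your version sidesteps.
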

\begin{proof}
The statement follows easily after one  notes that for any $\varepsilon >0$ there is an $r$ such that $\Prob (\|g\|_{W^{-\alpha,2}(0,1)}<2r) <\varepsilon/2,$ and  hence for any $k\geq k_0(\varepsilon)$  it holds that  $\Prob (\|g_k\|_{W^{-\alpha,2}(0,1)}<r) <\varepsilon$. A fortiori, for $k\geq k_0$
$$
\wass_2\big(g_k\; , \; g\big)_{W^{-\alpha,2}_{{\rm mult}}(0,1)} \leq 4\varepsilon +r^{-1}\wass_2\big(g_k\; , \; g\big)_{W^{-\alpha,2}(0,1)},
$$
and the claim follows.
\end{proof}

We are finally prepared for

\begin{proof}[Proof of Theorem \ref{th:meso2}] 
Observe that $\eta$ is almost surely non-vanishing as an almost sure limit of a non-trivial $L^2$-bounded martingale by its definition and a simple application of Kolmogorov's zero-one theorem, and the same holds for $\zeta_\rand$.  Moreover, we note that $\eta\sim \eta^{(\delta)}$ for all $\delta >0$. Proposition \ref{pr:meso7} yields that
\begin{eqnarray*}
&&\wass_2\big(\eta\; , \; \zeta_\rand(1/2+i\delta\cdot)\big)_{W^{-\alpha,2}_{{\rm mult}}(0,1)} =\wass_2\big(\eta^{(\delta)}\; , \; h_\delta e^{Y_\delta}\eta^{(\delta)}\big)_{W^{-\alpha,2}_{{\rm mult}}(0,1)} \\
&=& \wass_2\big(\eta^{(\delta)}\; , \; h_\delta \eta^{(\delta)}\big)_{W^{-\alpha,2}_{{\rm mult}}(0,1)} \to 0\quad \textrm{as}\quad \delta\to 0^+,
\end{eqnarray*}
where in the last step we used \eqref{eq:dila4} \footnote{More precisely, using the fact that $h_\delta$ is a multiplier on $W^{-\alpha,2}(0,1)$, one readily checks that for small, but arbitrary $\varepsilon>0$, $\Prob(||(h_\delta-1)\eta^{(\delta)}||_{W^{-\alpha,2}(0,1)}\leq \varepsilon ||\eta^{(\delta)}||_{W^{-\alpha,2}(0,1)})\geq 1-\varepsilon$ for small enough $\delta$. Now on this event, we have $\big\|||\eta^{(\delta)}||_{W^{-\alpha,2}(0,1)}^{-1}\eta^{(\delta)}-||h_\delta\eta^{(\delta)}||_{W^{-\alpha,2}(0,1)}^{-1} h_\delta \eta^{(\delta)}\big\|_{W^{-\alpha,2}(0,1)}=\mathcal{O}(\varepsilon)$, where the implied constant is deterministic. This implies that in the metric of $W^{-\alpha,2}_{\mathrm{mult}}(0,1)$, as $\delta\to 0^+$, the distance between $\eta^{(\delta)}$ and $h_\delta \eta^{(\delta)}$ tends to zero in probability. As the space is bounded, this implies convergence in the Wasserstein sense as well.}.  We then set $\delta= 1/k$ and apply Lemma \ref{le:vanutus} and Lemma \ref{le:wass} to pick a strictly increasing
sequence $T_k$ so that
\begin{align*}
\wass_2(\mu_T(k^{-1}\cdot),\zeta_\rand(1/2+ik^{-1}\cdot))_{W^{-\alpha,2}_{{\rm mult}}(0,1)}  \leq 1/k \quad\textrm{for}\quad T\geq T_k
\end{align*}
By combining the above inequalities we see that  the choice  $\delta_T=1/k$ for $T\in [T_k,T_{k+1})$ applies as before.
\end{proof}

\section{Relationship to real Gaussian multiplicative chaos in the subcritical case: Proof of Theorem \ref{th:gmcconv}}\label{se:subcritical}

In this section, we prove  that for $0<\beta<\beta_c:=2$, $|\zeta_{N,\rand}(1/2+ix)|^\beta/\E |\zeta_{N,\rand}(1/2+ix)|^\beta dx$ almost surely converges with respect to the weak topology of measures to a random measure which is absolutely continuous with respect to a Gaussian multiplicative chaos measure. This will be an easy consequence of  our Gaussian coupling and the general theory of real Gaussian multiplicative chaos measures. 

For a proper introduction to the theory of Gaussian multiplicative chaos, we refer the reader to Kahane's original work \cite{Kahane} or the recent review by Rhodes and Vargas \cite{RV}. We also point out Berestycki's elegant proof for the existence and uniqueness of subcritical Gaussian multiplicative chaos measures \cite{Be}. For the convenience of the reader, we nevertheless recall the main results from the theory that are relevant to us. 

\begin{theorem}\label{th:gmc}
Assume that we have a sequence of independent Gaussian fields $(Y_k)_{k=1}^\infty$ on $[0,1]$ and that the covariance kernel of $Y_k$, $K_{Y_k}$, is continuous on $[0,1]\times[0,1]$. Define the field

\begin{equation*}
X_n=\sum_{k=1}^n Y_k,
\end{equation*}

\noindent and assume that as $n\to \infty$,  the covariance kernel $K_{X_n}$ converges locally uniformly in $[0,1]^2\setminus \{x=y\}$  to a function on $[0,1]^2$ which is of the form

\begin{equation*}
\log \frac{1}{|x-y|}+g(x,y),
\end{equation*}

\noindent where $g$ is bounded and continuous. Moreover, assume that   there is a constant $C<\infty$ so that
\begin{equation}\label{eq:ylaraja}
K_{X_n}(x,y)\leq \log \frac{1}{|x-y|}+ C \quad {\rm for \; all}\quad  x,y\in [0,1]\quad\mathrm {and}\quad n\geq 1.
\end{equation}
Then for $\beta>0$ the random measure 

\begin{equation*}
\lambda_{\beta,n}(dx)=\frac{e^{\beta X_n(x)}}{\E e^{\beta X_n(x)}}dx
\end{equation*}

\noindent converges almost surely with respect to the topology of weak convergence of measures to a limiting measure $\lambda_\beta$. This limiting measure is a non-trivial random measure for $\beta<\beta_c=\sqrt{2}$ and for $\beta\geq \beta_c$, it is the zero measure. Moreover, if $0<\beta<\sqrt{2}$, and $0<p< 2/\beta^2$, then for a compact set $A\subset [0,1]$

\begin{equation*}
\E (\lambda_\beta(A)^p)<\infty.
\end{equation*}

\end{theorem}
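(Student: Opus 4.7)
The plan is to follow Kahane's classical approach to Gaussian multiplicative chaos, adapted to the specific approximation hypothesis stated. Let $\mathcal{F}_n := \sigma(Y_1,\ldots,Y_n)$. The first step is to observe that $\lambda_{\beta,n}$ is a measure-valued positive martingale: for any non-negative $\phi \in C[0,1]$, the identity
\begin{equation*}
\E\big[e^{\beta X_{n+1}(x)}\,\big|\,\mathcal{F}_n\big]=e^{\beta X_n(x)+\frac{1}{2}\beta^2 K_{Y_{n+1}}(x,x)},
\end{equation*}
combined with $K_{X_{n+1}}(x,x)=K_{X_n}(x,x)+K_{Y_{n+1}}(x,x)$, yields $\E[\lambda_{\beta,n+1}(\phi)\,|\,\mathcal{F}_n]=\lambda_{\beta,n}(\phi)$. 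By the non-negative martingale convergence theorem, $\lambda_{\beta,n}(\phi)$ has an almost sure limit for each fixed $\phi$. Applying this to a countable $\|\cdot\|_\infty$-dense family in $C[0,1]$, together with the uniform bound $\E\lambda_{\beta,n}([0,1])=1$ for tightness, produces an almost sure weak limit $\lambda_\beta$.

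For the subcritical phase $\beta<\sqrt{2}$, the first substantial task is to prove that $\lambda_\beta$ is non-trivial. I would establish a uniform $L^q$ bound $\sup_n \E\lambda_{\beta,n}([0,1])^q<\infty$ for some $q\in(1,2/\beta^2)$. Kahane's convexity inequality is the key tool: using the hypothesis $K_{X_n}(x,y)\leq -\log|x-y|+C$, one bounds, for any convex $F$,
\begin{equation*}
\E F(\lambda_{\beta,n}([0,1]))\leq\E F(\widetilde\lambda_{\beta,n}([0,1])),
\end{equation*}
where $\widetilde\lambda_{\beta,n}$ is built from an auxiliary Gaussian field whose covariance dominates $K_{X_n}$ pointwise, e.g. a star-scale invariant or cone-construction field for which finite $L^q$ moments in the stated range are known classically. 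This yields uniform integrability, so $\E\lambda_\beta([0,1])=\lim_n\E\lambda_{\beta,n}([0,1])=1$; a standard Kolmogorov zero-one argument on the tail $\sigma$-algebra then upgrades positivity to almost sure non-triviality on each open subinterval.

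For the supercritical phase $\beta\geq\sqrt{2}$, the martingale $\lambda_{\beta,n}([0,1])$ fails to be uniformly integrable. Kahane's comparison again reduces matters to a reference log-correlated field, for which the classical Peyri\`ere size-biasing construction (or equivalently a direct second-moment concentration onto thick points together with a truncation argument) shows that the total mass converges to $0$ in probability; combined with the almost sure convergence already obtained, this forces the limit to be the zero measure. Finally, the moment bound $\E\lambda_\beta(A)^p<\infty$ for $p<2/\beta^2$ is obtained via a standard dyadic multifractal scale decomposition, once more transferred from the reference field via Kahane's inequality; the relevant multifractal exponent is $\zeta(p)=(1+\beta^2/2)p-\beta^2p^2/2$, which is positive in precisely the claimed range.

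The principal obstacle throughout is the uniform $L^q$ bound in the subcritical step: establishing it without assuming exact scale invariance or independence of scales is exactly the purpose of the Kahane comparison principle, and is why the one-sided estimate $K_{X_n}(x,y)\leq -\log|x-y|+C$ is imposed as a hypothesis rather than merely the pointwise convergence of the covariances.
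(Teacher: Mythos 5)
Your proposal is correct and follows essentially the same route as the paper: both invoke Kahane's convexity inequality together with the one-sided bound \eqref{eq:ylaraja} to compare $\lambda_{\beta,n}$ with a reference log-correlated chaos measure, deduce uniform $L^q$-boundedness of the total-mass martingale for $\beta<\sqrt 2$, and then defer the remaining conclusions (non-degeneracy, degeneracy at $\beta\geq\sqrt 2$, and the moment bounds) to the standard theory of Gaussian multiplicative chaos, exactly as the paper does by citing the Rhodes--Vargas survey. You fill in somewhat more of the standard machinery (the explicit martingale computation, Kolmogorov zero-one law, Peyri\`ere size-biasing, and the multifractal exponent), but the underlying argument and the key role played by Kahane's comparison principle are the same.
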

\begin{proof} (Sketch) By \eqref{eq:ylaraja} and Kahane's convexity inequality (see \cite[Theorem 2.1]{RV}) one may easily compare to a standard approximation of a chaos measure and deduce  that for any $\beta<\beta_c$ the random variables $\lambda_{\beta, n} [0,1]$ form an $L^p$-martingale with a suitable $p=p(\beta)>1$. At this stage the standard theory of multiplicative chaos can be applied to obtain the rest of the claims, see e.g.  \cite[Theorems 2.5 and 2.11]{RV}. 

\end{proof}

As we are studying $|\zeta_{N,\rand}(1/2+ix)|^\beta$, our relevant field is $\log |\zeta_{N,\rand}(1/2+ix)|$, and the relevant Gaussian part is the real part of $\mathcal{G}_N$. To simplify notation slightly, let us write $G_N=\mathrm{Re}(\mathcal{G}_N)$ and $X_N=\log|\zeta_{N,\rand}(1/2+ix)|$. To apply Kahane's construction of a Gaussian multiplicative chaos measure, we'll need to establish that the covariance of $G_N$ satisfies the requirements of Theorem \ref{th:gmc}. From the definition of $\mathcal{G}_N$, we see that 
$$
G_N(x)=\sum_{j=1}^N\frac{1}{\sqrt{2p_j}}\left(W^{(1)}_j\cos(x\log p_j)+W^{(2)}_j\sin(x\log p_j)\right).
$$
A direct computation shows that
$$
K_{ G_{N}}(x-y):=\E G_{N}(x)G_{N}(y) =\Psi_N(x-y),
$$
where 
$$
\Psi_N(u):=\frac{1}{2}\sum_{j=1}^N\frac{\cos(u\log p_j)}{p_j}.
$$

The following result is enough for us to be able to apply Kahane's theory for defining a multiplicative chaos measure. It is of interest to note that we are dealing with a logarithmically correlated translation invariant field whose covariance deviates from $\frac{1}{2}\log (1/|x-y|)$ by only  a  smooth function.

\begin{lemma}\label{le:covariance_approximation}
We have 
$$
\Big| K_{ G_N}(x,y)-\frac{1}{2}\log\Big(\min \Big(\frac{1}{|x-y|},\log N \Big)\Big)\Big|\;\leq \; C,
$$
where $C$ is uniform over $N\geq 1$ and $(x,y)\in [0,1].$ Moreover, if $x\not=y$
$$
K_{ G_n}(x,y)\longrightarrow K_G(x,y)=\frac{1}{2}\log\frac{1}{|x-y|}+g(x-y)\quad\textrm{as}\quad {n\to\infty},
$$
with  local uniform convergence outside the diagonal. Moreover, we have  $g\in C^\infty (-2,2)$. 
\end{lemma}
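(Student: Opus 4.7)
The plan is to identify the pointwise limit $\Psi_\infty(u) := \lim_{N\to\infty}\Psi_N(u)$ explicitly in terms of the Riemann zeta function, which will simultaneously yield the logarithmic singularity at $u=0$ and the smoothness of the remainder $g$, and then to obtain the uniform bound by a two-regime analysis separated at $|u|\asymp 1/\log N$.

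First I would rewrite $\Psi_N(u) = \tfrac12 \Re\sum_{j=1}^N p_j^{-1-iu}$ and recall that the Euler product gives, for $u\neq 0$,
\[
\log\zeta(1+iu) \;=\; \sum_p p^{-1-iu} \;+\; R(1+iu),\qquad R(s):=\sum_p\sum_{k\ge 2}\frac{p^{-ks}}{k},
\]
where $R(s)$ is absolutely convergent and holomorphic in $\{\Re s > 1/2\}$, so $\Re R(1+iu)$ is a smooth bounded function of $u\in\R$. Taking real parts and letting $N\to\infty$,
\[
\Psi_\infty(u) \;=\; \tfrac12\log|\zeta(1+iu)| \;-\; \tfrac12\Re R(1+iu).
\]
Since $(s-1)\zeta(s)$ is entire with value $1$ at $s=1$ and $\zeta$ is non-vanishing on $\{\Re s=1\}$ (classical PNT), $\log\zeta(1+iu) + \log(iu)$ extends holomorphically to a neighbourhood of every $u\in\R$. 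Taking real parts yields $\log|\zeta(1+iu)| = \log(1/|u|) + \mathrm{smooth}$ on all of $\R$ (in particular on $(-2,2)$), hence $g(u) := \Psi_\infty(u) - \tfrac12\log(1/|u|)$ belongs to $C^\infty(\R)$, giving the second assertion.

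For the uniform bound I would split on whether $|u|\log N \le 1$ or $|u|\log N \ge 1$. In the first regime, Taylor expansion of $\cos$ and the estimates $\log p_j\lesssim \log N$ for $j\le N$ give
\[
\Psi_N(u) \;=\; \tfrac12\sum_{j\le N}\tfrac{1}{p_j} \;+\; O\!\Big(u^2\sum_{j\le N}\tfrac{(\log p_j)^2}{p_j}\Big) \;=\; \tfrac12\log\log N \;+\; O(1),
\]
where the main term uses Mertens' theorem and the error term uses the classical Chebyshev-type bound $\sum_{p\le x}(\log p)^2/p = O((\log x)^2)$ combined with $|u|\log N\le 1$; this matches $\tfrac12\log\min(1/|u|,\log N) = \tfrac12\log\log N$ in this regime. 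In the second regime, I would show that the tail $\Psi_\infty(u) - \Psi_N(u) = \tfrac12\sum_{j>N}\cos(u\log p_j)/p_j$ is $O(1)$ uniformly for $|u|\ge 1/\log N$. This reduces via Abel summation to a bound on $\int_{p_N}^\infty t^{-2-iu}\pi(t)\,dt$ for which the decomposition $\pi(t)=\mathrm{Li}(t)+O(te^{-c\sqrt{\log t}})$ together with oscillatory cancellation in $\int t^{-1-iu}dt/\log t$ (an integration by parts exploiting $|u|\ge 1/\log N$) gives $O(1)$. Combined with the explicit form of $\Psi_\infty$ this yields $\Psi_N(u)=\tfrac12\log(1/|u|)+O(1)$ in this regime, as required.

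The local uniform convergence away from the diagonal is then immediate: on any compact $K\subset \R\setminus\{0\}$ one has $|u|$ bounded below, so the tail estimate from the second regime (with $x=p_N\to\infty$) shows $\Psi_N\to\Psi_\infty$ uniformly on $K$. The main technical obstacle is precisely the tail bound $|\sum_{p>p_N} p^{-1-iu}|=O(1)$ uniform over $|u|\ge 1/\log N$: one must be careful that the implicit constants do not blow up as $u\to 0$ simultaneously with $N\to\infty$, which is why the threshold $|u|=1/\log N$ is critical and why a quantitative form of PNT (or equivalently a classical zero-free region yielding the bound $1/\zeta(1+iu) = O(\log^A(2+|u|))$) is needed in the Abel summation step.
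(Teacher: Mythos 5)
Your proposal is correct, and the uniform‐bound part takes a genuinely different route from the paper. The paper performs a chain of uniform replacements ($\log p_j \rightsquigarrow \log(j\log j)$, then $p_j^{-1}\rightsquigarrow (j\log j)^{-1}$, then sum $\rightsquigarrow$ integral, then a change of variables) that reduces $\Psi_N(u)$, up to a uniformly $O(1)$ and uniformly convergent error, to the single explicit integral $A(u,N)=\tfrac12\int_u^{u\log N}\frac{\cos x}{x}\,dx$, whose two‐regime behaviour is then read off by elementary calculus. You instead split directly at the threshold $|u|\log N\asymp 1$, handling the near regime by Taylor expansion of $\cos$ together with Mertens' theorem and $\sum_{p\le x}(\log p)^2/p=O((\log x)^2)$, and the far regime by an Abel summation against $\pi(t)=\mathrm{Li}(t)+O(te^{-c\sqrt{\log t}})$ plus an oscillatory‐integral bound for $\int_{\log p_N}^\infty e^{-ius}s^{-1}\,ds$; this also delivers the local uniform convergence off the diagonal as a by‐product. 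Both approaches are sound and rest on the same quantitative PNT input, but yours is more explicit about where the transition happens and avoids tracking the somewhat delicate $\add$‐chain, at the cost of doing two separate estimates rather than one reduction. The smoothness of $g$ is established by essentially the same idea in both proofs — the Euler product relating $\sum_p p^{-1-iu}$ to $\log\zeta(1+iu)$ minus an absolutely convergent tail, with the log singularity at $u=0$ cancelled by the simple pole of $\zeta$ — though your formulation via the entire, nonvanishing function $(s-1)\zeta(s)$ and holomorphic extension of $\log\zeta(1+iu)+\log(iu)$ is stated more cleanly than the paper's. One small point to keep in mind when fleshing out the Abel summation step: the conditional convergence identity $\sum_p p^{-1-iu}=\log\zeta(1+iu)-R(1+iu)$ for $u\neq 0$ is not automatic from the absolutely convergent half‐plane; the paper handles this with the $\varepsilon\to 0^+$ limit, and you should make that limiting argument explicit (your Abel/PNT tail bound in fact supplies it).
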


\begin{proof} We shall employ the notation where $z\add \widetilde{z}$ for given quantities $z=z_N(u),\widetilde z=\widetilde z_N(u)$ stands for the uniform inequality $|z_N(u)-\widetilde z_N(u)|\leq C$ with a universal bound $C$ and such that $\lim_{N\to\infty} (z_N(u)-\widetilde z_N(u))$ converges uniformly to a continuous function on the interval $u\in [-2,2].$ We shall employ the well-known asymptotics

\begin{equation*}
p_j=j\log j +\mathcal{O}(j\log\log j).
\end{equation*}
This implies that $\sum_{j=1}^\infty \frac{|\log p_j-\log (j\log j)|}{p_j}<\infty$ and since the cosine function is 1-Lipschitz we obtain
$$
\Psi_N(u) \add \frac{1}{2}\sum_{j=1}^N\frac{\cos\big(u\log (j\log j)\big)}{p_j}.
$$
In a similar vein, $\sum_{j=1}^\infty \big|p_j^{-1}-(j\log j)^{-1}\big|<\infty$ which leads to
\begin{equation*}
\Psi_N(u) \add \frac{1}{2}\sum_{j=1}^N\frac{\cos\big(u\log (j\log j)\big)}{j\log j}.
\end{equation*}
Next we observe that for all $u\in [-2,2]$ and $x\geq 10$
$$
 \left|\frac{d}{dx} \left(\frac{\cos\big(u\log (x\log x)\big)}{x\log x}\right)\right| 
 \;\leq \; \frac{6}{x^2\log x}.
$$
Since $\int_{10}^\infty (x^2\log x)^{-1}dx <\infty ,$ it follows that
\begin{equation*}
\Psi_N(u) \add \frac{1}{2}\int_{x=10}^N\frac{\cos\big(u\log (x\log x)\big)dx}{x\log x}.
\end{equation*}
To continue, we note that
$$
\int_{x=10}^\infty\left| 1-\frac{1+\log x}{\log x+\log\log x}\right|\frac{dx}{x\log x}<\infty
$$
so that
\begin{eqnarray}\label{eq:702}
\Psi_N(u) &\add& \frac{1}{2}\int_{x=10}^N\frac{\cos\big(u\log (x\log x)\big)}{\log(x\log x)}\frac{(1+\log x)dx}{x\log x}\nonumber\\
&\add& \frac{1}{2}\int_{1}^{\log N+\log\log N}\frac{\cos(ut)}{t}dt\add  \frac{1}{2}\int_{1}^{\log N}\frac{\cos(ut)}{t}dt\\
&=& \frac{1}{2}\int_{u}^{u\log N}\frac{\cos(x)}{x}dx\; =:\; A(u,N).\nonumber
\end{eqnarray}
Above in the first step we performed the change of variables $u=\log(x\log x)$ and noted that $du=(1+\log x)dx/x\log x$. In the second to last step we used the fact that $\int_{\log N}^{\log N+\log\log N}t^{-1}dt =o(1)$ as $N\to\infty.$

It remains to prove the claim for $A(u, N)$ defined in \eqref{eq:702}. 
Since $\lim_{z\to\infty} \int_{1}^{z}\frac{\cos(x)}{x}dx$  exists and is finite, we see
directly from the definition that for any $\varepsilon_0 >0$ in the set $\{ \varepsilon_0\leq |u|\leq 2\}$ the function $A(u,N)$ converges uniformly to a continuous function of $u$ as $N\to\infty$. Moreover, since $\int_0^1|\cos (x)-1|x^{-1}dx<\infty,$ we get  for $|u|\geq (\log N)^{-1}$
$$
|A(u,N)-\frac{1}{2}\int_u^1x^{-1}dx|=|A(u,N)-\frac{1}{2}\log(1/u)|\leq C,
$$
where $C$ is independent of $N$ and $u$. 
Finally, if $|u|\leq(\log N)^{-1}$ we get in a similar manner
$$
|A(u,N)-\frac{1}{2}\int_u^{u\log N}x^{-1}dx|=|A(u,N)-\frac{1}{2}\log\log N|\leq C',
$$
and now $C'$ is independent of $N$ and $u\in \{|u|\geq (\log N)^{-1}\}.$ This proves the first statement of the lemma.

By \eqref{eq:702} we deduce that there is a continuous function $\widetilde b(u)$ on $[-2,2]$ so that the limit $\Psi$ of the functions $\Psi_N$ takes the form 
\begin{equation*}
\Psi (u)= \widetilde b(u)+ \frac{1}{2}\int_{u}^{\infty}\frac{\cos(t)}{t}dt{u} =
\frac{1}{2}\log\left(|u|^{-1}\right)+ b(u)\quad\textrm{for}\quad 0<|u|<2,
\end{equation*}
with $b\in C[-2,2]$  as $u\mapsto \int_u^1(\cos (x)-1)x^{-1}dx$ is continuous over $u\in [0,1].$  Especially, we know that $\Psi (x-y)$ yields the covariance operator of our limit field since the estimates we have proven imply that $\Psi_N(x-y)\to\Psi(x-y)$ in $L^2([0,1]^2)$, and convergence in the Hilbert-Schmidt norm is enough to identify the limit covariance of a sequence of Gaussian fields converging a.s. in the sense of distributions. We still want to upgrade $b$ to be smooth. For that end we first fix $\delta_0 >0$ and observe that what we have proved up to now (see especially \eqref{eq:702}) yields that  we have 
\begin{equation*}
\Psi (u)=\frac{1}{2} {\rm Re\,}\left(\lim_{N\to\infty} \sum_{j=1}^Np_j^{-1-iu}\right)
\end{equation*}
with uniform convergence in the set $\{ \delta_0\leq |u|\leq 2\}$. However, if we apply exactly the same argument as above to the sum ${\rm Re\,}\big(\sum_{j=1}^Np^{-1-\varepsilon -iu}\big)$
for, say, $\varepsilon\in [0,1/2],$ we obtain uniform (in $\varepsilon$) estimates for the convergence of the series
$$
{\rm Re\,}\left(\sum_{j=1}^\infty p_j^{-1-\varepsilon -iu}\right)
$$
for any fixed $u\in (0,2)$.
Especially, we deduce by invoking the logarithm of the Euler product of the Riemann zeta function that 
\begin{eqnarray*}
\Psi (u)&=&\lim_{\varepsilon \to 0^+} \frac{1}{2}{\rm Re\,}\left(\sum_{j=1}^\infty p_j^{-1-\varepsilon -iu}\right)
=\lim_{\varepsilon \to 0^+} \frac{1}{2}{\rm Re\,}\left(\zeta(1+\varepsilon+iu)-\sum_{k=2}^\infty\sum_{j=1}^\infty \frac{p_j^{-k(1+\varepsilon +iu)}}{k}\right)\nonumber\\
&=& \frac{1}{2}{\rm Re\,}\left(\zeta(1+iu)-\sum_{k=2}^\infty\sum_{j=1}^\infty k^{-1}p_j^{-k(1+iu)}\right),\nonumber
\;=:\; \frac{1}{2}{\rm Re\,}\left(\log(\zeta(1+iu))-A(u)\right),\nonumber
\end{eqnarray*}
as the last written double sum converges absolutely (uniformly in $\varepsilon$).
It remains to note that $\log(\zeta(1+iu))$ is real analytic on $(0,\infty)$, and the function $A$ is $C^\infty$-smooth on the same set as termwise differentiation of $A$ $\ell$ times with respect to $u$ produces a series with the majorant series
$$
\sum_p\sum_{k=2}^\infty k^{\ell-1}p^{-k}\log^\ell p\leq
\sum_pp^{-3/2}\Big(\sum_{r=0}^\infty (r+2)^{\ell-1}p^{-r}\Big)<\infty .
$$
\end{proof}

\begin{remark}{\rm
Note that in our case the asymptotic covariance has a singularity of the form $-\frac{1}{2}\log |x-y|$ instead of $-\log |x-y|$ as in Theorem \ref{th:gmc}. This simply means that we replace $\beta$ by $\beta/\sqrt{2}$ in Theorem \ref{th:gmc}.}
\hfill $\blacksquare$
\end{remark}

Before proving the convergence of the subcritical chaos we still need to note that the normalizing constant in our exponential martingale obtained via the Gaussian approximation behaves like that of our original martingale. 
\begin{lemma}\label{le:exp} For any $\beta >0$ there is a constant $C=C(\beta)$ such that for $N\geq 1$ and $x\in[0,1]$
$$
C^{-1}\E \exp(\beta G_N(x))\leq \E |\zeta_{N,\rand}(1/2+ix)|^\beta \leq C\E \exp(\beta G_N(x))\quad \textrm{for all }.
$$
\end{lemma}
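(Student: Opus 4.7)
My plan is to compute both sides essentially explicitly using the product structure of $\zeta_{N,\rand}$ and the Gaussianity of $G_N$, rather than appealing to the coupling of Theorem~\ref{th:gaussian_appro}. The Gaussian coupling is of little use here: although $\log|\zeta_{N,\rand}(1/2+ix)|=G_N(x)+\mathrm{Re}(\mathcal{E}_N(x))$ with $\mathcal{E}_N$ having uniform exponential moments, a naive H\"older estimate on $\E[e^{\beta G_N}e^{\beta\mathrm{Re}\,\mathcal{E}_N}]$ pays a factor like $\exp(c\beta^2\sum_{k\leq N}p_k^{-1})$ that grows with $N$, whereas we need only an $O(1)$ multiplicative error.

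First I would exploit the independence of the $\theta_k$. Since $e^{2\pi i\theta_k}$ is uniform on the unit circle, so is $p_k^{-ix}e^{2\pi i\theta_k}$, giving
\begin{equation*}
\E|\zeta_{N,\rand}(1/2+ix)|^\beta=\prod_{k=1}^N I_k(\beta),\qquad I_k(\beta):=\int_0^1\big|1-p_k^{-1/2}e^{2\pi i\theta}\big|^{-\beta}d\theta,
\end{equation*}
which is independent of $x$. Next I would Taylor expand: using $|1-re^{i\phi}|^{-\beta}=(1-2r\cos\phi+r^2)^{-\beta/2}$ with $r=p_k^{-1/2}\leq 1/\sqrt{2}$, a second-order expansion in $r$ combined with $\int_0^1\cos(2\pi\theta)\,d\theta=0$ and $\int_0^1\cos^2(2\pi\theta)\,d\theta=1/2$ yields, uniformly for large $k$,
\begin{equation*}
I_k(\beta)=1+\frac{\beta^2}{4p_k}+O(p_k^{-3/2}),\qquad \log I_k(\beta)=\frac{\beta^2}{4p_k}+O(p_k^{-3/2}).
\end{equation*}
Since $\sum_k p_k^{-3/2}<\infty$ by the prime number theorem, and the finitely many small-$k$ factors $I_k(\beta)$ are bounded away from $0$ and $\infty$ (contributing a bounded additive correction to the sum of logarithms), we obtain
\begin{equation*}
\log\E|\zeta_{N,\rand}(1/2+ix)|^\beta=\frac{\beta^2}{4}\sum_{k=1}^N\frac{1}{p_k}+O(1),
\end{equation*}
with $O(1)$ uniform in $N\geq 1$ and $x\in[0,1]$.

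On the Gaussian side, from Definition~\ref{de:gaussian}, $G_N(x)=\mathrm{Re}\,\mathcal{G}_N(x)$ is a centred Gaussian with variance $\E G_N(x)^2=\tfrac{1}{2}\sum_{k=1}^N p_k^{-1}$, independent of $x$, so $\log\E e^{\beta G_N(x)}=\frac{\beta^2}{4}\sum_{k=1}^N p_k^{-1}$. Comparing the two expressions, their logarithms differ by $O(1)$ uniformly in $N$ and $x$, which yields both inequalities. I expect no genuine obstacle in this argument; the only point requiring minor care is the uniformity of the Taylor remainder, which is automatic because $r=p_k^{-1/2}\leq 1/\sqrt{2}$ stays safely away from the singularity at $r=1$.
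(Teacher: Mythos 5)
Your proof is correct and takes essentially the same route as the paper: both exploit the product structure and rotational invariance to reduce $\E|\zeta_{N,\rand}(1/2+ix)|^\beta$ to a product of one-dimensional integrals, Taylor-expand each factor to second order in $p_k^{-1/2}$, find a leading coefficient $\beta^2/(4p_k)$, and absorb the summable $O(p_k^{-3/2})$ remainders into the multiplicative constant; the only cosmetic difference is that you expand $(1-2r\cos\phi+r^2)^{-\beta/2}$ directly while the paper passes through $\exp\left(\beta\sum_{m\geq 1}m^{-1}r^m\cos(2\pi m\theta)\right)$ and truncates the series. Your preliminary observation — that invoking the coupling of Theorem \ref{th:gaussian_appro} plus H\"older would lose a factor $\exp\left(c\beta^2\sum_{k\leq N}p_k^{-1}\right)$ diverging with $N$, so one must compute directly — is exactly why the paper also proceeds by explicit computation.
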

\begin{proof} By independence, rotational invariance of the law of $e^{2\pi i \theta_j}$, and the finiteness of the sum $\sum_{j=1}^\infty \sum_{k=3}^\infty k^{-1}p_j^{-k/2}<\infty$, we see that

\begin{align}\label{eq:1point1}
\E|\zeta_{N,\rand}(1/2+ix)|^\beta =\prod_{j=1}^N \E e^{-\beta \log |1-p_j^{-1/2}e^{2\pi i \theta_j}|}&=\prod_{j=1}^N \E e^{\beta \sum_{k=1}^\infty \frac{1}{k}p_j^{-k/2} \cos (2\pi k \theta_j)}\\
\notag&\asymp \prod_{j=1}^N \E e^{\beta p_j^{-1/2}\cos (2\pi \theta_j)+\frac{\beta}{2}p_j^{-1}\cos (4\pi \theta_j)},
\end{align}

\noindent where we use the notation $a_n\asymp b_n$ to indicate that the ratios $a_n/b_n$ and $b_n/a_n$ are bounded. Now consider the function $\lambda\mapsto \E e^{\beta \lambda \cos (2\pi \theta_j)+\beta \frac{\lambda^2}{2}\cos (4\pi \theta_j)}$. This is analytic and one can easily check that 

\begin{equation}\label{eq:1point2}
\E e^{\beta \lambda \cos (2\pi \theta_j)+\beta \frac{\lambda^2}{2}\cos (4\pi \theta_j)}= e^{\frac{\beta^2}{4}\lambda^2+\mathcal{O}(\lambda^3)}
\end{equation} 

\noindent as $\lambda\to 0$. Substituting $\lambda=p_j^{-1/2}$ and noting that $\sum_{j=1}^\infty p_j^{-3/2}<\infty$, combining \eqref{eq:1point1} and \eqref{eq:1point2} implies that 

\begin{equation*}
\E|\zeta_{N,\rand}(1/2+ix)|^\beta\asymp  e^{\frac{\beta^2}{4}\sum_{j=1}^N \frac{1}{p_j}}.
\end{equation*}
From the definition of $G_N$, we see that $\E G_N(x)^2=\sum_{j=1}^N \frac{1}{2p_j}$ so we conclude that $$\E e^{\beta G_N(x)}=e^{\frac{\beta^2}{2}\sum_{j=1}^N \frac{1}{2p_j}}\asymp \E|\zeta_{N,\rand}(1/2+ix)|^\beta,$$ which was the claim.
\end{proof}

One should note that combining the above lemmas  we see that for $N\geq 1$
\begin{equation*}
\E |\zeta_{N,\rand}(1/2+ix)|^\beta\asymp \E \exp(\beta G_N(x))\asymp\exp(\frac{\beta^2}{4}\log\log N)=(\log N)^{\beta^2/4}.
\end{equation*}

Finally we are ready for:

\begin{proof}[Proof of Theorem \ref{th:gmcconv}] Consider the Gaussian field $G$ that is the limit of the fields $G_N$ (this limit exists for example in a suitable Sobolev space). For $\beta<2$ the corresponding Gaussian multiplicative chaos measure $\lambda_\beta$, exists due to Theorem \ref{th:gmc}, and the approximating measures obtained from the fields $G_N$ (we denote these measures by $\lambda_{\beta,N}$) converge  to $\lambda_\beta$. In particular, there is a $\widetilde p>1$ such that 
$\E (\lambda_{\beta, N}[0,1]^{\widetilde p})\leq C<\infty$ for all $N\geq 1$. Recall that we want to prove that for each continuous $f:[0,1]\to \R_+$, 
\begin{equation*}
\int_0^1 \frac{|\zeta_{N,\rand}(1/2+ix)|^\beta}{\E |\zeta_{N,\rand}(1/2+ix)|^\beta}f(x)dx
\end{equation*}
converges almost surely to a non-trivial random variable (almost sure convergence with respect to the weak topology then follows from the separability of the space $C[0,1]$). By the construction of the measure, this is a positive martingale, so it is enough to prove that it is bounded in $L^p$ for some $p>1$. For this it is then enough to show that for the special case where we choose $f=1$, the martingale is bounded in $L^p$ for some $p>1$. Choose $p\in (1,\widetilde p)$ and note that since the normalization factors are comparable, we obtain by H\"older's inequality
and Theorem \ref{th:gaussian_appro}
\begin{align*}
\E&\left[\int_0^1\frac{|\zeta_{N,\rand}(1/2+ix)|^\beta}{\E |\zeta_{N,\rand}(1/2+ix)|^\beta}dx\right]^p
\notag \;\leq \; C \E\Big( \exp(p\beta \| \mathcal{E}_N\|_{L^\infty[0,1]})(\lambda_{\beta,N}[0,1])^p\Big) \\
&\leq C\left(\E \exp\Big(p(\widetilde p/p)'\beta \| \mathcal{E}_N\|_{L^\infty[0,1]}\Big)\right)^{1/(\widetilde p/p)'}
\big(\E(\lambda_{\beta,N}[0,1])^{\widetilde p}\big)^{p/\widetilde p}
\;\;\leq \;C',
\end{align*}
where $'$ denotes H\"older conjugation. This yields uniform integrability of the quantity $\int_0^1|\zeta_{N,\rand}(1/2+ix)|^\beta/\E |\zeta_{N,\rand}(1/2+ix)|^\beta dx$ which proves the existence of a non-trivial limit. 

For $\beta\geq \beta_c$, we see similarly using Theorem \ref{th:gmc} and Theorem \ref{th:gaussian_appro} that the limiting measure is zero since the corresponding Gaussian limiting measure is zero. The claim about the existence of all moments for the Radon-Nikod\'ym derivative follows from Theorem \ref{th:gaussian_appro}. The existence of moments of order $p<4/\beta^2$ again follows from a simple H\"older argument making use of Theorem \ref{th:gaussian_appro} and the corresponding result for the Gaussian case (see Theorem \ref{th:gmc}).
\end{proof}

\section{The critical measure: Proof of Theorem \ref{th:critical}}\label{sec:critical}

In this section we establish the existence of the critical measure. We shall do this by showing that $G_N(x)=\widetilde{G}_N(x)+D_N(x)$ (recall that $G_N$ was the real part of the field $\mathcal{G}_N$), where $D_N$ converges almost surely to a nice continuous Gaussian field and $\widetilde{G}_N$ is a sequence of  Gaussian fields for which the critical measure can be shown to exist (using results from \cite{JS}). We will make use of the following result.

\begin{theorem}[{\cite[Theorem 1.1]{JS}}]\label{th:js1}
Let $(X_N)$ and $(\widetilde{X}_N)$ be two sequences of H\"older regular Gaussian fields on $[0,1]$ $($that is, the mapping $(x,y)\mapsto \sqrt{\E(X_N(x)-X_N(y))^2}$ is H\"older continuous on $[0,1]^2$ $)$. Assume that the measure $A_Ne^{\widetilde{X}_N(x)-\frac{1}{2}\E \widetilde{X}_N(x)^2}dx$ converges weakly $($that is with respect to the topology of weak convergence of measures$)$ in distribution to an almost surely non-atomic measure $\widetilde{\lambda}$, where $A_N$ is a deterministic scalar sequence. Assume further that the covariances $C_N(x,y)=\E X_N(x)X_N(y)$ and $\widetilde{C}_N(x,y)=\E \widetilde{X}_N(x) \widetilde{X}_N(y)$ satisfy the following conditions$:$ there exists a constant $K\in(0,\infty)$ $($independent of $N)$ such that for all $N\geq 1$, 

\begin{equation*}
\sup_{x,y\in[0,1]}|C_N(x,y)-\widetilde{C}_N(x,y)|\leq K
\end{equation*}

\noindent and for each $\delta>0$

\begin{equation*}
\lim_{N\to\infty}\sup_{|x-y|>\delta}|C_N(x,y)-\widetilde{C}_N(x,y)|=0.
\end{equation*}

Then also $A_ne^{X_N(x)-\frac{1}{2}\E X_N(x)^2}dx$ converges weakly in distribution to $\widetilde{\mu}$.
\end{theorem}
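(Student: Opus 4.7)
The plan is to deduce the convergence of $\mu_N := A_N e^{X_N - \frac{1}{2}\E X_N^2}dx$ from that of $\widetilde\mu_N := A_N e^{\widetilde X_N - \frac{1}{2}\E \widetilde X_N^2}dx$ by combining Kahane's convexity inequality (to produce sandwich bounds and tightness) with a Gaussian interpolation argument that exploits the off-diagonal vanishing of $C_N - \widetilde C_N$ to pin down the limit. Throughout, we may enlarge the probability space and assume that $X_N$, $\widetilde X_N$ are realized jointly and are independent. It suffices to prove that $\mu_N(f)\to\widetilde\lambda(f)$ in distribution for each fixed nonnegative $f\in C[0,1]$.

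The first step is tightness. Let $R_N(x,y):=C_N(x,y)-\widetilde C_N(x,y)$, so $|R_N|\leq K$, and introduce an auxiliary scalar $Z\sim N(0,K)$ independent of everything. Since $\widetilde C_N(x,y) \leq C_N(x,y)+K$ is the covariance of $X_N+Z$, Kahane's convexity inequality (e.g.\ \cite[Theorem 2.1]{RV}) applied with any concave $F:\R_+\to\R_+$ gives
\begin{equation*}
\E F\Big(\tfrac{1}{A_N}e^{Z-K/2}\mu_N(f)\Big)\; \leq\; \E F\big(\tfrac{1}{A_N}\widetilde\mu_N(f)\big).
\end{equation*}
Reversing the roles of $X_N$ and $\widetilde X_N$ yields a matching inequality in the other direction. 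Taking $F(x)=\min(x,M)$ and using the integrability of $e^{\pm Z}$ together with the assumed weak convergence $\widetilde\mu_N\Rightarrow\widetilde\lambda$, one obtains uniform tightness of $\mu_N(f)$ together with $L^p$-bounds for $p$ slightly above one. This also rules out escape of mass.

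The second and crucial step is identification of the limit. Here we use the strengthened assumption that $R_N\to 0$ uniformly away from the diagonal. Define the Gaussian interpolation $X_N(t):=\sqrt{t}\,X_N+\sqrt{1-t}\,\widetilde X_N$ for $t\in[0,1]$ and let $\nu_N^t(f):=A_N\!\int f(x)\,e^{X_N(t)(x)-\frac12\E X_N(t)(x)^2}dx$. For a smooth, bounded, twice-differentiable test function $F$ on $\R$, Gaussian integration by parts (Stein's lemma) produces the identity
\begin{equation*}
\frac{d}{dt}\E F(\nu_N^t(f))\;=\;\tfrac12\int_{[0,1]^2} R_N(x,y)\,f(x)f(y)\,\E\!\left[F''(\nu_N^t(f))\,e^{X_N(t)(x)+X_N(t)(y)-\cdots}\right]A_N^2\,dx\,dy.
\end{equation*}
Splitting the $(x,y)$-integration into the diagonal band $|x-y|<\delta$ and its complement, the off-diagonal piece is controlled by $\sup_{|x-y|>\delta}|R_N(x,y)|\to 0$ times uniform second-moment bounds on the integrand (obtained from step one together with the standard comparison between $\nu_N^t$ and the extremes $\mu_N,\widetilde\mu_N$). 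The diagonal piece is bounded by $K\int\int_{|x-y|<\delta} f(x)f(y)\,\E[|F''(\nu_N^t(f))|\,d\nu_N^t(x)d\nu_N^t(y)]$; using the $L^p$-moment bound on $\nu_N^t([0,1])$ together with the non-atomicity of the limit $\widetilde\lambda$ (which forces the limsup as $N\to\infty$ of $\E[\nu_N^t(B(x,\delta))^2]$ to vanish as $\delta\to 0$), this piece can be made arbitrarily small uniformly in $N,t$. Integrating in $t\in[0,1]$ gives $\E F(\mu_N(f))-\E F(\widetilde\mu_N(f))\to 0$ for a class of $F$ separating distributions, which together with tightness yields $\mu_N(f)\stackrel{d}{\to}\widetilde\lambda(f)$, as desired.

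The main obstacle lies in step three, specifically in making rigorous the Gaussian integration-by-parts identity in this infinite-dimensional setting and in verifying the uniform smallness of the diagonal contribution. The key input is the non-atomicity of $\widetilde\lambda$, which must be promoted to a quantitative statement of the form $\limsup_N\sup_x\E[\widetilde\mu_N(B(x,\delta))^2]\to 0$ as $\delta\to 0^+$; this is typical for log-correlated chaoses and follows by combining Kahane's inequality with the explicit moments of standard Gaussian multiplicative chaos. Once this uniform-in-$N$ diagonal decay is in hand, the interpolation argument proceeds cleanly, and the extension from a smooth $F$ to the full class identifying the distribution is a routine density argument using the tightness and moment bounds from step two.
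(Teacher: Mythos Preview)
This theorem is not proven in the present paper; it is quoted from \cite{JS} and used as a black box in Section~\ref{sec:critical}. There is thus no proof here to compare against, so I assess your argument on its own terms.

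Your overall architecture --- tightness via Kahane's convexity inequality, then identification of the limit by Gaussian interpolation --- has the right shape and is close in spirit to the strategy of \cite{JS}. The genuine gap is in the execution of the interpolation step. You claim the diagonal contribution is controlled by quantities of the type $\E[\nu_N^t(B(x,\delta))^2]$, and that these vanish as $\delta\to 0$ uniformly in $N$ by non-atomicity of $\widetilde\lambda$. But the theorem is designed precisely for the critical regime (where the normalization $A_N$ is nontrivial, as in Theorem~\ref{th:critical}), and there the measures $\nu_N^t$ do \emph{not} have uniformly bounded second moments: for critical chaos one has $\E[\lambda_{\beta_c}(I)^p]=\infty$ for every $p>1$ and every interval $I$, and $\E[\nu_N^t(I)^2]\to\infty$ as $N\to\infty$. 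Your claimed ``$L^p$-bounds for $p$ slightly above one'' in step one fail for the same reason, and the integrand produced by the Gaussian integration-by-parts in step two is simply not integrable. Non-atomicity of $\widetilde\lambda$ does not rescue this: it is a first-moment statement about small balls, not a second-moment one.

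The argument in \cite{JS} avoids second moments entirely. One works only with concave test functions $F$ and moments of order strictly less than $1$; the diagonal smallness is extracted from the non-atomicity of $\widetilde\lambda$ via Kahane's inequality applied to $\E F(\nu_N^t(I))$ for short intervals $I$, combined with a decomposition of the interpolation derivative that never leaves the first-moment world. Turning your sketch into a proof would require replacing every second-moment appearance by such a fractional-moment substitute, and that replacement is exactly the technical core of \cite{JS}.
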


To do this, we thus need a reference approximation for which convergence is known, and a representation of our field which gives us good control on the covariance. Let us first discuss the reference field. For this, we recall a construction from \cite{BM} and make use of results in \cite{DRSV1}. 

\begin{definition}{\rm
Let $W$ denote a white noise on $\R\times [-1/2,3/2]$. For $t\in \R$ and $x\in[0,1]$, write

\begin{equation*}
\widetilde{G}_t(x)=\frac{1}{\sqrt{2}}\int_{-\infty}^t \int_{-1/2}^{3/2}\mathbf{1}\left\lbrace |x-y|\leq \frac{1}{2}\min(e^{-s},1)\right\rbrace e^{s/2}W(ds,dy).
\end{equation*}}
\hfill $\blacksquare$
\end{definition}

The covariance of the field is

\begin{equation*}
\E\left(\widetilde{G}_t(x)\widetilde{G}_t(y)\right)=\begin{cases}
\frac{1}{2}\left[1+t-e^t |x-y|\right], & |x-y|\leq e^{-t}\\
-\frac{1}{2}\log |x-y|,  & e^{-t}\leq |x-y|\leq 1
\end{cases}.
\end{equation*}

\noindent Obviously the above field is H\"older-regular as it is $C^1$. As pointed out in \cite[Remark 3]{DRSV1}, the main results of \cite{DRSV1} apply to the measure $\sqrt{t}e^{2\widetilde{G}_t(x)-2\E \widetilde{G}_t(x)^2}dx$ as well, whence it converges weakly in probability to a non-trivial, and non-atomic random measure, as $t\to \infty$.

Our next task is to approximate our field by one whose covariance we can control. Here our starting point is Proposition \ref{pr:gaussian_appro2}, or rather it's proof. It follows immediately from this that we can write 
$$
\mathcal{G}_N(x)=\widetilde{G}_N(x)+\widetilde{E}_N(x)=\int_1^{\log \mathrm{Li}^{-1}(N+1)}e^{-2\pi i x u}u^{-1/2}dB_u^{\C}+\widetilde{E}_N(x)
$$
 for a nice Gaussian error field $\widetilde{E}_N$. We now want to replace $1/\sqrt{u}$ by something that will allow us to reach the desired covariance in the limit. Let us consider the translation invariant covariance, already alluded to before, that is induced by the function $C(x)=\max(-\log |x|,0)$. Then

\begin{align*}
\widehat{C}(k)&=\int_{-1}^1 e^{-2\pi ikx}\log \frac{1}{|x|}dx
=2\int_0^1 \cos(2\pi kx)\log \frac{1}{x}dx
= \frac{1}{\pi k}\int_0^{2\pi k} \cos y\log \frac{2\pi k}{y}dy\\
\notag &=\frac{1}{\pi k}\int_0^{2\pi k}\frac{\sin y}{y}dy,
\end{align*}

\noindent where in the last step we integrated by parts. This is positive (as it should since it's the Fourier transform of a translation invariant covariance), and as $k\to\infty$, it behaves like $1/(2k)+\mathcal{O}(k^{-2})$. Thus it should be possible to replace $1/\sqrt{u}$ in our field by $\sqrt{2\widehat{C}(s)}$, and this actually works out.

\begin{lemma}
Let 

\begin{align*}
\mathcal{G}_{N,4}(x)&=\int_{1}^{\log \mathrm{Li}^{-1}(N+1)}\sqrt{2\widehat{C}(s)}e^{-2\pi ix s}dB_s^{\C}.
\end{align*}

Then almost surely, $\widetilde{\mathcal{G}}_{N}-\mathcal{G}_{N,4}$ converges uniformly to a random continuous function $F_4$.
\end{lemma}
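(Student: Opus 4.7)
The plan is to reduce everything to showing that the coefficient
\[
a(u):=u^{-1/2}-\sqrt{2\widehat C(u)}
\]
decays fast enough in $u$ that the stochastic integral
\[
\Delta_N(x):=\widetilde{\mathcal G}_N(x)-\mathcal G_{N,4}(x)=\int_1^{\log \mathrm{Li}^{-1}(N+1)} a(u)\,e^{-2\pi i x u}\,dB_u^{\C}
\]
admits a continuous limit in $x$. First I would pin down the asymptotics of $\widehat C$. From the identity $\widehat C(k)=\frac{1}{\pi k}\int_0^{2\pi k}\frac{\sin y}{y}\,dy$ recorded in the excerpt and the fact that $\int_0^{\infty}\sin(y)/y\,dy=\pi/2$, integration by parts on the tail gives $\int_{2\pi k}^{\infty}\sin(y)/y\,dy=\mathcal O(k^{-1})$, so $2\widehat C(u)=u^{-1}+\mathcal O(u^{-2})$, and consequently $|a(u)|\lesssim u^{-3/2}$ as $u\to\infty$.

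Next, since $\Delta_N$ is a (complex) Gaussian process in $x$, all bounds reduce to the second moment, which by It\^o isometry reads, for $N<M$,
\[
\E\,|\Delta_M(x)-\Delta_N(x)|^2\;=\;\int_{L_N}^{L_M}|a(u)|^2\,du\;\lesssim\;\int_{L_N}^{\infty}u^{-3}\,du\;=\;\mathcal O(L_N^{-2}),
\]
where $L_N:=\log \mathrm{Li}^{-1}(N+1)\asymp\log N$. This tells us that at any fixed $x$, $\Delta_N(x)$ is $L^2$-Cauchy. To upgrade this to uniform convergence in $x$, I would compute the modulus of continuity: using $|e^{-2\pi i x u}-e^{-2\pi i y u}|^2\lesssim \min(1,u^2|x-y|^2)$,
\[
\E\,|\Delta_N(x)-\Delta_N(y)|^2\;\lesssim\;\int_1^{\infty}\min(1,u^2|x-y|^2)u^{-3}\,du\;\lesssim\;|x-y|^2\log\!\bigl(2/|x-y|\bigr),
\]
uniformly in $N$. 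The same estimate holds for the increments $\Delta_M-\Delta_N$, with the right-hand side additionally multiplied by the Cauchy factor above.

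To conclude, I would combine these two inputs in the spirit of the proof of Lemma~\ref{le:perus}. Fix a bounded interval, say $[-A,A]$. The Sobolev embedding $W^{\alpha,2}(-A,A)\hookrightarrow C[-A,A]$ for $\alpha\in(1/2,1)$ lets us bound $\|\Delta_M-\Delta_N\|_{C[-A,A]}$ by $\|\Delta_M-\Delta_N\|_{W^{\alpha,2}(-A,A)}$; the expected square of the latter norm can be read off directly from the two estimates above via the Slobodetski\u\i\ seminorm, and is $\mathcal O(L_N^{-2})$ (the double integral $\iint |x-y|^{1-2\alpha}\log(1/|x-y|)\,dx\,dy$ is finite for $\alpha<1$). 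Choosing a subsequence $N_j$ with $L_{N_j}^{-2}$ summable and applying Borel--Cantelli, together with L\'evy's inequality to fill in the gaps as in the proof of Lemma~\ref{le:perus}, yields almost sure uniform convergence $\Delta_N\to F_4$ on $[-A,A]$; Gaussianity moreover gives exponential tails for $\sup_N\|\Delta_N\|_{C[-A,A]}$ and for $\|F_4\|_{C[-A,A]}$. The main (and only real) obstacle is the mild logarithmic loss in the modulus of continuity estimate, but since $|x-y|^2\log(1/|x-y|)$ is still of $|x-y|^{2\alpha}$ type for any $\alpha<1$, it is easily absorbed into the fractional Sobolev argument.
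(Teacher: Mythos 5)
Your proof is correct in substance and rests on the same key observation as the paper's: the naive $W^{1,2}$ Sobolev estimate from Lemma~\ref{le:perus} fails here because $|a(u)|^2 u^2\asymp u^{-1}$ is not integrable, but the milder bound $|a(u)|^2\lesssim u^{-3}$ is compatible with a \emph{fractional} Sobolev embedding with any exponent in $(1/2,1)$. Your asymptotic $2\widehat C(u)=u^{-1}+\mathcal O(u^{-2})$, hence $|a(u)|\lesssim u^{-3/2}$, matches the paper exactly. The difference is purely in the implementation of the fractional Sobolev argument: you work on the physical side, estimating $\E|\Delta_N(x)-\Delta_N(y)|^2\lesssim |x-y|^2\log(1/|x-y|)$ and feeding this into the Slobodetski\u\i\ seminorm for $W^{\alpha,2}(-A,A)$ with $\alpha\in(1/2,1)$; the paper instead proves an auxiliary Lemma~\ref{le:frac-sobo}, which is a Fourier-side realisation of the same embedding tailored to stochastic integrals, giving directly $\E\|F_{N,4}-F_{M,4}\|_{L^\infty[0,1]}^2\lesssim \int (1+s^2)^{3/4}|a(s)|^2\,ds$. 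The Fourier-side version is a bit cleaner because it avoids the modulus-of-continuity computation and the logarithmic correction entirely, but both routes are legitimate and yield the same conclusion.

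One small inaccuracy to flag: the assertion that the increment estimate for $\Delta_M-\Delta_N$ is obtained from the uniform modulus bound ``additionally multiplied by the Cauchy factor'' $L_N^{-2}$ is not literally correct. For $|x-y|$ of order one, $\E|(\Delta_M-\Delta_N)(x)-(\Delta_M-\Delta_N)(y)|^2\asymp L_N^{-2}$, which is \emph{not} $\lesssim L_N^{-2}\,|x-y|^2\log(1/|x-y|)$ (the latter degenerates as $|x-y|\to 1$). What is true, and suffices, is that the two bounds hold simultaneously: the increment second moment is bounded both by $CL_N^{-2}$ and by $C|x-y|^2\log(1/|x-y|)$. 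Taking the minimum, or more simply invoking dominated convergence in the Slobodetski\u\i\ double integral (pointwise convergence to zero off the diagonal, integrable majorant $C|x-y|^{1-2\alpha}\log(1/|x-y|)$), shows that $\E\|\Delta_M-\Delta_N\|^2_{W^{\alpha,2}(-A,A)}\to 0$ as $N,M\to\infty$, after which the Borel--Cantelli and L\'evy/Fernique steps go through exactly as you describe.
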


\begin{proof}
In this case, making use of the same Sobolev estimate as before would lead to a non-summable series, but we still can proceed by employing the following simple lemma.

\begin{lemma}\label{le:frac-sobo}
Let $g:\R\to\C$ be a bounded measurable function with compact support. Let us denote by
$$
F(x):=\int_{\R}e^{-2\pi ixu}g(u)dB^\C_u
$$
the Fourier transform of the $($almost-surely well-defined$)$ compactly supported distribution $gdB^\C_u$.
Then for any $r>1/2$ we have
$$
\E \|F\|_{L^{\infty} [0,1]}^2 \lesssim \int_\R|g(\xi)|^2(1+\xi^2)^rd\xi.
$$
\end{lemma}
\begin{proof}
Let us first note that for, say smooth Schwartz test functions we obtain by Cauchy-Schwarz
\begin{equation*}
\|f\|_{L^\infty(\R)}\lesssim\|\widehat{f}\|_{L^1(\R)}\lesssim \|\widehat f(\xi) (1+\xi^2)^{r/2}\|_{L^2(\R)}\nonumber
\end{equation*}
since $\|(1+(\cdot)^2)^{-r/2}\|_2<\infty$ for $r>1/2$ (actually this yields a proof of a special case of the Sobolev embedding theorem, see e.g. \cite[Theorem 6.2.4]{G}). In order to localize in the case where $f$ is smooth but not compactly supported, we  pick a real-valued and symmetric Schwartz test function $\phi$ with supp$(\phi)\subset[-1,1]$. We demand further that ${\mathcal F}^{-1}\phi(x)\geq 1/2$ on $[0,1]$.
We then observe that by the previous inequality
\begin{equation}\label{eq:upotus2}
\|f\|_{L^\infty[0,1]}\lesssim\|[{\mathcal F}^{-1}\phi ]f\|_{L^\infty(\R)}\lesssim\|(\phi*\widehat f )(\xi) (1+\xi^2)^{r/2}\|_{L^2(\R)}.
\end{equation}
Note next that for any $\xi\in\R$ we may compute using the symmetry of $\phi$
\begin{align*}
\E |(\widehat F*\phi)(\xi)|^2=\E \big|(gdB^\C_u)*\phi(-\xi)\big|^2  &=
\E\int_R\int_\R g(u)\phi(\xi+u)\overline{g(u')}\phi(\xi+ u')dB^\C_ud\overline{B^\C_{u'}} \\ 
\notag &=\int_\R|g(u)|^2\phi^2 (\xi+u)du= \big(|g|^2*\phi^2)(-\xi).
\end{align*}
By combing this with \eqref{eq:upotus2} it follows that
\begin{align*}
\E\|F\|^2_{L^\infty[0,1]}&\lesssim\int_\R(|g|^2*\phi^2)(\xi)(1+\xi^2)^rd\xi= \int_\R|g(\xi)|^2[(1+(\cdot)^2)^r*\phi^2](\xi)d\xi,
\end{align*}
and the claim follows by noting that trivially $[(1+(\cdot)^2)^r*\phi^2](\xi)\lesssim (1+\xi^2)^r.$

\end{proof}

In our case, if we define $F_{N,4}=\widetilde{\mathcal{G}}_{N}-\mathcal{G}_{N,4}$, an application of the above lemma with the choice $r=3/4$ results in the bound (for say $M\leq N$)

\begin{equation*}
\E||F_{N,4}-F_{M,4}||_{L^\infty[0,1]}^2\lesssim \int_{\log \mathrm{Li}^{-1}(M+1)}^{\log \mathrm{Li}^{-1}(N+1)} (1+s^2)^{3/4}\left[\sqrt{\widehat{C}(s)}-\frac{1}{\sqrt{2s}}\right]^2ds
\end{equation*}
Note that
\begin{align*}
\left|\sqrt{\widehat{C}(s)}-\frac{1}{\sqrt{2s}}\right|&=\frac{1}{\sqrt{2s}}\left|\sqrt{\frac{2}{\pi}\int_0^{2\pi s}\frac{\sin y}{y}dy}-1\right|\leq \frac{1}{\sqrt{2s}}\frac{2}{\pi}\int_{2\pi s}^\infty \frac{\sin y}{y}dy=\mathcal{O}(s^{-3/2}),
\end{align*}
where we made use of the fact that $\frac{2}{\pi}\int_0^\infty \frac{\sin y}{y}dy=1$ and the already mentioned asymptotic bound 
$
\int_{s}^\infty \frac{\sin y}{y}dy=\mathcal{O}(s^{-1}). 
$
It follows that 
\begin{equation*}
\E||F_{N,4}-F_{M,4}||_{L^\infty[0,1]}^2\lesssim \int_{\log \mathrm{Li}^{-1}(M+1)}^{\log \mathrm{Li}^{-1}(N+1)} (1+s^2)^{3/4}s^{-3}ds,
\end{equation*}
which is bounded in $N$ and $M$, so we proceed as before. 
\end{proof}

To make use of Theorem \ref{th:js1} and compare $\Re \mathcal{G}_{N,4}$ to $\widetilde{G}_t$, we should see how $N$ and $t$ are related. To do this, let us calculate the variance of $\Re \mathcal{G}_{N,4}$ and require it to be $\frac{1}{2}t+\mathcal{O}(1)$. We have 

\begin{align*}
\E \Re \mathcal{G}_{N,4}(x)^2&=\int_{1}^{\log \mathrm{Li}^{-1}(N+1)}\widehat{C}(s)ds=\frac{1}{2}\int_{1}^{\log \mathrm{Li}^{-1}(N+1)}\frac{1}{s}ds+\int_{1}^{\log \mathrm{Li}^{-1}(N+1)}\mathcal{O}(s^{-2})ds\\
\notag &=\frac{1}{2}\log\log\mathrm{Li}^{-1}(N+1)+\mathcal{O}(1),
\end{align*}

\noindent where we used the expansion of $\widehat{C}(s)$. Thus we should expect that $t=\log\log \mathrm{Li}^{-1}(N+1)$ should give a good estimate for the covariances. Indeed, for $|x-y|\leq 1/\log \mathrm{Li}^{-1}(N+1)$, we have 

\begin{align*}
&\E \Re \mathcal{G}_{N,4}(x)\Re \mathcal{G}_{N,4}(y)=\frac{1}{2}\int_1^{\log\mathrm{Li}^{-1}(N+1)}\frac{1}{s}\cos(s|x-y|)ds+\mathcal{O}(1)\\
\notag &=\frac{1}{2}\int_{|x-y|}^{|x-y|\log \mathrm{Li}^{-1}(N+1)}\frac{1}{s}\cos sds+\mathcal{O}(1)\\
\notag &=\frac{1}{2}\int_{|x-y|}^{|x-y|\log \mathrm{Li}^{-1}(N+1)}\frac{1}{s}ds+\frac{1}{2}\int_{|x-y|}^{|x-y|\log \mathrm{Li}^{-1}(N+1)}\frac{\cos s-1}{s}ds+\mathcal{O}(1)\\
\notag &=\frac{1}{2}\log \log \mathrm{Li}^{-1}(N+1)+\mathcal{O}(1)
\end{align*}

\noindent where the $\mathcal{O}(1)$ terms are uniform in $x,y$. For $|x-y|\geq 1/\log \mathrm{Li}^{-1}(N+1)$, elementary calculations show that

\begin{align*}
\E \Re \mathcal{G}_{N,4}(x)\Re \mathcal{G}_{N,4}(y)&=\frac{1}{2}C(x-y)+\frac{1}{2}\int_{|x-y|[\log\mathrm{Li}^{-1}(N+1)+1]}^\infty \frac{\cos s}{s}ds+\mathit{o}(1),
\end{align*}

\noindent where the $\mathit{o}(1)$ term is uniform in $x,y$. From this we see that if we write $C_N(x,y)=\E \Re \mathcal{G}_{N,4}(x)\Re \mathcal{G}_{N,4}(y)$ and $\widetilde{C}_N(x,y)=\E \widetilde{G}_t(x)\widetilde{G}_t(y)$ with $t=\log\log\mathrm{Li}^{-1}(N+1)$, the conditions on the distances between the covariances in Theorem \ref{th:js1} are satisfied. Let us finally note that all our approximating  fields are smooth, and in particular, they have H\"older covariances.

Before finishing our proof of Theorem \ref{th:critical}, we recall a further result we need from \cite{JS}.

\begin{lemma}[{\cite[Lemma 4.2 (ii)]{JS}}]\label{le:js}
Let $X$ be a H\"older regular Gaussian field on $[0,1]$ and assume that it is independent of the sequence of measures $(\lambda_n)$ on $[0,1]$. If $e^{X}\lambda_n$ converges weakly in distribution, then $\lambda_n$ does as well.
\end{lemma}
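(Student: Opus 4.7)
The plan is threefold: (i) prove tightness of $(\lambda_n)$; (ii) identify each subsequential limit $\nu$ as being independent of $X$ and satisfying $e^X\nu\stackrel{d}{=}\mu$, where $\mu:=\lim_n e^X\lambda_n$; and (iii) show via a deconvolution argument that all such limits coincide in distribution.

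For tightness, note that $X$ is almost surely continuous on the compact $[0,1]$ by Hölder regularity, so $c(X):=\|e^{-X}\|_{\infty,[0,1]}$ is almost surely finite. Writing $\lambda_n = e^{-X}\cdot(e^X\lambda_n)$ as measures gives
\begin{equation*}
\lambda_n([0,1]) \leq c(X)\cdot(e^X\lambda_n)([0,1]),
\end{equation*}
where the right-hand side is a product of an almost surely finite random variable and a tight sequence independent of it; hence $\lambda_n([0,1])$ is tight, which yields tightness of $(\lambda_n)$ in the space of finite Borel measures on $[0,1]$. Passing to a subsequence $\lambda_{n_k}\stackrel{d}{\to}\nu$, the product structure of the joint law $\mathcal{L}(X)\otimes\mathcal{L}(\lambda_{n_k})$ passes to the weak limit, so $(X,\lambda_{n_k})\stackrel{d}{\to}(X,\nu)$ with $X$ independent of $\nu$. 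Continuity of $(\phi,\sigma)\mapsto e^\phi\sigma$ on $C[0,1]\times\mathcal{M}_+[0,1]$ then gives $e^X\lambda_{n_k}\stackrel{d}{\to}e^X\nu$, and combined with the hypothesis we conclude $e^X\nu\stackrel{d}{=}\mu$.

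The main task, and the main obstacle, is the following injectivity claim: if two random measures $\nu,\nu'$, each independent of $X$, satisfy $e^X\nu\stackrel{d}{=}e^X\nu'$, then $\nu\stackrel{d}{=}\nu'$. My approach would be to establish equality of all finite-dimensional distributions. Fix disjoint intervals $A_1,\ldots,A_m\subset[0,1]$ with midpoints $x_1,\ldots,x_m$; the Hölder continuity of $X$ gives
\begin{equation*}
\int_{A_j}e^X\,d\sigma = e^{X(x_j)}\sigma(A_j)\bigl(1+o(1)\bigr)
\end{equation*}
uniformly for $\sigma$ of bounded total mass as the mesh shrinks. The hypothesis then yields, to leading order, the distributional equality $(e^{X(x_j)}\nu(A_j))_{j=1}^m \stackrel{d}{=} (e^{X(x_j)}\nu'(A_j))_{j=1}^m$. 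Since the Gaussian vector $(X(x_j))_{j=1}^m$ is independent of both mass vectors and has an everywhere nonzero characteristic function, Fourier deconvolution (taking logarithms on the positive event and controlling the atom at zero separately) yields $(\nu(A_j))_j \stackrel{d}{=} (\nu'(A_j))_j$. Refining the mesh then identifies all finite-dimensional distributions of $\nu$ and $\nu'$, giving $\nu\stackrel{d}{=}\nu'$.

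The hardest part will be to execute the deconvolution step rigorously: one must quantify the Hölder approximation error $\bigl|\int_{A_j}e^X d\sigma - e^{X(x_j)}\sigma(A_j)\bigr|$ uniformly for $\sigma$ drawn from the (potentially heavy-tailed) laws of $\nu$ and $\nu'$, and then carry out the log-transform and Gaussian deconvolution while properly controlling the contribution from the event $\{\nu(A_j)=0\}$. The Hölder assumption on $X$ (as opposed to mere continuity) is what makes the approximation step quantitatively tractable and is the key structural input.
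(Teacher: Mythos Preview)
The paper does not prove this lemma; it is quoted verbatim from \cite[Lemma 4.2(ii)]{JS} and used as a black box. So there is no in-paper proof to compare against, and I will evaluate your argument on its own merits.

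Your steps (i) and (ii) are correct and cleanly argued: tightness via $\lambda_n=e^{-X}(e^X\lambda_n)$ and the identification $e^X\nu\stackrel{d}{=}\mu$ for any subsequential limit $\nu$ both go through as written.

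Step (iii), however, has a genuine structural gap, not just a hard estimate. Your plan is to fix a partition $(A_j)$ of mesh $\delta$, use H\"older continuity to write $\int_{A_j}e^X\,d\nu=e^{X(x_j)}\nu(A_j)(1+R_j)$ with $|R_j|\le e^{C_X\delta^\alpha}-1$, deconvolve the Gaussian vector $(X(x_j))_j$, and then refine $\delta$. The problem is that $R_j=R_j(X,\nu)$ depends on \emph{both} $X$ and $\nu$ (it involves the $\nu$-average of $e^{X-X(x_j)}$ over $A_j$), so when you pass to characteristic functions the putative ``Gaussian factor'' $\E_X[e^{i\xi\cdot(X(x)+\epsilon)}]$ is actually $\nu$-dependent and cannot be divided out cleanly. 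You only get
\[
\bigl|\widehat{\mathcal L}(\log\nu(A^\delta))(\xi)-\widehat{\mathcal L}(\log\nu'(A^\delta))(\xi)\bigr|
\;\lesssim\;\frac{|\xi|\,\delta^\alpha}{|\widehat\gamma(\xi)|},
\]
which blows up in $\xi$ and, more fatally, concerns a \emph{moving} target: as $\delta\to0$ the intervals $A_j^\delta$ shrink, $\nu(A_j^\delta)\to0$, and you never obtain equality of any fixed finite-dimensional distribution. ``Refining the mesh'' does not close the loop.

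A cleaner route to (iii) avoids the discretisation entirely. For any bounded continuous $F:\mathcal M_+[0,1]\to\R$ set
\[
g(\phi):=\E[F(e^\phi\nu)]-\E[F(e^\phi\nu')],\qquad \phi\in C[0,1].
\]
Applying the hypothesis to the test functional $\sigma\mapsto F(e^{-\psi}\sigma)$ gives $\E[g(X-\psi)]=0$ for \emph{every} $\psi\in C[0,1]$. So $g$ is bounded continuous and its convolution with the Gaussian law of $X$ (and all its translates) vanishes. By Cameron--Martin, the exponential vectors $\{d\gamma_h/d\gamma:h\in H\}$ span a dense set in $L^2(\gamma)$, forcing $g=0$ on $\mathrm{supp}(\gamma)$; translating by any $\psi$ then gives $g\equiv0$, in particular $g(0)=0$, i.e.\ $\E[F(\nu)]=\E[F(\nu')]$. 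Note that H\"older regularity enters only to guarantee that $X$ has continuous sample paths (so that $e^X$ is a bona fide positive continuous weight and the maps above are well-defined); it is not needed for a quantitative approximation as your proposal suggests.
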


Finally we give the

\begin{proof}[Proof of Theorem \ref{th:critical}]
Let us introduce some notation. For $M\geq 0$, let 

\begin{align*}
&\lambda_{\beta_c,M, N}(dx)=\sqrt{\log\log \mathrm{Li}^{-1}(N+1)}e^{\beta_c (\Re \mathcal{G}_{N,4}(x)-\Re \mathcal{G}_{M,4}(x))-\frac{\beta_c^2}{2}\E[ \Re \mathcal{G}_{N,4}(x)^2-\Re \mathcal{G}_{M,4}(x)^2]}dx,
\end{align*}

\noindent where $\mathcal{G}_{0,4}=0$. From Theorem \ref{th:js1} we see that $\lambda_{\beta_c,0,N}$ converges weakly in distribution (to a non-trivial random measure $\lambda_{\beta_c}$) as $N\to\infty$. Then from Lemma \ref{le:js} we see that also $\lambda_{\beta_c,M,N}$ converges weakly in law for any fixed $M\geq 0$. We also note that 

\begin{equation*}
\sqrt{\log\log \mathrm{Li}^{-1}(N+1)}\frac{|\zeta_{N,\rand}(1/2+ix)|^{\beta_c}}{\E|\zeta_{N,\rand}(1/2+ix)|^{\beta_c}}dx=e^{f_N(x)}\lambda_{\beta_c,0,N}(dx)
\end{equation*}

\noindent where $f_N$ is a sequence of  continuous functions converging uniformly almost surely to a continuous function $f$ and, by construction, $f_M$ is independent from $f_N-f_M$ for each $0\leq M<N$. Recall that we want to show that for each non-negative continuous $g:[0,1]\to[0,\infty)$, 

\begin{equation*}
\sqrt{\log\log \mathrm{Li}^{-1}(N+1)}\int_0^1\frac{|\zeta_{N,\rand}(1/2+ix)|^{\beta_c}}{\E|\zeta_{N,\rand}(1/2+ix)|^{\beta_c}}g(x)dx
\end{equation*}

\noindent converges in law to $\lambda_{\beta_c}(e^fg)$. Observe that for any $M\geq 1$

\begin{align*}
\sqrt{\log\log \mathrm{Li}^{-1}(N+1)}& e^{f_M-f_N}\frac{|\zeta_{N,\rand}(1/2+ix)|^{\beta_c}}{\E|\zeta_{N,\rand}(1/2+ix)|^{\beta_c}}dx\\
\notag &=\big(e^{f_M(x) +\beta_c \Re \mathcal{G}_{M,4}(x)-\frac{\beta_c^2}{2}\E\Re \mathcal{G}_{M,4}(x)^2}\big)\lambda_{\beta_c,M, N}(dx).
\end{align*}
 On the right hand side the first factor is a random continuous function, independent of the measure $\lambda_{\beta_c,M, N}(dx)$,
 which in turn converges in distribution as $N\to\infty.$ A simple argument that employs conditioning (i.e. Fubini) then shows  that the full product on right hand side converges in distribution, whence the same is true for the left hand side.
 As $\sup_{N\geq M}\|f_M-f_N\|_{L^\infty [0,1]}\to 0$ in probability as $M\to\infty$, it is then an easy matter to use the asymptotics of $\mathrm{Li}^{-1}(N)$ to verify the claim of Theorem \ref{th:critical}.
\end{proof}

\begin{remark} \label{rem:mesoscopic}{\rm 
We sketch here how a mesoscopic result can be shown for the statistical model that we are considering here. Observe first that  by Lemma \ref{le:covariance_approximation}, we may choose a sequence $\varepsilon_N\to 0^+$ (slower than $1/\log N$) and $\alpha_N\to\infty$ (in fact $\alpha_N=-\frac{1}{2}\log \varepsilon_N$) so that the covariance of $G_N(\varepsilon_N x)$ satisfies
$$
K_{G_N(\varepsilon_N \cdot)}(x,y)= \frac{1}{2}\min( \log(1/|x-y|),\log[\varepsilon_N\log N])+\alpha_N+ {\mathcal O}(1),
$$
and, uniformly outside the diagonal, one even has
$$
K_{G_N(\varepsilon_N \cdot)}(x,y)= \frac{1}{2} \log(1/|x-y|)+\alpha_N +{\mathit o}(1).
$$
On the other hand, we know that our error term $\mathcal{E}_N$ converges uniformly to a bounded continuous function. Thus, in the scaling $x\to \varepsilon_N x$ we may write

\begin{equation*}
\frac{|\nu_{N,\rand}(\varepsilon_N x)|^{\beta}}{\E|\nu_{N,\rand}(\varepsilon_N x)|^{\beta}}dx
=e^{\beta \sqrt{\alpha_N} G_0-\frac{\beta^2}{2}\alpha_N+R+{\mathit o}(1)}\widetilde {\lambda}_{N,\beta}(dx),
\end{equation*}

\noindent where $G_0$ is a fixed standard normal random variable, independent from each $\widetilde \lambda_{ N,\beta }$, $R:=\beta \mathrm{Re}\mathcal{E}(0)-C$ is a random variable, where $C$ is a constant depending on $\beta$ (it comes from the difference between the true value of $\E |\nu_N(x)|^\beta$ and the normalization of the Gaussian multiplicative chaos measure),  and $\widetilde\lambda_{N,\beta}$ is  obtained by exponentiating  a good approximation of a Gaussian field with the strictly logarithmic covariance structure $\frac{1}{2}\log(1/|x-y|)$ on $[0,1].$ In particular,  $\widetilde \lambda_{ N,\beta }$ converges to a standard Gaussian multiplicative chaos on $[0,1]$. A similar statement holds also true in the case $\beta=\beta_c$.}
\hfill $\blacksquare$
\end{remark}

\appendix

\section{The second shifted moment of the zeta function on the critical line}\label{app:appendixa}

In this first appendix we prove Theorem  \ref{th:twopoint}. Before embarking upon proving the auxiliary estimates of  shifted second moments, and suitable truncated versions of them as discussed in the introduction, let us first recall some fundamental growth estimates for the zeta function and record an easy consequence of these.

A classical growth estimate of the zeta function over the critical line $\{\sigma=1/2\}$ states that
\begin{equation}\label{eq:zcgrowth}
\zeta\left(\frac{1}{2}+it\right)=\mathcal{O}(t^{\mu_0}), \quad\textrm{with}\quad \mu_0<1/6.
\end{equation}
A proof of such a bound can be found in \cite[Chapter V]{Titchmarsh}, which is also an excellent introduction to the classical theory of the zeta function. 
Currently the best known bound is  $ \mu_0=13/84+\varepsilon$, due to Bourgain \cite{Bourgain}. The famous Lindel\"of hypothesis declares that \eqref{eq:zcgrowth} is true for any $\mu_0>0$, and would in turn be a consequence of the Riemann hypothesis.
Moreover, for $\sigma\geq \frac{1}{2}$, $t\in \R,$ and arbitrary fixed $\varepsilon>0$, the convexity of the indicator function of Dirichlet series (see e.g. \cite[II.1 Theorem 16]{Tenenbaum}) allows one to interpolate and to obtain a bound of the form
\begin{equation}\label{eq:zcgrowth2}
\zeta\left(\sigma+it\right)=\mathcal{O}\big((1+|t|)^{2\mu_0\max(1-\sigma,\varepsilon)}\big) = \mathcal{O}\big((1+|t|)^{\max(1-\sigma,\varepsilon)/3}\big) ,
\end{equation}
where $\mathcal{O}$ is uniform for $\sigma\geq 1/2$ and $t\in\R.$ The books \cite{Titchmarsh,Ivic,Tenenbaum}, among many others, are excellent references for further properties of the Riemann zeta function.

A simple corollary of this growth estimate we now prove is that the average size of $\zeta$ (and hence the mean in the limit for the random shifts of $\zeta$) over the critical line is $1$ in a strong quantitative sense:
\begin{lemma}\label{le:average}
For any real numbers $A<B$ it holds that
$$
\frac{1}{B-A}\int_A^B\zeta (1/2+it)dt = 1\;+ \;\mathcal{O}\big(\frac{1+|A|^{1/6}+|B|^{1/6}}{B-A}\big). 
$$
\end{lemma}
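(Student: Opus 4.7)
The plan is to evaluate $\int_A^B\zeta(1/2+it)\,dt$ by a contour-shift argument, moving the line of integration from $\Re s = 1/2$ to $\Re s = 2$, where $\zeta$ is given by an absolutely convergent Dirichlet series and the main term $(B-A)$ emerges from the $n=1$ summand. Concretely, I would apply Cauchy's theorem to the closed rectangular contour with vertices $1/2+iA,\ 2+iA,\ 2+iB,\ 1/2+iB$ (traversed counterclockwise). The only singularity of $\zeta$ inside is the simple pole at $s=1$ with residue $1$, and this pole lies in the rectangle precisely when $A<0<B$; the residue contribution $2\pi i\cdot\mathbf{1}_{\{A<0<B\}}$ is always of size $\mathcal{O}(1)$ and will be absorbed into the error term.

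The vertical integral on the right edge gives
\begin{equation*}
\int_A^B \zeta(2+it)\,dt \;=\; (B-A) \;+\; \sum_{n\geq 2} \frac{1}{n^2}\int_A^B n^{-it}\,dt,
\end{equation*}
where the series remainder is $\mathcal{O}(1)$ because $\bigl|\int_A^B n^{-it}dt\bigr|\leq 2/\log n$ and $\sum_{n\geq 2}(n^2\log n)^{-1}<\infty$. The two horizontal pieces at heights $t=A$ and $t=B$ are controlled by the standard subconvexity bound \eqref{eq:zcgrowth2}: for $\sigma\in[1/2,2]$ we have $|\zeta(\sigma+iT)|\lesssim (1+|T|)^{\max(1-\sigma,\varepsilon)/3}$, and integrating over $\sigma\in[1/2,2]$ yields
\begin{equation*}
\Bigl|\int_{1/2}^{2}\zeta(\sigma+iT)\,d\sigma\Bigr|\;=\;\mathcal{O}\bigl((1+|T|)^{1/6}\bigr),
\end{equation*}
the worst contribution coming from $\sigma=1/2$.

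Assembling these four edges and isolating the left-edge integral $i\int_A^B\zeta(1/2+it)\,dt$ produces
\begin{equation*}
\int_A^B\zeta(1/2+it)\,dt \;=\; (B-A)\;+\;\mathcal{O}\bigl(1+|A|^{1/6}+|B|^{1/6}\bigr),
\end{equation*}
and dividing by $B-A$ gives the claim. I do not expect a serious obstacle here: the argument is a textbook contour shift and the only point that requires any care is making sure the subconvexity exponent $1/6$ from \eqref{eq:zcgrowth2} matches the error exponent claimed in the lemma, which it does.
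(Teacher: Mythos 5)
Your proof is correct and is essentially the same as the paper's: a contour shift from $\Re s = 1/2$ to $\Re s = 2$, with the horizontal edges bounded via the convexity estimate \eqref{eq:zcgrowth2}, the pole at $s=1$ absorbed into the $\mathcal{O}(1)$ term, and the main term $(B-A)$ extracted from the $n=1$ term of the Dirichlet series on the right edge. The only cosmetic difference is that the paper first reduces WLOG to $|A|,|B|\geq 1$, which cleanly sidesteps the case where the pole sits on the boundary of the contour; your version handles the same edge case implicitly, but stating the reduction is tidier.
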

\begin{proof} We may obviously assume that $|A|,|B|\geq 1.$ We integrate $\zeta$ over a rectangular path with vertices $1/2+iA, 1/2+iB, 2+iB, 2+iA$ and estimate the integrals over the horizontal paths using \eqref{eq:zcgrowth2} to obtain
\begin{align*}
\notag \int_A^B\zeta (1/2+it)dt \;=\; \mathcal{O}(1+|A|^{1/6}+|B|^{1/6})+\int_{A}^{B}\zeta(2+it)dt,
\end{align*}
where the possible effect of the pole at $1$ is covered by the $\mathcal{O}(1)$-term. The claim follows since
$$
\int_{2+iA}^{2+iB}\zeta(s)ds=(B-A)+ \sum_{n=2}^\infty\frac{1}{i\log n}(n^{-2-iA}-n^{-2-iB})=B-A+\mathcal{O}(1).
$$
\end {proof}

We also point out the following trivial fact: 
\begin{equation}\label{le:zetanbound}
\sup_{\sigma\geq \frac{1}{2},t\in \R}|\zeta_N(\sigma+it)|\leq C_N<\infty\quad \textrm{for any}\quad N\geq 1.
\end{equation}

Let us  start with some  useful definitions. Recall that  $\N_N$ consists  of positive integers $n\geq 1$ whose prime factors do not exceed $p_N$.
\begin{definition}\label{def:divisors}{\rm
For $q\in \C$ and $n,N\in \Z_+$, let 
\begin{equation*}
\sigma_q(n)=\sum_{d: d|n}d^q \quad\textrm{and}\quad \sigma_q(n;N)=\sum_{d\in \N_N: d|n}d^q\notag .
\end{equation*}
For $T>0$  the corresponding divisor sums are given by
\begin{equation*}
D_q(T)=\sum_{n\leq T}\sigma_{-q}(n)\quad\textrm{and}\quad D_q(T;N)=\sum_{n\leq T}\sigma_{-q}(n;N) 
\notag
\end{equation*}}
\hfill $\blacksquare$
\end{definition}
\noindent The way the divisor functions $\sigma_{-q}(k)$ come into play is by the fact  that for $\mathrm{Re}(s)>1$ and $\mathrm{Re}(s+z)>1$
\begin{align*}
\zeta(s)\zeta(s+z)&=\sum_{k,m=1}^\infty (km)^{-s} m^{-z}
 =\sum_{n=1}^\infty n^{-s}\sum_{m: m|n} m^{-z}
=\sum_{n=1}^\infty \frac{\sigma_{-z}(n)}{n^s},
\end{align*}
and in a similar way
\begin{equation}\label{eq:tulo2}
\zeta(s)\zeta_N(s+z)=\sum_{n=1}^\infty \frac{\sigma_{-z}(n;N)}{n^s}.
\end{equation}

In what follows, the constant $\varepsilon>0$, if not otherwise stated,  stands for a positive quantity that can be taken as small as we wish, with the possible cost of increasing the implicit multiplicative but uninteresting constants.

We shall repeatedly make use of the following classical estimate for the standard divisor function  $d(n):=\sigma_0(n)$ (see e.g. \cite[Theorem 315]{HW})
\begin{equation}\label{le:dbound}
d(n)=\sigma_0(n)=\mathcal{O}(n^\varepsilon).
\end{equation}
Obviously $\sigma_{-z}(n)$ satisfies the same estimate uniformly for $z\in i\R$.

The  following result encodes the necessary estimates for $D_z(T)$ and $D_z(T;N)$ when $z$ is purely imaginary.

\begin{proposition}\label{prop:divsum}
For $a\in\R$ and $T\geq 1$ with $T-1/2\in\Z$ we have
\begin{equation*}
\frac{1}{T}D_{ia}(T)=\zeta(1+{ia})+\zeta(1-{ia})T^{-{ia}}\frac{1}{1-{ia}}\; +\; \mathcal{O}(T^{-1/3})+\mathcal{O}(T^{-5/12}|a|^{1/6}),
\end{equation*}
In a similar vein,
\begin{equation*}
\frac{1}{T}D_{{ia}}(T;N)=\zeta_N(1+{ia})+\mathcal{O}(T^{-1/3}+T^{-5/12}|{a}|^{1/6}),
\end{equation*}
$($where the error term  is not claimed to be uniform in $N\geq 1)$.
\end{proposition}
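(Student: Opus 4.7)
The plan is to derive both asymptotics by applying the truncated Perron formula to the Dirichlet series $\sum_n \sigma_{-ia}(n) n^{-s}=\zeta(s)\zeta(s+ia)$ (convergent in $\Re s>1$), and analogously to \eqref{eq:tulo2} in the truncated case. Since $|\sigma_{-ia}(n)|\leq d(n)=O(n^\varepsilon)$, Perron with $c=1+1/\log T$ and cutoff $U>0$ to be optimized yields
$$D_{ia}(T)=\frac{1}{2\pi i}\int_{c-iU}^{c+iU}\zeta(s)\zeta(s+ia)\frac{T^s}{s}\,ds+O(T^{1+\varepsilon}/U),$$
the half-integer assumption $T-1/2\in\Z$ keeping $|\log(T/n)|\gtrsim 1/T$ near $n=T$ so that the Perron error is controlled as stated.

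Next I would shift the contour to $\Re s=1/2$, crossing three simple poles: at $s=1$ from $\zeta(s)$ with residue $\zeta(1+ia)\,T$; at $s=1-ia$ (when $a\neq 0$) from $\zeta(s+ia)$ with residue $\zeta(1-ia)\,T^{1-ia}/(1-ia)$; and at $s=0$ from $1/s$ with residue $\zeta(0)\zeta(ia)=O(|a|^{1/2+\varepsilon})$ by convexity, the last becoming $O(|a|^{1/2+\varepsilon}/T)$ after division by $T$ and thus harmless. The horizontal pieces at heights $\pm U$ are dominated by the spike at $\sigma=c$ and contribute $O(T/U)$, absorbed into the Perron error. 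On the vertical segment $\Re s=1/2$ I would use Weyl's subconvexity $\zeta(1/2+it)=O((1+|t|)^{1/6+\varepsilon})$ together with the elementary inequality $(1+|t+a|)^{1/6}\lesssim (1+|t|)^{1/6}+|a|^{1/6}$, which yields
$$\int_{-U}^{U}\bigl|\zeta(\tfrac12+it)\zeta(\tfrac12+it+ia)\bigr|\frac{T^{1/2}}{|1/2+it|}\,dt\;\lesssim\; T^{1/2}\bigl(U^{1/3+\varepsilon}+|a|^{1/6+\varepsilon}U^{1/6+\varepsilon}\bigr).$$
Taking $U=T^{1/2}$ and dividing by $T$ converts the Perron term into $O(T^{-1/2+\varepsilon})$, and turns the two vertical contributions into $O(T^{-1/3+\varepsilon})$ and $O(T^{-5/12+\varepsilon}|a|^{1/6+\varepsilon})$ respectively, matching the two advertised error terms.

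The truncated case is strictly parallel with $\zeta(s)\zeta_N(s+ia)$. Since $\zeta_N$ is entire and uniformly bounded on $\{\sigma\geq 1/2\}$ by \eqref{le:zetanbound}, only the pole at $s=1$ is crossed and produces the single main term $\zeta_N(1+ia)\,T$; the $|a|$-splitting on the vertical segment is now unnecessary, and the vertical integral simplifies to $C_N T^{1/2}U^{1/6+\varepsilon}$, so that the same choice $U=T^{1/2}$ yields the desired bound with an implicit $N$-dependent constant.

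The main technical point I expect is identifying the sweet spot $U=T^{1/2}$ which simultaneously lands the $|a|$-free and $|a|$-dependent pieces of the vertical integral on the two claimed error terms $T^{-1/3}$ and $T^{-5/12}|a|^{1/6}$: any other balancing produces a strictly worse bound in at least one regime of $|a|$. A minor wrinkle is the degenerate case $a=0$, where the simple poles at $s=1$ and $s=1-ia$ merge into a double pole and the residue produces $\log T + 2\gamma - 1$; this agrees with the $a\to 0$ limit of the stated main term after Laurent-expanding $\zeta(1\pm ia)=\pm 1/(ia)+\gamma+O(a)$ and $T^{-ia}/(1-ia)=1-ia(\log T-1)+O(a^2)$, so no separate argument is required.
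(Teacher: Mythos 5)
Your overall strategy coincides with the paper's: apply the effective Perron formula with $c = 1 + 1/\log T$, shift the contour to $\Re s = 1/2$, collect residues, and bound the horizontal and vertical pieces using the (sub)convexity estimate for $\zeta$, optimizing at $U = T^{1/2}$. The residues at $s = 1$ and $s = 1 - ia$, the vertical estimate via $(1+|t+a|)^{1/6} \lesssim (1+|t|)^{1/6} + |a|^{1/6}$, and the balancing that lands on $T^{-1/3}$ and $T^{-5/12}|a|^{1/6}$ are all exactly the paper's computations. However, there is a concrete error: you claim to cross a third pole at $s = 0$ coming from the factor $1/s$, contributing $\zeta(0)\zeta(ia)$. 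The contour is being moved from $\Re s = 1 + 1/\log T$ to $\Re s = 1/2$, and $s = 0$ lies to the \emph{left} of that strip, so this pole is never crossed; only $s = 1$ and (for $a \neq 0$, $|a| < U$) $s = 1 - ia$ are. This is not merely cosmetic: your bound $O(|a|^{1/2+\varepsilon}/T)$ for the spurious residue is only dominated by $O(T^{-5/12}|a|^{1/6})$ when $|a| \lesssim T^{7/4}$, so if this term genuinely appeared, the stated estimate would fail for large $|a|$. Delete it.

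Two smaller points. First, dismissing the horizontal integrals as $O(T/U)$ ``dominated by the spike at $\sigma = c$'' is too quick: for large $|a|$ the factor $\zeta(\sigma + iU + ia)$ contributes a piece on the order of $T^{1/12}|a|^{1/6}$ near $\sigma = 1/2$, which your bound does not account for. It turns out to be absorbed (after dividing by $T$ one gets $T^{-11/12}|a|^{1/6} \ll T^{-5/12}|a|^{1/6}$), but this needs the convexity interpolation \eqref{eq:zcgrowth2} along the whole segment, as the paper does in \eqref{eq:error3}, rather than a crude endpoint argument. Second, you silently assume the pole at $s = 1 - ia$ lies inside the rectangle, which fails for $|a| > U = T^{1/2}$; the paper explicitly notes that in this regime the second main term is dropped and absorbed by the error, and that a slight adjustment is needed when $|a| \approx T^{1/2}$. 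Your observation that the $a\to 0$ limit of the stated main term recovers $\log T + 2\gamma - 1$ via the Laurent expansion is correct and is a useful sanity check not spelled out in the paper.
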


\begin{proof}  
By the estimate \eqref{le:dbound} the abscissa of absolute convergence of the Dirichlet series $\sum_{k=1}^\infty \frac{\sigma_{-ia}(k)}{k^s}$ is 1. 
For any $T\geq 2$ set $\varepsilon_T:=(\log T)^{-1} $and note that  by \eqref{le:dbound} and  the effective Perron formula (see e.g. \cite[II.2, Theorem 2]{Tenenbaum}) we may write 
\begin{equation*}
D_{{ia}}(T)=\frac{1}{2\pi i}\int_{1+\varepsilon_T-iT^{1/2}}^{1+\varepsilon_T+iT^{1/2}}\zeta(s)\zeta(s+{ia})T^s\frac{ds}{s}+\mathcal{O}\left(\sum_{n=1}^\infty |\sigma_{-{ia}}(n)|\frac{\left(\frac{T}{n}\right)^{1+\varepsilon_T}}{T^{1/2}|\log\frac{T}{n}|}\right).
\end{equation*}
We then split the sum in a standard way into three parts: $n\leq T/2$, $T/2<n\leq 2T$, and $n>2T$. In the first domain, the logarithm is bounded from below by $\log(2)$, and $\sigma_{-{ia}}(n)$  by $n^\varepsilon$. One infers that the contribution to the sum from the first domain is of order $T^{1/2+\varepsilon}$. For the second sum, we estimate the logarithm by 
$
\frac{1}{\log T-\log n}=\mathcal{O}\left(\frac{T}{T-n}\right).
$
\noindent Since $T$ is an half an odd integer we may estimate the sum of these terms by a harmonic series. Invoking again the bound $\sigma_{-z}(n)=\mathcal{O}(n^\varepsilon)$, we see that the contribution from this sum is of order $T^{1/2+\varepsilon}\log(T)$, i.e  of the form $\mathcal{O}(T^{1/2+\varepsilon})$.

For the last sum, we bound the logarithm from below by a constant, so the sum is again of order 
\begin{equation*}\notag
T^{1/2+\varepsilon_T}\sum_{n>2T}d(n)n^{-1-\varepsilon_T}\lesssim T^{1/2}\zeta(1+\varepsilon_T)^2=\mathcal{O}(T^{1/2}\log^2T),
\end{equation*}
where we used the fact that $\zeta(1+s)$ has a simple pole at the origin. We conclude that 
\begin{equation*}
D_{{ia}}(T)=\frac{1}{2\pi i }\int_{1+\varepsilon_T-iT^{1/2}}^{1+\varepsilon_T+iT^{1/2}}\zeta(s)\zeta(s+{ia})T^s\frac{ds}{s}+\mathcal{O}(T^{1/2+\varepsilon}),
\end{equation*}
with a uniform error term over $a\in \R$. The argument for $D_{{ia}}(T;N)$ is essentially identical as one starts from  \eqref{eq:tulo2} and the estimate $|\sigma_{-{ia}}(n;N)|\leq d(n)$ for the first two sums and for the last one, one gets a bound related to $\zeta_N(1+\varepsilon_T)\zeta(1+\varepsilon_T)$. Recalling \eqref{le:zetanbound}   we thus have uniformly in $z\in i\R$
\begin{equation*}
D_{{ia}}(T;N)=\frac{1}{2\pi i }\int_{1+\varepsilon_T-iT^{1/2}}^{1+\varepsilon_T+iT^{1/2}}\zeta(s)\zeta_N(s+{ia})T^s\frac{ds}{s}+\mathcal{O}(T^{1/2+\varepsilon}).
\end{equation*}
Actually, the estimate is uniform also in $N$, but we do not need this fact later on. 

We next move the integration to the critical line and consider first the case  $|a|<T^{1/2}-1.$ By applying residue calculus in the relevant domain and noting that the integrand has poles at $s=1$ and $s=1-{ia}$ we obtain
\begin{align}\label{eq:dint}
D_{{ia}}(T)&=\zeta(1+{ia})T+\zeta(1-{ia})T^{1-{ia}}\frac{1}{1-{ia}}+\mathcal{O}(T^{1/2+\varepsilon})\\
\notag  -\;
\frac{1}{2\pi i }&\bigg[\int_{1+\varepsilon_T+iT^{1/2}}^{\frac{1}{2}+iT^{1/2}}+\int_{\frac{1}{2}+iT^{1/2}}^{\frac{1}{2}-iT^{1/2}}+\int_{\frac{1}{2}-iT^{1/2}}^{1+\varepsilon_T-iT^{1/2}}\bigg] \zeta(s)\zeta(s+{ia})T^s\frac{ds}{s},
\end{align}
and the corresponding formula for the integral involving the truncated Euler product reads
\begin{align}\label{eq:dnint}
D_{z}(T;N)&=\zeta_N(1+{ia})T+\mathcal{O}(T^{1/2+\varepsilon})\\
\notag  -\; \frac{1}{2\pi i }\bigg[\int_{1+\varepsilon_T+iT^{1/2}}^{\frac{1}{2}+iT^{1/2}}&+\int_{\frac{1}{2}+iT^{1/2}}^{\frac{1}{2}-iT^{1/2}}+\int_{\frac{1}{2}-iT^{1/2}}^{1+\varepsilon_T-iT^{1/2}}\bigg] \zeta(s)\zeta_N(s+{ia})T^s\frac{ds}{s}.
\end{align}
Consider first the horizontal integrals in \eqref{eq:dint}. We see by  \eqref{eq:zcgrowth2} that these are bounded by 
\begin{align}\label{eq:error3}
&\int_{\frac{1}{2}}^{1+\varepsilon_T}\big(T^{\max(1-\sigma,\varepsilon)/6}(T^{\max(1-\sigma,\varepsilon)/6}+|a|^{\max(1-\sigma,\varepsilon)/3})\big)\frac{T^{\sigma}}{T^{1/2}}d\sigma=\mathcal{O}\left(( |a|^{1/6}+1)T^{1/2+\varepsilon}\right).
\end{align}
Next we note that according to \eqref{eq:zcgrowth}, the vertical integral in \eqref{eq:dint} is of order 
\begin{equation}\label{eq:error4}
\mathcal{O}\Big(T^{1/2}\int_{1}^{T^{1/2}}(t^{1/3}+t^{1/6} |a|^{1/6})\frac{dt}{t}\Big)=\mathcal{O}\big(T^{1/2} (T^{1/6}+ T^{1/12}|a|^{1/6})\big).
\end{equation}

By combining  \eqref{eq:dint} with  the estimates \eqref{eq:error3} and  \eqref{eq:error4} we finally have
\begin{equation*}\notag
\frac{1}{T}D_{{ia}}(T)=\zeta(1+{ia})+\zeta(1-{ia})\frac{T^{-{ia}}}{1-{ia}}+\mathcal{O}(T^{-1/3})+\mathcal{O}(T^{-5/12}|a|^{1/6}).
\end{equation*}
If  $|a|>T^{1/2}+1$ we argue as before with the only change that we do not obtain the second term at all, but in this case this difference is easily immersed in the stated error term in the actual proposition. Finally, in case $||a|-T^{1/2}|\leq 1$ we use instead the limits $1+\varepsilon_T\pm 2iT^{1/2}$ in the Perron formula and proceed as before.

Establishing the analogous estimate for $T^{-1}D_{{ia}}(T;N)$ is even simpler as we can  repeat our steps starting from \eqref{eq:dnint} and this time apply  \eqref{le:zetanbound} to bound $\zeta_N(s+{ia})$ uniformly by $C_N$. 
\end{proof}

Let us recall the functional equation of $\zeta (s)$ in the form
\begin{equation*}
\zeta(1-s)=\chi(1-s)\zeta(s),
\end{equation*}
where
\begin{equation*}
\chi(1-s):=\frac{2}{(2\pi)^s}\Gamma(s)\cos \frac{s\pi}{2}, \quad s\in\C.
\end{equation*}
The following two simple auxiliary results are found in the classical monograph  of Titchmarsh \cite[(7.4.2) and (7.4.3)]{Titchmarsh}:
\begin{lemma}\label{le:chiint}
If $n<T/2\pi$, 
\begin{equation*}
\frac{1}{2\pi i}\int_{\frac{1}{2}-iT}^{\frac{1}{2}+iT}\chi(1-s)n^{-s}ds=2+\mathcal{O}\left(\frac{1}{\sqrt{n}\log \frac{T}{2\pi n}}\right)+\mathcal{O}\left(\frac{\log T}{\sqrt{n}}\right)
\end{equation*}
and if $n> T/2\pi$ and $c>1/2$, then 
\begin{equation*}
\frac{1}{2\pi i }\int_{c-iT}^{c+iT}\chi(1-s)n^{-s}ds=\mathcal{O}\left(\frac{T^{c-1/2}}{n^c\log \frac{2\pi n}{T}}\right)+\mathcal{O}\left(\frac{T^{c-1/2}}{n^c}\right).
\end{equation*}
\end{lemma}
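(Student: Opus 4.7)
\textbf{Plan for the proof of Lemma \ref{le:chiint}.} The lemma is a saddle-point / stationary-phase estimate, and I would prove it by combining the explicit formula $\chi(1-s)=2(2\pi)^{-s}\Gamma(s)\cos(\pi s/2)$ with Stirling's asymptotic expansion. For $s=c+it$ with $c$ in a fixed bounded range and $|t|$ large, a direct computation using $\log\Gamma(s) = (s-1/2)\log s - s + (1/2)\log(2\pi) + O(1/|t|)$ and $\cos(\pi s/2) = (1/2)e^{\pi|t|/2}e^{-i\pi c \cdot\mathrm{sgn}(t)/2}(1+O(e^{-\pi |t|}))$ yields, after cancellation of the $e^{\pm\pi t/2}$ factors,
$$
\chi(1-c-it)\;=\;\left(\tfrac{|t|}{2\pi}\right)^{c-1/2}e^{-i\pi\,\mathrm{sgn}(t)/4}\,\exp\!\left(i\,\mathrm{sgn}(t)\,|t|\big[\log(|t|/2\pi)-1\big]\right)\bigl(1+O(1/|t|)\bigr).
$$
Substituting this and $n^{-s}=n^{-c}e^{-it\log n}$ into the integral and parametrizing $s=c+it$, the integrand for $t>0$ becomes $n^{-c}(t/2\pi)^{c-1/2}e^{-i\pi/4}e^{i\phi(t)}(1+O(1/t))$ with phase $\phi(t)=t[\log(t/2\pi n)-1]$. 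Since $\phi$ is real on the real axis and the integrand over $t<0$ is the complex conjugate of that over $t>0$, the whole integral equals $(n^{-c}/\pi)\,\Re\!\int_0^T(t/2\pi)^{c-1/2}e^{-i\pi/4}e^{i\phi(t)}\,dt$ plus a Stirling error.

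For the first estimate, set $c=1/2$ and note $\phi'(t)=\log(t/2\pi n)$, so the unique stationary point is $t_0=2\pi n$, which lies in $(0,T)$ exactly when $n<T/2\pi$; moreover $\phi(t_0)=-2\pi n$ and $\phi''(t_0)=1/(2\pi n)>0$. The stationary phase formula then gives
$$
\int_0^T e^{i\phi(t)}\,dt \;=\; \sqrt{\tfrac{2\pi}{\phi''(t_0)}}\,e^{i\phi(t_0)+i\pi/4}\;+\;(\text{error}) \;=\; 2\pi\sqrt{n}\,e^{i\pi/4}\;+\;(\text{error}),
$$
using $e^{-i 2\pi n}=1$ for $n\in\mathbb Z$. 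Multiplying by $n^{-1/2}e^{-i\pi/4}/\pi$ and taking real parts produces exactly the main term $2$. The error $O\bigl(1/(\sqrt n\log(T/2\pi n))\bigr)$ arises from truncating the Gaussian at the upper endpoint $t=T$: outside a neighbourhood of $t_0$, one integration by parts yields the factor $1/|\phi'(T)|=1/\log(T/2\pi n)$ together with $(T/2\pi n)^0=1$ at the endpoint. The error $O(\log T/\sqrt n)$ comes from integrating the $O(1/t)$ Stirling correction against the main integrand: since this correction gains a factor $1/t$, its contribution near the saddle point is $n^{-1/2}\!\int^T\! dt/t=O(n^{-1/2}\log T)$.

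For the second estimate ($n>T/2\pi$, $c>1/2$), the stationary point $t_0=2\pi n$ lies outside $(0,T)$, and throughout $(0,T)$ we have $|\phi'(t)|\ge\log(2\pi n/T)$. The integrand has modulus $n^{-c}(t/2\pi)^{c-1/2}$, and one integration by parts using $e^{i\phi}=(i\phi')^{-1}(d/dt)e^{i\phi}$ yields
$$
\Bigl|\int_0^T\!\!\Bigl(\tfrac{t}{2\pi}\Bigr)^{c-1/2}e^{i\phi(t)}dt\Bigr|\;\le\;\frac{(T/2\pi)^{c-1/2}}{|\phi'(T)|}\;+\;\int_0^T\!\frac{d}{dt}\!\Bigl(\tfrac{t}{2\pi}\Bigr)^{c-1/2}\cdot\frac{dt}{|\phi'(t)|}\;+\;(\text{boundary at }0).
$$
The first term gives the $O(T^{c-1/2}/(n^c\log(2\pi n/T)))$ contribution after multiplying by $n^{-c}$; the remaining terms, where $|\phi'(t)|$ may degenerate if $c$ is close to $1/2$ and $t\to 0$, are handled separately using the trivial bound $|\chi(1-s)n^{-s}|\lesssim (t/2\pi)^{c-1/2}n^{-c}$ on $[0,1]$, producing the second error $O(T^{c-1/2}/n^c)$.

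The main obstacle is getting the constant $2$ exactly right and correctly accounting for \emph{all} sources of error in the first estimate. In particular, the Stirling remainder $(1+O(1/|t|))$ contributes differently in the two regimes $|t-t_0|\lesssim\sqrt n$ (saddle point) and $|t-t_0|\gg\sqrt n$ (oscillatory tail), and both must be tracked along with the next-to-leading term from stationary phase (which is $O(1/\sqrt n)$ and is absorbed into the stated errors). Once these bookkeeping details are handled, the two claims follow directly.
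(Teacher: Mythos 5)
The paper does not prove Lemma \ref{le:chiint}; it quotes it verbatim from Titchmarsh \cite[(7.4.2), (7.4.3)]{Titchmarsh}, so there is no in-paper argument to compare against. Your saddle-point route is precisely the classical proof found there, and it checks out. Substituting the Stirling asymptotic
$\chi(1-c-it)=(t/2\pi)^{c-1/2}e^{-i\pi/4}e^{it(\log(t/2\pi)-1)}(1+O(1/t))$
for $t>0$, using $\chi(1-c+it)=\overline{\chi(1-c-it)}$ to fold the integral to $\tfrac{n^{-c}}{\pi}\Re\int_0^T(t/2\pi)^{c-1/2}e^{-i\pi/4}e^{i\phi(t)}\,dt$, and writing the phase as $\phi(t)=t(\log(t/2\pi n)-1)$ is correct. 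The stationary point $t_0=2\pi n$ lies in $(0,T)$ iff $n<T/2\pi$; with $c=1/2$ the leading stationary-phase contribution is $\sqrt{2\pi/\phi''(t_0)}\,e^{i\phi(t_0)+i\pi/4}=2\pi\sqrt{n}\,e^{i\pi/4}$ (using $e^{-2\pi i n}=1$), and after cancelling $e^{\pm i\pi/4}$ and multiplying by $n^{-1/2}/\pi$ the main term is exactly $2$. Your error accounting is also right: the endpoint at $t=T$ gives $O(1/(\sqrt n\,|\phi'(T)|))=O(1/(\sqrt n\log(T/2\pi n)))$, and the multiplicative $O(1/t)$ Stirling remainder contributes $n^{-1/2}\int^T dt/t=O(\log T/\sqrt n)$. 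The non-stationary case $n>T/2\pi$ with a single integration by parts, $|\phi'(t)|\ge\log(2\pi n/T)$ on $(0,T)$, plus a trivial bound near $t=0$ where Stirling is unavailable, produces the second pair of error terms. The bookkeeping you defer (the $t\in[0,1]$ region in the first case, the lower endpoint and the $f'/\phi'$ and $f\phi''/(\phi')^2$ terms in the second) is routine and is subsumed by the stated bounds. There is no gap; this is the proof.
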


As stated before, the following proposition is essentially due to Ingham and Bettin.
\begin{proposition}\label{pr:bettin} Assume that $a\in\R$ and  $T\geq 1.$ Then
\begin{align*}
\frac{1}{2iT}\int_{\frac{1}{2}-iT}^{\frac{1}{2}+iT}\chi(1-s)\zeta(s+ia)\zeta(s)ds&=\frac{2\pi}{T}D_{ia}(T/2\pi) \;+\; \mathcal{O}\Big(T^{-1/2+\varepsilon}\big( 1+ |a|^{1/6}\big)\Big)\\
&= \zeta(1+ia)+\zeta(1-ia)\Big(\frac{T}{2\pi}\Big)^{-ia}\frac{1}{1-ia}\\
& \quad  \;+\; \mathcal{O}\big(T^{-1/3}(1+|a|^{1/6})\big),
\end{align*}
where the right hand side must be understood as the limit $a\to 0$ in case $a=0$,
and 
\begin{align*}
\frac{1}{2iT}\int_{\frac{1}{2}-iT}^{\frac{1}{2}+iT}\chi(1-s)\zeta_N(s+ia)\zeta(s)ds&= \frac{2\pi}{T}D_{i(x-y)}(T/2\pi;N)\\
&\quad  +\mathcal{O}\Big(T^{-1/2+\varepsilon}\big( 1+ |a|^{1/6}\big)\Big))\Big)\\
  &=\zeta_N(1+ia) \;+\; \mathcal{O}\big(T^{-1/3}(1+|a|^{1/6})\big)
\end{align*}
\end{proposition}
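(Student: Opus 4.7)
The second equality in each claim follows immediately from Proposition \ref{prop:divsum}: since $\frac{2\pi}{T}D_{ia}(T/(2\pi)) = \frac{1}{T/(2\pi)}D_{ia}(T/(2\pi))$, applying that proposition with $T$ replaced by $T/(2\pi)$ (rounded appropriately so that the parity hypothesis on $T$ is met, the small correction being absorbed into the error) directly produces the asserted closed-form right-hand side, and the same argument delivers the truncated analogue. The substance of the proposition is thus the first equality in each case, relating the contour integral to the truncated divisor sum.

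The plan for the first equality is as follows. First, invoke the functional equation $\chi(1-s)\zeta(s) = \zeta(1-s)$ to rewrite the integrand as $F(s) = \zeta(1-s)\zeta(s+ia)$, and shift the contour from $\Re(s)=1/2$ to $\Re(s) = c := 1 + 1/\log T$. For $|a| \leq T$ this sweep encloses only the simple pole of $\zeta(s+ia)$ at $s = 1-ia$, contributing a residue $\zeta(ia) = \mathcal{O}(1+|a|^{1/6})$ which is absorbed by the final error once we divide by $T$. The two horizontal pieces at $\Im(s) = \pm T$ between $\Re(s) = 1/2$ and $\Re(s) = c$ are bounded using the convexity estimate \eqref{eq:zcgrowth2} combined with the Stirling-type asymptotic $|\chi(1-s)| \asymp (|t|/2\pi)^{\sigma-1/2}$, giving a total contribution of $\mathcal{O}(T^{1/2+\varepsilon}(1+|a|^{1/6}))$. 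In the truncated version the integrand is $\zeta(1-s)\zeta_N(s+ia)$ with $\zeta_N(s+ia)$ analytic for $\Re(s) > 0$, so no residue at all is picked up.

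On the new line $\Re(s) = c$, the Dirichlet series $\zeta(s)\zeta(s+ia) = \sum_n \sigma_{-ia}(n) n^{-s}$ (respectively $\zeta(s)\zeta_N(s+ia) = \sum_n \sigma_{-ia}(n;N)n^{-s}$) converges absolutely, so sum and integral may be swapped to write the shifted contour integral as $\sum_n \sigma_{-ia}(n) J_n$ with $J_n := \frac{1}{2\pi i}\int_{c-iT}^{c+iT}\chi(1-s)n^{-s}\,ds$. I then split according to the size of $n$: for $n > T/(2\pi)$, Lemma \ref{le:chiint}(ii) applies directly and gives $J_n = \mathcal{O}(T^{c-1/2}/(n^c\log(2\pi n/T))) + \mathcal{O}(T^{c-1/2}/n^c)$; for $n \leq T/(2\pi)$, I shift the contour of $J_n$ back to $\Re(s) = 1/2$ (no singularity is crossed since $\chi(1-s)n^{-s}$ is analytic in the strip), invoke Lemma \ref{le:chiint}(i) to extract the main term $J_n = 2 + \mathcal{O}(1/(\sqrt{n}\log(T/(2\pi n)))) + \mathcal{O}(\log T/\sqrt{n})$, and add the negligible horizontal shift errors. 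The leading contribution is then $\sum_{n \leq T/(2\pi)} 2\sigma_{-ia}(n) = 2 D_{ia}(T/(2\pi))$, and the aggregated error from both regimes is bounded via $|\sigma_{-ia}(n)|, |\sigma_{-ia}(n;N)| \leq d(n) = \mathcal{O}(n^\varepsilon)$, amounting to $\mathcal{O}(T^{1/2+\varepsilon})$. Finally, multiplying through by $\frac{1}{2iT} = \pi/(T \cdot 2\pi i)$ yields the asserted first equality. The main technical obstacle will be controlling the divisor-weighted harmonic sums $\sum_n d(n)/(\sqrt{n}\log(T/(2\pi n)))$ near the boundary $n \approx T/(2\pi)$, which one handles exactly as in the proof of Proposition \ref{prop:divsum} by keeping $T/(2\pi)$ bounded away from integers so the logarithmic denominator never vanishes.
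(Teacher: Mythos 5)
Your argument is correct in substance and uses the same key ingredients (Lemma \ref{le:chiint}, the convexity estimate, the divisor bound, and the parity adjustment of $T$), but organizes them differently. The paper first splits $\zeta(s)\zeta(s+ia)$ on the critical line into the partial sum $\sum_{n\leq T/2\pi}\sigma_{-ia}(n)n^{-s}$ plus a remainder, evaluates the partial-sum integral directly via Lemma \ref{le:chiint}(i) (no contour shift needed for those $n$), and only moves the remainder integral to $\Re s = c$. You instead move the entire contour to $\Re s = c$ first, expand the now absolutely-convergent Dirichlet series, and then for each $n\leq T/2\pi$ shift the individual integral $J_n$ back to the critical line. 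This is logically sound -- $\chi(1-s)n^{-s}$ is holomorphic in the strip so the per-$n$ shifts introduce only horizontal-segment errors that sum to $\mathcal{O}(T^{1/2+\varepsilon})$ since $\log(T/n)\geq\log(2\pi)$ for $n\leq T/2\pi$ -- but it requires an extra round of error estimation that the paper's ordering avoids. One small slip: the residue of $\zeta(1-s)\zeta(s+ia)$ at $s=1-ia$ is $\zeta(ia)=\chi(ia)\zeta(1-ia)=\mathcal{O}(|a|^{1/2+\varepsilon})$ by Stirling, not $\mathcal{O}(1+|a|^{1/6})$ as you claim; fortunately, after dividing by $T$ and using $|a|\leq T$, this is still $\mathcal{O}(T^{-1/2+\varepsilon})$ and so is absorbed in the stated error, so the conclusion stands. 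You should also flag, as the paper does, that when $||a|-T|\leq 1$ one must perturb the vertical endpoints slightly so the pole does not sit on the contour; this is the same kind of harmless adjustment you already invoke to ensure $T/2\pi$ is half an odd integer.
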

\begin{proof}
We may clearly assume that $a\not=0$. Moreover, we may assume   that $|T-|a||\geq 1$,  and  $1/2+T/2\pi\in\Z$ by slightly modifying the value of $T$ if needed. The adjustment  of $T$ by a uniformly bounded amount is justified by the estimate \eqref{eq:stirling} below and the growth bounds \eqref{eq:zcgrowth} and \eqref{eq:zcgrowth2}. Let us write 
\begin{equation*}\notag
I^{(1)}(a):=\; \frac{1}{2iT}\int_{\frac{1}{2}-iT}^{\frac{1}{2}+iT}\chi(1-s)\zeta(s+ia)\zeta(s)ds=I_1^{(1)}(a)+I_2^{(1)}(a),
\end{equation*}
where
\begin{equation*}\notag
I_1^{(1)}(a)\; := \; \frac{1}{2iT}\int_{\frac{1}{2}-iT}^{\frac{1}{2}+iT}\chi(1-s)\sum_{1\leq n\leq T/2\pi}\frac{\sigma_{-ia}(n)}{n^{s}}ds
\end{equation*}
and $I_2^{(1)}(a)=I^{(1)}(a)-I_1^{(1)}(a)$. Similarly we write 
\begin{equation*}\notag
I^{(2)}(a)\; :=\; \frac{1}{2iT}\int_{\frac{1}{2}-iT}^{\frac{1}{2}+iT}\chi(1-s)\zeta_N(s+ia)\zeta(s)ds
\; =\; I_1^{(2)}(a)+I_2^{(2)}(a)
\end{equation*}
where
\begin{equation*}\notag
I_1^{(2)}(a)\; :=\; \frac{1}{2iT}\int_{\frac{1}{2}-iT}^{\frac{1}{2}+iT}\chi(1-s)\sum_{1\leq n\leq T/2\pi}\frac{\sigma_{-ia}(n;N)}{n^{s}}ds
\end{equation*}
and $I_2^{(2)}(a)=I^{(2)}(a)-I_1^{(2)}(a)$. Using Lemma \ref{le:chiint}, and \eqref{le:dbound}
we obtain
\begin{align}\label{eq:error10}
I_1^{(1)}(a)=\;&\frac{2\pi}{T}\sum_{1\leq n\leq T/2\pi}\sigma_{-ia}(n)
 +\frac{1}{T}\sum_{1\leq n\leq T/2\pi}\mathcal{O}\left(\frac{n^\varepsilon}{\sqrt{n}\log \frac{T}{2\pi n}}+\frac{n^\varepsilon\log T }{\sqrt{n}}\right)\\
\notag =\;&\frac{2\pi}{T}D_{ia}(T/2\pi)+\frac{1}{T}\mathcal{O}\left(\sum_{1\leq n\leq T/4\pi}n^{\varepsilon-\frac{1}{2}}\right)\\
\notag&\quad +\frac{1}{T}\mathcal{O}\left(\sum_{T/4\pi\leq n\leq T/2\pi}n^{\varepsilon-\frac{1}{2}}\frac{T}{\frac{T}{2\pi}-n}\right)+\frac{\log T}{T}\mathcal{O}\left(\sum_{1\leq n\leq T/2\pi}n^{\varepsilon-\frac{1}{2}}\right)\\
\notag =\;&\frac{2\pi}{T}D_{ia}(T/2\pi)+\mathcal{O}(T^{\varepsilon-1/2}),
\end{align}
where the error is uniform in $a\in\R$ and we used the fact that $T/2\pi$ is half an odd integer. With an identical argument (using $\sigma_{-ia}(n;N)=\mathcal{O}(n^\varepsilon)$ uniformly in $a$) we see that 
\begin{equation*}
I_1^{(2)}(a)=\frac{2\pi}{T}D_{ia}\left(\frac{T}{2\pi};N\right)\; +\;\mathcal{O}(T^{\varepsilon-1/2}).
\end{equation*}

In turn,  in case $|a|\leq T-1$ we obtain for the $I^{(1)}_2$-term by residue calculus  (for arbitrary $c>1$)
\begin{align}\label{eq:i21}
I_2^{(1)}(a)&=\frac{1}{2i T}\left[\int_{\frac{1}{2}-iT}^{c-iT}+\int_{c+iT}^{\frac{1}{2}+iT}\right]\chi(1-s)\Bigg(\zeta(s+ia)\zeta(s)-\sum_{1\leq n\leq T/2\pi}\frac{\sigma_{-ia}(n)}{n^s}\Bigg)ds\\
\notag &\qquad +\frac{1}{2iT}\int_{c-iT}^{c+iT}\chi(1-s)\sum_{n>T/2\pi}\frac{\sigma_{-ia}(n)}{n^s}ds-\frac{\pi}{T}\zeta(1-ia)\chi(ia).
\end{align}
The integrand in the second term on the right hand side comes simply from the fact that on this integration contour, we can write $\zeta(s)\zeta(s+ia)=\sum_{n=1}^\infty\sigma_{-ia}(n)n^{-s}$. The last term comes from the fact that inside the domain bounded by the contours, $\chi(1-s)\zeta(s)\zeta(s+ia)$ has only a single pole at $s=1-ia$. The last term  appears only in the case  $|a|\leq T-1$ and then it is of the order $T^{-1}|a|^{1/2+\varepsilon}$ i.e. of order $T^{-1/2+\varepsilon}$. Here we used the growth estimate  $\chi(ia) = \mathcal{O}(|a|^{1/2})$ which follows  from the more general estimate
\begin{equation}\label{eq:stirling}
\chi(1-(\sigma\pm iT))=\mathcal{O}(T^{\sigma-\frac{1}{2}}).
\end{equation}
This works uniformly with respect to $\sigma\in [1/2,1]$ and $|T|\geq 1,$ and is an easy consequence of Stirling's formula. 

We then consider the horizontal integrals. 
Using \eqref{eq:zcgrowth2}, and the fact that we are safely avoiding the pole of the second factor, we have on the other hand that on these contours 
\begin{align*}
\zeta(\sigma\pm iT)\zeta(\sigma\pm iT+ia)=\mathcal{O}\Big((T^{\max(1-\sigma,\varepsilon)/3})(T^{\max(1-\sigma,\varepsilon)/3} +|a|^{\max(1-\sigma,\varepsilon)/3})\Big)
\end{align*}
 and recalling that $\sigma_{-ia}(n)=\mathcal{O}(n^\varepsilon)$, we get 
\begin{align*}
\sum_{1\leq n\leq T/2\pi}\sigma_{-ia}(n)n^{-\sigma\mp iT}
= \mathcal{O}(T^{1+\varepsilon-\sigma})
\end{align*}
Putting everything together, with the choice $c=1+2\varepsilon$ the total contribution from the horizontal integrals and the residue term in \eqref{eq:i21} is of the order 
$$
T^{-1/2+\varepsilon}\big( 1+ |a|^{1/6}\big).
$$

For the vertical integral, by the choice of $c$  we are able pull the sum out of the integral due to the bound $\sigma_{-ia}(n)=\mathcal{O}(n^\varepsilon)$ and we can then make use of Lemma \ref{le:chiint}  and our assumption that $T/2\pi$ is half an odd integer to find that the contribution from the term with the vertical integral is 
less than
\begin{align*}
\frac{1}{T}&\sum_{n>T/2\pi}n^\varepsilon \left(\frac{T^{2\varepsilon+1/2}}{n^{1+2\varepsilon}}+\frac{T^{2\varepsilon+1/2}}{n^{1+2\varepsilon}\log \frac{2\pi n}{T}}\right)\\
\notag &=\mathcal{O}(T^{-1/2+2\varepsilon})+\mathcal{O}\left(T^{-1/2+\varepsilon}\sum_{\frac{T}{2\pi}<n\leq \frac{T}{\pi}}\frac{1}{n-\frac{T}{2\pi}}\right)+\mathcal{O}\left(T^{2\varepsilon-1/2}\sum_{n>\frac{T}{\pi}}n^{-1-\varepsilon}\right)\\
\notag &=\mathcal{O}(T^{-1/2+2\varepsilon}),
\end{align*}
uniformly in $a\in\R.$

Put together, we have established  that
\begin{equation*}
I_2^{(1)}(a)=\mathcal{O}\Big(T^{-1/2+\varepsilon}\big( 1+ |a|^{1/6}\big)\Big).
\end{equation*}
By combining  the above estimate with \eqref{eq:error10} and  recalling Proposition \ref{prop:divsum} we see that 
\begin{align*}
I^{(1)}(a) &\; = \; \frac{2\pi}{T}D_{ia}(T/2\pi) \;+\; \mathcal{O}\Big(T^{-1/2+\varepsilon}\big( 1+ |a|^{1/6}\big)\Big).\\
&\; = \;  \zeta(1+ia)+\zeta(1-ia)\Big(\frac{T}{2\pi}\Big)^{-ia}\frac{1}{1-ia} \;+\; \mathcal{O}\big(T^{-1/3}(1+|a|^{1/6})\big),
\end{align*}
as was to be shown.

For $I_2^{(2)}(a)$ the argument is very similar. The main differences are that there is no pole for the integrand since $\zeta_N(s+ia)$ is analytic in the relevant domain, and   we also get better bounds on the different contours since $\zeta_N(s+ia)$ is uniformly bounded in the relevant domain. We use the bound $\sigma_{-ia}(n;N)=\mathcal{O}(n^\varepsilon)$ as for the $I_2^{(1)}(a)$ term, and a repetition of the whole argument produces analogous estimates. Finally, by again combining with Proposition \ref{prop:divsum}  we obtain the claim for $I_2^{(2)}(a)$.
\end{proof}

We note that   
$\zeta$ has the Laurent expansion
\begin{equation}\label{eq:laurent}
\zeta (1+s)=s^{-1}+\gamma_0+\mathcal{O}(s)
\end{equation}
for small $s$, where $\gamma_0$ is the Euler-Mascheroni constant, see \cite[equation (2.1.16)]{Titchmarsh}. Hence, by letting $a\to 0$ in the first part of the previous proposition and using the fact that $\zeta(1/2+it)= \overline{\zeta(1/2-it)}$
 we deduce for $T\geq 2$ the classical mean square result for the zeta-function in the form
\begin{equation*}
\frac{1}{T}\int_0^T|\zeta(1/2+it)|^2dt\; = \; \log (T/2\pi)  +(2\gamma_0-1)+\mathcal{O}(T^{-1/3}),
\end{equation*}
where the error estimate could of course be improved, but this is not needed for our purposes.

We are finally  ready to prove Theorem \ref{th:twopoint}.

\begin{proof}[Proof of Theorem \ref{th:twopoint}]
$V_T^{(1)}$ and $V_T^{(2)}$ are handled in an almost identical way so let us discuss them first. We divide the considerations into two cases.

\smallskip

\noindent \emph{\underline{Case 1: $\max(x,y)> T$}.\, \,}
This case can be disposed of by rough size estimates. Namely, immediately from the bound \eqref{eq:zcgrowth} we see that 
\begin{equation}\label{eq:virhe11}
|V_T^{(1)}(x,y)|\lesssim (T+|x|+|y|)^{2/6}\lesssim (|x|+|y|)^{1/3}\lesssim T^{-1/12}(|x|+|y|)^{5/12}.
\end{equation}
Let us then assume that $|x-y|\leq 1$. Then an application of \eqref{eq:laurent} and the $\log(T/2\pi)$-Lipschitz property of the map $u\mapsto (T/2\pi)^{iu}$ yield that
\begin{align*}
\zeta(1+i(x-y))+\frac{\zeta(1-i(x-y))}{1-i(x-y)}\left(\frac{T}{2\pi}\right)^{i(x-y)}=&(i(x-y))^{-1}\Big(1-\left(\frac{T}{2\pi}\right)^{i(x-y)} \Big)+\mathcal{O}(1) \\
=&\mathcal{O}(1) +\mathcal{O}(\log (T))\\
=& \mathcal{O}(T^{-1/12}(|x|+|y|)^{5/12}).
\end{align*}
On the other hand, if $|x-y| > 1$, the same quantity can obviously be estimated by $\mathcal{O}(\log (|x|+|y|))=  \mathcal{O}(T^{-1/12}(|x|+|y|)^{5/12})$ according to the assumption $\max(x,y)>T$.  Both these bounds and the one in \eqref{eq:virhe11} can hence be subsumed into the desired error estimate of $V_T^{(1)}$ on the right hand side of Theorem \ref{th:twopoint}. Even easier bounds show that the same is  true in case of $V_T^{(2)}$.

\smallskip

\noindent \emph{\underline{Case 2: $\max(x,y)\leq T$}.\quad}
In this case we first  extend  the range on integration  in the definition of $V_T^{(1)}$ to $[-T,T]$, and for that end we write
\begin{eqnarray*}
&&\frac{1}{T}\int_{-T}^0\zeta\left(\frac{1}{2}+it+ix\right)\zeta\left(\frac{1}{2}-it-iy\right)dt\notag\\
\notag &=&\frac{1}{T}\int_{0}^T\zeta\left(\frac{1}{2}-it+ix\right)\zeta\left(\frac{1}{2}+it-iy\right)dt\\
\notag &=&\frac{1}{T}\int_{-(x+y)}^{T-(x+y)}\zeta\left(\frac{1}{2}-it-iy\right)\zeta\left(\frac{1}{2}+it+ix\right)dt\\
\notag &=&\frac{1}{T}\int_{0}^{T}\zeta\left(\frac{1}{2}-it-iy\right)\zeta\left(\frac{1}{2}+it+ix\right)dt+ \; E (x,y,T).
\end{eqnarray*} 
By crudely bounding the integrals over the segments of length $|x+y|$ and employing again  \eqref{eq:zcgrowth} we obtain an error estimate of the order
\begin{align*}
E(x,y,T)\; =\; &\mathcal{O}\Big(T^{-1} (|x|+|y|)(T^{1/6}+|x|^{1/6}+|y|^{1/6})^2\Big)
=\mathcal{O}\Big( T^{-1}(|x|+|y|)T^{1/3}\Big)\\
\notag =\; &\mathcal{O}\Big( T^{-1/12}\big(1+|x|^{5/12}+|y|^{5/12})\Big)\\
=:&\;\mathcal{O}_1(x,y,T),
\end{align*}
where we made essential use of the condition  $\max(x,y)\leq T$. 
By making another shift in the integration range and using the functional equation we obtain analogously
\begin{eqnarray}
\label{eq:eka1}V_T^{(1)}(x,y)
&=&\frac{1}{2iT}\int_{\frac{1}{2}-iT}^{\frac{1}{2}+iT}\zeta(s+ix)\zeta(1-(s+iy))ds\; +\; \mathcal{O}_1 (x,y,T)\\
\notag &=&\frac{1}{2iT}\int_{\frac{1}{2}-iT}^{\frac{1}{2}+iT}\zeta(s+i(x-y))\zeta(1-s)ds\; +\; \mathcal{O}_1 (x,y,T)\\
\notag &=&\frac{1}{2iT}\int_{\frac{1}{2}-iT}^{\frac{1}{2}+iT}\chi(1-s)\zeta(s+i(x-y))\zeta(s)ds\; +\; \mathcal{O}_1 (x,y,T).
\end{eqnarray}
In a similar vein,
\begin{align}\label{eq:eka2}
V_{T}^{(2)}(x,y)
=\frac{1}{Ti}\int_{\frac{1}{2}-iT}^{\frac{1}{2}+iT}\chi(1-s)\zeta_N(s+i(x-y))\zeta(s)ds\; +\; E_2(x,y,T)
\end{align}
The first two claims of the proposition now follow  in view of \eqref{eq:eka1}, \eqref{eq:eka2}
and Proposition \ref{pr:bettin}.

We still need to study the $V_{T,N}^{(3)}(x,y)$ term. Fix $N$, and employ the absolute  convergence (uniform over $t,x,y\in\R$) of the relevant sums to pick $n_0$ such that 
\begin{align*}
\sup_{x,y,t\in\R}&\bigg|\zeta_N\left(\frac{1}{2}+it+ix\right)\zeta_N\left(\frac{1}{2}-it-iy\right)-\prod_{k=1}^N \sum_{j_k,l_k=0}^{n_0} p_k^{-\frac{1}{2}(j_k+l_k)-i(xj_k-yl_k)-it(j_k-l_k)}\bigg|<\frac{\varepsilon}{4}
\end{align*}
and
\begin{equation*}
\sup_{x,y\in\R }\bigg|\prod_{k=1}^N \sum_{j_k=0}^{\infty} p_k^{-(1+i(x-y))j_k}
-\prod_{k=1}^N \sum_{j_k=0}^{n_0} p_k^{-(1+i(x-y))j_k}\bigg|<\frac{\varepsilon}{2}.
\end{equation*}
Next select $T_0=T_0(n,N,\varepsilon)$ so that for $T\geq T_0$
\begin{align*}
\sup_{x,y\in[0,1]}&\bigg|\frac{1}{T}\int_0^T\zeta_N\left(\frac{1}{2}+it+ix\right)\zeta_N\left(\frac{1}{2}-it-iy\right)dt
 \; -\prod_{k=1}^N \sum_{j_k=0}^{n_0} p_k^{-(1+i(x-y))j_k}\bigg|< &\frac{\varepsilon}{2}.
\end{align*}
This is possible because of \eqref{eq:bohr2}. Put together, as $\varepsilon$ was arbitrary, the inequalities clearly imply that
\begin{equation*}\notag
\lim_{T\to\infty}\frac{1}{T}\int_0^T\zeta_N\left(\frac{1}{2}+it+ix\right)\zeta_N\left(\frac{1}{2}-it-iy\right)dt=\zeta_N(1+i(x-y)),
\end{equation*}
uniformly over $x,y\in\R$, which was precisely the claim.
\end{proof}

\section{Proof of the Gaussian approximation result}\label{app:appendixb}

In this appendix we provide a self contained proof of  Proposition \ref{le2} for the reader' convenience. We commence with by considering some general facts about coupling of random variables and then we apply them to Gaussian approximation. Basically all the ingredients here are well-known, and we have no need to strive for optimal bounds.

We again will make use of the notion of the Wasserstein distance, though now the appropriate notion is the  1-Wasserstein distance. Assume that $\mu$ and $\nu$ are Borel probability distributions on a separable metric space $(X,d)$. Then
$$
\wass_1(\mu,\nu)_X\; :=\; \inf_{(U,V)} \E  d(U,V),
$$
where  $U,V$ are random variables on a common probability space taking values in $X$ so that $U\sim \mu$ and $V\sim \nu$. We start with a simple observation.

\begin{lemma}\label{le11}
In the above situation one has 
$$
\wass_1(\mu,\nu)_X\leq \inf_{R>0,\; x_0\in X} \Big(4R|\mu-\nu|(B(x_0;R))+32\int_{R/2}^\infty |\mu-\nu|(B(x_0,r)^c)dr.
\Big)$$
\end{lemma}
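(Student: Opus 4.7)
The plan is to construct an explicit coupling $(U,V)$ of $\mu$ and $\nu$ via the Jordan decomposition of the signed measure $\sigma := \mu - \nu$, then bound the transport cost using the triangle inequality around the chosen point $x_0$ and a layer-cake formula.

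First I would write $\sigma = \sigma^+ - \sigma^-$, set $m := \sigma^+(X) = \sigma^-(X) = \tfrac12 |\mu-\nu|(X) \leq 1$, and define the coupling as follows: with probability $1-m$ draw $U=V$ from the common part $\mu \wedge \nu$ (normalized), and with probability $m$ draw $U \sim \sigma^+/m$ and $V\sim\sigma^-/m$ independently. A direct check shows the marginals are $\mu$ and $\nu$. The triangle inequality $d(U,V)\leq d(U,x_0)+d(V,x_0)$ and Fubini then yield
\begin{equation*}
\wass_1(\mu,\nu)_X \;\leq\; \E d(U,V)\;\leq\; \int_X d(u,x_0)\,d\sigma^+(u) + \int_X d(v,x_0)\,d\sigma^-(v)
\;=\; \int_X d(u,x_0)\, d|\mu-\nu|(u).
\end{equation*}

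The second step is to apply the layer-cake formula to the final integral. Writing $\phi(r) := |\mu-\nu|(B(x_0,r)^c)$ (a nonincreasing function of $r$), one gets
\begin{equation*}
\int_X d(u,x_0)\, d|\mu-\nu|(u) \;=\; \int_0^\infty \phi(r)\,dr.
\end{equation*}
Split the integral at $R$. For $r \in [0,R]$ estimate $\phi(r) \leq |\mu-\nu|(X) = |\mu-\nu|(B(x_0,R)) + \phi(R)$, giving
\begin{equation*}
\int_0^R \phi(r)\,dr \;\leq\; R\, |\mu-\nu|(B(x_0,R)) + R\, \phi(R).
\end{equation*}
The monotonicity of $\phi$ now absorbs the boundary term: since $\phi(r)\geq \phi(R)$ for $r\in[R/2,R]$, one has $R\phi(R) \leq 2\int_{R/2}^R \phi(r)\, dr \leq 2\int_{R/2}^\infty \phi(r)\,dr$. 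Combining with the trivial bound $\int_R^\infty \phi(r)\,dr \leq \int_{R/2}^\infty \phi(r)\,dr$, the total becomes
\begin{equation*}
\int_0^\infty \phi(r)\,dr \;\leq\; R\,|\mu-\nu|(B(x_0,R)) \;+\; 3\int_{R/2}^\infty |\mu-\nu|(B(x_0,r)^c)\,dr,
\end{equation*}
which is even sharper than the claimed inequality and finishes the argument after infimizing over $R>0$ and $x_0\in X$.

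There is no serious obstacle here; the only minor nuisances are verifying the coupling is well-defined on a separable metric space (standard, since Borel probability measures on separable metric spaces admit regular conditional distributions) and handling the distinction between open and closed balls in the layer-cake step, which only costs $|\mu-\nu|$ mass on a set of zero Lebesgue measure in $r$ and hence is irrelevant. The constants $4$ and $32$ in the statement are presumably not optimized; the argument above yields the cleaner constants $1$ and $3$.
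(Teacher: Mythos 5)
Your proof is correct, and it shares the core ingredient with the paper's argument: both use the Jordan decomposition $\mu-\nu = \sigma^+ - \sigma^-$ and the coupling $\beta_\Delta + m^{-1}\,\sigma^+\times\sigma^-$, where $\beta = \mu\wedge\nu$ sits on the diagonal (your probabilistic description and the paper's measure-theoretic one are the same coupling). Where you diverge is in how the transport cost is bounded. The paper keeps the two-dimensional integral $\int d(x,y)\,d(\sigma^+\times\sigma^-)$, enlarges $\sigma^\pm$ to $|\mu-\nu|$, and splits $X\times X$ into $B(x_0,R)\times B(x_0,R)$ and its complement, picking up the factors $4R$ and $32$ along the way. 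You instead apply the triangle inequality $d(x,y)\le d(x,x_0)+d(y,x_0)$ \emph{before} doing any estimation, which collapses everything to the one-dimensional integral $\int_X d(u,x_0)\,d|\mu-\nu|(u)$, and then a standard layer-cake computation together with the monotonicity trick $R\phi(R)\le 2\int_{R/2}^R\phi$ produces the bound with constants $1$ and $3$. This is a genuine (if modest) improvement in both cleanliness and sharpness; the paper evidently did not optimize constants here since they play no quantitative role later. One small remark: you need not invoke regular conditional distributions — the coupling is written down directly as a Borel measure on $X\times X$ with the correct marginals, so nothing beyond separability (to ensure $d$ is measurable on $X\times X$) is used. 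Also, the degenerate case $m=0$ (i.e.\ $\mu=\nu$) is trivial and should just be noted aside.
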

\begin{proof}
Observe that 
$$
\beta:=\mu-(\mu-\nu)_+=\nu-(\nu-\mu)_+\geq 0.
$$
and define the measure $\beta_\Delta$ on $X\times X$ by $\beta_\Delta (A)=\beta (\{ x\; :\; (x,x)\in A \})$ and note that the measure 
$$
\beta_\Delta +\frac{2}{\|\mu-\nu\|_{TV}}(\mu-\nu)_+\times (\nu-\mu)_+
$$
has the right marginals since $\mu$ and $\nu$ are probability measures so  $\|(\mu-\nu)_+\|_{TV} =\|(\nu-\mu)_+\|_{TV}=(1/2) \|\mu-\nu\|_{TV}$, and both of the marginals of $\beta_\Delta$ are simply $\beta$. As $\beta_{\Delta}$ lives on the diagonal, it follows that
\begin{align}\label{40}
\wass_1(\mu,\nu)_X&\leq \frac{2}{\|\mu-\nu\|_{TV}}\int_{X\times X}d(x,y)\; (\mu-\nu)_+\times (\nu-\mu)_+ (dx\times dy)\\
&\leq \frac{2}{\|\mu-\nu\|_{TV}}\int_{X\times X}d(x,y)\; |\mu-\nu|\times |\nu-\mu| (dx\times dy).\nonumber
\end{align}
Fix now some $x_0\in X$ and $R>0$ and split the integral into ones over $B(x_0,R)\times B(x_0,R)$ and its complement. Thus
\begin{align}\label{41}
\frac{2}{\|\mu-\nu\|_{TV}}&\int_{B(x_0,R)\times B(x_0,R)}d(x,y)\; |\mu-\nu|\times |\nu-\mu| (dx\times dy)\\
&\leq {2\cdot 2R}\big({\|\mu-\nu\|_{TV}}\big)^{-1} |\mu-\nu|\times |\nu-\mu| (B(x_0,R)\times B(x_0,R))\nonumber\\
\notag &\leq 4R  |\nu-\mu| (B(x_0,R))
\end{align}
By symmetry, the integral over the rest has the upper bound
\begin{eqnarray}\label{42}
&&\frac{4}{\|\mu-\nu\|_{TV}}\int_{d(x,x_0)\geq d(y,x_0)\vee R}d(x,y)\; |\mu-\nu|\times |\nu-\mu| (dx\times dy)\\
&\leq&\frac{8}{\|\mu-\nu\|_{TV}}\int_{d(x,x_0)\geq R}d(x,x_0)\; |\mu-\nu|\times |\nu-\mu| (dx\times dy)\nonumber\\ 
&\leq&8\int_{d(x,x_0)\geq  R}d(x,x_0)\; |\mu-\nu| (dx)\nonumber\\ 
&\leq&8\sum_{k=1}^\infty2kR\; \Big(|\mu-\nu|(B(x_0,kR)^c)-|\mu-\nu|(B(x_0,(k+1)R)^c)\Big)\nonumber\\ 
&\leq&16R\sum_{k=1}^\infty\; |\mu-\nu|(B(x_0,kR)^c)\leq 32\int_{R/2}^\infty  |\mu-\nu|(B(x_0,r)^c)dr\nonumber
\end{eqnarray}
The claim follows by combining the estimates \eqref{40}--\eqref{42}.
\end{proof}

We denote by $\widehat\mu$ the Fourier transform of the measure $\mu$ on $\R^d$ (i.e. up to a scaling by $-2\pi$, the characteristic function of a random variable with distribution $\mu$).
\begin{corollary}\label{co12} Assume that $\mu$ and $\nu$ are absolutely continuous measures on $\R^d$. Then
$$
\wass_1(\mu,\nu)_X\leq \inf_{R\geq 1} C_d\Big(R^{d+1}\|\widehat\mu-\widehat\nu\|_{L^1(\R^d)}+\int_{R/2}^\infty (\mu+\nu)(B(0,r)^c)dr.
\Big)$$
\end{corollary}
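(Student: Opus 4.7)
The plan is to apply Lemma \ref{le11} with the specific choice $x_0 = 0$ and then bound the two resulting quantities separately: the local mass $|\mu-\nu|(B(0,R))$ via Fourier inversion, and the tail integral by the trivial pointwise estimate $|\mu-\nu| \leq \mu + \nu$ on any Borel set.

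For the local term, I would argue as follows. We may assume $\|\widehat\mu - \widehat\nu\|_{L^1(\R^d)}<\infty$ since otherwise the claimed bound is vacuous. Write $\rho := \mu - \nu$, which is a signed measure with an $L^1$ density $f$ because both $\mu$ and $\nu$ are absolutely continuous. Since $\widehat\rho = \widehat\mu - \widehat\nu \in L^1$, Fourier inversion identifies $f$ almost everywhere with the continuous function obtained from $\widehat\rho$ by inverse Fourier transform, and in particular
$$
\| f\|_{L^\infty(\R^d)} \;\leq\; \|\widehat\mu - \widehat\nu\|_{L^1(\R^d)}.
$$
Consequently
$$
|\mu-\nu|(B(0,R)) \;=\; \int_{B(0,R)} |f(x)|\, dx \;\leq\; |B(0,R)|\,\| f\|_{L^\infty(\R^d)} \;\leq\; c_d R^{d}\,\|\widehat\mu - \widehat\nu\|_{L^1(\R^d)},
$$
so the contribution of the first summand in Lemma \ref{le11} is bounded by $4c_d R^{d+1}\|\widehat\mu - \widehat\nu\|_{L^1(\R^d)}$.

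For the tail term, the key observation is that since $\mu$ and $\nu$ are non-negative measures, the total variation satisfies $|\mu-\nu|(A) \leq (\mu+\nu)(A)$ for every Borel set $A$. Therefore
$$
32\int_{R/2}^\infty |\mu-\nu|(B(0,r)^c)\,dr \;\leq\; 32\int_{R/2}^\infty (\mu+\nu)(B(0,r)^c)\,dr.
$$
Combining this with the bound from the previous paragraph, choosing $C_d$ large enough (dependent only on the dimension) to absorb the factor $4c_d$ and the constant $32$, and taking the infimum over $R \geq 1$ yields the claim. There is no real obstacle here; the assumption $R\geq 1$ is immaterial except to allow the constants in the two summands to be consolidated into a single $C_d$.
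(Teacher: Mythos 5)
Your proof is correct and follows essentially the same route as the paper: apply Lemma \ref{le11} with $x_0=0$, bound the local mass term by $C_dR^d\|\widehat\mu-\widehat\nu\|_{L^1}$ via Fourier inversion and the $L^\infty$--$L^1$ duality, and replace $|\mu-\nu|$ by $\mu+\nu$ in the tail integral. You have merely spelled out the Fourier-inversion step and the trivial tail estimate, which the paper leaves implicit.
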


\begin{proof}
Let $f$ (resp. $g$) stand for  the density of $\mu$ (resp. $\nu$). The desired statement  follows from the previous lemma as soon as we observe that 
\begin{eqnarray*}
&&\int_{B(0,R)}|f(x)-g(x)|dx\leq C_dR^d\|f-g\|_{L^\infty(\R^d)}\leq C''_dR^d\|\widehat f-\widehat g\|_{L^1(\R^d)}.
\end{eqnarray*}
\end{proof}

We are  ready for

\begin{proof}[Proof of Proposition \ref{le2}]
All the unspecified constants  (and  the $\mathcal{O}(\cdot)$ terms) in the proof are universal in the sense that they may depend only on the the quantities  $d,b_0, b_1,b_2,b_3$. We let $C_j={\rm Cov}(H_j)$ stand for the covariance matrix of the variable $H_j$. 
Denote $\ell_n:= (\sum_{j=1}^n c_j)^{1/2}$  and observe that 
\begin{equation}\label{eq:000}
b_0^{-1/2}n^{1/2}\leq\ell_n\leq b_0^{1/2}n^{1/2}.
\end{equation}
Moreover, set
$$
\displaystyle W:=\ell_n^{-1}\sum_{j=1}^n H_j,
$$
so that ${\rm Tr}({\rm Cov}(W))=d$. By considering instead the random variables $RH_j$ where $R:\R^d\to\R^d$ is a rotation matrix chosen so that $R{\rm Cov}(W)R^{\rm T}$ is diagonal, we may assume that $A:={\rm Cov}(W)$ is diagonal:
$$
A={\rm Cov}(W)=\begin{bmatrix}
\lambda_1&0&0& \ldots &0\\
0&\lambda_2&0&\ldots &0\\
\vdots&&&&\vdots\\
0&\ldots&&0&\lambda_d
\end{bmatrix}
$$

\noindent where $\lambda_1\geq\lambda_2\geq\ldots \geq \lambda_d\geq 0$ and $
\sum_{k=1}^d\lambda_j=d$.

We start by proving an estimate of  type \eqref{e30} by first assuming that the smallest eigenvalue of $A$ satisfies the lower bound $\lambda_d\geq n^{-2\delta} $, where the constant $\delta\in [0,1/6)$ will be chosen later on. 
Towards this goal, we note that the exponential moment bound \eqref{eq:exp} for $H_k$:s implies that   $\|D^m\varphi_{H_j}\|_{L^\infty (\R^d)}\leq C$ for $m=1,2,3$ and all $j=1,\ldots ,n$, where $\varphi_{H_j}$ stands for the  characteristic function of the variable $H_j$.  Also, we have $D^2\varphi_{H_j}(0)= -{\rm Cov}(H_j)$, whence
$$
\varphi_{H_j}(\xi)=1-\frac{1}{2}\xi^{\rm T}{\rm Cov}(H_j)\xi+ \mathcal{O}(|\xi|^3) \qquad \textrm{for all}\;\; \xi.
$$
Hence for the branch of the logarithm that takes value 0 at the point 1 we have for a universal $r_1>0$
\begin{equation*}
\log \varphi_{H_j}(\xi) = -\frac{1}{2}\xi^{\rm T}{\rm Cov}(H_j)\xi+ \mathcal{O}(|\xi|^3) \qquad \textrm{for }\;\; |\xi|\leq 2r_1.
\end{equation*}
By independence (and since $b_0^{-1}\leq c_j\leq b_0$ for all $j$) we gather that for another universal $r_2>0$
\begin{equation}\label{52}
\log\big(\varphi_{W}(\xi)\big) = \sum_{j=1}^n\log\big(\varphi_{H_j}(\xi/\ell_n)\big) = -\frac{1}{2}\xi^{\rm T}A\xi+ n^{-1/2}\mathcal{O}(|\xi|^3) 
\end{equation}

\noindent for $|\xi|\leq r_2\sqrt{n}$. We note that $\lambda_1\geq 1$ and each $\lambda_j\geq n^{-2\delta}.$ Hence, as $|\xi|^3\leq C(d)\sum_{k=1}^d|\xi_k|^3,$ we may estimate component-wise and deduce (by also decreasing $r_2$ universally, if needed) 
\begin{equation}\label{53}
|\varphi_{W}(\xi)|\leq \exp\Big( -\frac{1}{4}\xi^{\rm T}\widetilde A\xi\Big)\qquad \textrm{for }\;\;|\widetilde A^{-1}\xi |\leq r_2\sqrt{n},
\end{equation}
where $\widetilde A$ is the $d\times d$ diagonal matrix
$$
\widetilde A:= {\rm diag\,}(1, n^{-2\delta},\ldots, n^{-2\delta})\leq A.
$$

We next choose  a $d$-dimensional centred Gaussian $G$ (independent from the $H_j$:s) such that  
\begin{align}\label{605}
B:={\rm Cov\,}(G)&=r_2^{-2}\log^2(n)\; {\rm diag\,}(n^{-1}, n^{4\delta-1},\ldots, n^{4\delta-1})\\
\notag &= (r_2^{-1} \log(n)n^{-1/2}\widetilde A^{-1})^{2}
\end{align}
and set
$
\widetilde W:= G+W.
$
Then $\varphi_{ \widetilde W}(\xi)=\varphi_{W}(\xi)\exp \big(-\frac{1}{2}\xi^{\rm T}B\xi\big)$ and we estimate
\begin{align}\label{61}
&\| \exp(-\frac{1}{2}\xi^{\rm T}A\xi)-\varphi_{\widetilde W}(\xi)\|_{L^1(\R^d)}\\
\notag &= \Big(\int_{|\widetilde A^{1/2}\xi|\leq \log n}+\int_{\left\{\begin{array}{l}{|\widetilde A^{1/2}\xi|> \log n}\\
{|B^{1/2}\xi|\leq \log n}\end{array}\right.}+\int_{\left\{\begin{array}{l}{|\widetilde A^{1/2}\xi|> \log n}\\
{|B^{1/2}\xi|> \log n}\end{array}\right.}\Big)\\
\notag & \qquad \times \big|\exp(-\frac{1}{2}\xi^{\rm T}A\xi)-\varphi_{\widetilde W}(\xi)\big|\; d\xi\nonumber
\;\;= T_1+T_2+T_3.\nonumber
\end{align}

We make use of the following simple observation for $d\times d$ symmetric matrices $J$:
\begin{equation}\label{6051}
\textrm{If  }\;\,J\geq n^{-\alpha}I,\;\; \textrm{where} \;\; \alpha >0,\;\;\textrm{then}\quad
\int_{|J^{1/2}\xi|\geq\log n}e^{-\frac{1}{4}\xi^{\rm T}J\xi}d\xi =\mathcal{O}(n^{-1/2}).
\end{equation}
This is verified by computing
\begin{align*}
\int_{|J^{1/2}\xi|\geq\log n}e^{-\frac{1}{4}\xi^{\rm T}J\xi}d\xi &\;=\; |J|^{-1/2}\int_{|\xi|\geq\log(n)}e^{-|\xi|^2/4}d\xi
\;\lesssim \;n^{d\alpha/2}\int_{r\geq \log(n)}e^{-r^2/4}r^{d-1}dr\\
\notag &\lesssim n^{d\alpha/2}\int_{r\geq \log(n)}e^{-r^2/8}dr
=\mathcal{O}\big(n^{d\alpha/2}e^{-\frac{1}{8}\log^2(n)}\big) 
\;=\;\mathcal{O}(n^{-1/2})
\end{align*}
Towards estimating the first term $T_1$ we note that since $\delta<1/6,$ we have 
$$
\sup_{\{|\widetilde A^{1/2}\xi|\leq \log n\}}n^{-1/2}|\xi|^3 =o(1)\quad \textrm{as}\quad n\to\infty.
$$ Hence we may apply \eqref{52}, the ordering $A\geq \widetilde A$ and the inequality $|e^x-1|\leq 2|x|$ for $x\in (-\infty, 1]$ to obtain the bound 
\begin{align}\label{62}
T_1&\leq \int_{| \widetilde A^{1/2}\xi|\leq \log n}e^{-\frac{1}{2}\xi^{\rm T}A\xi}\bigg|\exp \Big(-\frac{1}{2}\xi^{\rm T}B\xi+  n^{-1/2}\mathcal{O}(|\xi|^3)\Big)-1\bigg|\; d\xi\\
&\leq 2\int_{|\widetilde A^{1/2}\xi|\leq \log n}e^{-\frac{1}{2}\xi^{\rm T}\widetilde A\xi}\Big(\frac{1}{2}\xi^{\rm T}B\xi +  n^{-1/2}\mathcal{O}(|\xi|^3)\Big)\; d\xi\nonumber\\
&\leq 2|\widetilde A^{-1/2}|\int_{\R^d}e^{-|\xi|^2/2}\Big(\|\widetilde A^{-1/2}B\widetilde A^{-1/2}\||\xi|^2 + \|\widetilde A^{-1/2}\|^{3} n^{-1/2}\mathcal{O}(|\xi|^3)\Big)\; d\xi\nonumber\\
&\lesssim  n^{(d-1)\delta}\big(n^\delta \log^2(n)n^{-1+4\delta} n^\delta+ n^{-1/2}n^{3\delta}\big)
\nonumber\;
=\; \mathcal{O}\big(n^{-1/2+(d+2)\delta}\big)\nonumber,
\end{align}
since $\delta<1/6$. Next, by the last equality in \eqref{605}, the condition $|B^{1/2}\xi|\leq \log(n)$ is equivalent to $|\widetilde A^{-1}\xi|\leq r_2n^{1/2}$. Then \eqref{53} and the estimate \eqref{6051} yield
\begin{align}\label{63}
T_2&\leq \int_{|\widetilde A^{1/2}\xi|>\log n}\big(e^{-\frac{1}{4}\xi^{\rm T}\widetilde A\xi}+e^{-\frac{1}{2}\xi^{\rm T}A\xi}\big)d\xi
\;\lesssim\; \int_{|\widetilde A^{1/2}\xi|>\log n}e^{-\frac{1}{4}\xi^{\rm T}\widetilde A\xi}d\xi \\\notag
&=\; \mathcal{O}(n^{-1/2}).
\end{align}
Finally, for the remaining term $T_3$ we can again invoke \eqref{6051} to obtain
\begin{align}\label{64}
T_3&\leq\int_{\left\{\begin{array}{l}{|\widetilde A^{1/2}\xi|> \log n}\\
{|B^{1/2}\xi|> \log n}\end{array}\right.}\Big(\exp(-\frac{1}{2}\xi^{\rm T}A\xi)+\exp(-\frac{1}{2}\xi^{\rm T}B\xi)\Big)\; d\xi\\
\notag & =\;\mathcal{O}(n^{-1/2})\nonumber
\end{align}
Combining  the estimates \eqref{62}--\eqref{64} with \eqref{61} we arrive at the  estimate
\begin{eqnarray}\label{65}
\| e^{-|\xi|^2/2}-\varphi_{\widetilde W}(\xi)\|_{L^1(\R^d)} = \mathcal{O}(n^{-1/2+(d+2)\delta}).
\end{eqnarray}

We shall next apply  Bernstein's inequality on $W$, which was defined as the sum of independent random variables. Actually one may use H\"older's inequality to reduce the $d$-dimensional case to the one-dimensional one, whence we may invoke the Bernstein  type tails estimates in \cite[Theorem 2.1]{Rio} to easily infer that
there are universal constants $n_0,b'_4, b'_5$ (depending only on dimension) such that for $n\geq n_0$ it holds that
\begin{equation}\label{eq:bernstein}
\E \exp(\lambda |W|) \;\leq \;b'_4\exp(b'_4\lambda^2) \quad \textrm{and for all}\quad
\lambda \leq b'_5n^{1/2}.
\end{equation}
Choosing e.g. $\lambda =3$ here and combining  with the excellent Gaussian tail (better than $\lesssim e^{-|\xi|^2/4}$) for $G$ we see that $\Prob (|\widetilde W|>\lambda)<b''_5\exp (-2\lambda).$ This yields for $R\geq 1$ the
estimate
\begin{eqnarray}\label{66}
\int_{R/2}^\infty \Prob(|\widetilde W|\geq r)dr =\mathcal{O}(e^{-R})
\end{eqnarray}

We are now ready to invoke Corollary \ref{co12} in combination with \eqref{65} and \eqref{66} in order to deduce the existence of a Gaussian random variable $U$ such that
$$
\E |U-\widetilde W|\lesssim \inf_{R\geq 1}\big( R^{d+1}n^{-1/2+(d+2)\delta} +e^{-R}\big)\lesssim \log^{d+1}(n) n^{-1/2+(d+2)\delta} .
$$
This yields our basic estimate  
\begin{align}\label{1111}
\E |V|&= \E |U- W|\leq \E |U-\widetilde W|+\E |G|\,\\
\notag &\lesssim\; \log^{d+1}(n) n^{-1/2+(d+2)\delta}+ \log(n)n^{-1+4\delta}\\
&= \mathcal{O}\big(\log^{d+1}(n) n^{-1/2+(d+2)\delta}\big).\nonumber
\end{align}

We next explain how to infer  from \eqref{1111} the inequality \eqref{e30} or \eqref{e30'}. For  part (ii) of the proposition (which also covers the case $d=1$) we may choose $\delta=0$ in \eqref{1111} and obtain directly  
 \eqref{e30'}.  In order to deal with part (i) of the proposition (where $d\geq 2$) we assume first that $\lambda_d\geq n^{-(4d+12)^{-1}}.$ Then we may apply directly \eqref{1111} with the choice $\delta=(2d+6)^{-1}$ and obtain the inequality \eqref{e30} with the exponent
 $$
 \beta_0= 1/2-(d+2)(2d+5)^{-1}>0,
 $$ 
 that depends only on $d.$
In the remaining case there is $k_0\in\{2,\ldots d-1\}$ so that $\lambda_{k_0}\geq n^{-(4d+12)^{-1}}$ but $\lambda_{k_0+1}<n^{-(4d+12)^{-1}}.$ Write $W':=(W_1,\ldots,W_{k_0})$ and $W'':=(W_{k_0+1},\ldots,W_d).$ We may apply the above proof on
 $W'$ and find a $k_0$-dimensional Gaussian approximation $U'$ for $W'$ so that $\E | U'-W' |= 
 \mathcal{O}\big( n^{-\beta_0}\big)$ (some unimportant adjustments of the multiplicative constants are needed to get the $k_0$-dimensional normalisation right, and we actually could get a better decay). We then define   the trivial extension $U'$ to a $d$-dimensional random variable $U$ by setting 
 $U=(U',U'')$, where the components of $U''$ are identically zero. Now 
 $$
 \E |W''|\leq (\E |W''|^2)^{1/2}=(\sum_{k=k_0+1}^d\lambda_k)^{1/2}\lesssim n^{-(2d+6)^{-1}}\lesssim n^{-\beta_0}
 $$
 Finally,
\begin{align*}
 \E |V| &\leq \E |W'-U'|+\E |W''|\;\lesssim n^{-\beta_0}.
\end{align*}
This proves the desired estimate \eqref{e30}.

We turn to estimating   the exponential moments. Their proof is based solely on \eqref{e30} and the uniform exponential estimates of $V$ which follow from those of $W$ and from the automatic ones for the Gaussian approximation. By invoking the  Bernstein estimate \eqref{eq:bernstein}  and using the double exponential decay (with constants depending on $d$) of our Gaussian approximation $U$ we obtain universal (i.e. depending only on $d$) constants $b_4, b_5$ such that
\begin{equation}\label{eq:1111}
\Prob (|V|\geq u)\leq b_4e^{-2\lambda u}e^{4b_4\lambda^2}\qquad\textrm{for any } \quad u>0\quad\textrm{and}\quad \lambda\in(0,b_5\sqrt{n}).
\end{equation}
We denote  $\alpha:=a_1n^{-\beta}$ and  assume that $\lambda\in (0, b_5\sqrt{n})$. Letting  $M\geq 0$ stand for an auxiliary parameter, to be chosen later, we may  write
\begin{align*}
\E e^{\lambda |V|} &=1+ \E \big( |V|\frac{\exp(\lambda |V|)-1}{|V|}\chi_{\{|V|\leq M\}}\big)\\
\notag &\quad  + (e^{\lambda M}-1)\Prob (|V|>M)+\lambda \int_M^\infty e^{\lambda u}\Prob (|V|>u)du.
\end{align*}
We recall that by  \eqref{e30} we have $\E |V|\leq \alpha$. Also, we note that $t\mapsto t^{-1}(e^{\lambda t}-1)$ (defined to be $\lambda$ at zero) is increasing on $[0,M]$, and hence less than $M^{-1}(e^{\lambda M}-1)$ on that interval. Using these facts and the bound \eqref{eq:1111} we deduce that
\begin{align*}
\E \exp(\lambda |V|) -1 &\leq \delta (e^{\lambda M}-1)M^{-1}+ (e^{\lambda M}-1)b_4e^{4b_4\lambda^2-2\lambda M}+b_4e^{4b_4\lambda^2}\lambda \int_M^\infty e^{-\lambda u}du\\
\notag &\leq \delta e^{\lambda M}M^{-1}+2b_4e^{-\lambda M}e^{4b_4\lambda^2}
\end{align*}
The desired estimate is obtained by choosing $M$ so that $\sqrt{\delta}=e^{-\lambda M}$ and plugging in the definition of $\delta.$

Assume then the that variables $H_k$ are uniformly bounded. In this case a standard application of Azuma's inequality yields universal constants $s, r>0$ so that
$$
\Prob (|V|\geq u)\leq  se^{-2ru^2} \quad \textrm{for all}\quad u>0.
$$
In an analogous manner  to what we just did for the exponential moments,  for any $M>0$ it follows that
\begin{align*}
\E e^{rV^2} &=1+ \E \big( |V|\frac{e^{rV^2}-1}{|V|}\chi_{\{|V|\leq M\}}\big) + (e^{rM^2}-1)\Prob (|V|>M)\\
\notag &\quad +2r\int_M^\infty xe^{rx^2}\Prob (|V|>x)dx
\end{align*}
and we deduce that
\begin{align*}
\E \exp(r|V|^2) &\leq 1+\delta (e^{rM^2}-1)M^{-1}+ s(e^{rM^2}-1)e^{-2rM^2}+s\int_M^\infty 2rxe^{-rx^2}dx\\
\notag &\leq 1+\delta M^{-1}e^{rM^2}+2se^{-rM^2}.
\end{align*}
The desired estimate is  obtained by  this time choosing $M$ so that $\sqrt{\delta}=e^{-rM^2}$.
\end{proof}

\section{Gaussian multiplicative chaos and Random unitary matrices -- the global scale}\label{app:rmt}

In this appendix we prove that on the global scale, the characteristic polynomial of a Haar distributed random unitary matrix converges to a multiplicative chaos distribution. The proof is very similar to the zeta case, though naturally the moment calculations are different. For these, we use known results from \cite{CFKRS,DIK}

To fix notation, let $U_N$ be a $N\times N$ random unitary matrix whose law is the Haar measure on the unitary group. Our main object of interest is the characteristic polynomial of $U_N$ and we consider it evaluated on the unit circle. For $\theta\in[0,2\pi]$, we define

\begin{equation*}
\upsilon_N(\theta):=\det(I-e^{-i\theta}U_N).
\end{equation*}

We introduce an approximation, which is analogous to the truncated Euler product we consider above. More precisely, for $M\in \Z_+$, we define

\begin{equation*}
\upsilon_{N,M}(\theta)=e^{-\sum_{k=1}^M \frac{1}{k}\mathrm{Tr}U_N^k e^{-ik\theta}}.
\end{equation*}

Our goal is to again control the difference $\upsilon_N-\upsilon_{N,M}$ in the limit where $N\to\infty$ and then $M\to\infty$ while proving that in this limit, $\upsilon_{N,M}$ converges to a certain multiplicative chaos distribution. 
A suitable space of generalized functions to carry out this analysis in  is provided by the Sobolev space on the circle:

\begin{equation*}
\mathcal{H}^\alpha=\left\lbrace f(\theta)=\sum_{j\in \Z}\widehat{f}_j e^{ij\theta}: ||f||_\alpha^2:=\sum_{j\in \Z}(1+j^2)^{\alpha}|\widehat{f}_j|^2<\infty\right\rbrace.
\end{equation*}

Let us begin with an analogue of Proposition \ref{pr:B}.

\begin{lemma}\label{le:rmtmom}
Let $\alpha>1/2$. Then 

\begin{equation*}
\lim_{M\to\infty}\limsup_{N\to\infty}\E ||\upsilon_N-\upsilon_{N,M}||_{-\alpha}^2=0.
\end{equation*}
\end{lemma}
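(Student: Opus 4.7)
My plan is to mirror the argument of Proposition~\ref{pr:B}, exploiting the rotational invariance of Haar measure under $U_N\mapsto e^{i\phi}U_N$ as a replacement for the averaging over $\omega T$ in the zeta setting. By that invariance,
$$K_{N,M}(\theta_1,\theta_2):=\E\bigl[(\upsilon_N(\theta_1)-\upsilon_{N,M}(\theta_1))\overline{(\upsilon_N(\theta_2)-\upsilon_{N,M}(\theta_2))}\bigr]$$
depends only on $\theta=\theta_1-\theta_2$. Denoting it by $\widetilde K_{N,M}(\theta)$, Parseval's identity on the circle yields
$$\E\|\upsilon_N-\upsilon_{N,M}\|_{-\alpha}^2=\sum_{j\in\Z}(1+j^2)^{-\alpha}\,\widehat{\widetilde K}_{N,M}(j),$$
and the task reduces to showing that each Fourier coefficient vanishes in the iterated limit while admitting a summable majorant.

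The heart of the argument is the computation of the four asymptotic two-point functions entering $\widetilde K_{N,M}$. For the pure self-term, the Heine--Szegő identity (an application of Andreief/Cauchy--Binet to the CUE eigenvalue density) gives the exact equality
$$\E\,\upsilon_N(\theta_1)\overline{\upsilon_N(\theta_2)}=\sum_{k=0}^N e^{-ik(\theta_1-\theta_2)},$$
whose $j$-th Fourier coefficient is $\mathbf{1}_{0\leq j\leq N}$. For the truncated self-term, the Diaconis--Shahshahani theorem $(\mathrm{Tr}\,U_N^k)_{k=1}^M\stackrel{d}{\longrightarrow}(\sqrt k\,Z_k)_{k=1}^M$ (with $Z_k$ i.i.d.\ standard complex Gaussians) together with deterministic control of $|\upsilon_{N,M}|$ gives, by a uniform integrability argument,
$$\lim_{N\to\infty}\E\,\upsilon_{N,M}(\theta_1)\overline{\upsilon_{N,M}(\theta_2)}=\exp\!\left(\sum_{k=1}^M\frac{e^{-ik(\theta_1-\theta_2)}}{k}\right)=:E_M(\theta_1-\theta_2).$$
Writing $b_j^M$ for the $j$-th Fourier coefficient of $E_M$, a direct inspection of $\exp(\sum_{k=1}^M z^k/k)$ as a power series (counting partitions with parts of size $\leq M$) gives $b_j^M=0$ for $j<0$, $b_j^M=1$ for $0\leq j\leq M$, and $b_j^M\in[0,1]$ with $b_j^M\to 1$ as $M\to\infty$ for every fixed $j>M$.

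The principal obstacle will be the cross term $\E\,\upsilon_N(\theta_1)\overline{\upsilon_{N,M}(\theta_2)}$, which is the RMT counterpart of the $V_T^{(2)}$-type computation carried out for zeta in Theorem~\ref{th:twopoint} via the Ingham--Bettin approach. I expect it (and, by conjugation symmetry, the twin cross term) to converge as $N\to\infty$ to the same limit $E_M(\theta_1-\theta_2)$. To establish this I would use the formal identity $\upsilon_N(\theta)=\exp(-\sum_{k\geq 1}\mathrm{Tr}(U_N^k)e^{-ik\theta}/k)$ in combination with the joint asymptotic Gaussianity of $(\mathrm{Tr}\,U_N^k)_{k=1}^M$ and a uniform $L^2$ control of the high-index trace contributions; alternatively, one may expand $\upsilon_N$ in elementary symmetric polynomials and apply Weyl integration term by term. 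Granting these inputs, $\widehat{\widetilde K}_{\infty,M}(j)=\mathbf{1}_{j\geq 0}-b_j^M$, which vanishes for $j\leq M$, lies in $[0,1]$ for $j>M$, and converges pointwise to zero as $M\to\infty$; dominated convergence against the summable weight $(1+j^2)^{-\alpha}$ (where $\alpha>1/2$) then yields $\lim_{M\to\infty}\limsup_{N\to\infty}\E\|\upsilon_N-\upsilon_{N,M}\|_{-\alpha}^2=0$, as required.
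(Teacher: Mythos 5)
Your overall architecture is the same as the paper's: expand the square into three two‑point functions, compute the $N\to\infty$ limit of each with $M$ fixed, and then let $M\to\infty$. The Fourier reformulation via rotational invariance and Parseval is a clean way of expressing the bookkeeping, and your combinatorics for the Fourier coefficients $b_j^M$ of $\exp(\sum_{k=1}^M z^k/k)$ (namely $b_j^M=1$ for $0\leq j\leq M$, $b_j^M\in[0,1]$ for $j>M$, $b_j^M\uparrow 1$ as $M\to\infty$) are correct, as is the exact CUE formula for the untruncated self‑term.

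The gap is exactly where you flag the ``principal obstacle'': the cross term. You propose to get it from the formal identity $\upsilon_N(\theta)=\exp\bigl(-\sum_{k\geq 1}\mathrm{Tr}(U_N^k)e^{-ik\theta}/k\bigr)$ together with asymptotic Gaussianity of the low traces and ``uniform $L^2$ control of the high-index trace contributions.'' That last step fails: on the unit circle the tail $\sum_{k>M}^{N}\mathrm{Tr}(U_N^k)e^{-ik\theta}/k$ has variance $\sum_{k>M}^{N}\min(k,N)/k^{2}\asymp\log(N/M)$, which diverges as $N\to\infty$ for each fixed $M$, so the high traces are not a negligible perturbation and the ``formal identity'' cannot be linearised around the truncation. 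This is not a presentational issue — it is precisely the place where the factor $(1-e^{-i\theta}z)$ in $\det(I-e^{-i\theta}U_N)$ introduces a genuine Fisher--Hartwig singularity in the Toeplitz symbol. The paper's proof handles this by writing the cross expectation as a Toeplitz determinant with symbol $(1-e^{-i\theta}z)\exp\bigl(-\sum_{k=1}^M e^{ik\theta'}z^{-k}/k\bigr)$ and invoking Deift--Its--Krasovsky asymptotics, whose uniformity in $(\theta,\theta')$ also supplies the dominated‑convergence input needed to pass the $N\to\infty$ limit through the sum over $j$ — another point your proposal leaves open. Your alternative suggestion (expand $\det$ in elementary symmetric polynomials and integrate term by term) would essentially amount to reconstructing a special case of that asymptotic result, so it is not an escape from the difficulty. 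To make the proof go through you must replace the heuristic exponential step by an actual Toeplitz/Fisher--Hartwig argument (or an equivalent supply of the uniform asymptotics of the cross two‑point function).
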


\begin{proof}
As in the zeta case, we expand the square and study the asymptotics of each term. Consider first the $||\upsilon_N||^2$-term. We have 

\begin{align}\label{eq:upsilonsq}
\E ||\upsilon_N||_{-\alpha}^2=\sum_{j\in \Z}\frac{1}{(1+j^2)^\alpha}\int_{[0,2\pi]^2}e^{ij(\theta-\theta')}\E \left[\det(I-e^{-i\theta}U_N)\det(I-e^{i\theta'}U_N^*)\right]\frac{d\theta}{2\pi}\frac{d\theta'}{2\pi},
\end{align}

\noindent where $U_N^*$ denotes Hermitian conjugation. One can justify changing the order of the different integrals by the facts that for each fixed $N$, the absolute value of the determinant is bounded by $2^N$ (deterministically) and as $\alpha>1/2$ the sum $\sum_{j\in \Z}(1+j^2)^{-\alpha}$ converges. 

We thus need to understand the expectation above. The expression that is relevant to us follows from e.g. \cite[equation (2.16)]{CFKRS} (in the notation of \cite{CFKRS} we have $\E \det(I-e^{-i\theta}U_N)\det(I-e^{i\theta'}U_N^*)=e^{-iN\theta}I_{1,2}(U(N);e^{i\theta'};e^{i\theta})$):

\begin{equation*}
\E \left[\det(I-e^{-i\theta}U_N)\det(I-e^{i\theta'}U_N^*)\right]=\frac{1}{1-e^{i(\theta'-\theta)}}+\frac{e^{iN(\theta'-\theta)}}{1-e^{i(\theta-\theta')}}=\sum_{l=0}^N e^{-il(\theta-\theta')}.
\end{equation*}

Plugging this into \eqref{eq:upsilonsq}, we see that

\begin{equation}\label{eq:upsilonasy}
\E ||\upsilon_N||_{-\alpha}^2=\sum_{l=0}^N \frac{1}{(1+l^2)^{\alpha}}\to \sum_{l=0}^\infty \frac{1}{(1+l^2)^{\alpha}}<\infty
\end{equation}

\noindent as $N\to\infty$.

For the terms with $\upsilon_{N,M}$ we'll need results from \cite{DIK} which concern the asymptotics of Toeplitz determinants with Fisher-Hartwig singularities. For the cross term, we note that instead of \eqref{eq:upsilonsq}, one now is interested in expectations of the form $\E \det(I-e^{-i\theta}U_N) e^{-\sum_{k=1}^M \frac{1}{k}e^{ik\theta'}\mathrm{Tr}[U_N^*]^k}$ (again with an elementary argument one can justify changing the order of sums and integrals in calculating the second moment of the Sobolev norm). Using the rotation invariance of the law of $U_N$, we can write this expectation as $\E \prod_{j=1}^N f(e^{i\varphi_j})$, where $(e^{i\varphi_j})_{j=1}^N$ are the eigenvalues of $U_N$, and $f(z)=e^{-\sum_{k=1}^M \frac{1}{k}e^{ik(\theta'-\theta)}z^{-k}}(1-z)$. Using elementary geometry, one can check that $f$ can be written in the form

\begin{equation}\label{eq:FH}
f(z)=-ie^{V(z;\theta-\theta')}z^{1/2}|z-1|,
\end{equation}

\noindent where 

\begin{equation*}
V(z;\theta-\theta')=\sum_{k\in \Z}V_k z^k=-\sum_{k=1}^M \frac{1}{k}e^{ik(\theta'-\theta)}z^{-k}
\end{equation*}

\noindent and the interpretation of $z^{1/2}$ is the following: for $\mathrm{arg} z\in[0,2\pi)$, $z^{1/2}=e^{\frac{1}{2}i \mathrm{arg} z}$.

Due to the Heine-Szeg\H{o} identity, the expectation $\E \prod_{j=1}^N f(e^{i\varphi_j})$ can be written as the determinant of a $N\times N$ Toeplitz matrix with symbol $f$. The study of the asymptotics of such determinants with symbols of the form \eqref{eq:FH}, namely symbols with Fisher-Hartwig singularities, has a long history and the asymptotics are well known. We refer to \cite[Theorem 1.1]{DIK} for a more general result and a proof of the following fact: for $f$ of the form \eqref{eq:FH}

\begin{align}\label{eq:FHasy}
\E\prod_{j=1}^N f(e^{i\varphi_j})&=e^{-\sum_{k=-\infty}^{-1} V_k }(1+\mathit{o}(1))=e^{\sum_{k=1}^M\frac{1}{k}e^{ik(\theta'-\theta)}}(1+\mathit{o}(1))
\end{align}

\noindent as $N\to\infty$ for fixed $M$.
To help the interested reader compare with \cite[Theorem 1.1]{DIK}, we point out that in their notation, our case becomes $m=0$, $\alpha_0=\beta_0=1/2$, $V_k=-\frac{1}{|k|}e^{-ik(\theta'-\theta)}$ for $k\in\lbrace -M,...,-1\rbrace$ and zero otherwise. Thus the only non-trivial quantity in \cite[equation (1.10)]{DIK} is $b_-(1)^{-\alpha_0-\beta_0}$ -- every other term in the product is one. We also point out that similarly to the discussion in \cite[Remark 1.4]{DIK}, one can check that these asymptotics are uniform in $\theta,\theta'$. More precisely, one can check in general that if $V$ is a Laurent polynomial of fixed degree and if we require the coefficients of $V$ to be restricted to some fixed compact set, then the asymptotics of \cite[Theorem 1.1]{DIK} are uniform in the coefficients. This can be deduced from corresponding uniform bounds on the relevant jump matrices.

Due to the uniformity of \eqref{eq:FHasy} and our assumption that $\alpha>1/2$, we thus have (writing $\langle\cdot,\cdot\rangle_{-\alpha}$ for the inner product of $\mathcal{H}^{-\alpha}$ and using the convention that it is conjugate linear in the second entry) 

\begin{align}\label{eq:mixedasy2}
\notag \E\langle \upsilon_{N},\upsilon_{N,M}\rangle_{-\alpha}&=\sum_{j\in \Z}\frac{1}{(1+j^2)^{\alpha}}\int_{[0,2\pi]^2} e^{-ij(\theta-\theta')}e^{\sum_{k=1}^M \frac{1}{k}e^{ik(\theta'-\theta)}}\frac{d\theta}{2\pi}\frac{d\theta'}{2\pi}+\mathit{o}(1)\\
&=\sum_{j\in \Z}\frac{1}{(1+j^2)^{\alpha}}\int_0^{2\pi}e^{-ij\theta}e^{\sum_{k=1}^M \frac{1}{k}e^{-ik\theta}}\frac{d\theta}{2\pi}+o(1)
\end{align}

\noindent in the limit where $N\to\infty$ and $M$ is fixed. Now by expanding the exponential and applying the dominated convergence theorem  one can check that the relevant asymptotics become

\begin{align*}
\E\langle \upsilon_{N},\upsilon_{N,M}\rangle_{-\alpha}&=\sum_{j=0}^\infty\frac{1}{(1+j^2)^\alpha}\sum_{l=0}^\infty\frac{1}{l!}\sum_{k_1,...,k_l=1}^M\frac{1}{k_1\cdots k_l}\delta_{j,k_1+\dots +k_l}+o(1),
\end{align*}

\noindent where the interpretation of the $l=0$ term in the sum is $\delta_{j,0}$. Now by monotonicity/positivity, we see that 

\begin{equation*}
\lim_{M\to\infty}\lim_{N\to\infty}\E\langle \upsilon_{N},\upsilon_{N,M}\rangle_{-\alpha}=\sum_{j=0}^\infty\frac{1}{(1+j^2)^\alpha}\sum_{l=0}^\infty\frac{1}{l!}\sum_{k_1,...,k_l=1}^\infty\frac{1}{k_1\cdots k_l}\delta_{j,k_1+\dots +k_l}.
\end{equation*}
Invoking Lemma \ref{le:joulu2} (which we prove shortly in the next appendix) we conclude that
\begin{equation}\label{eq:mixedasy}
\lim_{M\to\infty}\lim_{N\to\infty}\E\langle \upsilon_{N},\upsilon_{N,M}\rangle_{-\alpha}=\sum_{j=0}^\infty\frac{1}{(1+j^2)^\alpha}.
\end{equation}

Finally we need to understand $\E ||\upsilon_{N,M}||_{-\alpha}^2$. This can again be reduced to understanding expectations of the form $\E \prod_{j=1}^N e^{\mathcal{V}(e^{i\varphi_j};e^{i\theta},e^{i\theta'})}$, where 

\begin{equation*}
\mathcal{V}(z;e^{i\theta};e^{i\theta'})=-\sum_{k=1}^M\frac{1}{k}(e^{-ik\theta}z^k +e^{ik\theta'}z^{-k})=\sum_{k\in \Z}\mathcal{V}_k z^k.
\end{equation*}
This expectation is again a Toeplitz determinant with symbol $e^{\mathcal{V}}$ and its asymptotics follow from the strong Szeg\H{o} theorem. As we need uniformity in $\theta,\theta'$ since we want to interchange the order of the limit and the integrals, we need something slightly stronger than the standard strong Szeg\H{o} theorem. This corresponds to a  simplified version of the problem studied in \cite{DIK} where there are no singularities (in their notation $\alpha_k=\beta_k=0$ for all $k$) and again in the Riemann-Hilbert problem the jump matrices will have bounds that are uniform in $\theta,\theta'$ and one finds 

\begin{equation*}
\E \prod_{j=1}^N e^{\mathcal{V}(e^{i\varphi_j};e^{i\theta},e^{i\theta'})}=e^{N \mathcal{V}_0+\sum_{j=1}^\infty j \mathcal{V}_j\mathcal{V}_{-j}}(1+o(1))=e^{\sum_{k=1}^M \frac{1}{k}e^{-ik(\theta-\theta')}}(1+o(1)),
\end{equation*}
where the error is again uniform in $\theta,\theta'$ as $N\to\infty$ with fixed $M$. We thus find as before that as $N\to\infty$ for a fixed $M$

\begin{equation*}
\E||\upsilon_{N,M}||_{-\alpha}^2=\sum_{j\in \Z}\frac{1}{(1+j^2)^\alpha}\int_{[0,2\pi]^2}e^{-ij(\theta-\theta')}e^{\sum_{k=1}^M\frac{1}{k}e^{-ik(\theta-\theta')}}\frac{d\theta}{2\pi}\frac{d\theta'}{2\pi}+o(1),
\end{equation*}

\noindent which is precisely the same as what we found in \eqref{eq:mixedasy2}. Thus
\begin{equation}\label{eq:truncasy}
\lim_{M\to\infty}\lim_{N\to\infty}\E||\upsilon_{N,M}||_{-\alpha}^2=\sum_{j=0}^\infty \frac{1}{(1+j^2)^\alpha}.
\end{equation}

Putting together \eqref{eq:upsilonasy}, \eqref{eq:mixedasy} and \eqref{eq:truncasy}, we infer that 

\begin{equation*}
\lim_{M\to\infty}\lim_{N\to\infty}\E||\upsilon_N-\upsilon_{N,M}||_{-\alpha}^2=0.
\end{equation*}
\end{proof}

The next step in proving that $\upsilon_N$ converges in $\mathcal{H}^{-\alpha}$ is to show that if we first let $N\to\infty$ and then $M\to\infty$, then $\nu_{N,M}$ converges in $\mathcal{H}^{-\alpha}$. Let us prove this now.

\begin{lemma}\label{le:rmttruncconv}
If we first let $N\to\infty$ and then $M\to\infty$, then $\upsilon_{N,M}$ converges in law with respect to the topology of $\mathcal{H}^{-\alpha}$ to an object $\upsilon$ which can be formally written as $\upsilon=e^{H(e^{i\theta})}$, where 

\begin{equation*}
H(e^{i\theta})=\sum_{k=1}^\infty \frac{1}{\sqrt{k}}Z_k e^{-ik\theta}.
\end{equation*}

\noindent Here $(Z_k)_{k=1}^\infty$ are i.i.d. standard complex Gaussians. 
\end{lemma}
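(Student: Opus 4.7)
The plan is to take the two limits separately: for each fixed $M$ let $N\to\infty$, and then let $M\to\infty$. The first step identifies a Gaussian-in-exponent limit $\upsilon_M$, and the second realizes $\upsilon$ as the almost-sure limit of an $L^2$-bounded $\mathcal{H}^{-\alpha}$-valued martingale.

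\emph{Step 1 ($N \to \infty$, $M$ fixed).} The key input is the classical Diaconis--Shahshahani theorem, which asserts that for any fixed $M$, the vector $(\mathrm{Tr}\, U_N, \mathrm{Tr}\, U_N^2, \ldots, \mathrm{Tr}\, U_N^M)$ converges jointly in distribution to $(\sqrt{k}\,\widetilde Z_k)_{k=1}^M$ for i.i.d.\ standard complex Gaussians $\widetilde Z_k$. Consequently the random trigonometric polynomial $-\sum_{k=1}^M k^{-1}\mathrm{Tr}\, U_N^k e^{-ik\theta}$ converges in law in $C^\infty(\T)$ to $\sum_{k=1}^M k^{-1/2}\widetilde Z_k e^{-ik\theta}$, where the sign can be absorbed using $-\widetilde Z_k \stackrel{d}{=} Z_k$. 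Since exponentiation followed by the continuous embedding $C^\infty(\T) \hookrightarrow \mathcal{H}^{-\alpha}$ is continuous, the continuous mapping theorem yields
\begin{equation*}
\upsilon_{N,M} \stackrel{d}{\longrightarrow} \upsilon_M := \exp\!\Big(\sum_{k=1}^M \frac{1}{\sqrt{k}} Z_k e^{-ik\theta}\Big) \quad \text{in } \mathcal{H}^{-\alpha} \text{ as } N \to \infty.
\end{equation*}

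\emph{Step 2 ($M \to \infty$).} Setting $\mathcal{F}_M = \sigma(Z_1,\ldots,Z_M)$, the identity $\E e^{\alpha Z} = 1$ for any $\alpha \in \C$ and any standard complex Gaussian $Z$ shows that $(\upsilon_M)_{M\geq 1}$ is an $\mathcal{H}^{-\alpha}$-valued martingale. For $L^2$-boundedness, using independence together with $\E e^{\alpha Z_k + \bar\beta \bar Z_k} = e^{\alpha \bar\beta}$, one computes
\begin{equation*}
\E |\widehat{\upsilon_M}(j)|^2 = \int_0^{2\pi} e^{-ij\theta} \exp\!\Big(\sum_{k=1}^M \frac{1}{k} e^{-ik\theta}\Big) \frac{d\theta}{2\pi}.
\end{equation*}
Only Fourier modes with $j \leq 0$ contribute; for $j = -m$ with $m \geq 0$, expanding the exponential and observing that the coefficient of $z^m$ in $e^{\sum_{k\geq 1} z^k/k} = (1-z)^{-1}$ equals $1$, one sees that the partial-sum version is bounded by $1$ uniformly in $M$. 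Hence $\E\|\upsilon_M\|_{-\alpha}^2 \leq \sum_{m\geq 0}(1+m^2)^{-\alpha} < \infty$ uniformly in $M$, and standard Banach-space-valued martingale convergence on the separable Hilbert space $\mathcal{H}^{-\alpha}$ (as in Proposition \ref{pr:C}) produces an almost-sure and $L^2$ limit $\upsilon$, which is the object formally denoted $e^H$.

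Combining Steps 1 and 2 immediately yields the iterated limit asserted in the statement. The only conceptual subtlety is in upgrading Step 1 from convergence of the exponent in $C^\infty$ to convergence of $\upsilon_{N,M}$ in the Sobolev space $\mathcal{H}^{-\alpha}$; this is a soft consequence of the continuous mapping theorem since $M$ is fixed and $\upsilon_{N,M}$ is a.s.\ smooth. I do not expect any real obstacle beyond this; in particular, the uniform bound on $\E|\widehat{\upsilon_M}(j)|^2$ that drives Step 2 is essentially a byproduct of the moment calculations already performed in the proof of Lemma \ref{le:rmtmom}.
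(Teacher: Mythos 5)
Your proof is correct and follows essentially the same route as the paper: Diaconis--Shahshahani plus the continuous mapping theorem for fixed $M$, then $L^2$-bounded $\mathcal{H}^{-\alpha}$-valued martingale convergence as $M\to\infty$, with the uniform Fourier-coefficient bound coming from the same generating-function identity the paper packages as Lemma \ref{le:joulu2}. The only cosmetic difference is that the paper constructs an explicit continuous map $F:\C^M\to\mathcal{H}^{-\alpha}$ and applies the definition of convergence in distribution directly, whereas you factor the continuity through $C^\infty(\T)$ and exponentiation; both are instances of the same continuous-mapping argument.
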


\begin{proof}
Our main tool is  a result due to Diaconis and Shahshahani \cite{DiSh}, who proved that for any fixed $M$
\begin{equation}\label{eq:DiSh}
(\frac{1}{\sqrt{k}}\mathrm{Tr}U_N^k)_{k=1}^M\stackrel{d}{\to}(Z_k)_{k=1}^M
\quad\textrm{as}\quad N\to\infty.
\end{equation}
 We'll transform this into a statement about the convergence of $\upsilon_{N,M}$ as $N\to\infty$. Lemma \ref{le:simple} does not apply directly, so 
we take a slightly different approach here. Consider the function $F:\C^M\to \mathcal{H}^{-\alpha}$ with $\alpha>1/2$ defined by 
\begin{equation*}
[F(z_1,...,z_M)](e^{i\theta})=e^{-\sum_{j=1}^M \frac{1}{\sqrt{j}}e^{-ij\theta}z_j}.
\end{equation*}
Let us write this as 
\begin{equation*}
[F(z_1,...,z_M)](e^{i\theta})=\sum_{m=0}^\infty F_{-m}(z_1,...,z_M) e^{-im\theta},
\end{equation*}
whence the definition of Fourier coefficients gives the rough estimate
\begin{equation*}
|F_{-m}(z_1,...,z_M)|\leq e^{||z||_\infty \sum_{k=1}^M \frac{1}{\sqrt{k}}}.
\end{equation*}
This yields a uniform bound for the Fourier coefficients in each ball $\{||z||_\infty \leq R\}$. Hence $F(z)\in \mathcal{H}^{-\alpha}$ for $\alpha>1/2$, and since the $F_{-j}$:s  are polynomials in $z_1,\ldots z_M$ we deduce that, in fact, $F:\C^M\to \mathcal{H}^{-\alpha}$ is a continuous mapping.  At this stage \eqref{eq:DiSh} implies directly by definition of convergence in distribution that
\begin{equation*}
\upsilon_{N,M}\stackrel{d}{\to}F(Z_1,...,Z_M)\quad\textrm{as}\quad N\to\infty
\end{equation*}
in the topology of $\mathcal{H}^{-\alpha}$. 

Now if we realize the i.i.d. Gaussians $Z_j$ on the same probability space, then the random variables $(F(Z_1,...,Z_M))_M$ will form a $\mathcal{H}^{-\alpha}$-valued martingale and one has 

\begin{equation*}
\E ||F(Z_1,...,Z_M)||_{-\alpha}^2=\sum_{j\in \Z}\frac{1}{(1+j^2)^\alpha}\int_{[0,2\pi]^2}e^{-ij(\theta-\theta')}e^{\sum_{k=1}^M \frac{1}{k}e^{-ik(\theta-\theta')}}\frac{d\theta}{2\pi}\frac{d\theta'}{2\pi}.
\end{equation*} 
We already saw in the proof of Lemma \ref{le:rmtmom} that this quantity remains bounded as $M\to\infty$, so in particular, $(F(Z_1,...,Z_M))_{M=1}^\infty$ is a $\mathcal{H}^{-\alpha}$-valued $L^2$-bounded martingale. As in the zeta-case, this martingale converges almost surely to a $\mathcal{H}^{-\alpha}$-valued random variable $\upsilon $:
$$
F(Z_1,...,Z_M)\stackrel{d}{\to}\upsilon\quad\textrm{as}\quad M\to\infty.
$$
We conclude that if we first let $N\to\infty$ and then $M\to\infty$, then $\upsilon_{N,M}$ converges in law to $\upsilon$ (in the space $\mathcal{H}^{-\alpha}$). Finally, the formal interpretation of the representation in terms of $H$ simply comes from $\log F(Z_1,...,Z_M)$ converging to $H$ (in say $\mathcal{H}^{-\varepsilon}$).
\end{proof}

To argue as in the zeta case to prove that $\upsilon_N$ converges to $\upsilon$, we would need to know that one has convergence in the Wasserstein metric in the previous proof. But as we see from the proof of Lemma \ref{le:rmtmom}, the second moment of $||\upsilon_{N,M}||_{\mathcal{H}^{-\alpha}}$ is bounded when we first let $N\to\infty$ and then $M\to\infty$, so combined with Lemma \ref{le:rmttruncconv}, we have an identical setting as in the zeta case, and in complete analogy with the proof of Theorem \ref{th:main} ({\bf{\rm{i}}}), one finds the following result (we omit the proof as it is essentially identical to the zeta-case).

\begin{proposition}\label{pr:rmtconv}
For $\alpha>1/2$, $\upsilon_N$ converges in law $($with respect to the topology of $\mathcal{H}^{-\alpha})$ to $\upsilon$ when we let $N\to\infty$, and

\begin{equation*}
\lim_{N\to\infty}\wass_2(\upsilon_N,\upsilon)_{\mathcal{H}^{-\alpha}}=0.
\end{equation*}
\end{proposition}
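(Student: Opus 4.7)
The plan is to close the proof in the same way as the proof of Theorem~\ref{th:main}(i), combining three separate $\wass_2$-estimates via the triangle inequality and an $\varepsilon/3$ argument. The three ingredients needed are (a) $\wass_2\big(F(Z_1,\ldots,Z_M),\upsilon\big)_{\mathcal{H}^{-\alpha}}\to 0$ as $M\to\infty$, where $F$ and $\upsilon$ are the continuous map and the $\mathcal{H}^{-\alpha}$-valued martingale limit introduced in the proof of Lemma~\ref{le:rmttruncconv}; (b) $\limsup_{N\to\infty}\wass_2\big(\upsilon_N,\upsilon_{N,M}\big)_{\mathcal{H}^{-\alpha}}\to 0$ as $M\to\infty$; and (c) for each fixed $M\geq 1$, $\wass_2\big(\upsilon_{N,M},F(Z_1,\ldots,Z_M)\big)_{\mathcal{H}^{-\alpha}}\to 0$ as $N\to\infty$.

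Bound (a) is essentially for free: the proof of Lemma~\ref{le:rmttruncconv} already exhibits $(F(Z_1,\ldots,Z_M))_{M\geq 1}$ as an $L^2$-bounded martingale in the separable Hilbert space $\mathcal{H}^{-\alpha}$, so by the Radon--Nikod\'ym property (as invoked in the proof of Proposition~\ref{pr:C}(ii)) it converges in $L^2$ to $\upsilon$, and $L^2$-convergence implies $\wass_2$-convergence. Bound (b) is a direct reformulation of Lemma~\ref{le:rmtmom}, using the trivial coupling in which both $\upsilon_N$ and $\upsilon_{N,M}$ are realised on the same $U_N$.

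The only substantive step is (c), and this is where the argument deviates slightly from the zeta case, since the analogue of Proposition~\ref{pr:C}(i) is not directly supplied by Lemma~\ref{le:rmttruncconv}, which only gives convergence in distribution. To upgrade distributional convergence to $\wass_2$-convergence in a separable Hilbert space, it suffices (see e.g.\ \cite{Villani}) to additionally verify that $\E\|\upsilon_{N,M}\|_{-\alpha}^2\to\E\|F(Z_1,\ldots,Z_M)\|_{-\alpha}^2$ as $N\to\infty$. The explicit formula derived in the proof of Lemma~\ref{le:rmtmom} gives
\begin{equation*}
\lim_{N\to\infty}\E\|\upsilon_{N,M}\|_{-\alpha}^2=\sum_{j\in\Z}\frac{1}{(1+j^2)^\alpha}\int_{[0,2\pi]^2}e^{-ij(\theta-\theta')}e^{\sum_{k=1}^M k^{-1}e^{-ik(\theta-\theta')}}\frac{d\theta\, d\theta'}{(2\pi)^2},
\end{equation*}
and a direct computation using $\E e^{aZ_k+b\overline{Z_k}}=e^{ab}$ for standard complex Gaussians together with the Fourier-series form of $F$ shows that the right-hand side equals $\E\|F(Z_1,\ldots,Z_M)\|_{-\alpha}^2$ exactly.

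Assembling (a), (b), (c) by triangle inequality in the standard $\varepsilon/3$-fashion---first pick $M$ large to control (a) and the $\limsup$ in (b), then pick $N$ large to realise (b) and to make (c) small---one concludes $\wass_2(\upsilon_N,\upsilon)_{\mathcal{H}^{-\alpha}}\to 0$, which in particular implies the convergence in law asserted by the proposition. The only genuine obstacle is (c): the continuous mapping theorem alone would leave us short of $\wass_2$-convergence, but the second-moment identification needed is exactly the computation already carried out inside the proof of Lemma~\ref{le:rmtmom}, so no new estimate is required.
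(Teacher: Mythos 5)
Your proof is correct, and it carefully fills in the one place where the phrase ``essentially identical to the zeta-case'' in the text elides a genuine difference. In the zeta-case, the analogue of your step (c) --- namely Proposition~\ref{pr:C}(i) --- is obtained directly from Lemma~\ref{le:simple}, which requires the coefficients $Y_{T,n}$ to be uniformly bounded; that hypothesis holds for $n^{-i\omega T}$ but does not hold for $\mathrm{Tr}\,U_N^k$, so Lemma~\ref{le:simple} cannot be applied verbatim to upgrade the distributional convergence $\upsilon_{N,M}\stackrel{d}{\to}F(Z_1,\ldots,Z_M)$ of Lemma~\ref{le:rmttruncconv} to $\wass_2$-convergence. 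You correctly replace this step with the standard characterization of $\wass_2$-convergence on a Polish space (weak convergence together with convergence of second moments, cf.\ \cite[Theorem 6.9]{Villani}), and you correctly observe that the second-moment convergence $\E\|\upsilon_{N,M}\|_{-\alpha}^2\to\E\|F(Z_1,\ldots,Z_M)\|_{-\alpha}^2$ is already available from the Toeplitz-determinant computation in the proof of Lemma~\ref{le:rmtmom}. Note that mere \emph{boundedness} of second moments (which is what the paper's one-line remark literally asserts) would not suffice here --- it is the \emph{convergence} of second moments that closes the gap --- so your precise formulation is an improvement on the sketch. Steps (a) and (b) are as claimed: (a) is the $L^2$-martingale convergence already set up in Lemma~\ref{le:rmttruncconv}, and (b) is Lemma~\ref{le:rmtmom} read through the trivial coupling on the common $U_N$. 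The $\varepsilon/3$-assembly then runs exactly as in the proof of Theorem~\ref{th:main}(i).
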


\section{Gaussian multiplicative chaos and Random unitary matrices -- the mesoscopic scale}\label{app:mesormt}

The goal of this appendix is to prove Theorem \ref{th:mesormt}. While we expect that an identical result as in the zeta case can be indeed proven with slightly more effort, this would involve mainly analysis of purely Gaussian multiplicative chaos with no questions related to random matrix theory or number theory, so we will be satisfied with Theorem \ref{th:mesormt}.

We begin by recording some auxiliary observations needed for the proof. The following easy inequality  is an analogue of Lemma \ref{le:embed}.

\begin{lemma}\label{le:joulu1} 
There exists a positive constant $C$ such that for all $g$ with $\int_\R |g(x)|^2(1+x^2)dx<\infty$  and $M\geq 1$
$$
M^{-1}\sum_{n\in\Z}|\widehat {g}(n/M)|^2\; \leq \; C\int_\R |g(x)|^2(1+x^2)dx.
$$
\end{lemma}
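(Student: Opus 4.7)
The natural approach is to recognize the left-hand side as a Parseval identity on the torus $\R/M\Z$, via periodization of $g$. Set
$$G(y) := \sum_{k\in\Z} g(y-kM), \qquad y\in\R.$$
Since $\int_\R(1+x^2)|g|^2\,dx<\infty$ implies $g\in L^1(\R)$ (by Cauchy--Schwarz against the integrable weight $(1+x^2)^{-1}$), the series converges absolutely a.e.\ and defines an $M$-periodic function whose $n$-th Fourier coefficient, by an unfolding argument using $e^{-2\pi i n k}=1$ for $n,k\in\Z$, equals
$$c_n \;=\; \frac{1}{M}\int_0^M G(y)e^{-2\pi i n y/M}\,dy \;=\; \frac{1}{M}\int_\R g(y)e^{-2\pi i n y/M}\,dy \;=\; \frac{1}{M}\widehat g(n/M).$$
Parseval's identity on $[0,M]$ then rewrites the left-hand side as
$$M^{-1}\sum_{n\in\Z}|\widehat g(n/M)|^2 \;=\; \int_0^M|G(y)|^2\,dy,$$
and the claim reduces to bounding the right-hand side by $C\int_\R(1+x^2)|g|^2\,dx$.

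For this, I would apply Cauchy--Schwarz with the weights $w_k(y):=(1+(y-kM)^2)^{-1}$:
$$|G(y)|^2 \;\leq\; \Bigl(\sum_{k\in\Z}w_k(y)\Bigr)\Bigl(\sum_{k\in\Z}(1+(y-kM)^2)|g(y-kM)|^2\Bigr).$$
The key uniform estimate is
$$\sup_{M\geq 1,\;y\in[0,M]}\;\sum_{k\in\Z}\frac{1}{1+(y-kM)^2} \;\leq\; C < \infty,$$
which is elementary: for $y\in[0,M]$ the indices $k=0,1$ each contribute at most $1$, while for $|k|\geq 2$ we have $|y-kM|\geq(|k|-1)M\geq|k|-1$, so the tail is dominated by $2\sum_{j\geq 1}j^{-2}$. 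Substituting this bound, integrating over $[0,M]$, swapping sum and integral, and using the change of variables $x=y-kM$ in each summand, the contribution telescopes into $\int_\R(1+x^2)|g(x)|^2\,dx$, completing the proof.

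No step presents a genuine obstacle; the argument is routine once one recognizes that the uniform sampling grid $\{n/M\}$ naturally calls for Poisson summation and periodization, in contrast to the duality / Dirac-measure approximation argument used for the logarithmic sampling grid $\{\log(n)/(2\pi)\}$ in Lemma~\ref{le:embed}.
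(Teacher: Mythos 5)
Your proof is correct, but it takes a genuinely different route from the paper's. The paper first reduces to $M=1$ by a scaling argument, then represents the lattice value $\widehat g(n)$ via the elementary identity $\widehat g(n)=\int_n^{n+1}\big(\widehat g(u)-(n+1-u)\widehat g'(u)\big)\,du$, applies Cauchy--Schwarz on each unit cell, and sums, so that Plancherel converts $\int_\R(|\widehat g|^2+|\widehat g'|^2)$ into $\int_\R(1+4\pi^2 x^2)|g|^2$. You instead periodize $g$ to an $M$-periodic $G$, identify $M^{-1}\widehat g(n/M)$ as its Fourier coefficients, apply Parseval on $[0,M]$, and then control $\int_0^M|G|^2$ by a weighted Cauchy--Schwarz using the uniform bound $\sup_{M\ge 1,\,y}\sum_k(1+(y-kM)^2)^{-1}<\infty$. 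Both arguments are clean; yours handles the $M$-dependence natively (no separate scaling step) and is conceptually transparent as a Poisson-summation/periodization statement, whereas the paper's is shorter to write down once $M=1$ is isolated and sits closer in spirit to the one-dimensional Sobolev trace estimate used in Lemma \ref{le:embed}. Your argument also implicitly requires checking $g\in L^1$ and the legitimacy of the term-by-term Fourier coefficient computation, which you correctly justify via the weight $(1+x^2)^{-1}$; the paper's version avoids this by working on the Fourier side throughout, at the cost of needing $\widehat g\in W^{1,2}(\R)$, which is exactly the hypothesis. Either proof is acceptable here.
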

\begin{proof} 
Consider first the case $M=1$. We obtain by some elementary identities and Cauchy-Schwarz 
\begin{eqnarray*}
\sum_{n\in\Z}|\widehat g(n)|^2&=&\sum_{n\in\N} \Big|\int_n^{n+1}\big(\widehat g(u)du-(n+1-u)\widehat g'(u)\big)du\Big|^2
\\
&\leq&
2\int_\R\big(|\widehat g(u)|^2+|\widehat g'(u)|^2\big)du =2\int_\R |g(x)|^2(1+4\pi^2x^2)dx
\end{eqnarray*}
The general case $M\geq 1$ then follows by  a simple scaling argument.
\end{proof}
\noindent We will also make use of the following simple facts.

\begin{lemma}\label{le:joulu2} {\bf (i)}\quad
For any $\varepsilon \geq 0$ one has
$\displaystyle
\sum_{k=0}^\infty \frac{1}{k!}\sum_{j_1,...,j_k=1}^\infty\frac{e^{-\varepsilon (j_1+\cdots j_k)}}{j_1\cdots j_k}\delta_{j_1+\cdots +j_k,l}=e^{-\varepsilon l}.
$
\noindent{\bf (ii)}\quad Let $\varepsilon \in (0,1/2)$ and $M\in \N$. Then
$$\displaystyle
\sum_{j=1}^{M-1}\frac{e^{-2\varepsilon j/M}}{j} =\sum_{j=1}^{M-1}\frac{1}{j}+o(1),
$$
where $o(1)\to 0$ as  $\varepsilon \to 0^+$, uniformly in $M\geq 2.$ 
\end{lemma}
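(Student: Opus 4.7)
The plan is to prove both parts by elementary means: (i) is a generating function identity that follows from $\exp(-\log(1-z)) = 1/(1-z)$, and (ii) follows from a one-line estimate using $1-e^{-u}\le u$.

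For (i), I would first note that $e^{-\varepsilon(j_1+\cdots+j_k)} = \prod_{i=1}^k e^{-\varepsilon j_i}$, so the sum in question is exactly the coefficient of $z^l$ in a formal power series. More precisely, for $|z|<1$ one has the absolutely convergent expansion
$$
\frac{1}{1-ze^{-\varepsilon}}\;=\;\exp\!\left(\sum_{j=1}^\infty \frac{(ze^{-\varepsilon})^j}{j}\right)\;=\;\sum_{k=0}^\infty \frac{1}{k!}\left(\sum_{j=1}^\infty \frac{e^{-\varepsilon j}z^j}{j}\right)^{\!k}.
$$
Expanding the inner power and collecting by the total degree $j_1+\cdots+j_k=l$, the right-hand side yields $\sum_{l\ge 0} S(l)z^l$ where $S(l)$ is precisely the expression on the left-hand side of (i). Comparing with the geometric series expansion of $1/(1-ze^{-\varepsilon})=\sum_{l\ge 0}e^{-\varepsilon l}z^l$ gives $S(l)=e^{-\varepsilon l}$ for every $l\ge 0$ (with the $k=0$ convention taking care of $l=0$). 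All series manipulations are justified by absolute convergence for any $\varepsilon\ge 0$ and $|z|<1$.

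For (ii), I would simply use the elementary inequality $0\le 1-e^{-u}\le u$ for $u\ge 0$ applied to $u=2\varepsilon j/M$:
$$
0\;\le\;\sum_{j=1}^{M-1}\frac{1}{j}-\sum_{j=1}^{M-1}\frac{e^{-2\varepsilon j/M}}{j}\;=\;\sum_{j=1}^{M-1}\frac{1-e^{-2\varepsilon j/M}}{j}\;\le\;\sum_{j=1}^{M-1}\frac{2\varepsilon j/M}{j}\;=\;\frac{2\varepsilon(M-1)}{M}\;\le\;2\varepsilon,
$$
which tends to $0$ as $\varepsilon\to 0^+$ uniformly in $M\ge 2$, as claimed.

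Neither part presents any real obstacle: (i) is a classical combinatorial identity equivalent to the Taylor expansion of $-\log(1-z)$, and (ii) is a one-line estimate. The only thing to double-check in (i) is the rearrangement of the double series, which is immediate from absolute convergence for $\varepsilon>0$; the case $\varepsilon=0$ can either be read off by continuity (using monotone or dominated convergence coefficient-by-coefficient) or verified directly as a formal power series identity.
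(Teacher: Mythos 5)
Your proof is correct and follows essentially the same route as the paper: part (i) is the generating-function identity $\exp(-\log(1-z))=1/(1-z)$ (the paper substitutes $u=e^{-\varepsilon}$ directly rather than carrying a separate variable $z$, but the coefficient comparison is the same), and part (ii) uses exactly the estimate $1-e^{-x}\le x$.
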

\begin{proof} For the first claim we assume that
$\varepsilon >0$ and denote $e^{-\varepsilon}=u\in (0,1),$. The statement is then equivalent to the identity $\exp(\log(1/(1-u))= 1/(1-u)$. The case $\varepsilon =0$ follows by a limiting argument. Towards (ii) one uses the inequality $1-e^{-x}\le x$ to obtain
$$
\sum_{j=1}^{M-1}\frac{1-e^{-2\varepsilon j/M}}{j} \leq \frac{2\varepsilon}{ M}\sum_{j=1}^{M-1} 1\leq 2\varepsilon .
$$
\end{proof}

We continue with a statement about how scaling affects local Sobolev norms of distributions on the unit circle after they are spread on $\R$. This is analogous to Lemma \ref{le:vanutus}.

\begin{lemma}\label{le:scalingcirc}
Assume that $\lambda\in {\mathcal H}^{-\alpha}$ with $\alpha\in (1/2,1).$ Then for $\varepsilon\in (0,1]$
$$
\| \lambda (e^{2\pi i\varepsilon \cdot})\|_{W^{-\alpha,2}(0,1)}\lesssim \varepsilon^{-1-2\alpha}\|\lambda\|_{{\mathcal H}^{-\alpha}}.
$$
\end{lemma}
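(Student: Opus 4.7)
The plan is to mimic Lemma \ref{le:vanutus}, but with an additional step that transfers from the discrete circle frequencies to continuous Fourier frequencies on $\mathbb{R}$; this step will be carried out via Schur's test applied to a convolution-type operator.

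First I would expand $\lambda(e^{i\theta}) = \sum_n \widehat\lambda_n e^{in\theta}$ as a Fourier series on the circle, so that $\lambda(e^{2\pi i \varepsilon x}) = \sum_n \widehat\lambda_n e^{2\pi i n \varepsilon x}$ is a well-defined periodic tempered distribution on $\mathbb{R}$ (the series converges in $\mathcal{S}'(\mathbb{R})$ since $|\widehat\lambda_n|$ grows at most polynomially). Fixing a cutoff $\chi \in C_c^\infty(\mathbb{R})$ with $\chi \equiv 1$ on $[0,1]$, the product $g := \chi\cdot\lambda(e^{2\pi i \varepsilon \cdot})$ is an extension of $\lambda(e^{2\pi i \varepsilon \cdot})|_{(0,1)}$ to all of $\mathbb{R}$, so by the infimum definition of the restriction norm,
\begin{equation*}
\|\lambda(e^{2\pi i \varepsilon \cdot})\|_{W^{-\alpha,2}(0,1)} \;\leq\; \|g\|_{W^{-\alpha,2}(\mathbb{R})}.
\end{equation*}
Computing the Fourier transform gives $\widehat g(\xi) = \sum_n \widehat\lambda_n\,\widehat\chi(\xi - n\varepsilon)$, so it suffices to bound the linear operator
\begin{equation*}
T:(a_n)_n \;\longmapsto\; \Big(\xi \mapsto \sum_n a_n\,(1+n^2)^{\alpha/2}(1+\xi^2)^{-\alpha/2}\widehat\chi(\xi - n\varepsilon)\Big)
\end{equation*}
from $\ell^2(\mathbb{Z})$ to $L^2(\mathbb{R})$; indeed the $\ell^2$-norm of the input equals $\|\lambda\|_{\mathcal{H}^{-\alpha}}$ (with the natural choice $a_n = (1+n^2)^{-\alpha/2}\widehat\lambda_n$), and the $L^2$-norm of the output equals $\|g\|_{W^{-\alpha,2}(\mathbb{R})}$.

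Next I would verify the required norm bound via Schur's test applied to the kernel $K(\xi,n) = (1+n^2)^{\alpha/2}(1+\xi^2)^{-\alpha/2}\widehat\chi(\xi - n\varepsilon)$ with the trivial weights $p(\xi)\equiv q(n) \equiv 1$. The key technical input is the elementary inequality
\begin{equation*}
(1+n^2)^{\alpha/2} \;\leq\; C\,\varepsilon^{-\alpha}\,(1+(\xi - n\varepsilon)^2)^{\alpha/2}\,(1+(\xi/\varepsilon)^2)^{\alpha/2},
\end{equation*}
obtained by writing $n = (n - \xi/\varepsilon) + \xi/\varepsilon$ and using $\varepsilon \leq 1$. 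Combining this with the Schwartz decay of $\widehat\chi$, the Riemann-sum estimate $\sum_n (1+|\xi - n\varepsilon|)^{-M} \lesssim 1/\varepsilon$ (valid uniformly in $\xi$ for any $M > 1$), and the easily checked bound $(1+\xi^2)^{-\alpha/2}(1+(\xi/\varepsilon)^2)^{\alpha/2} \leq \varepsilon^{-\alpha}$, one finds $A := \sup_\xi \sum_n |K(\xi,n)| \lesssim \varepsilon^{-\alpha - 1}$, and by an analogous change-of-variable and triangle-inequality argument $B := \sup_n \int |K(\xi,n)|\,d\xi \lesssim \varepsilon^{-\alpha}$. Schur's test then produces $\|T\| \leq \sqrt{AB} \lesssim \varepsilon^{-\alpha - 1/2}$, and because $\alpha > 1/2$ ensures $\alpha + 1/2 \leq 1 + 2\alpha$, this is dominated by $\varepsilon^{-1-2\alpha}$ for $\varepsilon \in (0,1]$.

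The main obstacle is simply combinatorial bookkeeping: tracking the three competing weight factors (in the variable $\xi$, in the index $n$, and in the gap $\xi - n\varepsilon$) under the rescaling by $\varepsilon$. Once the triangle-inequality decomposition displayed above is in hand, all the relevant sums and integrals reduce to Riemann sums or integrals of Schwartz functions at scale $\varepsilon$, and the estimates follow routinely from the rapid decay of $\widehat\chi$; in fact the argument yields a slightly stronger bound than the $\varepsilon^{-1-2\alpha}$ stated in the lemma.
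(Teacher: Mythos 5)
Your proof is correct and reaches the conclusion by a genuinely different device: the paper applies Cauchy--Schwarz against the explicit weight $\sum_j (1+|\varepsilon^{-1}\xi - j|^2)^{-2}$ (bounded uniformly in $\xi$) followed by a change of variables, whereas you package the same estimates into Schur's test on the kernel $K(\xi,n)=(1+n^2)^{\alpha/2}(1+\xi^2)^{-\alpha/2}\widehat\chi(\xi-n\varepsilon)$ viewed as a map $\ell^2(\Z)\to L^2(\R)$. A second difference is in the cutoff: the paper uses the dilated cutoff $\phi_0(\varepsilon\,\cdot)$, so that after rescaling the lattice becomes $\Z$ and the $\varepsilon$-dependence is confined to the weight $(1+\varepsilon^2\xi'^2)^{-\alpha}$, while you keep a fixed $\chi$, so the lattice is $\varepsilon\Z$ and the $\varepsilon$-dependence enters through the Riemann-sum estimate $\sum_n(1+|\xi-n\varepsilon|)^{-M}\lesssim \varepsilon^{-1}$. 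Both routes are elementary and both deliver the sharp norm bound $\lesssim\varepsilon^{-\alpha-1/2}\|\lambda\|_{\mathcal{H}^{-\alpha}}$, which dominates the $\varepsilon^{-1-2\alpha}$ stated in the lemma.

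There is a bookkeeping inconsistency to tidy up. With your stated inequality
$(1+n^2)^{\alpha/2}\lesssim \varepsilon^{-\alpha}(1+(\xi-n\varepsilon)^2)^{\alpha/2}(1+(\xi/\varepsilon)^2)^{\alpha/2}$
\emph{together} with the separate bound $(1+\xi^2)^{-\alpha/2}(1+(\xi/\varepsilon)^2)^{\alpha/2}\leq\varepsilon^{-\alpha}$, you pick up \emph{two} factors of $\varepsilon^{-\alpha}$, so that $A\lesssim\varepsilon^{-2\alpha-1}$ and $B\lesssim\varepsilon^{-2\alpha}$, not $\varepsilon^{-\alpha-1}$ and $\varepsilon^{-\alpha}$ as claimed. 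This is harmless for the lemma, since $\sqrt{AB}\lesssim\varepsilon^{-2\alpha-1/2}\leq\varepsilon^{-1-2\alpha}$. To get the constants you asserted, decompose $n\varepsilon=(n\varepsilon-\xi)+\xi$ rather than $n=(n-\xi/\varepsilon)+\xi/\varepsilon$, which yields
$(1+n^2)^{\alpha/2}\lesssim \varepsilon^{-\alpha}(1+(\xi-n\varepsilon)^2)^{\alpha/2}(1+\xi^2)^{\alpha/2}$;
the $(1+\xi^2)^{\pm\alpha/2}$ factors then cancel directly in $K(\xi,n)$, the auxiliary bound on $(1+\xi^2)^{-\alpha/2}(1+(\xi/\varepsilon)^2)^{\alpha/2}$ becomes unnecessary, and one lands at $A\lesssim\varepsilon^{-\alpha-1}$, $B\lesssim\varepsilon^{-\alpha}$ as you wrote. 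Finally, note that $\alpha+\tfrac12\leq 1+2\alpha$ holds for all $\alpha\geq-\tfrac12$, not merely for $\alpha>\tfrac12$; the hypothesis $\alpha>\tfrac12$ is needed for convergence of the relevant series and integrals, not for that last numerical comparison.
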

\begin{proof}
Fix a cut-off function $\phi_0\in C^\infty_0(\R)$ such that ${\phi_0}_{|(-1,2)}=1.$ By Lemma \ref{le:vanutus} and the definition of the norm of $W^{-\alpha, 2}(0,1)$ it is enough to consider the case $\varepsilon =1$ and estimate
$\| \phi_0\lambda(e^{2\pi i\cdot})\|_{W^{-\alpha,2}(\R)}.$ One  may write $\lambda(e^{2\pi i x})=\sum_{k\in\Z}a_ke^{2\pi ikx}$
with $\sum_{k\in\Z}|a_k|^2(1+k^2)^{-\alpha}\sim \|\lambda\|_{{\mathcal H}^{-\alpha}}^2<\infty.$
We note that $\widehat{\phi_0\lambda}(\xi)=\sum_{k\in\Z}a_k\widehat\phi_0(\xi-k)$ and observe that by the regularity of $\phi_0$, $|\widehat\phi_0(\xi)|\lesssim (1+|\xi |^2)^{-2}$. Moreover, we have the estimates 
$$
\| \sum_{j\in\Z}(1+|\xi-j|^2)^{-2}\|_{L^\infty(\R)}<\infty\quad\textrm{and}\quad \int_\R(1+(\varepsilon\xi)^2)^{-\alpha}(1+|\xi-k|^2)^{-2}\lesssim \varepsilon^{-2\alpha}(1+k^2)^{-\alpha},
$$
for any $k\in\Z.$
The latter estimate is obtained by assuming that $k\geq 1$  and considering separately the  integration ranges $\xi <k/2$ and $\xi\geq k/2.$ 
Using these estimates and Cauchy-Schwarz inequality we deduce that
\begin{eqnarray*}
&&\| \phi_0\lambda(e^{2\pi \varepsilon i\cdot})\|^2_{W^{-\alpha,2}(\R)}\; \lesssim \;\int_\R \left| \varepsilon^{-1} \sum_{k\in\Z} \frac{|a_k|}{(1+|\varepsilon^{-1}\xi-k|^2)^2}\right|^2(1+\xi^2)^{-\alpha}d\xi \\
&\lesssim& \varepsilon^{-1}\int_\R \sum_{k\in\Z} \frac{|a_k|^2}{(1+|\xi'-k|^2)^2}(1+\varepsilon^2\xi'^2)^{-\alpha}d\xi'  \; \lesssim\;
\varepsilon^{-1-2\alpha}\sum_{k\in\Z}|a_k|^2(1+k^2)^{-\alpha}\\
&\lesssim& \varepsilon^{-1-2\alpha}\|\lambda\|^2_{{\mathcal H}^{-\alpha}}.
\end{eqnarray*}
\end{proof}

As in the $\zeta$-case in Section \ref{sec:meso}, one applies the above observation  to $\varphi(\varepsilon_N\cdot)=\upsilon_N(\varepsilon_N\cdot)-\upsilon(\varepsilon_N\cdot)$, where the coupling is the one given through the Wasserstein distance. At this stage Proposition \ref{pr:rmtconv} and Lemma \ref{le:scalingcirc} together verify that there exists some $\varepsilon_N$ under which the Wasserstein distance between $\upsilon_N(\varepsilon_N\cdot)-\upsilon(\varepsilon_N\cdot)$ tends to zero as $N\to\infty$. It remains for us to understand how $\upsilon(\varepsilon\theta)$ behaves as $\varepsilon\to 0$. By our construction, we may actually restrict to the case where $\varepsilon_N=1/M_N$, where $M_N$ is some integer.  For this situation we make the following statement.

\begin{lemma}\label{le:mesogaussianlemma} There is a $C<\infty$  such that for
$M\geq 1$ and $f\in C^\infty_0 (\R)$   $($and a suitable version of $\eta)$ it holds that
\begin{equation}\label{eq:joulu10}
\E \big|e^{-\sum_{j=1}^{M-1}\frac{1}{\sqrt{j}}Z_j}\int_\R \upsilon(x/M)f(x)dx\big|^2\leq C\int_\R |f(x)|^2(1+x^2)dx,
\end{equation}
\begin{equation}\label{eq:joulu11}
\E \big|\eta(f)|^2\leq C\int_\R |f(x)|^2(1+x^2)dx,
\end{equation}
and 
\begin{equation}\label{eq:joulu12}
\E \big|\eta(f) \; -\, e^{-\sum_{j=1}^{M-1}\frac{1}{\sqrt{j}}Z_j}\int_\R \upsilon(x/M)f(x)dx\big|^2\to 0\qquad\textrm{as}\quad M\to\infty,
\end{equation}
where $\eta$ is as in Theorem \ref{th:meso1}.
\end{lemma}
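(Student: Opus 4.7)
The plan is to first reduce the lemma to a statement about a centred complex Gaussian field. By the representation of Lemma \ref{le:rmttruncconv},
$$
e^{-\sum_{j=1}^{M-1}Z_j/\sqrt{j}}\,\upsilon(x/M)\;=\;e^{G_M(x)},
$$
where
$$
G_M(x)\;:=\;\sum_{k=1}^{M-1}\frac{Z_k}{\sqrt{k}}\bigl(e^{-ikx/M}-1\bigr)\;+\;\sum_{k=M}^{\infty}\frac{Z_k}{\sqrt{k}}e^{-ikx/M}.
$$
The scalar factor thus cancels the divergent low-frequency part, and the resulting field $G_M$ has two pieces that mirror the two pieces in the stochastic integral defining $\log\eta$. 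The lemma then reduces to analyzing $\E\bigl|\int f(x)e^{G_M(x)}dx\bigr|^{2}$ and its convergence to $\E|\eta(f)|^{2}$ as $M\to\infty$.

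For the second-moment bound \eqref{eq:joulu10}, I would introduce an ultraviolet cutoff $K$ by truncating the second sum at $k=K$ to get random smooth fields $G_M^{(K)}$. Using the independence of the $Z_k$ and the identity $\E e^{aZ+b\bar Z}=e^{ab}$ for a standard complex Gaussian $Z$, a direct computation gives the two-point function
$$
\E\,e^{G_M^{(K)}(x)+\overline{G_M^{(K)}(y)}}\;=\;e^{C_{M-1}}e^{-\phi_M(x)-\overline{\phi_M(y)}}\exp\!\Bigl[\sum_{k=1}^{K}\tfrac{1}{k}e^{-ik(x-y)/M}\Bigr],
$$
with $\phi_M(x):=\sum_{k=1}^{M-1}k^{-1}e^{-ikx/M}$ and $C_{M-1}:=\sum_{k=1}^{M-1}k^{-1}$. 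Writing $f_M:=fe^{-\phi_M}$, Taylor-expanding the last exponential, and integrating term-by-term against $f(x)\overline{f(y)}$ yields a non-negative sum
$$
e^{C_{M-1}}\sum_{l\ge 0}a_l^{(K)}\bigl|\widehat{f_M}(l/(2\pi M))\bigr|^{2},
$$
whose combinatorial coefficients $a_l^{(K)}$ increase to $1$ as $K\to\infty$ by Lemma \ref{le:joulu2}(i). Monotone convergence removes the cutoff, and Lemma \ref{le:joulu1} bounds the Fourier sum by $C\,M\int|f_M|^{2}(1+x^{2})dx$. The estimate \eqref{eq:joulu10} then follows from the pointwise bound $M\,e^{C_{M-1}}\,|e^{-\phi_M(x)}|^{2}\lesssim 1$, valid uniformly in $M\ge 1$ on any fixed compact set containing the support of $f$: indeed, the Dirichlet-kernel approximation $\sum_{k=1}^{M-1}\cos(kx/M)/k\approx -\log|2\sin(x/(2M))|$, combined with $|2\sin(x/(2M))|\asymp|x|/M$ for bounded $x$, gives $|e^{-\phi_M(x)}|\asymp|x|/M$, which is exactly what is needed. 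The bound \eqref{eq:joulu11} for $\eta$ is proved in an entirely parallel, in fact easier, manner; indeed it is essentially already contained in Lemma \ref{le:meso2} and Lemma \ref{le:meso3}.

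For the convergence \eqref{eq:joulu12} my plan is to realize both $\eta$ and $G_M$ on a common probability space by setting $Z_k=\sqrt{M}\bigl(B^{\C}_{k/M}-B^{\C}_{(k-1)/M}\bigr)$ for the same complex Brownian motion $B^{\C}$ defining $\eta$ (up to the natural $2\pi$-rescaling of the $x$-variable between the two conventions, which is the sense in which the ``suitable version of $\eta$'' is chosen). Under this coupling $G_M$ becomes precisely a Riemann--Stieltjes approximation of the stochastic integral representation of $\log\eta$. Expanding $\E|\eta(f)-\int fe^{G_M}|^{2}$ into two diagonal two-point functions and a cross term, each of which admits an explicit expression via Itô's isometry, one finds that all three take the shape of the Fourier series in $l$ derived in the previous step, and the three series differ only by discretization errors that vanish pointwise as $M\to\infty$. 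The convergence then follows by a dominated-convergence argument, with the integrable majorant supplied by the pointwise estimate of the previous paragraph. The main technical obstacle is to justify this dominated-convergence step rigorously for the only conditionally convergent Fourier series involved; this in turn rests on controlling the nearly singular kernel $1/(1-e^{-i(x-y)/M})$ close to the diagonal---morally a Hilbert-transform kernel at the macroscopic scale whose $L^{2}$-boundedness is what produces the estimates of the correct order.
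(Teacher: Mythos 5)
Your treatment of \eqref{eq:joulu10} and \eqref{eq:joulu11} matches the paper's argument closely: the UV truncation, the explicit two-point function, the Taylor expansion together with the combinatorial identity from Lemma \ref{le:joulu2}(i), monotone convergence, and the Fourier-series bound from Lemma \ref{le:joulu1} are precisely the ingredients used there (the paper organizes the notation slightly differently, writing $f^M := f\exp(-\sum_{j<M}(e^{-ijx/M}-1)/j)$ so that the low-frequency ``subtraction'' and the prefactor $e^{-C_{M-1}}$ are grouped together, but this is cosmetic). Your choice of coupling $Z_k=\sqrt{M}(B^{\C}_{k/M}-B^{\C}_{(k-1)/M})$ is also exactly the coupling the paper uses.

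Where you have a genuine gap is \eqref{eq:joulu12}, and you identify it yourself: you plan to compare the two diagonal second moments and the cross term by ``dominated convergence'', but you have no integrable majorant for the cross term. Unlike the diagonal terms, the cross term expands into a signed series in $l$ whose absolute convergence is not controlled by the pointwise estimate from the previous step, so monotone convergence does not apply and you do not supply a substitute. The paper closes this gap by a specific regularization device that your proposal does not anticipate: it introduces damped versions $U_{N,M}^{(\varepsilon)}$ and $\eta^{(\varepsilon)}$ (inserting $e^{-\varepsilon j/M}$ and $e^{-\varepsilon u}$ factors), splits
\[
\E\big|U_{\infty,M}(f)-\eta(f)\big|^2 \lesssim \E\big|U_{\infty,M}(f)-U_{\infty,M}^{(\varepsilon)}(f)\big|^2 + \E\big|U_{\infty,M}^{(\varepsilon)}(f)-\eta^{(\varepsilon)}(f)\big|^2 + \E\big|\eta^{(\varepsilon)}(f)-\eta(f)\big|^2,
\]
and handles the middle term for fixed $\varepsilon>0$ as $M\to\infty$ (Lemma \ref{le:secondterm}, where the damping makes every interchange of sum and integral legitimate by dominated convergence), while the outer terms are shown to vanish as $\varepsilon\to 0^+$ \emph{uniformly in $M$} (Lemma \ref{le:firstterm}, which is precisely where the quantitative estimate \eqref{eq:uusivuosi1} enters). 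It is this uniform-in-$M$ statement that replaces the dominated-convergence step you leave unjustified; without it, the argument does not close. So the architecture is the same, but the $\varepsilon$-regularization and the uniformity argument are missing pieces, not just cosmetic choices.
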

\noindent
Assuming the above Lemma, we may finish the
\begin{proof}[Proof of Theorem \ref{th:mesormt}] Let us denote  $\iota_M:=\eta-e^{-\sum_{j=1}^{M-1}\frac{1}{\sqrt{j}}Z_j}\int_\R \upsilon(\cdot/M)_{|(0,1)}.$ We apply the above lemma on $f_\xi:= e^{-2\pi i\xi x}\phi_0(x)$, where again $\phi_0$ is a suitable  smooth cut-off function, multiply by $(1+\xi^2)^{-\alpha}$, and note that \eqref{eq:joulu10} and \eqref{eq:joulu11} verify that the dominated convergence theorem applies to show that $\E \|\iota_M\|_{W^{-\alpha,2}(0,1)}^2\to 0$ as $M\to\infty.$ Especially, $ \iota_M\to 0$ in distribution in the Sobolev space, and the theorem follows by
writing $v(\cdot/M)= e^{\sum_{j=1}^{M-1}\frac{1}{\sqrt{j}}Z_j}(\eta+\iota_M).$
\end{proof}

Let us now turn to the

\begin{proof}[Proof of Lemma \ref{le:mesogaussianlemma}]
We note that \eqref{eq:joulu11} was already essentially established in Section \ref{sec:meso}  (see Lemma \ref{le:meso2} and Lemma \ref{le:meso3}).
In order to proceed towards the remaining claims, we simplify notation slightly and introduce the following objects:

\begin{equation*}
U_{N,M}(x):=\exp\left(\sum_{j=1}^{M-1} \frac{e^{-ijx/M}-1}{\sqrt{j}}Z_j+\sum_{j=M}^N \frac{e^{-ijx/M}}{\sqrt{j}}Z_j\right)
\end{equation*}

\begin{equation*}
U_{N,M}^{(\varepsilon)}(x)=\exp\left(\sum_{j=1}^{M-1} \frac{e^{-ijx/M}-1}{\sqrt{j}}e^{-\varepsilon j/M}Z_j+\sum_{j=M}^N \frac{e^{-\varepsilon j/M}e^{-ijx/M}}{\sqrt{j}}Z_j\right),
\end{equation*}

\noindent and 

\begin{equation*}
\eta^{(\varepsilon)}(x)= \exp\left(\int_{0}^1 \frac{e^{-iux}-1}{\sqrt{u}}e^{-\varepsilon u}dB_u^\C+\int_1^{\infty}\frac{e^{-\varepsilon u}e^{-iux}}{\sqrt{u}}dB_u^\C\right),
\end{equation*}

\noindent where $\varepsilon>0$. Note that formally $\eta^{(0)}=\eta$ and $U_{\infty,M}(x)=e^{-\sum_{j=1}^{M-1}Z_j/\sqrt{j}}\upsilon(x/M)$. So to prove \eqref{eq:joulu12}, we wish to show that $U_{\infty,M}(f)\stackrel{d}{\to}\eta(f)$ as $M\to\infty$. To do this, we couple the two objects by choosing

\begin{equation}\label{eq:zetacoupl}
Z_j:=\sqrt{M}\int_{\frac{j}{M}}^{\frac{j+1}{M}}dB_u^\C.
\end{equation}
We then note that for a constant $C$ independent of $N,M,\varepsilon$ and $f$
\begin{align*}
\E |U_{\infty,M}(f)-\eta(f)|^2&\leq C\big[\E|U_{\infty,M}(f)-U_{\infty,M}^{(\varepsilon)}(f)|^2+\E|U_{\infty,M}^{(\varepsilon)}(f)-\eta^{(\varepsilon)}(f)|^2\\
&\quad +\E|\eta^{(\varepsilon)}(f)-\eta(f)|^2\big].
\end{align*}
Our goal is to show that the first term on the right hand side is small uniformly in $M$ when $\varepsilon$ is small enough, the last term tends to zero when $\varepsilon\to 0$,  while the second term converges to zero when we let $M\to\infty$ for any fixed $\varepsilon>0$. This will then imply \eqref{eq:joulu12}.

Let us begin with the second term. 

\begin{lemma}\label{le:secondterm}
For any fixed $\varepsilon>0$ and any $f\in C^\infty_0(\R)$,

\begin{equation*}
\lim_{M\to\infty}\lim_{N\to\infty}\E|U_{N,M}^{(\varepsilon)}(f)-\eta^{(\varepsilon)}(f)|^2=0.
\end{equation*}
\end{lemma}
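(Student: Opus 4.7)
The plan is to exploit the coupling \eqref{eq:zetacoupl} to realize both $\log U_{N,M}^{(\varepsilon)}$ and $\log \eta^{(\varepsilon)}$ as complex stochastic integrals against the same Brownian motion $B^\C$, reducing the claim to an elementary Riemann-sum approximation in $L^2(du)$. The exponential cutoff $e^{-\varepsilon u}$ with fixed $\varepsilon>0$ will be precisely what tames the tail at infinity. Concretely, under the coupling we write $\log U_{N,M}^{(\varepsilon)}(x) = \int_{1/M}^{(N+1)/M} h_{N,M}(u;x)\,dB_u^\C$, where $h_{N,M}(\cdot;x)$ is the step function equal to $(j/M)^{-1/2}e^{-\varepsilon j/M}(e^{-ijx/M}-1)$ on $[j/M,(j+1)/M)$ for $1\le j<M$ and equal to $(j/M)^{-1/2}e^{-\varepsilon j/M}e^{-ijx/M}$ on the same interval for $M\le j\le N$; while $\log \eta^{(\varepsilon)}(x) = \int_0^\infty h(u;x)\,dB_u^\C$ with
\begin{equation*}
h(u;x):=\frac{(e^{-ixu}-1)e^{-\varepsilon u}}{\sqrt{u}}\mathbf{1}_{(0,1)}(u)+\frac{e^{-ixu}e^{-\varepsilon u}}{\sqrt{u}}\mathbf{1}_{[1,\infty)}(u).
\end{equation*}

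Next I would expand the squared $L^2$ norm. Using the standard identity $\E[e^{\int g_1\,dB^\C}\,\overline{e^{\int g_2\,dB^\C}}]=\exp(\int g_1\overline{g_2}\,du)$, which follows from the Itô-isometry relations $\E[(dB^\C)^2]=0$ and $\E[dB^\C\,d\overline{B^\C}]=du$ together with $\E e^{G}=e^{\frac12\E G^2}$ for mean-zero complex Gaussians $G$, one obtains
\begin{equation*}
\E|U_{N,M}^{(\varepsilon)}(f)-\eta^{(\varepsilon)}(f)|^2=\int_{\R^2}f(x)\overline{f(y)}\big[e^{I_{N,M}(x,y)}-e^{J_{N,M}(x,y)}-e^{\overline{J_{N,M}(y,x)}}+e^{I(x,y)}\big]dx\,dy,
\end{equation*}
where $I_{N,M}(x,y)=\int h_{N,M}(u;x)\overline{h_{N,M}(u;y)}\,du$, $J_{N,M}(x,y)=\int h_{N,M}(u;x)\overline{h(u;y)}\,du$ and $I(x,y)=\int h(u;x)\overline{h(u;y)}\,du$.

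The core step is then to show that for each $x,y$ in the (compact) support of $f$, the kernel $h_{N,M}(\cdot;x)$ converges in $L^2(\R_+)$ to $h(\cdot;x)$ as first $N\to\infty$ and then $M\to\infty$, with the convergence uniform in $x$ on $\mathrm{supp}(f)$. The tail $u\ge K$ contributes at most $\int_K^\infty e^{-2\varepsilon u}/u\,du$, which is small uniformly in $N,M,x$ since $\varepsilon>0$; on $[1/M,K]$ the step-function approximation converges in $L^2$ by dominated convergence, using that the limit $h(\cdot;x)$ is locally Lipschitz in $u$ away from $0$ and that the integrable singularity $1/\sqrt{u}$ at the origin dominates both $h_{N,M}$ and $h$. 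Uniformity in $x$ on compacts is then immediate since differences of the trigonometric factors $e^{-ijx/M}-e^{-ixu}$ are controlled uniformly in $x$. By Cauchy--Schwarz this yields $I_{N,M}(x,y),J_{N,M}(x,y)\to I(x,y)$ uniformly on $\mathrm{supp}(f)\times \mathrm{supp}(f)$, and since all these quantities are uniformly bounded by $\int_0^\infty e^{-2\varepsilon u}/u\,du<\infty$, dominated convergence applied to the $dx\,dy$ integral gives the conclusion.

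The only technical point to watch is the uniformity in $x$ of the $L^2$-approximation of $h$ by $h_{N,M}$, which is where the restriction $\varepsilon>0$ enters decisively; without it the covariance kernel $\int h(u;x)\overline{h(u;y)}\,du$ would diverge, corresponding to the logarithmic divergence that makes $\eta$ itself merely a generalized function rather than a random variable. I expect the argument to be largely routine once the coupling is set up and the Gaussian moment formula is invoked.
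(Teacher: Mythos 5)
Your proposal follows essentially the same route as the paper's proof: expand the square, invoke the coupling \eqref{eq:zetacoupl} to write all four terms via the Gaussian identity $\E[e^{\int g_1 dB^{\C}}\overline{e^{\int g_2 dB^{\C}}}]=e^{\int g_1\overline{g_2}\,du}$, reduce the convergence of the resulting covariance kernels to an $L^2$ Riemann-sum approximation, and finish by dominated convergence. The paper works the Riemann-sum estimates term by term with explicit $\mathcal{O}(M^{-2}(j/M)^{-1/2}e^{-\varepsilon j/M})$ error bounds, while you package the same estimates as uniform-in-$x$ convergence of the step kernel $h_{N,M}(\cdot;x)\to h(\cdot;x)$ in $L^2(\R_+)$ and then apply Cauchy--Schwarz; this is a cleaner bookkeeping but not a genuinely different argument.

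One small slip at the end: you claim all of $I_{N,M},J_{N,M},I$ are ``uniformly bounded by $\int_0^\infty e^{-2\varepsilon u}/u\,du < \infty$,'' but that integral diverges logarithmically at $u=0$. The correct uniform bound comes from Cauchy--Schwarz together with $\|h(\cdot;x)\|_{L^2}^2 \lesssim |x|^2\int_0^1 u\,du + \int_1^\infty e^{-2\varepsilon u}u^{-1}\,du <\infty$, using exactly the $(e^{-ixu}-1)$ factor near $u=0$ that you correctly invoke earlier in the argument; so the estimate is fine, only the written bound is wrong. With that corrected, the proof is complete.
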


\begin{proof}
Let us expand the square and calculate the expectation. The $U$-term is

\begin{align*}
\E |U_{N,M}^{(\varepsilon)}(f)|^2=\int_\R\int_\R f(x)\overline{f(y)} e^{\sum_{j=1}^{M-1}\frac{\left(e^{-i\frac{jx}{M}}-1\right)\left(e^{i\frac{jy}{M}}-1\right)e^{-2\varepsilon \frac{j}{M}}}{j}+\sum_{j=M}^N\frac{e^{-2j\varepsilon/M}e^{-ij(x-y)/M}}{j}}dxdy
\end{align*}
First of all, by dominated convergence, we can take the $N\to\infty$ limit inside of the integrals. Since $\varepsilon >0$, also the latter sum in the exponential is uniformly bounded, whence  by dominated convergence in the limit where first  $N\to\infty$ and then $M\to\infty$
\begin{align*}
\E |U_{N,M}^{(\varepsilon)}(f)|^2&\to \int_\R\int_\R f(x)\overline{f(y)}e^{\int_0^1 \frac{(e^{-iux}-1)(e^{iuy}-1)e^{-2\varepsilon u}}{u}du+\int_1^\infty e^{-2\varepsilon u} \frac{e^{-iu(x-y)}}{u}du}dxdy\\
&=\E|\eta^{(\varepsilon)}(f)|^2,
\end{align*}

\noindent which is finite. Thus our task is to analyze the cross terms. From \eqref{eq:zetacoupl} -- the definition of $Z_j$ -- one finds

\begin{align*}
\E U_{N,M}(f)\overline{\eta^{(\varepsilon)}(f)}&=\int_\R\int_\R f(x)\overline{f(y)} e^{\sum_{j=1}^{M-1}\frac{e^{-ijx/M}-1}{\sqrt{j/M}}e^{-\varepsilon j/M}\int_{\frac{j}{M}}^{\frac{j+1}{M}}\frac{e^{iuy}-1}{\sqrt{u}}e^{-\varepsilon u}du}\\
&\quad \times e^{\sum_{j=M}^{N}e^{-\varepsilon j/M}\frac{e^{-ijx/M}}{\sqrt{j/M}}\int_{\frac{j}{M}}^{\frac{j+1}{M}}\frac{e^{-\varepsilon u}e^{iuy}}{\sqrt{u}}du}dxdy
\end{align*}

Again by dominated convergence one can justify taking the limits inside of the integral and the question becomes studying the limit of the exponential in the last written integrand as $M\to\infty .$


Noting that uniformly for $y$ in the support of $f$ we have
\begin{equation*}
\int_{\frac{j}{M}}^{\frac{j+1}{M}}\frac{e^{iuy}-1}{\sqrt{u}}e^{-\varepsilon u}du=M^{-1}\frac{e^{ijy/M}-1}{\sqrt{j/M}}e^{-\varepsilon j/M}+\mathcal{O}(M^{-2}(j/M)^{-1/2}e^{-\varepsilon j/M})
\end{equation*}
 and 
\begin{equation*}
\int_{\frac{j}{M}}^{\frac{j+1}{M}}\frac{e^{-\varepsilon u}e^{iuy}}{\sqrt{u}}du=M^{-1}\frac{e^{-\varepsilon j/M} e^{ijy/M}}{\sqrt{j/M}}+\mathcal{O}(M^{-2}(j/M)^{-1/2} e^{-\varepsilon j/M}),
\end{equation*}
 one finds that the exponential in the question converges to
\begin{align*}
=\exp\left(\int_0^1 \frac{(e^{-ixu}-1)(e^{iyu}-1)}{u}e^{-2\varepsilon u}du+\int_1^\infty \frac{e^{-2\varepsilon u}e^{-iu(x-y)}}{u}du
\right).
\end{align*}
The other cross term is handled in an identical way, and  the claim follows.
\end{proof}

Next we consider the $U_{N,M}-U_{N,M}^{(\varepsilon)}$-term.

\begin{lemma}\label{le:firstterm}
Let $f\in C^\infty_0(\R)$. Then 

\begin{equation*}
\lim_{\varepsilon\to 0^+}\lim_{N\to\infty}\E \left|U_{N,M}^{(\varepsilon)}(f)-U_{N,M}(f)\right|^2=0,
\end{equation*}
 uniformly in $M$.

\end{lemma}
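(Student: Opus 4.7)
The plan is to exploit non-negativity via a Taylor expansion of exponentials of Gaussian covariances. First, applying Fubini together with the identity $\E e^{aZ+b\overline{Z}}=e^{ab}$ for $Z$ a standard complex Gaussian, and noting that both cross expectations $\E U_{N,M}^{(\varepsilon)}(x)\overline{U_{N,M}(y)}$ and $\E U_{N,M}(x)\overline{U_{N,M}^{(\varepsilon)}(y)}$ produce the same kernel (because the Gaussian MGF depends only on the sum of the two damping exponents), one obtains
\begin{equation*}
\E |U_{N,M}^{(\varepsilon)}(f)-U_{N,M}(f)|^2 = \int\int f(x)\overline{f(y)}\bigl[e^{k_{2\varepsilon}(x,y)}-2e^{k_{\varepsilon}(x,y)}+e^{k_0(x,y)}\bigr]\,dx\,dy,
\end{equation*}
where $k_\beta(x,y) := \sum_{j=1}^{M-1}\frac{(e^{-ijx/M}-1)(e^{ijy/M}-1)}{j}e^{-\beta j/M}+\sum_{j=M}^{N}\frac{e^{-ij(x-y)/M}}{j}e^{-\beta j/M}$. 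Expanding the exponentials as Taylor series and integrating term by term produces the manifestly non-negative representation
\begin{equation*}
\E|U_{N,M}^{(\varepsilon)}(f)-U_{N,M}(f)|^2 = \sum_{n=0}^\infty \frac{1}{n!}\sum_{j_1,\ldots,j_n=1}^{N}|\hat G_{j_1,\ldots,j_n}|^2\Bigl(\prod_{i=1}^n e^{-\varepsilon j_i/M}-1\Bigr)^{2},
\end{equation*}
with $\hat G_{j_1,\ldots,j_n}:=\int f(x)\phi_{j_1}^{N,M}(x)\cdots\phi_{j_n}^{N,M}(x)\,dx$ and $\phi_j^{N,M}$ the coefficient of $Z_j$ in the exponent of $U_{N,M}(x)$.

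Writing $S:=\sum_i j_i/M$, the elementary inequality $(1-e^{-\varepsilon S})^2\leq \min(\varepsilon^2 S^2,\,1)$ and a split at $S=T$ yield
\begin{equation*}
\E|U_{N,M}^{(\varepsilon)}(f)-U_{N,M}(f)|^2 \leq \varepsilon^2 T^2\,\E|U_{N,M}(f)|^2 + \sum_{n=0}^\infty \frac{1}{n!}\sum_{S>T}|\hat G_{j_1,\ldots,j_n}|^2.
\end{equation*}
Thus it suffices to establish two uniform-in-$(N,M)$ bounds: (A) $\E|U_{N,M}(f)|^2\leq C\int_\R(1+x^2)|f(x)|^2\,dx$, and (B) $\sum_n\frac{1}{n!}\sum_{S>T}|\hat G_{j_1,\ldots,j_n}|^2\to 0$ as $T\to\infty$. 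Given both, one chooses $T$ large first and then $\varepsilon$ small, completing the proof.

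Bound (A) is the discrete analog of Lemma \ref{le:meso2}. The plan is to exploit the coupling $Z_j=\sqrt{M}\int_{j/M}^{(j+1)/M}dB^\C_u$ to realise the exponent of $U_{N,M}(x)$ as a step-function stochastic integral against $(e^{-ixu}-1)u^{-1/2}\chi_{(0,1)}(u)+e^{-ixu}u^{-1/2}\chi_{[1,\infty)}(u)$; the Riemann-sum error defines an auxiliary Gaussian field whose exponential has all moments uniformly in $M,N$ (by Lemma \ref{le:perus}, as in Lemmas \ref{le:meso5}--\ref{le:meso7}), reducing (A) to Lemma \ref{le:meso2}. For bound (B), since $\hat f$ is Schwartz (as $f\in C_0^\infty$), a weighted version of (A) applied to derivatives or polynomial weights of $f$ (both still in $V$) should yield $\sum_n\frac{1}{n!}\sum(1+S)^{2K}|\hat G|^2\leq C_K$ uniformly in $N,M$, after which Chebyshev's inequality gives (B) with $T^{-2K}$ decay.

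The hard part will be making the comparison in (A) rigorous: one must show that the discrete tail $\sum_{j=M}^\infty e^{-ij(x-y)/M}/j$ shares with the continuous integral $\int_1^\infty e^{-iu(x-y)}/u\,du$ both the $-\log|x-y|-i\pi/2+O(1)$ singular behaviour near $x=y$ and a Paley-Wiener-type analytic extension (exactly as in the proof of Lemma \ref{le:meso2}), so that $e^{k_0(x,y)}$ factorises as $c_0/[i(x-y)]$ plus bounded, whence the Plemelj formula together with the $L^2$-boundedness of the Hilbert transform yield the estimate. Uniform control of the mixed contributions where some $j_i<M$ and some $j_i\geq M$ is the most delicate bookkeeping step.
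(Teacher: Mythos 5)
Your non-negativity trick is correct and is a genuinely different organizing principle from the paper's proof: expanding $e^{k_{2\varepsilon}}-2e^{k_\varepsilon}+e^{k_0}$ term by term yields the manifestly non-negative representation $\sum_n\frac{1}{n!}\sum_{j_1,\ldots,j_n}|\hat G_{j_1,\ldots,j_n}|^2(1-e^{-\varepsilon S})^2$, $S=\sum_ij_i/M$, whereas the paper works with the three second moments directly, collapses them via the multiplicative identity of Lemma \ref{le:joulu2}(i) into a single sum over the lattice $\{l/(2\pi M)\}$, and then pushes $\varepsilon\to0$ using the quantitative $\ell^2$ estimate \eqref{eq:uusivuosi1}. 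Your reduction to the two bounds (A) and (B) is clean and would indeed suffice. Bound (A) coincides with the uniform estimate in \eqref{eq:joulu100}, which the paper proves in a couple of lines from Lemma \ref{le:joulu1}; your alternative plan via comparison with the continuous stochastic integral (mimicking Lemmas \ref{le:meso5}--\ref{le:meso7} and then invoking Lemma \ref{le:meso2}) is plausible but considerably more circuitous, and the Plemelj/Hilbert-transform argument you flag as ``the hard part'' is a detour that the paper's direct discrete Fourier lattice estimate avoids entirely.

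The real gap is in (B). You want $\sum_n\frac1{n!}\sum_{S>T}|\hat G_{j_1,\ldots,j_n}|^2\to0$ uniformly in $N,M$, and your strategy is to obtain a uniform bound $\sum_n\frac1{n!}\sum(1+S)^{2K}|\hat G|^2\le C_K$ by ``applying a weighted version of (A) to derivatives or polynomial weights of $f$''. This does not go through as stated. The obstruction: for $j<M$ one has $\phi_j(x)=(e^{-ijx/M}-1)/\sqrt j$, and the subtracted $-1$ means that $\prod_i\phi_{j_i}$ contains low-frequency terms even when $\sum_i j_i$ is large (expanding the product of binomials, only the subset $A\cup\{j_i\ge M\}$ of indices enters the phase), so $|\hat G_{j_1,\ldots,j_n}|$ need not be small when $S$ is large. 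Consequently, integrating $f'$ against $\prod\phi_{j_i}$ does not extract a factor of $S$: $\phi_j'(x)=-\frac{ij}{M}e^{-ijx/M}/\sqrt j$ is not a multiple of $\phi_j$ when $j<M$. The paper avoids this precisely by first absorbing all the $-1$'s into the modulated test function $f^M(x)=f(x)e^{-\sum_{j<M}(e^{-ijx/M}-1)/j}$, which has uniformly (in $M$) compactly supported $C^\infty$ bounds; after this absorption the remaining frequency label $l$ genuinely matches the uniform Schwartz decay $|\widehat{f^M}(\xi)|\lesssim(1+|\xi|)^{-K}$, which is what drives the tail. Without identifying this absorption step, the Chebyshev/weighted-sum route for (B) is incomplete, and it is not obvious that, say, the first moment $\int s\,d\mu_{M,N}(s)$ is uniformly bounded (one can in fact verify this, but the proof again goes through the $f^M$ factorization and $\beta$-differentiation of the collapsed formula, i.e.\ through the same machinery the paper already deploys). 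So the proposal is a sound alternative \emph{blueprint}, but the two deferred bounds contain the real content and your sketches for them, particularly (B), currently have a genuine gap.
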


\begin{proof}
Let us write $f^{M,\varepsilon}(x)=f(x)e^{-M^{-1}\sum_{j=1}^{M-1}\frac{e^{-ijx/M}-1}{j/M}e^{-\varepsilon j/M}}$ and $f^M =f^{M,0}$.  Then $|f_M(x)|\leq C|f(x)|$, where $C$ is independent of $f$ and $M$. An application of Lemma \ref{le:joulu1} and Lemma \ref{le:joulu2} together with a straightforward computation yield that in the limit $N\to\infty$
\begin{align}\label{eq:joulu100}
\E \left|U_{N,M}(f)\right|^2&=\int_\R\int_\R f^M(x)\overline{f^M(y)} e^{-\sum_{j=1}^{M-1}\frac{1}{j}} e^{\sum_{j=1}^N \frac{e^{-ij(x-y)/M}}{j}}dxdy\nonumber\\
&=e^{-\sum_{j=1}^{M-1}\frac{1}{j}}\sum_{k=0}^\infty \frac{1}{k!}\sum_{j_1,...,j_k=1}^N\frac{1}{j_1\cdots j_k}\left|\widehat{f^M}\left(\frac{j_1+\cdots +j_k}{2\pi M}\right)\right|^2\nonumber\\
&\nearrow e^{-\sum_{j=1}^{M-1} \frac{1}{j}}\sum_{l=0}^\infty \left|\widehat{f^M}\left(\frac{l}{2\pi M}\right)\right|^2\sum_{k=0}^\infty \frac{1}{k!}\sum_{j_1,...,j_k=1}^\infty\frac{1}{j_1\cdots j_k}\delta_{j_1+\cdots +j_k,l}\nonumber\\
&=e^{-\sum_{j=1}^{M-1} \frac{1}{j}}\sum_{l=0}^\infty \left|\widehat{f^M}\left(\frac{l}{2\pi M}\right)\right|^2\;\;\lesssim
\int_\R |f(x)|^2(1+x^2)dx,
\end{align}
since $\exp(-\sum_{j=1}^{M-1} \frac{1}{j})= M^{-1}e^{-\gamma}(1+o(1)),$ as $M\to\infty$.
 Similar arguments lead to 

\begin{equation*}
\lim_{N\to\infty}\E\left|U_{N,M}^{(\varepsilon)}(f)\right|^2=e^{-\sum_{j=1}^{M-1} \frac{e^{-2\varepsilon j/M}}{j}}\sum_{l=0}^\infty\left|\widehat{f^{M,2\varepsilon}}\left(\frac{l}{2\pi M}\right)\right|^2 e^{-2\varepsilon l/M}
\end{equation*}

\noindent and 

\begin{equation*}
\lim_{N\to\infty}\E \left[U_{N,M}(f)\overline{U_{N,M}^{(\varepsilon)}(f)}\right]=e^{-\sum_{j=1}^{M-1} \frac{e^{-\varepsilon j/M}}{j}}\sum_{l=0}^\infty\widehat{f^{M}}\left(\frac{l}{2\pi M}\right)\overline{\widehat{f^{M,\varepsilon}}\left(\frac{l}{2\pi M}\right)} e^{-\varepsilon l/M}.
\end{equation*}

Formally we may then let $\varepsilon\to 0^+$ in the above computations and obtain the desired claim. To be able to do this we just need to recall  Lemma \eqref{le:joulu2}(ii), and observe the quantitative estimate
\begin{eqnarray}\label{eq:uusivuosi1}
\| M^{-1/2}\widehat{f^{M,\varepsilon}}\left(\frac{\cdot}{2\pi M}\right)- M^{-1/2}\widehat{f^{M}}\left(\frac{\cdot}{2\pi M}\right)e^{-\varepsilon l/M}\|_{\ell^2(\Z)}
\to 0\quad
\end{eqnarray}
as $\varepsilon\to 0^+,$ which is uniform in $M\geq 1.$ This in turn follows easily by combining  the estimate
\begin{eqnarray*}
&&\| M^{-1/2}\widehat{f^{M,\varepsilon}}\left(\frac{\cdot}{2\pi M}\right)- M^{-1/2}\widehat{f^{M}}\left(\frac{\cdot}{2\pi M}\right)\|_{\ell^2(\Z)}\\
&\lesssim& \left(\int_\R |f^{M}(x)-f^{M,\varepsilon}(x)|^2(1+x^2)\right)^{1/2}
\;\;\longrightarrow 0\quad
\end{eqnarray*}
from Lemma \eqref{le:joulu2}, with uniformity  in $M$ as $\varepsilon\to 0^+$, with the observation that $|\widehat{f^{M,\varepsilon}}(\xi)|\lesssim ((1+|\xi|)^2)^{-2}$ uniformly in $M$ and $\varepsilon$.  For the last mentioned fact one notes in addition that the supports of the functions 
$f^{M,\varepsilon}$ are contained in the same compact interval and applies notes the uniform bound $\| (d/dx)^k f^{M,\varepsilon}\|_\infty \leq C_k$ for $k\geq 0$.
\end{proof}

In order to finish the proof of  Lemma  \ref{le:mesogaussianlemma} we need to establish the uniform estimate \eqref{eq:joulu10}. However, this we already obtained in \eqref{eq:joulu100}, and the proof is complete.
\end{proof}

\footnotesize

\end{document}